\documentclass[reqno]{amsart}

\usepackage{amsmath,amsthm,amssymb}
\usepackage{hyperref}
\usepackage{tikz}
\usetikzlibrary{calc}

\newtheorem{thm}{Theorem}
\newtheorem{prop}[thm]{Proposition}
\newtheorem{cor}[thm]{Corollary}
\newtheorem{lem}[thm]{Lemma}
\newtheorem{clm}[thm]{Claim}

\theoremstyle{definition}
\newtheorem{dfn}[thm]{Definition}
\newtheorem{rem}[thm]{Remark}
\newtheorem{ex}[thm]{Example}
\newtheorem{prob}[thm]{Problem}
\newtheorem{ques}[thm]{Question}

\theoremstyle{remark}
\newtheorem*{org}{Organization}
\newtheorem*{ack}{Acknowledgements}
\newtheorem*{nota}{Notation}

\numberwithin{thm}{section}
\numberwithin{equation}{section}

\newcommand{\R}{\mathbb{R}}
\newcommand{\Hcal}{\mathcal{H}}
\newcommand{\GH}{\mathrm{GH}}

\DeclareMathOperator{\In}{In}
\DeclareMathOperator{\diam}{diam}

\usepackage[
  style=numeric,
  sorting=nyt,
  giveninits=true,
  doi=false,
  url=false,
  isbn=false,
  maxnames=10,
]{biblatex}

\AtEveryBibitem{\clearlist{language}}

\DeclareFieldFormat[article,incollection,inproceedings,misc]{title}{#1}
\DeclareFieldFormat[article,incollection,inproceedings,misc]{volume}{\mkbibbold{#1}}
\renewbibmacro{in:}{}
\DeclareFieldFormat{pages}{#1}
\AtEveryBibitem{\clearfield{eprintclass}}
\AtEveryBibitem{%
  \ifentrytype{book}{\clearfield{pages}}{}%
}

\DeclareBibliographyDriver{inproceedings}{%
  \usebibmacro{bibindex}%
  \usebibmacro{begentry}%
  \usebibmacro{author/editor+others}%
  \setunit{\labelnamepunct}\newblock
  \usebibmacro{title}%
  \newunit\newblock
  \printfield{booktitle}%
  \iffieldundef{pages}
    {}
    {\setunit{\addcomma\space}\printfield{pages}}%
  \newunit\newblock
  \usebibmacro{publisher+location+date}%
  \newunit\newblock
  \usebibmacro{finentry}%
}

\DeclareBibliographyDriver{incollection}{%
  \usebibmacro{bibindex}%
  \usebibmacro{begentry}%
  \usebibmacro{author/editor+others}%
  \setunit{\labelnamepunct}\newblock
  \usebibmacro{title}%
  \newunit\newblock
  \printfield{booktitle}%
  \iffieldundef{pages}
    {}
    {\setunit{\addcomma\space}\printfield{pages}}%
  \newunit\newblock
  \usebibmacro{publisher+location+date}%
  \newunit\newblock
  \usebibmacro{finentry}%
}

\DeclareBibliographyDriver{misc}{%
  \usebibmacro{bibindex}%
  \usebibmacro{begentry}%
  \usebibmacro{author/editor+others}%
  \setunit{\labelnamepunct}\newblock
  \usebibmacro{title}%
  \newunit\newblock
  \iffieldundef{note}
    {}
    {{\printfield{note}\setunit{\addcomma\space}}%
      \iffieldundef{eprint}
        {}
      {\usebibmacro{eprint}%
      \setunit{\addcomma\space}%
    }}
  \usebibmacro{date}%
  \newunit\newblock
  \usebibmacro{finentry}%
}

\usepackage{xcolor}

\usepackage{todonotes}
\usepackage{pb-diagram}
\usepackage{appendix}
\newcommand{\Ric}{\mathrm{Ric}}

\title{Busemann and MCP}

\author[T. Fujioka]{Tadashi Fujioka}
\address[T. Fujioka]{Fukuoka University, Fukuoka 814-0180, Japan}
\email{tfujioka@fukuoka-u.ac.jp}

\author[K. Tashiro]{Kenshiro Tashiro}
\address[K. Tashiro]{The University of Osaka, Osaka 560-0043, Japan}
\email{tashiro@math.sci.osaka-u.ac.jp}

\date{\today}
\subjclass[2020]{53C23, 53C24, 53C70}
\keywords{non-positive curvature, measure contraction property, rigidity}

\begin{document}

\begin{abstract}
We study the structure of Busemann spaces with measures satisfying the measure contraction property (MCP).
The main results are rigidity theorems and structure theorems under the assumption of geodesic completeness or non-collapse.
The appendix contains some observations on the tangent cones of geodesically complete Busemann spaces.
\end{abstract}

\maketitle

\tableofcontents

\section{Introduction}

In this paper, we consider the following two conditions simultaneously: the Busemann convexity and the measure contraction property (MCP).
They are, respectively, synthetic notions of upper and lower curvature bounds.
A \textit{Busemann space} (or a \textit{convex space}) is a complete geodesic space such that the distance function is convex along every pair of geodesics (\cite{Bu48, Gr81}).
A metric measure space is said to satisfy the \textit{measure contraction property} MCP($K,N$) if the measure of sets contracting to a point along radial geodesics is controlled by that in the $N$-dimensional model space of constant curvature $K$ (\cite{Oh07, St06}; in general $N$ need not be an integer).
See Section \ref{sec:pre} for the precise definitions.

For a Riemannian manifold $M$, the Busemann convexity is equivalent to having non-positive sectional curvature and infinite injectivity radius, while the MCP($K,n$) for the Riemannian measure is equivalent to having Ricci curvature bounded below by $K$, provided that $n$ is the dimension of $M$.
However, compared to other notions of synthetic curvature bounds, the Busemann convexity and MCP are weak enough to make sense even for Finsler manifolds and sub-Riemannian/sub-Finsler manifolds, respectively.
In fact, the Busemann convexity is weaker than the CAT($0$) condition, and the MCP($K,N$) is weaker than the curvature-dimension condition CD($K,N$) (especially RCD($K,N$)).
See Section \ref{sec:rel} for comparisons of these concepts.

The purpose of this paper is to study the structure of Busemann spaces with measures satisfying the MCP.
We obtain some rigidity theorems and structure theorems for such spaces.
Our work is motivated by and relies on the rigidity result of Andreev \cite{An17} for Busemann G-spaces, as well as the structural results of Kapovitch--Ketterer \cite{KK20} and Kapovitch--Kell--Ketterer \cite{KKK22} for CAT spaces with CD conditions.
We will also review these results in Section \ref{sec:rel}.

\subsection{Main results}

We first prove the following rigidity theorem (cf.\ \cite{An17}).
We say that a geodesic space is \textit{geodesically complete} if every geodesic (i.e., a local shortest path) is extendable infinitely.

\begin{thm}\label{thm:main}
Let $X$ be a geodesically complete Busemann space equipped with a measure $m$ satisfying the measure contraction property {\rm MCP($0,N$)}, where $N\ge 1$.
Then $X$ is isometric to a strictly convex Banach space of dimension $n\le N$.
Furthermore, $m$ is a constant multiple of the $n$-dimensional Hausdorff measure.
\end{thm}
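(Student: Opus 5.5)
The plan is to reduce to Andreev's rigidity theorem \cite{An17}. In the form we use it, a geodesically complete Busemann space (a Busemann G-space) in which the volume growth of balls is polynomial of the right kind, or more precisely in which some asymptotic cone is locally compact and finite dimensional, is isometric to a strictly convex Banach space. We first record what MCP($0,N$) gives directly. The space $X$ is proper, since MCP spaces with $N<\infty$ are doubling and hence proper. Moreover the Bishop--Gromov type inequality
\[
\frac{m(B(x,R))}{m(B(x,r))}\le \Bigl(\frac{R}{r}\Bigr)^N \qquad (0<r\le R)
\]
holds for every $x\in X$. In particular $X$ is globally doubling with a doubling constant $2^N$ that does not depend on the scale.

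The first key step is to show that the rescaled spaces $(X,\lambda^{-1}d,x)$ with $\lambda\to\infty$ are uniformly doubling, so that by Gromov compactness they subconverge to an asymptotic cone $C$. This cone is again a proper, geodesically complete Busemann space; Busemann convexity and geodesic completeness are closed under pointed Gromov--Hausdorff limits of uniformly locally compact spaces. Since doubling passes to the limit, $C$ has finite Assouad, hence Hausdorff, dimension at most $N$. Next we exploit the convexity. In a Busemann space the homothety $x\mapsto$ (the point at fraction $t$ along the geodesic $[o,x]$) is $t$-Lipschitz. Geodesic completeness lets us extend every geodesic from $o$, so the dilations are surjective in the appropriate sense. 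Combined with the scale-invariant doubling bound, this shows that the volume growth $m(B(o,R))$ is comparable to $R^{n}$ for a single exponent $n\le N$, uniformly in $R$. We then invoke Andreev's theorem, whose hypothesis for Busemann G-spaces is exactly of this growth or doubling type, and conclude that $X$ is isometric to a strictly convex normed space $V$ of dimension $n$. The strict convexity comes from the uniqueness of geodesics in Busemann spaces. The bound $n\le N$ follows because $\mathbb R^n$ with any norm and an MCP($0,N$) measure has Hausdorff dimension at most $N$, a standard consequence of the volume inequality above with $r\to0$.

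Finally we identify the measure. On $V$ the geodesics are straight segments, so MCP($0,N$) with center $x$ says that the push-forward of $m$ under the contraction $y\mapsto x+t(y-x)$ has density at most $t^{-N}$, up to the usual $t^N$ normalization. We first show that $m$ is absolutely continuous with respect to Lebesgue measure $\mathcal L^n$. Then we compare contraction toward two different centers to see that its density $f$ satisfies $f(x+t(y-x))\ge t^{N-n}f(y)$ for all $x,y$ and $t\in(0,1]$. Applying this with the roles swapped along entire lines, which exist by completeness, gives $f$ bounded above and below on every line by its values at any point. From there, letting $t\to1$ and using that lines are infinite in both directions, we show that $f$ is constant. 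Since $\mathcal H^n$ on a normed space is a constant multiple of $\mathcal L^n$, this gives the claim.

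I expect the main obstacle to be this last step, proving that the density is constant. MCP alone allows non-constant weights, for example $|x|^{a}$ on half-spaces. The argument therefore has to exploit geodesic completeness, meaning that lines extend in both directions, which rules out such weights: a weight $f$ that is $(N-n)$-concave-like along every complete line and positive everywhere must be constant, in the same way that a concave function on $\mathbb R$ that is bounded below is constant.
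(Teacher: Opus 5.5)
There is a genuine gap at the central step. Andreev's theorem \cite{An17} (Theorem~\ref{thm:and}) has no growth or doubling hypothesis. It requires $X$ to be \emph{of cone type}: there is a point $p$ with $d(x_t,y_t)=t\,d(x,y)$ for all $x,y$ and $t$. That is, the reverse (Busemann concavity) inequality must hold on top of Busemann convexity. Nothing you extract from MCP($0,N$) can replace this: properness, scale-invariant doubling, polynomial growth, and a locally compact finite-dimensional asymptotic cone are all too weak.

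For a concrete example, take a rotationally symmetric Cartan--Hadamard surface $dr^2+f(r)^2d\theta^2$ with $f$ convex, $f(r)=r$ near $0$ and $f'(r)=2$ for large $r$. It is a geodesically complete, non-branching, locally compact Busemann (even CAT($0$)) space. Its area measure is doubling with quadratic growth, and its asymptotic cone is a Euclidean cone of total angle $4\pi$. Yet it is not a normed space.

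Two auxiliary claims also fail. Busemann convexity does not pass to pointed Gromov--Hausdorff limits: finite-dimensional $\ell^p$ norms converge to $\ell^1$ or $\ell^\infty$, and only a convex geodesic bicombing survives. The assertion $m(B(o,R))\asymp R^n$ for a single exponent $n$ is not derived from anything. So after your first paragraph nothing has been established that distinguishes $X$ from a non-flat Busemann space.

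The missing idea is to use MCP \emph{rigidly}, not merely through volume growth, and to do so on $X$ itself before any linear structure is known. This is exactly the mechanism you reserve for the final step.

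First, by geodesic completeness, extend $xy$ to a point $y'$ far beyond $y$. The contraction toward $y'$ sending $x$ to $y$ is $1$-Lipschitz by Busemann convexity, so it maps $B(x,r)$ into $B(y,r)$. MCP gives the image measure at least $(1-d(x,y)/d(x,y'))^N m(B(x,r))$. Letting $y'\to\infty$ and symmetrizing gives $m(B(x,r))=m(B(y,r))$ (Lemma~\ref{lem:hom}).

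Second, set $D=d(x,y)/2$ and contract the disjoint union $B(x,D)\cup B(y,D)$ toward far points $q_i$ on a ray $\gamma$ from $x$. Using homogeneity, this produces a limit ray $\eta$ from $y$ with $d(\gamma(t),\eta(t))=d(x,y)$. Convexity gives $\le$. A strict inequality would make the two image balls overlap and lose measure, contradicting MCP (Lemma~\ref{lem:trans}).

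Third, apply this to the extension of $xp$ beyond $p$ and use the triangle inequality. This yields $d(x_t,y_t)=t\,d(x,y)$ (Proposition~\ref{prop:cone}), which is precisely Andreev's hypothesis; non-branching is automatic (Proposition~\ref{prop:kkbran}).

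Then $n\le N$ follows from Ohta's dimension bound. Also, $m$ is a multiple of $\Hcal^n$ because all balls of a given radius have equal measure. This also avoids the absolute continuity of $m$ with respect to Lebesgue measure, which your density argument needs but does not prove.
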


\noindent
This theorem can be interpreted as follows:
Since the Busemann convexity is a generalization of non-positive sectional curvature and the MCP($0,N$) is a kind of non-negative Ricci curvature, combining these two should yield a flat space.
However, since both make sense in the Finsler setting, the conclusion is a Banach space rather than a Hilbert space.

\begin{rem}\label{rem:main}
In Theorem \ref{thm:main}, the dimension $n$ of the Banach space is not necessarily equal to the dimension parameter $N$.
In fact, the Euclidean space $\R^n$ with the Hausdorff measure $\Hcal^n$ satisfies MCP($0,N$) for any $N>n$.
\end{rem}

In Theorem \ref{thm:main}, geodesic completeness rules out the possibility of boundary.
In the non-geodesically complete case, we obtain the following generalization under the non-collapsing assumption (cf.\ \cite[Theorem 5.1(ii)]{KK20}).
Here we say that a metric space satisfies the \textit{non-collapsed} MCP($0,n$) if the MCP($0,n$) holds for the $n$-dimensional Hausdorff measure $\Hcal^n$.
In this case, we will use small $n$ for the dimension parameter, which is not necessarily an integer a priori.

\begin{thm}\label{thm:bdry}
Let $X$ be a Busemann space satisfying non-collapsed {\rm MCP($0,n$)}, where $n\ge 1$.
Then $n$ is an integer and $X$ is isometric to a closed convex subset of a strictly convex Banach space of dimension $n$.
\end{thm}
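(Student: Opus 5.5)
We argue by blow-down and volume rigidity, using Theorem~\ref{thm:main} as the model case. Write $\Hcal^n$ for the measure. First, non-collapsed MCP($0,n$) gives the Bishop--Gromov monotonicity: for every $p\in X$ the function $r\mapsto \Hcal^n(B_r(p))/r^n$ is non-increasing. It is bounded above, as $r\to 0$, by the density of $\Hcal^n$ at $p$. Busemann convexity makes the tangent cone $T_pX$ (the pointed limit of $(X,p,r^{-1}d)$ as $r\to0$) a well-defined Busemann cone. The MCP is stable under measured Gromov--Hausdorff convergence, so the blow-up limits satisfy MCP($0,n$) with a limit measure.

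The first step is to show that $X$ is finite dimensional and that the non-collapsed measure passes to limits. I would prove a volume convergence statement of Colding/Kapovitch--Ketterer type: in a Busemann space, geodesic contractions $\Phi_t$ toward $p$ are $t$-Lipschitz. Hence the blow-down or blow-up limit of $(X,\Hcal^n)$ is again $\Hcal^n$ up to a constant, and the limit has $\Hcal^n$-density bounded above.

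The second step is to identify the asymptotic cone. Rescale $X$ at $p$ by $r\to\infty$. The limit $C_\infty$ is a Busemann cone over $p$ with measure $\theta\Hcal^n$, where $\theta=\lim_{r\to\infty}\Hcal^n(B_r(p))/r^n$. Positivity of $\theta$ follows because contraction to a point along radial geodesics in a Busemann space is injective Lipschitz. I would then show that equality in Bishop--Gromov forces a cone structure at every scale. Then comes rigidity. Radial geodesics from $p$ extend to rays on a set of full measure: the MCP(0,n) equality case is exactly the non-branching volume cone. The subset $C\subset X$ swept by such rays is convex and has nonempty interior. On the interior, $X$ is geodesically complete locally, and applying the argument of Theorem~\ref{thm:main}, in the local form of Andreev's theorem, shows that the interior is locally isometric to an open subset of an $n$-dimensional strictly convex Banach space $V$. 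In particular $n$ is an integer, because Hausdorff dimension equals the topological dimension of $V$.

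The final step is to globalize. The interior $\In X$ is convex, connected, and locally a strictly convex normed space. It is also uniquely geodesic and Busemann, so the developing map into $V$ is a local isometry that preserves geodesics. By convexity it is injective and globally isometric onto a convex open set. Taking closure, and using that $X$ is the closure of its interior (non-collapsed volume density forbids lower dimensional pieces), gives $X$ isometric to a closed convex subset of $V$.

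The main obstacle will be the local rigidity on the interior. Theorem~\ref{thm:main} assumes global geodesic completeness. I would try to reduce to it via tangent cones: at an interior point, the tangent cone is a geodesically complete Busemann cone with non-collapsed MCP($0,n$), hence a Banach space by Theorem~\ref{thm:main}. I would then upgrade this to flatness of a neighborhood, using volume rigidity $\Hcal^n(B_r(x))=\omega r^n$ for small $r$.
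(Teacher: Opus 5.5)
Your outline has a genuine gap at its core. Nothing in it produces an interior (manifold, or geometrically inner) point, and nothing shows that the set of such points is convex and dense. Yet your final step simply asserts that $\In X$ is convex and connected.

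The mechanism in your second step cannot supply this. The hypotheses allow bounded $X$: a compact convex body in a strictly convex normed $\R^n$, with $\Hcal^n$, is Busemann and satisfies non-collapsed MCP($0,n$). It contains no rays, $\theta=\lim_{r\to\infty}\Hcal^n(B(p,r))/r^n=0$, the asymptotic cone is a point, and the Bishop--Gromov ratio eventually decreases to $0$. So there is no global volume-cone equality to exploit; even an unbounded half-space has a non-constant ratio at interior points.

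Your first step also relies on facts that are not available:
\begin{itemize}
\item Without uniform extendability at $p$, the Gromov--Hausdorff tangent cone of a Busemann space need not be unique (see the example after Lemma \ref{lem:tang}).
\item It is not known whether blow-ups of a Busemann space are Busemann (Remark \ref{rem:tang}).
\item The claimed volume convergence does not follow from the $t$-Lipschitz property of $\Phi_t$, which only gives $\Hcal^n(\Phi_t(A))\le t^n\Hcal^n(A)$ inside $X$.
\end{itemize}
Your closing reduction is fine: at an interior point the tangent cone is a geodesically complete Busemann space (Proposition \ref{prop:app}) with an MCP($0,n$) limit measure, hence Banach by Theorem \ref{thm:main}. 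But it presupposes exactly the interior points you have not constructed.

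The paper closes these gaps as follows.
\begin{itemize}
\item \emph{Almost-extendability.} At every point of finite upper $\Hcal^n$-density (so almost everywhere), iterating MCP against the monotone ratio $\Hcal^n(B(p,r))/r^n$ shows that $\Phi_t(B(p,r))$ is $(\delta tr)$-dense in $B(p,tr)$ at small scales (Claim \ref{clm:noncol1}).
\item \emph{Unique tangent cones.} With Busemann convexity, this almost-extendability forces a unique tangent cone $T_pX=T_p^gX$ (Claim \ref{clm:noncol2}).
\item \emph{Existence of a manifold point.} Magnabosco--Mondino--Rossi (Theorem \ref{thm:mmr}) then gives that $n$ is an integer and almost every tangent cone is an $n$-dimensional Banach space. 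Lemma \ref{lem:reg} (via Proposition \ref{prop:kkkreg} and Lytchak--Schroeder) turns such a point into a manifold point.
\item \emph{Convexity and density of $\In X$.} These come from the strong convexity of manifold points (Theorem \ref{thm:conv}, Corollary \ref{cor:conv}). That is proved using continuity of tangent cones along geodesics (Proposition \ref{prop:kkkconti}) and stability of $n$-dimensional normed spaces under pointed Gromov--Hausdorff limits (Lemma \ref{lem:ban}). Your remark that ``non-collapsed volume density forbids lower dimensional pieces'' is not a substitute for this.
\end{itemize}

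Once these are in place, your local rigidity idea is the right one. With local extendability, the $t$-Lipschitz contraction plus MCP give $\Hcal^n(B(x,r))=Cr^n$ near interior points (Proposition \ref{prop:homloc}), and then local Busemann concavity (Proposition \ref{prop:coneloc}).

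For the globalization, your developing map on the convex, hence contractible, interior is a legitimate alternative to the paper's Toponogov-type Proposition \ref{prop:topo}. The paper instead propagates Busemann concavity with respect to a fixed $p\in\In X$. It then reads off that $\exp_p$ is an isometry from a closed convex subset of $T_pX$ onto $X$.
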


\begin{rem}\label{rem:bdry}
The non-collapsing assumption of Theorem \ref{thm:bdry} is necessary.
For example, for any $N>2$, any sufficiently small closed ball in the hyperbolic plane $\mathbb H^2$ with $\Hcal^2$ satisfies MCP($0,N$).
See Example \ref{ex:hyp} for details (cf.\ \cite[Remark 5.6]{St06}).
This is in contrast to Theorem \ref{thm:main}.
See also Remark \ref{rem:grushin} for the CD case.
\end{rem}

In the course of proving Theorem \ref{thm:bdry}, we obtain the following structure theorem in a more general setting.
Compare with \cite[Corollary 1.2]{KK20} and \cite[Theorem 1.1]{KKK22}.
Note that the curvature assumptions here are local and the lower curvature bound $K$ may be negative.

\begin{thm}\label{thm:mfd}
Let $X$ be a locally Busemann space satisfying local non-collapsed {\rm MCP($K,n$)}, where $K\le0$ and $n\ge 1$.
Then $n$ is an integer and $X$ is an $n$-dimensional topological manifold with boundary.
The manifold interior of $X$ is geodesically convex, has full measure, and coincides with the set of $n$-regular points.
\end{thm}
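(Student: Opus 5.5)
Since the statement is local, I will work in a small convex ball $B=B(p,r)$ on which $X$ is Busemann and $(B,\Hcal^n)$ satisfies MCP$(K,n)$. The plan is to build a Busemann-type analogue of the Kapovitch--Ketterer argument for CAT spaces with CD, using Andreev's rigidity for geodesically complete Busemann spaces as the model case.

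\textbf{Step 1: tangent cones at interior points.} For $x\in B$, let $\In(x)$ denote the set of directions at $x$ in which geodesics through $x$ extend beyond $x$. I first show that non-collapsed MCP forces the density $\lim_{\rho\to0}\Hcal^n(B(x,\rho))/\rho^n$ to exist and be finite and positive. Existence should follow from Bishop--Gromov type monotonicity, which MCP provides. I then blow up at $x$. Rescalings of a locally Busemann space converge to a Busemann cone, and the MCP$(0,n)$ condition passes to the limit with $\Hcal^n$. This uses stability of MCP under measured Gromov--Hausdorff convergence; non-collapsing ensures the limit measure is $\Hcal^n$ up to a constant (Cheeger--Colding/De Philippis--Gigli style volume convergence, here deduced from the density bounds and convexity). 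Call $x$ \emph{$n$-regular} if its tangent cone is a strictly convex $n$-dimensional Banach space. If $x$ lies in the interior of a geodesic extending in every direction, the tangent cone is geodesically complete, and Theorem \ref{thm:main} applies to it. Hence $n$ is an integer and $T_xX$ is a strictly convex Banach space.

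\textbf{Step 2: the regular set is a manifold and convex.} Busemann convexity gives that the logarithm-type map along geodesics from $x$ is injective and continuous near $x$. So if a small sphere around an $n$-regular point is spanned by extendable geodesics, Andreev-type arguments (as used for Busemann G-spaces) show that a neighbourhood of $x$ is homeomorphic to $\R^n$. Convexity of the interior uses MCP: a geodesic between two interior points cannot touch the boundary. Otherwise one finds a midpoint whose contraction image of a small ball loses the measure predicted by MCP, because a branching or half-space-like cone has too little volume.

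\textbf{Step 3: boundary points.} At non-regular points, the tangent cone is a Busemann cone with non-collapsed MCP$(0,n)$ that is not geodesically complete. I would argue inductively, via the cross-section or splitting, that such a cone is a convex cone in an $n$-dimensional strictly convex Banach space, namely a half-space. The argument is to extend geodesics as far as possible and apply Theorem \ref{thm:main} to the "double". This yields manifold-with-boundary charts. Full measure of the interior then comes from a density argument: boundary points have density at most half the regular value, so by Lebesgue differentiation for $\Hcal^n$ the set of boundary points is null.

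\textbf{Main obstacle.} The hardest step is Step 3, classifying non-geodesically-complete tangent cones and producing boundary charts. Theorem \ref{thm:main} does not apply directly there, and in the Busemann setting the nonpositive-curvature tools available in CAT spaces, such as angles and the space of directions being CAT(1), are missing. My first attempt would be to show that in the tangent cone the set of extendable directions is open and dense of full measure, using MCP with contraction toward a regular point. I would then realize the cone as a closed convex subset of the Banach cone obtained from the regular directions.
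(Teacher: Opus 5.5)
There is a genuine gap. You never produce a single regular (equivalently manifold, or geometrically inner) point, yet every later step presupposes one. Step 1 only analyses points through which all geodesics already extend, and Step 3 contracts ``toward a regular point''. Without geodesic completeness nothing guarantees such points exist. This is the real obstacle, not Step 3, and it is the only place the paper uses non-collapsing (Theorem \ref{thm:noncol}).

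The paper fills this in four steps.
\begin{enumerate}
\item At every point of finite upper density, monotonicity of $\Hcal^n(B(p,r))/r^n$ forces almost extendability of geodesics (Claim \ref{clm:noncol1}). Finite density holds only $\Hcal^n$-a.e., not everywhere as you assert, since Bishop--Gromov only gives a limit in $(0,\infty]$.
\item Busemann convexity upgrades almost extendability to uniqueness of the tangent cone, $T_pX=T_p^gX$ (Claim \ref{clm:noncol2}).
\item Magnabosco--Mondino--Rossi (Theorem \ref{thm:mmr}) then shows that $n$ is an integer and that a.e.\ tangent cone is an $n$-dimensional Banach space.
\item Lemma \ref{lem:reg}, via Propositions \ref{prop:kkkreg} and \ref{prop:ls}, turns such a point into a genuine manifold point.
\end{enumerate}

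Your integrality argument also does not close. Theorem \ref{thm:main} applied to a tangent cone only gives a Banach space of some dimension $n'\le n$. Volume convergence identifying the limit measure with $\Hcal^n$ is not available for MCP spaces. At an inner point of finite density $\theta$, you could instead note that the $\rho^{-n}$-rescaled limit measure gives $B(o,s)$ mass $\theta s^n$, forcing $n'=n$. But this again needs an inner point in hand.

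Several other steps fail as stated.
\begin{itemize}
\item Blow-ups of a locally Busemann space are not known to be Busemann; limits only carry convex bicombings (Remark \ref{rem:tang}). Without extendability they need not be unique or even metric cones (see the example after Lemma \ref{lem:tang}). Proposition \ref{prop:app} gives Busemann tangent cones only under local geodesic completeness.
\item In Step 3, boundary tangent cones need not be half-spaces. A closed square in $\R^2$ is Busemann with non-collapsed MCP$(0,2)$, and its tangent cone at a corner is a quarter-plane.
\item There is no splitting theorem for Busemann spaces.
\item Doubling is unusable. The double of a quarter-plane is a Euclidean cone of total angle $\pi$, which is not even uniquely geodesic. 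MCP is also not stable under gluing.
\item The convexity claim in Step 2 is a heuristic, not an argument.
\end{itemize}

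The paper never classifies boundary tangent cones. It proves strong convexity of $\In_nX$ (Theorem \ref{thm:conv}) from three ingredients: continuity of tangent cones along shortest paths (Proposition \ref{prop:kkkconti}), closedness of $n$-dimensional Banach spaces under pointed GH limits (Lemma \ref{lem:ban}), and Lemma \ref{lem:reg}. It obtains half-space charts topologically, by sliding along the unique maximal shortest paths from a nearby regular point (Corollary \ref{cor:bdry}). It gets full measure of the interior from von Renesse's theorem that the non-extendable set is null, combined with the equivalence (1)$\Leftrightarrow$(5) of Theorem \ref{thm:inner}, not from a density comparison.
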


\noindent Here a point $p\in X$ is called \textit{$n$-regular} if the Gromov--Hausdorff tangent cone of $X$ at $p$ is unique and is isometric to a strictly convex Banach space of dimension $n$ (see Section \ref{sec:tang} for the tangent cone).
As we shall see later in Theorem \ref{thm:inner}, several different notions of ``inner points'' coincide for our space $X$.

\begin{rem}\label{rem:mfd}
The non-collapsing assumption of Theorem \ref{thm:mfd} is expected to be unnecessary.
Indeed, the same conclusion as in Theorem \ref{thm:mfd} holds if $X$ contains at least one manifold point, even in the collapsing case (Corollary \ref{cor:mfdcol}).
The non-collapsing assumption is only used to show the existence of a manifold point (Theorem \ref{thm:noncol}).
See also Remark \ref{rem:cat} for the CAT case.
\end{rem}

\noindent The novelty of Theorem \ref{thm:mfd} lies in the existence of boundary.
Indeed, the geodesically complete (and possibly collapsed) case of Theorem \ref{thm:mfd} is an easy combination of the existing results.
See Corollary \ref{cor:kkbran}.

After settling the manifold structure of $X$ by Theorem \ref{thm:mfd}, one can endow natural coordinates on the interior of $X$ by using the exponential map (see Section \ref{sec:tang} for the exponential map).
We show that this map is an \textit{almost isometry}, i.e., a bi-Lipschitz map whose Lipschitz constants are close to $1$ (Theorem \ref{thm:exp}).
As a consequence, we obtain the following more rigid structure for the interior.

\begin{thm}\label{thm:int}
Let $X$ be a locally Busemann space satisfying local non-collapsed {\rm MCP($K,n$)}, where $K\le0$ and $n\ge 1$.
Then every interior point of $X$ has a neighborhood almost isometric to an open subset of a strictly convex Banach space of dimension $n$.
\end{thm}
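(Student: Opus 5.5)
The plan is to combine the manifold structure from Theorem \ref{thm:mfd} with the almost-isometry property of the exponential map mentioned just before the statement (Theorem \ref{thm:exp}). Fix an interior point $p\in X$. By Theorem \ref{thm:mfd}, $p$ is $n$-regular. Its tangent cone $T_p$ is therefore unique and isometric to a strictly convex Banach space $V$ of dimension $n$. Since the manifold interior is open, there is $r>0$ such that the ball $B(p,r)$ consists of interior points and lies inside a Busemann-convex neighborhood of $p$ where local MCP($K,n$) holds. In such a neighborhood geodesics from $p$ are unique, so the logarithm map $\log_p\colon B(p,r)\to V$ is defined by sending $x$ to $d(p,x)$ times the direction of the geodesic $px$.

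The key estimate is the following: for every $\varepsilon>0$ there is $\delta>0$ such that $\log_p$ restricted to $B(p,\delta)$ is $(1+\varepsilon)$-bi-Lipschitz onto its image.
\begin{itemize}
\item The upper bound $\|\log_p x-\log_p y\|\le d(x,y)$ comes from Busemann convexity. Comparing the geodesics $px$ and $py$ gives $d(\gamma_x(t),\gamma_y(t))\le t\,d(x,y)$ after rescaling. Passing to the limit $t\to0$ in the tangent cone, the cone distance between $\log_p x$ and $\log_p y$ is at most $d(x,y)$.
\item The reverse inequality $d(x,y)\le(1+\varepsilon)\|\log_p x-\log_p y\|$ is the real content. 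Since the tangent cone at $p$ is unique, $(\delta^{-1}X,p)\to(V,0)$ in the pointed Gromov--Hausdorff sense as $\delta\to0$. Hence on $B(p,\delta)$ the rescaled distance is $\varepsilon$-close to the distance of $V$ under some GH approximation. One then shows this approximation can be taken to be $\log_p$ itself. The argument is that the radial structure is preserved: under the GH approximation, geodesics from $p$ converge to rays from $0$, and homogeneity in the $t$-parameter together with strict convexity of $V$ pins the map down up to $o(\delta)$ errors.
\end{itemize}
This gives only additive $\varepsilon\delta$ control. To upgrade it to a multiplicative bi-Lipschitz bound for pairs $x,y$ at distance much smaller than $\delta$, apply the same argument at the other interior points near $p$. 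This requires uniformity of the convergence to tangent cones over a neighborhood. That uniformity should follow from the local non-collapsed MCP: $\Hcal^n$ densities are lower semicontinuous and tend to the Banach volume density at regular points, which gives a volume-rigidity argument. This is where I expect the work in Theorem \ref{thm:exp} to lie.

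Finally, injectivity of $\log_p$ on $B(p,\delta)$ follows from the lower Lipschitz bound. Openness of the image follows from invariance of domain, since $B(p,\delta)$ is an $n$-manifold by Theorem \ref{thm:mfd} and $\log_p$ is a continuous injection into $\R^n\cong V$. Thus $\log_p$ maps a neighborhood of $p$ almost isometrically onto an open subset of $V$, and its inverse $\exp_p$ gives the asserted chart. The main obstacle is the uniform multiplicative lower bound in the second step, specifically ruling out that nearby points collapse relative to their radial data. My first attempt would be a contradiction argument: take sequences $x_i,y_i$ violating the bound, rescale by $d(x_i,y_i)$, and use the non-collapsed MCP with Bishop--Gromov-type volume rigidity to derive a contradiction in the limit Banach cone.
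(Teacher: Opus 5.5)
The reduction is right and matches the paper. At an interior point $p$, $T_pX=T_p^gX$ is a strictly convex Banach space and $\log_p$ is defined. By local geodesic completeness and non-branching it is surjective onto a ball $B(o,R)$, so invariance of domain is not needed. Busemann convexity gives $d^*(\log_px,\log_py)\le d(x,y)$. After this reduction the statement is exactly Theorem~\ref{thm:exp}, and the gap lies in your proof of its content: the multiplicative lower bound.

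\textbf{Why your route does not give the lower bound.} Uniform Gromov--Hausdorff closeness of $r^{-1}B(q,r)$ to Banach spaces at points $q$ near $p$ describes the geometry around $q$, i.e.\ $\log_q$. It says nothing about how the radial structure from the fixed center $p$ distorts distances near $q$ at scales $\ll d(p,q)$. That distortion is governed by whether the contraction $\Phi^p_t$ toward $p$ can squeeze pairs of points, which is exactly what you need to rule out.

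Busemann convexity plus uniform regularity does not rule it out. Insert into the plane smoothed cone points of small excess angle $\alpha_k$ and radius $\rho_k\ll\alpha_k|q_k|$, at points $q_k\to p$, with $\sum_k\alpha_k$ small.
\begin{itemize}
\item The result is CAT$(0)$ with $T_pX=\R^2$, and it is Gromov--Hausdorff close to $\R^2$ at every point near $p$ and every small scale.
\item Yet geodesics from $p$ entering the $k$-th bump fan out over a wedge of angle $\approx\alpha_k$ behind it.
\item So points at distance $\sim|q_k|$ behind $q_k$ that are $\sim\alpha_k|q_k|$ apart have $\log_p$-images only $O(\rho_k)$ apart, and $\log_p$ has no positive lower Lipschitz bound near $p$.
\end{itemize}
What fails in this example is the lower curvature bound. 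So MCP must be used quantitatively along the contraction toward $p$.

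Your appeals to densities tending to the Banach value and to Bishop--Gromov volume rigidity do not do this. They are volume-convergence and volume-cone-rigidity statements that you do not prove and that are not available for MCP spaces in general. Also, Bishop--Gromov at a single center does not see where $x,y$ sit relative to $p$.

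\textbf{The missing idea} is the paper's direct volume comparison. It proves the almost Busemann concavity $d(x_t,y_t)\ge(1-\epsilon)t\,d(x,y)$ for all $x,y\in B(p,R)$ and $0\le t\le1$. Here $R$ is small enough that $\Hcal^n(A_t)\ge(1-\delta)t^n\Hcal^n(A)$ and that geodesics from $B(p,R)$ extend.

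First, $\Hcal^n$ is almost homogeneous near $p$: $\Hcal^n(B(x,r))\approx Cr^n$ (cf.\ Proposition~\ref{prop:homloc}).
\begin{itemize}
\item Local geodesic completeness gives $\Phi^x_t(B(x,R))=B(x,tR)$.
\item Since $\Phi^x_t$ is $t$-Lipschitz, $\Hcal^n(B(x,tR))\le t^n\Hcal^n(B(x,R))$; the almost MCP gives $\Hcal^n(B(x,tR))\ge(1-\delta)t^n\Hcal^n(B(x,R))$.
\item Contracting by $1/2$ toward a point on the extension of $xy$ compares balls with different centers.
\end{itemize}

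Second (cf.\ Proposition~\ref{prop:coneloc}), suppose $d(x_t,y_t)<(1-\epsilon)t\,d(x,y)$ and set $D=d(x,y)/2$.
\begin{itemize}
\item Then $B(x_t,tD)\cap B(y_t,tD)$ contains a ball of radius $\epsilon tD$, so the union has measure at most about $2C(tD)^n-C(\epsilon tD)^n$.
\item But $\Phi^p_t$ maps the disjoint union $B(x,D)\cup B(y,D)$ into that union, so MCP forces measure at least about $2C(tD)^n$.
\item This is a contradiction once $\delta$ is small compared to $\epsilon^n$.
\end{itemize}
The argument works for every pair, whatever $d(x,y)$ is. Dividing by $t$ and letting $t\to0$ gives $d^*(\log_px,\log_py)\ge(1-\epsilon)d(x,y)$ at all scales at once, with no blow-up, GH approximation or uniformity of tangent cones needed.
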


\noindent This theorem can be viewed as a local almost rigidity version of Theorem \ref{thm:bdry}.

In the CAT+CD case \cite[Corollary 1.2]{KK20}, \cite[Theorem 1.1]{KKK22} (based on \cite{BN93, Be02, LN19}), the interior admits a Riemannian metric of some regularity.
It is natural to ask whether the interior of our space $X$ admits a Finsler metric (see Problem \ref{prob:fin} and \cite{Po90, Po98}).
However, we will not discuss this problem in this paper, and will address it in future work.

\subsection{Related results and concepts}\label{sec:rel}

\subsubsection*{Andreev's work}

A \textit{G-space}, introduced by Busemann \cite{Bu48, Bu55}, is a qualitative generalization of a Finsler manifold from the perspective of the geometry of geodesics.
More specifically, it is a locally compact, complete, geodesically complete metric space without branching geodesics such that the injectivity radius is locally uniformly bounded below (here the injectivity radius is a maximal radius in which any shortest path admits a unique extension).
Any locally geodesically complete, locally Busemann space with local MCP is, in fact, a G-space (see Proposition \ref{prop:kkbran} for the non-branching property).

In \cite{An14, An17}, Andreev studied the structure of G-spaces satisfying the Busemann convexity (for which the injectivity radius is automatically infinite).
In particular, he proved in \cite{An14} that any G-space satisfying the Busemann convexity is homeomorphic to Euclidean space (cf.\ \cite[Theorem 1.6]{FG25}).
Furthermore, in \cite{An17}, he proved that the tangent cone of such a G-space is isometric to a strictly convex Banach space (Theorem \ref{thm:and}, Remark \ref{rem:and}).
We will use this rigidity result to prove Theorems \ref{thm:main} and \ref{thm:bdry}, as well as the regularity of the manifold interior in Theorems \ref{thm:mfd} and \ref{thm:int}.

\subsubsection*{CAT vs.\ Busemann}

The \textit{{\rm CAT($\kappa$)} condition} is a Riemann-type upper sectional curvature bound for metric spaces defined by triangle comparison (see \cite{BH99} for the basic theory).
For Riemannian manifolds, the CAT($0$) condition is equivalent to the non-positive sectional curvature and infinite injectivity radius, and hence to the Busemann convexity.
However, unlike the CAT($0$) condition, the Busemann convexity also makes sense for Finsler manifolds (\cite{KVK04, KK06, IL19}).
In general, the CAT($0$) condition implies the Busemann convexity, and the converse is true if and only if a Busemann space admits a unique notion of angle (see, for instance, \cite[Exercise 9.81]{AKP24}).

The local structure of CAT spaces was studied by Kleiner \cite{Kl99} (see also \cite{Kr11}), and a finer structure theory in the geodesically complete case was developed by Lytchak--Nagano \cite{LN19, LN22}.
Recently, the first named author and Shijie Gu \cite{FG25} generalized the topological part of the Lytchak--Nagano theory to the setting of geodesically complete Busemann spaces.
However, there is currently no counterpart to Kleiner's result in the non-geodesically complete case, which makes the proofs of Theorems \ref{thm:bdry} and \ref{thm:mfd} more challenging.

\subsubsection*{(R)CD vs.\ MCP}
The \textit{curvature-dimension condition} CD($K,N$), introduced in \cite{LotVil09,St06}, is a synthetic notion of a lower Ricci curvature bound and an upper dimension bound for metric measure spaces in terms of optimal transport.
For Riemannian manifolds, the CD($K,N$) is equivalent to $\Ric\geq Kg$ and $\dim\leq N$, but this notion makes sense even in the Finsler setting, see \cite{Oh09}.
The CD($K,N$) condition (for finite $N$) implies the MCP($K,N$), but the converse is not true: some classes of sub-Riemannian/sub-Finsler manifolds satisfy the MCP($K,N$),
although most of those are known to fail the CD($K,N$) condition, see, e.g., \cite{BarRiz18,BMRT24a,NavPan25} and references therein.
Another remarkable difference is that the MCP does not enjoy the local-to-global property (\cite[Remark 5.6]{St06}), whereas the CD condition does (\cite{CM21}).

The study of \textit{Riemannian curvature-dimension condition} RCD($K,N$), is initiated in \cite{AmbGigSav14, AmbGigMonRaj15,Gig12}, by adding the so-called infinitesimal Hilbertianity to CD($K,N$) condition. 
Then Finsler-type spaces are excluded, and one achieves the splitting theorem for RCD($0,N$) spaces with finite $N$ (see \cite{Gig13}).
Furthermore, there is a rich structure theory for RCD spaces, such as the rectifiability (\cite{MN19}) and the constancy of dimension (\cite{BS20}).
See also \cite[Theorem 1.13]{Ho24} for a brief summary.
We emphasize that, for MCP or even CD spaces, such nice results cannot be expected, e.g., \cite{KR15, Ma23}.
The best result so far for MCP/CD spaces is the flatness of the tangent cone and the rectifiability by Magnabosco--Mondino--Rossi \cite{MMR25}, under some technical assumptions (see Theorem \ref{thm:mmr}).
This will play an important role in the proofs of Theorems \ref{thm:bdry} and \ref{thm:mfd}.

\subsubsection*{Kapovitch--Kell--Ketterer's work}

In \cite{KK19,KK20,KKK22}, Kapovitch--Ketterer and Kapovitch--Kell--Ketterer developed the structure theory of metric measure spaces satisfying the CAT($\kappa$) and CD($K,N$) conditions.
 Note that CAT spaces are infinitesimally Hilbertian (\cite{DiGigPasSou21}), and thus CAT with CD condition is equivalent to CAT with RCD condition (see also \cite{KK20}).

As explained above, CAT($0$) is stronger than Busemann and CD is stronger than MCP.
Therefore, if we replace ``Busemann'' and ``MCP'' in the main theorems by ``CAT($0$)'' and ``CD'', respectively, then the corresponding (and refined) statements can be derived from the Kapovitch--Kell--Ketterer theory.
For example, compared to Theorems \ref{thm:mfd} and \ref{thm:int}, the interior of a CAT+CD space admits a Riemannian metric of low regularity (\cite[Corollary 1.2]{KK20}, \cite[Theorem 1.1]{KKK22}).
However, this Riemannian character of the CAT condition excluded Finsler-type spaces from the scope of their study.
Our results thus partially generalize theirs in that we are dealing with Finsler-type spaces.
It should be emphasized that some of the results of Kapovitch--Kell-Ketterer apply directly to our Busemann and MCP setting (see Section \ref{sec:factbusemcp}), but others do not.

\subsection{Outline of the proofs}\label{sec:out}

The proof of Theorem \ref{thm:main} is an application of the rigidity theorem of Andreev \cite{An17} mentioned above (Theorem \ref{thm:and}).
We show that our space $X$ is \textit{Busemann concave}, i.e., the opposite inequality to the Busemann convexity holds (Proposition \ref{prop:cone}).
This is analogous to the basic fact that non-positive sectional curvature and non-negative Ricci curvature together imply non-negative sectional curvature (cf.\ \cite{KK20}).
The proof is straightforward --- we first show the homogeneity of the reference measure (Lemma \ref{lem:hom}) and then construct a rigid parallel translation for our Busemann space (Lemma \ref{lem:trans}), which implies the desired Busemann concavity.
Indeed, some arguments can be viewed as rigid versions of those used by Kapovitch--Ketterer \cite{KK20} and Kapovitch--Kell--Ketterer \cite{KKK22} in the study of CAT spaces with CD conditions.
We also provide an alternative proof using the idea from sub-Finsler geometry developed in \cite{Le11} (Theorem \ref{thm:le}).

The proof of Theorem \ref{thm:bdry} is much more involved due to the lack of geodesic completeness.
We first prove Theorem \ref{thm:mfd}.
There are several difficulties in extending the original argument of Kapovitch--Kell--Ketterer in \cite[Section 3]{KKK22}, since it relies heavily on the properties of CAT spaces.
For example, Busemann spaces do not admit an isometric splitting theorem as in the CAT case (cf.\ \cite{Fuk24}).
Furthermore, unlike CAT spaces, the class of Busemann spaces is not closed under limiting operations.
The limit space admits a weaker notion of non-positive curvature called a \textit{convex geodesic bicombing} (see \cite{De16, DL15, DL16}), but in general is not Busemann.
In particular, it is unclear if the tangent cone of a Busemann space is again Busemann.
For these reasons, we shall take a different approach (suggested in \cite[Remark 7.3]{KKK22}) that makes use of the continuity of the tangent cone shown in \cite[Theorem 7.1]{KKK22} (Proposition \ref{prop:kkkconti}).
Together with a basic lemma on Banach spaces (Lemma \ref{lem:ban}), this enables us to prove that the set of manifold points in a Busemann space with MCP is strongly convex (Theorem \ref{thm:conv}).

Another difficulty is that, as explained earlier, the dimension theory of Kleiner \cite{Kl99} developed for CAT spaces is currently not available for Busemann spaces.
In \cite[Theorem 3.15]{KKK22}, this theory allowed Kapovitch--Kell--Ketterer to find a manifold point in a CAT space with CD condition.
Of course, we cannot rely on the structure theory of RCD spaces either.
Instead, we shall use the recent structure result for non-collapsed MCP spaces by Magnabosco--Mondino--Rossi \cite[Theorem 1.2]{MMR25} (Theorem \ref{thm:mmr}).
We show that the non-collapsed MCP implies the almost extendability of geodesics (Claim \ref{clm:noncol1}), which together with the Busemann convexity implies the uniqueness of the tangent cone almost everywhere (Claim \ref{clm:noncol2}).
Then \cite[Theorem 1.2]{MMR25} enables us to find a Banach tangent cone and finally obtain a manifold point (Theorem \ref{thm:noncol}).
The existence of a manifold point, together with the convexity of manifold points proved above, implies the manifold structure as in Theorem \ref{thm:mfd}.

Once Theorem \ref{thm:mfd} is proved, the rest of the proof of Theorem \ref{thm:bdry} reduces to localizing the proof of Theorem \ref{thm:main} with the help of the non-collapsing assumption (Propositions \ref{prop:homloc}, \ref{prop:coneloc}) and proving a Toponogov-type globalization theorem for the Busemann concavity (Proposition \ref{prop:topo}).

Finally, we prove Theorem \ref{thm:int} by generalizing the proof of the local part of Theorem \ref{thm:bdry} to the ``almost MCP($0,n$)'' setting (Theorem \ref{thm:exp}).

\begin{org}
In Section \ref{sec:pre}, we define basic notions including Busemann spaces and the measure contraction property.
In Section \ref{sec:fact}, we recall several facts on Busemann spaces possibly with MCP.
In Section \ref{sec:main}, we prove Theorem \ref{thm:main}.
In Section \ref{sec:mfd}, we prove Theorem \ref{thm:mfd}.
We first establish several results that hold without the non-collapsing assumption and then clarify what improvements the non-collapsing assumption brings.
In Section \ref{sec:bdry}, using Theorem \ref{thm:mfd} and modifying the proof of Theorem \ref{thm:main}, we prove Theorem \ref{thm:bdry}.
In Section \ref{sec:int}, generalizing a part of the proof of Theorem \ref{thm:bdry}, we prove Theorem \ref{thm:int}.
Section \ref{sec:prob} summarizes the remaining open problems.
Appendix \ref{sec:app} contains some important observations on the tangent cone of a geodesically complete Busemann space, which will be useful for future research.
\end{org}

\begin{ack}
The authors would like to thank Enrico Le Donne for answering their question and introducing us to the reference \cite{HaL23}, and Shouhei Honda for bringing \cite{MMR25} to our attention.
The first named author was supported by JSPS KAKENHI Grant Number 25K23336.
He also appreciates the OIST Analysis on Metric Space Unit for supporting his stay in Okinawa.
\end{ack}

\section{Preliminaries}\label{sec:pre}

Here, after introducing some terminology on geodesic spaces, we define Busemann spaces and the measure contraction property.
In particular, we use the geodesic contraction to define Busemann and MCP in a parallel way.
We also discuss the tangent cone of a Busemann space and the exponential map.

\begin{nota}
The distance between two points $x,y$ is denoted by $d(x,y)$.
The shortest path (see the next subsection) between $x$ and $y$ is denoted by $xy$.
The open $r$-ball and closed $r$-ball around $p$, and its boundary are denoted by $B(p,r)$, $\bar B(p,r)$ and $\partial B(p,r)$, respectively.
The $n$-dimensional Hausdorff measure is denoted by $\Hcal^n$.
\end{nota}

\subsection{Geodesic spaces}

A \textit{shortest path} is an isometric embedding of an interval into a metric space.
A \textit{geodesic space} is a metric space such that every two points can be joined by a shortest path.
A \textit{geodesic} is a curve whose restriction to each small interval is a shortest path.
A subset $A$ of a geodesic space is \textit{(geodesically) convex} if for any $x,y\in A$ any shortest path connecting $x$ and $y$ is contained in $A$.

Let $X$ be a geodesic space.
We say that $X$ is
\begin{itemize}
\item \textit{uniquely geodesic} if a shortest path between given two points is unique;
\item \textit{non-branching} if for any two shortest paths, one must be contained in the other when they coincide on a sub-interval;
\item \textit{geodesically complete} if any geodesic extends to a geodesic defined on $\R$.
\end{itemize}
For a complete geodesic space, geodesic completeness reduces to \textit{local geodesic completeness}, defined by the condition that any geodesic is extendable to a geodesic defined on a slightly larger interval (\cite[Proposition 9.1.28]{BBI01}).

Unless otherwise stated, we always assume the completeness of metric spaces.
Furthermore, since every Busemann space is uniquely geodesic, all spaces dealt with in this paper are in fact uniquely geodesic (at least locally, see Section \ref{sec:buse} for more details).
For this reason, we will only consider uniquely geodesic spaces below.

\subsection{Geodesic contraction}\label{sec:cont}

Here we introduce the geodesic contraction to define Busemann and MCP in a parallel way in the next subsections.

Let $X$ be a uniquely geodesic space and fix $p\in X$.
For $x\in X$, we define the \textit{$t$-intermediate point} $x_t=x_t^p$ for $x$ with respect to $p$ as a point on the unique shortest path $px$ such that
\[d(p,x_t)=td(p,x).\]
For a subset $A\subset X$, we define the \textit{$t$-intermediate set} $A_t=A_t^p$ for $A$ with respect to $p$ by
\[A_t:=\{x_t\mid x\in A\}.\]
We also define the \textit{$t$-contraction map} $ \Phi_t=\Phi_t^p:X\to X$ centered at $p$ by
\[\Phi_t(x):=x_t.\]
We will omit the superscript $p$ if there is no ambiguity.

\subsection{Busemann spaces}\label{sec:buse}

We now define Busemann spaces.
For simplicity, we introduce the Busemann convexity under the uniquely geodesic assumption. Note that it is possible to define it without uniqueness of geodesics (and uniqueness is induced as a consequence).
See \cite[Chapter 8]{Pa14} for more details.

\begin{dfn}
A complete uniquely geodesic space $X$ is called a \textit{Busemann space} if for any fixed $p\in X$, we have
\begin{equation}\label{eq:buse}
d(x_t,y_t)\le td(x,y)
\end{equation}
for any $x,y\in X$ and $0\le t\le 1$, where $x_t$ and $y_t$ denote the $t$-intermediate points for $x$ and $y$ with respect to $p$.
We refer to \eqref{eq:buse} as the \textit{Busemann convexity}.
\end{dfn}

\noindent In other words, the $t$-contraction map $\Phi_t$ is $t$-Lipschitz.

\begin{rem}\label{rem:buse}
Another equivalent formulation of a Busemann space is that for any pair of linearly reparameterized shortest paths $\gamma,\eta:[0,1]\to X$, the function
\[t\mapsto d(\gamma(t),\eta(t))\]
is convex on $[0,1]$.
This property plays an essential role in the proof of the rigidity theorem of Andreev \cite{An17} (see also \cite{Bo95}).
However, in this paper we only use the inequality \eqref{eq:buse}, as it provides an opposite estimate to the MCP condition defined in Section \ref{sec:mcp}.
\end{rem}

\begin{ex}
The following are examples of Busemann spaces.
\begin{itemize}
\item CAT($0$) spaces \cite[Proposition 2.2]{BH99}.
\item Strictly convex Banach spaces \cite[Proposition 8.1.6]{Pa14}.
\item Simply-connected Finsler manifolds with Berwald metrics of non-positive flag curvature \cite{KVK04, KK06, IL19}.
\item Gluings and Gromov--Hausdorff limits of Busemann spaces under some reasonable assumptions \cite{An09}.
\end{itemize}
 However, in general the limits of Busemann spaces may fail to be Busemann, e.g., $L^p$ spaces ($1<p<\infty$) converge to $L^1$ or $L^\infty$.
\end{ex}

A \textit{locally Busemann space} is a complete geodesic space such that every point has a neighborhood that is a Busemann space with respect to the restricted metric.
By Remark \ref{rem:buse}, every point in a locally Busemann space has a convex neighborhood.
The Busemann convexity satisfies the following Cartan-Hadamard-type globalization: any simply-connected locally Busemann space is Busemann (\cite{AB90}).

In a Busemann space, every geodesic is a shortest path (\cite[Corollary 8.2.3]{Pa14}).
In particular, if a Busemann space is geodesically complete, then one can extend a shortest path infinitely as a shortest path (not as a geodesic).
Similarly, if a locally Busemann space is locally geodesically complete, then every shortest path near a point $p$ is extendable to length $\epsilon>0$, where $\epsilon>0$ depends only on $p$.

\begin{rem}
The inverse of the Busemann convexity \eqref{eq:buse} is called the \textit{Busemann concavity}, i.e.,
\begin{equation}\label{eq:conc}
d(x_t,y_t)\ge td(x,y),
\end{equation}
where $x_t,y_t$ are $t$-intermediate points for $x,y$ with respect to $p$ in a geodesic space (in this case $x_t,y_t$ are not necessarily unique).
However, there is no counterpart to Remark \ref{rem:buse} for the Busemann concavity, so this notion is not actually ``concave'' in any sense.
See \cite{Ke19} and \cite{HY25} for the study of Busemann concave spaces.
The Busemann concavity plus non-trivial Hausdorff measure implies the (non-collapsed) MCP($0,n$) defined below (\cite[Proposition 2.23]{Ke19}).
\end{rem}

\subsection{Measure contraction property}\label{sec:mcp}

We next define the measure contraction property MCP($K,N$) introduced in \cite{Oh07} in the setting of metric measure spaces, see also \cite[Section 5]{St06}.
Here we mainly focus on the $K=0$ case with unique geodesics (i.e., the contraction map is well-defined), which is our main concern.

A \textit{metric measure space} is a triple $(X,d,m)$ such that $(X,d)$ is a complete separable geodesic space and $m$ is a locally finite, locally positive Borel measure on $X$.
The following is a simplified version of MCP($0,N$) in the uniquely geodesic case (see \cite[Lemma 2.3]{Oh07}).

\begin{dfn}
Let $(X,d,m)$ be a metric measure space.
Suppose, for simplicity, $X$ is uniquely geodesic.
For $N\ge1$ (not necessarily an integer), we say that $(X,d,m)$ satisfies the \textit{measure contraction property} MCP($0,N$) if for any fixed $p\in X$ and any measurable set $A\subset X$ with $0<m(A)<\infty$, we have
\begin{equation}\label{eq:mcp}
m(A_t)\ge t^Nm(A),
\end{equation}
where $0\le t\le 1$ and $A_t$ denotes the $t$-intermediate set for $A$ with respect to $p$.
\end{dfn}

\begin{rem}\label{rem:mcp}
For arbitrary $K\in\R$, one can define MCP($K,N$).
For example, if $K<0$ (and $N>1$), instead of the inequality \eqref{eq:mcp}, we assume
\[m(A_t)\ge\int_{A}t\left(\frac{\sinh(td(p,x)\sqrt{-K/(N-1)})}{\sinh(d(p,x)\sqrt{-K/(N-1)})}\right)^{N-1}dm(x).\]
See \cite[Section 2]{Oh07} for more details.
\end{rem}

\begin{ex}
The following are examples of MCP($K,N$) spaces.
Note that these are not necessarily uniquely geodesic as above.
    \begin{itemize}
        \item RCD($K,N$) spaces, CD($K,N$) spaces \cite{St06}. In particular,
        \item Riemannian/Finsler manifolds with $N$-weighted Ricci curvature $\ge K$ \cite{Oh09}.
        \item $n$-dimensional Banach spaces with the Lebesgue measure satisfy MCP($0,n$) cf.\ \cite[Example 29.16]{Vil}.
        \item Some sub-Riemannian Carnot groups \cite{Jui09,Rif13, Ri16, BarRiz18}, and some sub-Finsler Heisenberg groups \cite{BorTas23,BMRT24a}.
        \item Measured Gromov--Hausdorff limits of MCP($K,N$) spaces \cite[Theorems 6.8, 6.11]{Oh07}
    \end{itemize}
   However, in general the gluings of MCP spaces may fail the MCP, see \cite{Riz18}.
\end{ex}

A metric space is said to satisfy the \textit{non-collapsed} MCP($K,n$) if it satisfies MCP($K,n$) for the $n$-dimensional Hausdorff measure $\Hcal^n$.
In this case, we will use small $n$ to emphasize it.

A metric measure space is said to satisfy the \textit{local} MCP($K,N$) if every point has a convex neighborhood that satisfies the MCP($K,N$) with respect to the restricted metric and measure.
As mentioned in Section \ref{sec:buse}, the existence of a convex neighborhood and the uniqueness of shortest paths in such neighborhoods are always satisfied in the setting of locally Busemann spaces.
Note that MCP does not enjoy the local-to-global property (\cite[Remark 5.6]{St06}), whereas the curvature-dimension condition does (\cite{CM21}).

The MCP($K,N$) implies the \textit{($K,N$)-Bishop--Gromov inequality} (\cite[Theorem 5.1]{Oh09}).
For example, if $K=0$, this means that for any $p\in X$, the function 
\[m(B(p,r))/r^N\]
is non-increasing in $r>0$; in general, a corresponding comparison value defined by $r$, $K$, and $N$ appears in the denominator.
In particular, the measure contraction property implies the uniform local doubling property.
Here, a metric space is \textit{uniformly locally doubling} if for all $R>0$, there exists a constant $C$ such that for any $0<r<R$, any $r$-ball is covered by at most $C$ balls of radius $r/2$.
In particular, any MCP space is proper, i.e., every closed ball is compact.

The Hausdorff dimension of an MCP($K,N$) space is always less than or equal to the dimension parameter $N$, see \cite[Corollary 2.7]{Oh07}.

\subsection{Tangent cones}\label{sec:tang}

Finally, we recall two different notions of tangent cones for Busemann spaces and define the exponential map in the non-branching case.
This will be used in the proofs of Theorems \ref{thm:mfd}, \ref{thm:bdry}, and \ref{thm:int}, but is not necessary for Theorem \ref{thm:main}.

We first define the Gromov--Hausdorff tangent cone.
Let $X$ be a locally doubling, locally Busemann space.
For $p\in X$ and $\lambda_i\to\infty$, the pointed Gromov--Hausdorff limit
\[(T_p^{(\lambda_i)}X,o):=\lim_{i\to\infty}(\lambda_i X,p)\]
is called the \textit{Gromov--Hausdorff tangent cone} of $X$ at $p$ for scale $(\lambda_i)$ (if it exists).
 Here $\lambda X$ denotes the rescaled space $X$ with the metric multiplied by $\lambda$.
By the local doubling condition, after passing to a subsequence, the limit always exists, but is not necessarily unique.
If there is no ambiguity, we will omit the superscript $(\lambda_i)$ and denote it by $T_pX$.
We call $o$ the \textit{apex} of $T_pX$.

Next we define the geodesic tangent cone (cf.\ \cite{Ke19, An14, KKK22}).
Let $X$ be a locally Busemann space.
Let $\Gamma_p$ be the set of shortest paths emanating from $p\in X$.
For a product space $\Gamma_p\times[0,\infty)$, we define a pseudo metric $d^*$ by
\[d^*((\gamma,a),(\eta,b)):=\lim_{t\to0}\frac{d(\gamma(at)),\eta(bt))}{t},\]
where $(\gamma,a),(\eta,b)\in\Gamma_p\times[0,\infty)$.
The Busemann convexity \eqref{eq:buse} ensures the existence of the limit.
The completion of the metrization of $\Gamma_p\times[0,\infty)$ is called the \textit{geodesic tangent cone} of $X$ at $p$ and is denoted by $T_p^gX$.
By abusing the notation, we denote by $o$ the equivalent class of $(\gamma,0)\in\Gamma_p\times[0,\infty)$ and call it the \textit{apex} of $T_p^gX$.
Clearly, for any $\lambda>0$, we have
\[d^*((\gamma,\lambda a),(\eta,\lambda b))=\lambda d^*((\gamma,a),(\eta,b)),\]
where $(\gamma,a),(\eta,b)\in\Gamma_p\times[0,\infty)$.
This defines the cone structure of $T_p^gX$.

In general, the geodesic tangent cone $T_p^gX$ isometrically embeds into any Gromov--Hausdorff tangent cone $T_pX$ (\cite[Lemma 3.4(i)]{KKK22}).
This embedding is surjective in the following case (\cite[Lemma 3.4(ii)]{KKK22}; see also \cite[Corollary 5.7]{LN19}).
Although the original statement is for CAT spaces, the proof also works for Busemann spaces.

\begin{lem}\label{lem:tang}
Let $X$ be a locally doubling, locally Busemann space and $p\in X$.
Suppose there exists $\epsilon>0$ such that any shortest path emanating from $p$ extends to length $\epsilon$.
Then the geodesic tangent cone $T_p^gX$ is isometric to any Gromov--Hausdorff tangent cone $T_pX$.
In particular, $T_pX$ is unique.
\end{lem}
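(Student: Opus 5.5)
Since we already know $T_p^gX$ embeds isometrically into every Gromov--Hausdorff tangent cone $T_pX$ (\cite[Lemma 3.4(i)]{KKK22}), the plan is to prove that this embedding is surjective whenever shortest paths from $p$ extend to a uniform length $\epsilon$. Concretely, fix a scale $\lambda_i\to\infty$ with $(\lambda_iX,p)\to(T_pX,o)$, and take a point $v\in T_pX$ with $|v|=d(o,v)=R$. We want points $(\gamma,a)\in\Gamma_p\times[0,\infty)$ whose images approximate $v$ arbitrarily well. Since $T_p^gX$ is complete and the embedding is isometric, its image is closed, so approximation suffices.

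\emph{Step 1 (approximating points by rays of fixed length).} Choose $x_i\in X$ with $x_i\to v$ under the Gromov--Hausdorff approximations, so $\lambda_i d(p,x_i)\to R$. Extend the shortest path $px_i$ to a shortest path $\gamma_i$ of length $\epsilon$ from $p$; this is where the hypothesis is used. Let $y_i=\gamma_i(\epsilon)$, so $x_i$ is the $t_i$-intermediate point of $y_i$ with $t_i=d(p,x_i)/\epsilon\approx R/(\lambda_i\epsilon)$.

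\emph{Step 2 (compactness of directions).} By local doubling, $\partial B(p,\epsilon)\cap\{$endpoints$\}$ lies in the compact ball $\bar B(p,\epsilon)$ (properness). Given $\delta>0$, take a finite $\delta\epsilon$-net $\{z_1,\dots,z_k\}$ of $\bar B(p,\epsilon)$ and let $\eta_j$ denote the shortest path $pz_j$. For each $i$ choose $j(i)$ with $d(y_i,z_{j(i)})<\delta\epsilon$; passing to a subsequence, $j(i)=j$ is constant. Busemann convexity \eqref{eq:buse} with contraction factor $t_i$ gives $d(x_i,(z_j)_{t_i})\le t_i\delta\epsilon$, hence $\lambda_i d(x_i,(z_j)_{t_i})\le R\delta+o(1)$. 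Also $d(p,z_j)$ differs from $\epsilon$ by at most $\delta\epsilon$.

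\emph{Step 3 (identifying the limit).} The points $(z_j)_{t_i}=\eta_j(s_i)$ with $\lambda_is_i\to a:=R\,d(p,z_j)/\epsilon$ lie on the single shortest path $\eta_j$. By the definition of $d^*$ and the standard argument of \cite[Lemma 3.4(i)]{KKK22}, they converge in $T_pX$ to the image of $(\eta_j,a)$. Therefore $d(v,\iota(\eta_j,a))\le R\delta$, and letting $\delta\to0$ gives surjectivity. Uniqueness of $T_pX$ follows because every tangent cone is isometric to the intrinsically defined $T_p^gX$.

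The main obstacle is Step 3. One must check carefully that the convergence of points along one fixed path to the image of $(\eta_j,a)$ is compatible with the chosen Gromov--Hausdorff approximations. In other words, the embedding of \cite[Lemma 3.4(i)]{KKK22} must be realized through these approximations along the subsequence. I would handle this by constructing the embedding concretely as the limit of the maps $(\gamma,a)\mapsto\gamma(a/\lambda_i)$, for a countable dense family of paths, using a diagonal subsequence. Its isometric property is then exactly the existence of the limit defining $d^*$.
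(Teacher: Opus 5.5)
Your argument is correct, but it is organized differently from the paper's. The paper gives no separate proof of Lemma~\ref{lem:tang}. It defers to \cite[Lemma 3.4(ii)]{KKK22} and \cite[Corollary 5.7]{LN19}, and later recovers the lemma as the case of Claim~\ref{clm:noncol2} in which Condition ($*$) holds with $\delta=0$.

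\textbf{The paper's route.} It works directly with the logarithmic map $\log_p\colon B(p,r)\to B(o,r)\subset T_p^gX$.
\begin{itemize}
\item It shows $\log_p$ is uniformly almost distance-preserving at all small scales. A finite net $A\subset B(p,R)$ together with the monotonicity of $d(x_t,y_t)/t$ yields a single $t_0$ for all pairs in $A$. Busemann convexity then transfers this to arbitrary pairs.
\item Extendability makes $\log_p$ surjective.
\end{itemize}
The rescaled maps $\log_p$ are therefore Gromov--Hausdorff approximations onto $B(o,R)$ at every scale. Hence $(\lambda X,p)\to(T_p^gX,o)$ along the full family $\lambda\to\infty$, and the embedding of \cite[Lemma 3.4(i)]{KKK22} is never used.

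\textbf{Your route.} You take that embedding $\iota$ as given, so distance preservation comes for free from the definition of $d^*$. All the work goes into surjectivity: extendability plus compactness of $\bar B(p,\epsilon)$, with one application of Busemann convexity at ratio $t_i$, shows that $\iota(T_p^gX)$ is dense. This is shorter given $\iota$, but it needs the bookkeeping you flag in Step~3.

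Your diagonal construction handles that bookkeeping. The family of pairs $(pz,a)$, with $z$ in a countable dense subset of $\bar B(p,\epsilon)$ and $a$ rational, is $d^*$-dense in $T_p^gX$. Indeed, after extending $\gamma$ to length $\epsilon$, Busemann convexity gives
\[
d^*\bigl((\gamma,a),(pz,a\,d(p,z)/\epsilon)\bigr)\le a\,d(\gamma(\epsilon),z)/\epsilon .
\]
So $\iota$ can be built from this countable family and is then realized along the given approximations for every path.

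\textbf{What each buys.} The paper's route is self-contained. More importantly, it still works when surjectivity of $\log_p$ is weakened to the almost extendability ($*$), which is exactly what Theorem~\ref{thm:noncol} needs.
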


\noindent The following example suggests that without the extendability of geodesics, the conclusion of Lemma \ref{lem:tang} does not hold in general.

\begin{ex}
Let $\R$ be the real line.
Take sequences of positive numbers $\delta_i,\epsilon_i$ converging to $0$ such that
\[\delta_i/\epsilon_i\to 0,\quad \epsilon_{i+1}/\delta_i\to 0.\]
Attach intervals $[-\delta_i,\delta_i]$ to $\R$ by identifying $0\in[-\delta_i,\delta_i]$ with $\delta_i\in\R$.
Similarly, attach intervals $[0,\epsilon_i]$ to $\R$ by identifying $0\in[0,\epsilon_i]$ with $\epsilon_i\in\R$.
We denote by $X$ the resulting space, which is a doubling Busemann space.
Then $T_p^{(\delta_i^{-1})}X$ is $\R$ with $[-1,1]$ attached by identifying $0\in[-1,1]$ with $1\in\R$.
Similarly, $T_p^{(\epsilon_i^{-1})}X$ is $\R$ with $[0,1]$ attached by identifying $0\in[0,1]$ with $1\in\R$.
On the other hand, $T_p^gX$ is isometric to $\R$.
\end{ex}

\noindent  Nevertheless, it is possible to relax the extendability assumption in Lemma \ref{lem:tang} to what we call ``almost extendability'' (Claim \ref{clm:noncol2}).
One remarkable observation is that this milder assumption follows from the non-collapsed MCP condition (Claim \ref{clm:noncol1}).
These claims are the core of Theorem \ref{thm:noncol}.

\begin{rem}\label{rem:doub}
Under the uniform extendability of geodesics at $p$ as in Lemma \ref{lem:tang}, the local doubling condition is equivalent to local compactness; see \cite[Proposition 3.1]{FG25} for the proof.
\end{rem}

\begin{rem}\label{rem:tang}
Let $X$ be a locally doubling, locally Busemann space and $p\in X$.
Since the Busemann convexity is not preserved by taking limits in general, it is unknown whether the Gromov--Hausdorff tangent cone $T_pX$ is Busemann.
However, $T_pX$ admits a weaker variant of non-positive curvature called a \textit{convex geodesic bicombing}.
That is, the distance function of $T_pX$ satisfies the convexity condition \eqref{eq:buse} only for the shortest paths that arise as limits of the shortest paths of $(\lambda_iX,p)$.
For more details, see \cite{De16, DL15, DL16} and references therein (see also \cite[Section 10.1]{Kl99} and \cite{AS19}).
However, if $X$ is geodesically complete, we can show that $T_pX$ is again Busemann.
See Proposition \ref{prop:app}.

On the other hand, the MCP condition is preserved by the (pointed) measured Gromov--Hausdorff convergence (\cite[Theorem 6.11]{Oh07}).
Hence, if $X$ satisfies local MCP($K,N$), then $T_pX$ satisfies MCP($0,N$) for some rescaled limit measure.
\end{rem}

Now we define the exponential map in the non-branching case.
Let $X$ be a locally doubling, locally Busemann space without branching geodesics and $p\in X$.
Suppose any shortest path emanating from $p$ extends to uniform length, as in Lemma \ref{lem:tang} (in particular, $T_pX=T_p^gX$).
Since $X$ is non-branching, one can define $\exp_p:B(o,r)\to B(p,r)$
by
\[\exp_p((\gamma,t)):=\gamma(t)\]
for $(\gamma,t)\in B(o,r)\subset T_p^gX$, provided $r>0$ is sufficiently small.
Then $\exp_p$ is a homeomorphism.
See \cite[Lemmas 3,4]{An14} for the proof.
In Theorem \ref{thm:exp}, under the additional assumption of non-collapsed MCP, we will show that $\exp_p$ is a bi-Lipschitz homeomorphism with Lipschitz constants close to $1$.

\section{Facts}\label{sec:fact}

Here we collect several facts on Busemann spaces possibly with MCP condition.
For the proof of Theorem \ref{thm:main}, we only need Theorem \ref{thm:and}.
The other results are used in the proofs of Theorems \ref{thm:mfd} and \ref{thm:bdry}, so the reader can skip them until needed.

\subsection{Facts on Busemann}\label{sec:factbuse}

First we recall facts on Busemann spaces without MCP condition and prove their corollaries.

The following rigidity theorem of Andreev \cite{An17} is the key ingredient in the proofs of Theorems \ref{thm:main} and \ref{thm:bdry} (and part of Theorems \ref{thm:mfd} and \ref{thm:int}).
We say that a Busemann space $X$ is \textit{of cone-type} if there exists $p\in X$ such that
\begin{equation}\label{eq:cone}
d(x_t,y_t)=td(x,y)
\end{equation}
for any $x,y\in X$ and $0\le t\le 1$, where $x_t,y_t$ denote the $t$-intermediate points for $x,y$ with respect to $p$, respectively.
In other words, not only the Busemann convexity \eqref{eq:buse}, but also the Busemann concavity \eqref{eq:conc} holds for $p$.

\begin{thm}[{\cite[Theorem 1]{An17}}]\label{thm:and}
Let $X$ be a locally compact, geodesically complete Busemann space without branching geodesics.
Suppose $X$ is of cone-type.
Then $X$ is isometric to a strictly convex Banach space of finite dimension.
\end{thm}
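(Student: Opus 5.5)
The natural route is to build the linear structure directly from the homotheties centered at the cone point $p$, and then identify the resulting metric as a norm. First, combining geodesic completeness, the absence of branching and unique geodesics, I would extend every shortest path $px$ uniquely to a ray from $p$, and in fact to a bi-infinite line through $p$. By the preceding remark in Section \ref{sec:buse}, in a Busemann space such extensions remain shortest paths. For $\lambda\ge0$ define the dilation $\delta_\lambda(x)$ as the point on this ray at distance $\lambda d(p,x)$ from $p$. The cone-type identity \eqref{eq:cone} says $\delta_t$ is a $t$-homothety for $t\le1$, and uniqueness of extension shows $\delta_\lambda=\delta_{1/\lambda}^{-1}$ for $\lambda>1$, so every $\delta_\lambda$ is a $\lambda$-homothety. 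Setting $-x$ to be the point on the extension of $xp$ beyond $p$ with $d(p,-x)=d(p,x)$ gives scalar multiplication by all reals. The convexity in Remark \ref{rem:buse}, applied to two lines through $p$ parametrized linearly on $\R$ and combined with homogeneity, shows that $x\mapsto -x$ is an isometry. Local compactness together with the homotheties makes $X$ proper, and $X$ is isometric to its own tangent cone at $p$.

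Next I would define addition by $x+y:=\delta_2(m(x,y))$, where $m$ denotes the midpoint. The goal is to show that the maps $\tau_z(x)=x+z$ are isometries, commute, and satisfy $\tau_z\tau_w=\tau_{z+w}$. Busemann convexity applied to the shortest paths $zx$ and $zy$ gives $d(\tau_z x,\tau_z y)\le d(x,y)$ immediately, so only the reverse inequality is missing. For that I would use parallel lines. Given a line $\gamma$ and a point $z$, the family $\tau_z\circ\gamma$ is again a line, and the function $t\mapsto d(\gamma(t),\tau_z\gamma(t))$ is convex on $\R$. It is bounded, being controlled via the homotheties by $d(p,z)$, and a bounded convex function on $\R$ is constant. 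From this constancy I expect to get that $\tau_z$ maps lines to lines and preserves their length parametrization, which upgrades the $1$-Lipschitz bound to an isometry, with inverse $\tau_{-z}$. Once translations are isometries, the space is of cone type at every point, namely at $\tau_z(p)=z$. The group law then follows from uniqueness of geodesics and the fact that compositions of translations fix the structure of lines through $p$.

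With a transitive family of translations commuting with the dilations, $X$ becomes a real vector space with a translation-invariant, positively homogeneous metric. Hence $d(x,y)=\|x-y\|$ is a norm. Finite dimensionality follows from local compactness, and strict convexity of the norm follows from uniqueness of geodesics, since a segment in the unit sphere would produce non-unique shortest paths.

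The main obstacle is the second step: transferring the cone-type property from the single point $p$ to all points, that is, proving that $\tau_z$ is an isometry rather than merely $1$-Lipschitz. I would first attempt the bounded-convex-function argument above on pairs of lines. If the boundedness of $d(\gamma(t),\tau_z\gamma(t))$ proves hard to justify directly, I would replace it by a blow-down argument: apply $\delta_s$ with $s\to0$ and use the continuity of geodesics in locally compact non-branching G-spaces to compare the configuration at $z$ with one at $p$.
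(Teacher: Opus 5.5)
The paper does not prove this statement; it is quoted from Andreev \cite{An17}, so your outline has to stand on its own. It has a genuine gap exactly at the step you flag, and the argument you give there is circular.

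You correctly show that $\tau_z$ is $1$-Lipschitz and that $d(x,\tau_z x)=2\,d(m(p,x),m(z,x))\le d(p,z)$. But the claim that $\tau_z\circ\gamma$ is again a linearly parametrized line is not an input you have. Since $\tau_z=\delta_2\circ m(\cdot,z)$, the claim says that $t\mapsto m(\gamma(t),z)$ is a geodesic. Together with the displacement bound, this is already equivalent to the conclusion. A line of strictly smaller speed than $\gamma$ would drift linearly away from $\gamma$, so the speeds must agree. Then $d(\tau_z\gamma(s),\tau_z\gamma(t))=d(\gamma(s),\gamma(t))$, which is the midpoint identity at $z$.

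So the ``bounded convex function on $\R$ is constant'' argument never receives independent information. You use ``$\tau_z$ maps lines to lines'' both as input and as output. The reverse inequality $d(\tau_zx,\tau_zy)\ge d(x,y)$ is therefore assumed, not proved. That inequality means transporting the cone property from the single point $p$ to every other point, and it is the whole content of Andreev's theorem.

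Compare Lemma \ref{lem:trans} and Proposition \ref{prop:cone}. There, too, Busemann convexity gives only $\le$ for the parallel ray, and the reverse inequality is obtained from the measure via MCP and Lemma \ref{lem:hom}. Under the hypotheses of Theorem \ref{thm:and} there is no measure, so a genuinely different input is required.

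Your blow-down fallback does not supply that input. Because $\delta_s$ is a global homothety fixing $p$, $(s^{-1}X,p,\delta_s z)$ is isometric to $(X,p,z)$, so rescaling never brings $z$ closer to $p$ relative to the scale. Blowing up at $z$ instead gives $(\lambda X,z)\cong(X,\delta_\lambda z)$. Identifying $T_zX$ with $(X,p)$ is then the same problem again.

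There is also a secondary gap: your justification that $x\mapsto -x$ is an isometry does not work. Take the lines through $p$ with $\ell_x(t)=\delta_t x$ and $\ell_x(-t)=\delta_t(-x)$ for $t\ge 0$, and similarly $\ell_y$. The cone identity gives $d(\ell_x(t),\ell_y(t))=t\,d(x,y)$ for $t\ge0$ and $|t|\,d(-x,-y)$ for $t\le 0$. Such a function is convex whatever the two slopes are, so convexity along these lines gives no relation between $d(-x,-y)$ and $d(x,y)$. Symmetry would follow once translations are known to be isometries, so this gap sits downstream of the main one.

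The remaining steps are fine: the dilations are homotheties via unique extension, $X$ is proper, and finite dimension and strict convexity follow once a norm is in hand.
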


\begin{rem}
As mentioned in Section \ref{sec:rel}, the original theorem of Andreev \cite{An17} is stated in terms of G-spaces, but in this paper we will not use this terminology in order to clarify the conditions we are dealing with.
\end{rem}

\begin{rem}\label{rem:and}
Every tangent cone of a locally compact, locally geodesically complete, locally Busemann space without branching geodesics satisfies the assumption of Theorem \ref{thm:and}.
See \cite[Theorem 4]{An14} for the proof.
Note that the two definitions of tangent cones coincide by Lemma \ref{lem:tang}.
\end{rem}

Before moving on to the next results, we define several notions of inner points for Busemann spaces (some of which are taken from \cite[Definition 1.4]{LS07}).
Let $X$ be a locally Busemann space and $p\in X$.
We say that $p$ is
\begin{itemize}
\item a \textit{manifold point}, or an \textit{$n$-manifold point}, if $p$ has a neighborhood that is homeomorphic to $\R^n$;
\item a \textit{regular point}, or an \textit{$n$-regular point}, if the Gromov--Hausdorff tangent cone of $X$ at $p$ is unique and isometric to a strictly convex Banach space of dimension $n$;
\item a \textit{topologically inner point} if for any sufficiently small $r>0$, the punctured ball $B(p,r)\setminus\{p\}$ is non-contractible (note that its homotopy type is independent of $r$, thanks to the geodesic contraction);
\item a \textit{geometrically inner point} if there exists $\epsilon>0$ such that any shortest path ending at $p$ is extendable beyond $p$ to length $\epsilon$;
\end{itemize}

Later in Theorem \ref{thm:inner}, we prove that all the above notions are equivalent under the assumptions of MCP and the existence of a manifold point.
Here we recall some of these equivalences that hold without such additional assumptions.

The first is due to Lytchak--Schroeder \cite[Theorem 1.5(1)]{LS07} (see also \cite[Lemma 3.6]{FG25} for a weaker version).
Although the original statement is formulated for CAT spaces, the proof applies verbatim to Busemann spaces.

\begin{prop}\label{prop:ls}
Let $X$ be a locally Busemann space and $p\in X$.
If $p$ is topologically inner, then $p$ is a geometrically inner point.
\end{prop}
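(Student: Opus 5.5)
We argue by contraposition: assuming $p$ is not geometrically inner, we show that the punctured ball $B(p,r)\setminus\{p\}$ is contractible for small $r>0$. Fix $r_0>0$ such that $\bar B(p,r_0)$ lies in a convex Busemann neighborhood of $p$. Shortest paths in this ball are unique and depend continuously on their endpoints, and the contraction $\Phi_t^q$ toward any point $q$ of the ball is defined and $t$-Lipschitz.

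The first step extracts a direction from the failure of extendability. If $p$ is not geometrically inner, then for every $\epsilon>0$ there is a shortest path ending at $p$ that cannot be extended beyond $p$ to length $\epsilon$. We want one point $q\ne p$, close to $p$, such that the segment $qp$ does not extend beyond $p$ at all, or at least only by a negligible length. A compactness argument should do this. Choose shortest paths $q_ip$ that are non-extendable beyond length $\epsilon_i\to0$, and normalize them to unit length from $p$ using the Busemann contraction. Local compactness is not assumed, so instead we work directly with a single $r$: pick $q$ with $d(p,q)\ll r$ whose segment $qp$ extends beyond $p$ by less than $r/10$. This weaker property suffices for the homotopy below.

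The second step contracts the punctured ball by pushing away from $p$. For $x\in B(p,r)\setminus\{p\}$, consider the segment $qx$. Because $qp$ cannot be prolonged past $p$ by much, points near $p$ are not "behind" $p$ as seen from $q$. We then use the radial contraction $\Phi^q_t$ toward $q$. This is a homotopy on $B(p,r)$ from the identity to the constant map $q$, and it is continuous by the Busemann convexity. The issue is that some $x$ may have $\Phi_t^q(x)=p$ for some $t$. This happens exactly when $p$ lies in the interior of $qx$, that is, when $x$ lies on an extension of $qp$ beyond $p$. Such $x$ satisfy $d(p,x)<r/10$ by the choice of $q$, and they lie on the single extension ray, since geodesics in a Busemann space cannot branch once the initial segment is fixed. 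We would therefore compose with a preliminary deformation of $B(p,r)\setminus\{p\}$ onto $B(p,r)\setminus B(p,r/5)$, or onto a subset avoiding that short extension. To build it, we use the contraction toward a point $q'$ chosen on the opposite side, or more robustly we follow Lytchak--Schroeder's argument: the set of "bad" points is the extension segment $\sigma$ of $qp$ beyond $p$, and $B(p,r)\setminus\{p\}$ deformation retracts within itself, avoiding $p$, by sliding points of $\sigma$ and its neighborhood back toward $q$ along the segments from the endpoint of $\sigma$.

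The main obstacle is making this avoidance deformation continuous and keeping it off $p$. I would first try the following formulation. Let $z$ be the farthest extension point of $qp$ beyond $p$ within distance $r/10$, and contract toward $z$ instead of $q$. Then $\Phi_t^z(x)=p$ forces $p$ to lie in the interior of $zx$, meaning $zp$ extends beyond $p$ toward $q$. Combined with the maximality of $z$ and unique geodesics, this forces $x$ onto the segment $pq$. Those points are handled by then contracting toward $q$. Patching the two contractions along a neighborhood of the segment $pq$ requires a straight-line homotopy between the two contraction maps, via the geodesic bicombing, and we must check that this homotopy avoids $p$. That check is where the Busemann convexity inequality $d(\Phi_t^z x,\Phi_t^q x)\le \dots$ has to be used quantitatively.
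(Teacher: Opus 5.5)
Your proposal has a genuine gap where you replace the failure of geometric innerness by a single shortest path. You propose to ``pick $q$ with $d(p,q)\ll r$ whose segment $qp$ extends beyond $p$ by less than $r/10$'' and claim this suffices. It does not: no homotopy built from such data can contract $B(p,r)\setminus\{p\}$.

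Here is a counterexample. Take the closed half-plane $\{(u,v)\in\R^2 : v\ge 0\}$, which is convex in $\R^2$ and hence Busemann, and set $p=(0,\delta)$, $q=(0,2\delta)$, $r=100\delta$. The segment $qp$ extends beyond $p$ only by $\delta<r/10$. Yet $B(p,r)\setminus\{p\}$ is a planar convex set punctured at an interior point, so it is homotopy equivalent to $S^1$. (Here $p$ is in fact geometrically inner.) In the same example your ``preliminary deformation'' onto $B(p,r)\setminus B(p,r/5)$ cannot exist, since that set is contractible. In general you cannot push points away from $p$ without extending geodesics.

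Two further steps also fail. First, Busemann spaces can branch: metric trees are CAT($0$), and the statement assumes no MCP. So the extensions of $qp$ beyond $p$ need not form a single segment, and the points $x$ with $p$ interior to $zx$ need not lie on $pq$. Second, the patching of the two contractions, which you flag as the main obstacle, is never carried out.

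The paper only cites Lytchak--Schroeder here. Any correct argument has to use that extensions can be made arbitrarily short, with the target point chosen according to the set being contracted.

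Fix $r$ so that $B(p,r)$ lies in a convex Busemann neighborhood. Given a compact $K\subset B(p,r)\setminus\{p\}$, let $s:=d(p,K)>0$. Choose a shortest path $ap$ with $a\in B(p,r)\setminus\{p\}$ that cannot be extended beyond $p$ to length $s$. Passing to a terminal subsegment is harmless, because extensions concatenate to local geodesics.

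Now move each $y\in K$ to $a$ along $ya$. This stays in $B(p,r)$ by convexity of balls. It never meets $p$: if $p$ were interior to $ay$, uniqueness of shortest paths would make $ay$ an extension of $ap$ beyond $p$ of length $d(p,y)\ge s$. Hence every map of a sphere into $B(p,r)\setminus\{p\}$ is null-homotopic. This punctured ball is an ANR, being an open subset of a Busemann ball, so Whitehead's theorem gives contractibility.

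A contraction along geodesics toward one fixed point cannot work in general. Moreover, the non-extendable direction you hoped to extract by compactness need not exist even in compact spaces. In the Hilbert cube $\prod_i[-1/i,1/i]\subset\ell^2$, every shortest path ending at $0$ extends beyond $0$ by at least its own length. The paths along the $i$-th axis extend only by $1/i$, so $0$ is not geometrically inner, yet no shortest path ending there is non-extendable.
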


The second is due to Kapovitch--Kell--Ketterer, which is included in the proof of \cite[Proposition 3.11]{KKK22} for a special class of CAT spaces.

\begin{prop}\label{prop:kkkreg}
Let $X$ be a locally Busemann space and $p\in X$.
If $p$ is a regular point, then $p$ is topologically inner.
Moreover, the same conclusion holds under a slightly weaker assumption that $T_pX$ is unique and isometric to a finite-dimensional Banach space (not necessarily strictly convex).
\end{prop}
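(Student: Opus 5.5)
We argue by contradiction: we show that if the punctured ball $B(p,r)\setminus\{p\}$ is contractible, then the blow-ups near $p$ yield a contraction of the unit sphere of the Banach space $T_pX$. That sphere is a topological sphere $S^{n-1}$, which is not contractible.

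Let $T_pX=V$ be an $n$-dimensional Banach space, with $n\ge1$. Fix a small $r>0$ such that $\bar B(p,r)$ lies in a compact Busemann neighborhood of $p$. The geodesic contraction $\Phi_t$ centered at $p$ is $t$-Lipschitz. It maps the annulus $A(r/2,r)=\{r/2\le d(p,x)\le r\}$ onto $A(tr/2,tr)$, and through the scale-invariant deformation along radial shortest paths it identifies the homotopy types of all punctured balls. So it suffices to show that the punctured balls $B(p,r)\setminus\{p\}$ cannot all be contractible.

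Since the tangent cone is unique, the rescaled balls $(r^{-1}\bar B(p,r),p)$ converge in the Gromov--Hausdorff sense to the unit ball $\bar B(o,1)\subset V$ as $r\to0$. Fix a small $\delta>0$ and, for $r$ small, choose $\delta r$-approximations $f_r\colon \bar B(p,r)\to \bar B(o,1)$ and $g_r\colon \bar B(o,1)\to \bar B(p,r)$ in rescaled units.

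The key step is to make $g_r$ continuous on the sphere $S=\partial B(o,1)$, or on a thin annulus around it. We do this by triangulating $S$ finely, mapping the vertices by $g_r$, and extending over the simplices. This is possible because $\bar B(p,r)$ is locally uniformly contractible via geodesic contraction toward nearby points, and in a Busemann space the geodesic bicombing gives controlled-size fillings of small simplices (an absolute neighborhood retract-type estimate with linear control). The resulting map $\tilde g\colon S\to \bar B(p,r)$ is continuous, lands in the annulus $A(r/2,2r)$ after a small adjustment for $\delta$ small, and satisfies $d(\tilde g(v),g_r(v))\le C\delta r$.

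In the other direction, the rescaled map $f_r$ composed with radial projection $V\setminus\{o\}\to S$ is only a coarse map. Use instead a continuous map $h\colon A(r/2,2r)\to S$ built by partition of unity subordinate to a fine cover: send each point to a convex combination in $V$ of the $f_r$-images of nearby net points, then radially project. Convex combinations of nearby points stay near $S$ and away from $o$, so $h$ is well defined, and $h\circ\tilde g$ is $C\delta$-close to the identity on $S$. For small $\delta$ it is therefore homotopic to the identity by straight-line homotopy followed by radial projection.

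If $B(p,2r)\setminus\{p\}$ were contractible, then $\tilde g$ would be null-homotopic inside it. Pushing the null-homotopy into the annulus $A(r/2,2r)$ with the radial deformation (a strong deformation retraction of the punctured ball onto the annulus via $\Phi$ and its radial extension inside the ball) would make $h\circ\tilde g\simeq \mathrm{id}_S$ null-homotopic, contradicting the non-contractibility of $S^{n-1}$. For $n=1$, $S$ consists of two points, and the contradiction is connectivity.

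The main obstacle is the deformation retraction of the punctured ball onto the annulus, which requires pushing points outward along shortest paths that may not extend. We would handle this by using only the inward contraction: the homotopy $\Phi_t$ maps the annulus into smaller annuli, and the null-homotopy's image, a compact set avoiding $p$, lies in $A(s,2r)$ for some $s>0$. We then rescale the whole argument, comparing the annuli $A(s,2r)$ via $h$ at multiple scales. Uniqueness of the tangent cone at all scales lets us chain the scale-$s$ approximations into scale $r$, using that $\Phi_{s/r}$ approximately corresponds to the dilation by $s/r$ on $V$.

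Strict convexity is never used, only the existence of a unique Banach tangent cone. This matches the ``moreover'' part of the statement.
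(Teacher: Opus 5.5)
Your architecture is the same as the paper's. You build a continuous approximation $\tilde g$ of the tangent sphere from the geodesic bicombing (the paper's Property (1)), show it is nontrivial in a thin annulus, and then pass from the thin annulus to the punctured ball using the contraction $\Phi$ and uniqueness of $T_pX$ across scales. Using homotopy and a partition-of-unity map $h$ instead of homology and chain approximation is a fine variant of the first half. The gap is in the second half, which is exactly where the lack of extendability matters, and your last paragraph does not close it. Once compactness puts the null-homotopy $H$ into $A(s,2r)$, you need one continuous map from $A(s,2r)$ to $S$. Your $h$ exists only on a thin annulus $A(\rho/2,2\rho)$. The mechanism you cite cannot make $A(s,2r)$ thin: $\Phi_t$ sends $A(s,2r)$ into $A(ts,2tr)$ and preserves the ratio $2r/s$. ``Comparing via $h$ at multiple scales'' would require gluing different $h$'s across scales, which you do not do. The missing idea is an inward radial squeeze with a \emph{variable} factor. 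Take a dyadic $\rho=2^{-m}r<s$ and a continuous $\phi\colon[s,2r]\to[\rho/2,\rho]$, so $\phi(t)<t$ automatically. Let $Q(x)$ be the point of $px$ at distance $\phi(d(p,x))$ from $p$. Then $Q$ is continuous and moves points only toward $p$, so no extension is needed. Moreover, $Q\circ H$ null-homotopes $Q\circ\tilde g$ inside $A(\rho/2,\rho)$. Interpolating the radial parameter along the same shortest paths gives $Q\circ\tilde g\simeq\Phi_{\rho/r}\circ\tilde g$ there. This is what the paper's step \eqref{eq:chain2} does.

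It then remains to show that $h_\rho\circ\Phi_{\rho/r}\circ\tilde g$ is not null-homotopic. Your ``chaining'' must be done one dyadic step at a time: a single jump $\Phi_{\rho/r}$ with $\rho/r$ tiny lands within the GH error of the apex at scale $r$, so it is invisible to the approximations there. Also, uniqueness of the tangent cone identifies the approximations at consecutive scales only up to an isometry of $T_pX$ fixing $o$. So the correct inductive statement, the analogue of the paper's \eqref{eq:chain4}, is the following. Write $\tilde g_k$ for your map at scale $r_k=2^{-k}r$. Then $\Phi_{1/2}\circ\tilde g_k$ is uniformly $\epsilon_k r_k$-close, with $\epsilon_k\to0$, to $\tilde g_{k+1}\circ A_k$ for some such isometry $A_k$. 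Hence the two are homotopic along shortest paths inside a slightly thickened annulus. Applying further contractions and iterating yields $\Phi_{2^{-m}}\circ\tilde g_0\simeq\tilde g_m\circ A_{m-1}\circ\dots\circ A_0$, and $h_\rho\circ\tilde g_m$ precomposed with an isometry is homotopic to a homeomorphism of $S$, which is not null-homotopic. Finally, the input that $\Phi_{1/2}$ corresponds to $v\mapsto v/2$ is automatic only for strictly convex $T_pX$. For a non-strictly convex norm, the set of midpoints of $o$ and $v$ can have positive diameter. The paper uses the Descombes--Lang fact that limits of Busemann shortest paths are affine (Property (3)). So your remark that strict convexity is never used is true only once that ingredient, or a substitute, is supplied.
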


\noindent
The proof is essentially the same as the original one, but requires several minor modifications.
For the convenience of the reader, we include a complete argument.
Before the proof, we prepare some basic facts on Busemann spaces.

First, let $X$ be a Busemann space.
Then
\begin{enumerate}
\item any (non-continuous) map $f:K\to X$ from a finite simplicial complex $K$ can be arbitrarily approximated by a continuous map $g:K\to X$;
\item any two continuous maps $f,g:Y\to X$ from a topological space $Y$ are homotopic.
\end{enumerate}
For the proof of (1), consider a sufficiently fine triangulation $K'$ of $K$ and first define $g\equiv f$ on the set of vertices of the barycentric subdivision of $K'$.
Since any simplex $\sigma\in K'$ is a cone over its boundary $\partial\sigma$, where the vertex of the cone is identified with the barycenter of $\sigma$, one can define $g$ inductively on the skeleta of $K'$ by using the unique shortest paths of $X$.
For the proof of (2), just connect $f(y)$ and $g(y)$ for any $y\in Y$ by the unique shortest path of $X$.
Note that Properties (1) and (2) are also true for any normed space $X$, even if it is not strictly convex, by using affine geodesics.

Next, suppose $T_pX$ is a finite-dimensional Banach space for $p\in X$.
Consider the Gromov--Hausdorff convergence $(\lambda X,p)\to(T_pX,o)$ as $\lambda\to\infty$.
Then
\begin{enumerate}
\item[(3)] any limit of shortest paths in $\lambda X$ is an affine geodesic of $T_pX$.
\end{enumerate}
Indeed, if $T_pX$ is strictly convex (i.e., uniquely geodesic), this is trivial.
In general, this follows from the fact that affine geodesics are only curves in a normed space on which the distance function from an arbitrarily point is convex, see \cite[Theorem 3.3]{DL15}.
By Remark \ref{rem:buse}, the limit shortest path in (3) satisfies this condition.

In particular, we will use Property (1) instead of the barycentric construction of Kleiner \cite{Kl99} used in the original proof (cf.\ \cite[Lemma 3.1]{Kr11}).
Note also that the original proof only concerns the Euclidean tangent cone, for which Property (3) is trivial.

\begin{proof}[Proof of Proposition \ref{prop:kkkreg}]
The proof is a minor modification of the original one in \cite[Proposition 3.11]{KKK22}.
For $0<r_1<r_2$, we denote by $A(p;r_1,r_2)$ the metric annulus $\bar B(p,r_2)\setminus B(p,r_1)$ centered at $p$.
We also use the same symbol $\epsilon_i$ to denote possibly different sequences of positive numbers converging to $0$.

Let $o$ denote the apex of the tangent cone $T_pX$.
Setting $r_i:=1/2^i$, we consider $\epsilon_i$-Gromov--Hausdorff approximations
\[f_i:r_i^{-1}B(p,10r_i)\to B(o,10),\quad g_i:B(o,10)\to r_i^{-1}B(p,10r_i)\]
such that $f_i\circ g_i$ and $g_i\circ f_i$ are $\epsilon_i$-close to the identities.

Let $S:=\partial B(o,3/4)\subset T_pX$, which is homeomorphic to $S^{n-1}$.
We regard $S$ as a continuous map from $S^{n-1}$ (with a sufficiently fine triangulation).
Using Property (1) before the proof, one can define a continuous map $S_i:S^{n-1}\to A(p;r_{i+1},r_i)$ that approximates $g_i\circ S$.
Then the homology class satisfies
\begin{equation}\label{eq:chain1}
[S_i]\neq 0\in H_{n-1}(A(p;r_{i+1},r_i))
\end{equation}
for any large $i$.
Indeed, if $[S_i]=0$, there exists an $n$-chain $C_i$ in $A(p;r_{i+1},r_i)$ with $\partial C_i=S_i$.
Applying Property (1) to $f_i\circ C_i$, one can construct an $n$-chain $C$ in $A(o;1/2,1)$ such that $\partial C$ is $\epsilon_i$-close to $S$.
Using Property (2), one can show that $\partial C$ is homologous to $S$ in $B(o,1)\setminus\{o\}$.
This implies $[S]=0$ in $H_{n-1}(B(o,1)\setminus\{o\})$, a contradiction.

We prove that
\[[S_i]\neq 0\in H_{n-1}(B(p,r_i)\setminus\{p\})\]
for any large $i$, which implies that $p$ is a topologically inner point.
Suppose this does not hold for some large $i_0$.
Then there exists an $n$-chain $C_{i_0}$ in $B(p,r_{i_0})\setminus\{p\}$ such that $S_{i_0}=\partial C_{i_0}$.
Since the support of $C_{i_0}$ is compact, it is contained in $A(p;\delta,r_{i_0})$ for some $\delta>0$.
Let $\Phi_i:X\to X$ denote the $r_i$-contraction map centered at $p$.
Choosing $j\gg1$ such that $r_j<\delta$ and using the geodesic contraction centered at $p$, we see that
\begin{equation}\label{eq:chain2}
[\Phi_{j-{i_0}}(S_{i_0})]=0\in H_{n-1}(A(p;r_{j+1},r_j)).
\end{equation}
On the other hand, we will show that
\begin{equation}\label{eq:chain3}
[\Phi_{j-i_0}(S_{i_0})]=\pm[S_j]\in H_{n-1}(A(p;r_{j+1},r_j)).
\end{equation}
Combining \eqref{eq:chain1}, \eqref{eq:chain2}, and \eqref{eq:chain3}, we get a contradiction.

We prove \eqref{eq:chain3} by induction on $j$.
The base case $j=i_0$ is trivial.
To prove the induction step, it suffices to show that
\begin{equation}\label{eq:chain4}
[\Phi(S_j)]=\pm[S_{j+1}]\in H_{n-1}(A(p;r_{j+2},r_{j+1})),
\end{equation}
where $\Phi$ is the $1/2$-contraction map centered at $p$ (so $\Phi_i$ is the $i$-th iterate of $\Phi$).

Let us prove \eqref{eq:chain4}.
We show that $f_{j+1}(\Phi(S_j))$ is $\epsilon_j$-close to $S$.
Then $\Phi(S_j)$ is $\epsilon_j$-close to $S_{j+1}$, and thus \eqref{eq:chain4} follows from Property (2).

Define $\Psi:T_pX\to T_pX$ by $\Psi(v)=v/2$ for any $v\in T_pX$.
In other words, $\Psi$ is the ``$1/2$-contraction map'' centered at $o$ defined by affine geodesics.
By Property (3), we see that $f_{j+1}$ almost commutes with $\Phi$ and $\Psi$.
In particular, $f_{j+1}(\Phi(S_j))$ is $\epsilon_j$-close to $\Psi(f_{j+1}(S_j))$.

Observe that the composition
\[\Psi\circ f_{j+1}:r_j^{-1}B(p,5r_j)\to B(o,5)\]
gives another Gromov--Hausdorff approximation, possibly different from $f_j$.
This, together with the uniqueness of the Gromov--Hausdorff tangent cone, implies that $\Psi\circ f_{j+1}$ and $f_j$ are $\epsilon_j$-close modulo an isometry of $T_pX$ fixing $o$.
Therefore,
\[f_{j+1}(\Phi(S_j))\approx \Psi(f_{j+1}(S_j))\approx' f_j(S_j)\approx S\]
where $\approx$ represents $\epsilon_j$-closeness and $\approx'$ represents ``$\epsilon_j$-closeness modulo an isometry fixing $o$''.
Since $S$ is a metric sphere centered at $o$, we obtain $f_{j+1}(\Phi(S_j))\approx S$, as desired.
This completes the proof.
\end{proof}

From the above results, we derive the following corollary.

\begin{cor}\label{cor:reg}
Let $X$ be a locally doubling, locally Busemann space without branching geodesics.
Then $p\in X$ is an $n$-manifold point if and only if it is an $n$-regular point.
\end{cor}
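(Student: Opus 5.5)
We prove the two implications separately, using the chain of results just established.

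\emph{Regular $\Rightarrow$ manifold.} Suppose $p$ is $n$-regular. By Proposition \ref{prop:kkkreg}, $p$ is topologically inner, and by Proposition \ref{prop:ls}, $p$ is geometrically inner. So there is $\epsilon>0$ such that every shortest path ending at $p$ extends beyond $p$ to length $\epsilon$. Since a small ball around $p$ is Busemann and uniquely geodesic, reversing orientation shows that every shortest path emanating from $p$ extends to length $\epsilon$ as well.

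Now Lemma \ref{lem:tang} applies: $T_pX=T_p^gX$, and this is the strictly convex Banach space $V$ of dimension $n$. Since $X$ is non-branching, the exponential map $\exp_p:B(o,r)\to B(p,r)$ is well defined for small $r$. By the discussion in Section \ref{sec:tang} (\cite[Lemmas 3,4]{An14}), it is a homeomorphism. Its source $B(o,r)\subset V$ is homeomorphic to $\R^n$, so $p$ is an $n$-manifold point. One point needs care: Andreev's lemmas are stated for G-spaces. I would check that the proof uses only local compactness (from local doubling), non-branching, and uniform extendability at $p$, all of which we have here.

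\emph{Manifold $\Rightarrow$ regular.} Suppose $p$ has a neighborhood $U$ homeomorphic to $\R^n$.
\begin{enumerate}
\item Every point $q\in U$ is topologically inner: a small punctured ball $B(q,r)\setminus\{q\}$ is homotopy equivalent to $U\setminus\{q\}\simeq S^{n-1}$, using the geodesic contraction to compare small balls with Euclidean neighborhoods. By Proposition \ref{prop:ls}, every point of $U$ is therefore geometrically inner.
\item Applying this at $p$, Lemma \ref{lem:tang} shows that $T_pX$ is unique and equals $T_p^gX$.
\item Take a uniform extension length $\epsilon$ at $p$. Then $X$ is locally geodesically complete near $p$: shortest paths through points near $p$ extend, because interior points of $U$ are geometrically inner with locally uniform $\epsilon$.
\item Since $X$ is also locally compact and non-branching near $p$, Remark \ref{rem:and} (via \cite[Theorem 4]{An14}) together with Theorem \ref{thm:and} shows that $T_pX$ is a strictly convex Banach space of finite dimension $k$.
\item Finally, $k=n$. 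By the first direction, $\exp_p$ is a homeomorphism from $B(o,r)\cong\R^k$ onto a neighborhood of $p$, which is homeomorphic to an open subset of $\R^n$. Invariance of domain gives $k=n$.
\end{enumerate}

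\emph{Main obstacle.} The delicate step is step 3: obtaining uniform extendability, i.e.\ local geodesic completeness, in a whole neighborhood of $p$ rather than only at $p$. This is what is needed to invoke \cite[Theorem 4]{An14}. My first attempt would be to note that $\epsilon$ in Proposition \ref{prop:ls} depends only on the radius at which punctured balls are non-contractible. Since this can be chosen uniformly on a compact subset of the Euclidean chart, the extension length is locally uniform, and hence $X$ is locally geodesically complete on a neighborhood of $p$.

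If this uniformity fails, the fallback is to avoid \cite[Theorem 4]{An14}. Instead, I would show directly that the unique cone $T_pX=T_p^gX$ is of cone-type, geodesically complete and non-branching. Its cone structure already gives the equality \eqref{eq:cone} along rays from $o$. I would then apply Theorem \ref{thm:and} to $T_pX$, after checking that it is Busemann by a limit argument that uses geodesic completeness at $p$.
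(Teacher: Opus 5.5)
Your outline matches the paper's in both directions. However, the step ``reversing orientation shows that every shortest path emanating from $p$ extends to length $\epsilon$'' is invalid, and it is exactly where the non-branching hypothesis has to enter.

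Geometric innerness lets you extend a shortest path $xp$ \emph{beyond $p$}. Applied to the reversal of $px$, it gives a path continuing past $p$ on the side away from $x$. It does not extend $px$ beyond $x$, which is what Lemma \ref{lem:tang} needs, and what is needed for $\exp_p$ to be defined on all of $B(o,r)$.

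Busemann convexity and unique geodesics do not close this gap. Attach to $\R$ a segment of length $4^{-i}$ at the point $2^{-i}$ for every $i$, and let $p=0$. This metric tree is Busemann, and $p$ is topologically and geometrically inner. Also $T_pX=\R$ is unique, since the segments shrink to nothing after rescaling. Yet the shortest path from $p$ to the free end of the $i$-th segment cannot be extended at all, and its length $2^{-i}+4^{-i}$ tends to $0$. Nor is $p$ a manifold point, since every neighborhood of $p$ contains branch points. You use non-branching only afterwards, to define $\exp_p$. So your reasoning up to the application of Lemma \ref{lem:tang} goes through verbatim in this example, where its conclusion is false.

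The repair is the paper's double-extension argument. Given $px$, first extend $xp$ beyond $p$ to a point $q$. Then, by geometric innerness, extend $qp$ beyond $p$ to length $\epsilon$. This extension and the shortest path from $q$ through $p$ to $x$ agree on $qp$. By non-branching one contains the other, so $px$ extends to length $\epsilon$ (or already has that length). Your step 2 in the converse direction leans on the same faulty reversal, but there it can also be obtained from step 3.

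Finally, the ``main obstacle'' you single out is not one. Local geodesic completeness only asks that each geodesic extend by \emph{some} amount. So it follows at once from step 1 and Proposition \ref{prop:ls}, since every point of the chart is geometrically inner. Uniform extension lengths near $p$ then come for free by a compactness argument (cf.\ Section \ref{sec:buse}). That is all the paper uses before invoking Remark \ref{rem:and}. You need neither your uniformity claim about $\epsilon$ in Proposition \ref{prop:ls} nor the fallback, which would require proving that $T_pX$ is Busemann (cf.\ Remark \ref{rem:tang}).
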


\begin{proof}
Suppose $p$ is an $n$-manifold point.
By Proposition \ref{prop:ls}, $X$ is locally geodesically complete around $p$.
By Theorem \ref{thm:and} and Remark \ref{rem:and}, $p$ is a regular point.
Moreover, using the exponential map defined in Section \ref{sec:tang}, we see that it is an $n$-regular point.

Suppose $p$ is an $n$-regular point.
By Propositions \ref{prop:kkkreg} and \ref{prop:ls}, $p$ is a geometrically inner point.
This, together with the assumption that $X$ is non-branching, implies that any shortest path emanating from $p$ extends to uniform length, as assumed in Lemma \ref{lem:tang}.
(Indeed, for any shortest path $px$, extend it beyond $p$ and let $q$ be an endpoint.
Since $p$ is geometrically inner, we can again extend $qp$ beyond $p$ to uniform length.
By the non-branching assumption, this is actually an extension of $px$.)
By Lemma \ref{lem:tang} and the assumption that $p$ is $n$-regular, $T_pX=T_p^gX$ is a strictly convex Banach space of dimension $n$.
Therefore the exponential map shows that $p$ is an $n$-manifold point.
\end{proof}

By the second half of Proposition \ref{prop:kkkreg}, the ``if'' part of Corollary \ref{cor:reg} is slightly generalized as follows.
We will use it in the proofs of Theorems \ref{thm:conv} and \ref{thm:noncol}.

\begin{lem}\label{lem:reg}
Let $X$ be a locally doubling, locally Busemann spaces without branching geodesics.
Suppose for $p\in X$, $T_pX$ is unique and isometric to a Banach space of dimension $n$ (not necessarily strictly convex).
Then $p$ is an $n$-manifold point.
\end{lem}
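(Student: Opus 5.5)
We follow the second half of the proof of Corollary \ref{cor:reg}, using the weaker hypothesis allowed by the second half of Proposition \ref{prop:kkkreg}. The only difference is that we do not yet know $T_pX$ is strictly convex. The argument must therefore produce strict convexity as a by-product, via Theorem \ref{thm:and}.

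\emph{Step 1: extendability of shortest paths at $p$.} By the second half of Proposition \ref{prop:kkkreg}, $p$ is topologically inner. Proposition \ref{prop:ls} then shows that $p$ is geometrically inner. Now take a shortest path $px$. Extend it beyond $p$ to a point $q$, which is possible since $p$ is geometrically inner. Extend $qp$ again beyond $p$ to uniform length $\epsilon$. By non-branching, the second extension prolongs $px$. Hence every shortest path emanating from $p$ extends to uniform length, which is the hypothesis of Lemma \ref{lem:tang}. Consequently $T_pX=T_p^gX$.

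\emph{Step 2: local geodesic completeness near $p$.} We want Remark \ref{rem:and} at $p$, and possibly the exponential map, so we need extendability not only at $p$ but for geodesics passing near $p$. The plan is to show that topological innerness is an open condition in a neighborhood of $p$. Each point $y$ close to $p$ should also have non-contractible small punctured balls, and Proposition \ref{prop:ls} then gives local geodesic completeness on a neighborhood. One could alternatively avoid this step. The exponential map $\exp_p$ of Section \ref{sec:tang} only needs uniform extendability from $p$ together with non-branching, and the proof of \cite[Lemmas 3,4]{An14} concerns only geodesics from $p$. So $\exp_p\colon B(o,r)\to B(p,r)$ is a homeomorphism from a ball in $T_p^gX$.

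\emph{Step 3: conclusion.} Since $T_pX$ is an $n$-dimensional normed space, $B(o,r)$ is homeomorphic to $\R^n$. Then $\exp_p$ exhibits $p$ as an $n$-manifold point. As a consistency check, once $p$ is a manifold point, Corollary \ref{cor:reg} together with Remark \ref{rem:and} upgrades $T_pX$ to a strictly convex space.

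\emph{Main obstacle.} The main difficulty is checking that the homeomorphism property of $\exp_p$ does not secretly use strict convexity of $T_p^gX$. There are two points to verify. Injectivity comes from the non-branching property of $X$ combined with uniqueness of shortest paths in a convex neighborhood. Continuity of the inverse and surjectivity onto $B(p,r)$ come from the Busemann-convexity definition of $d^*$ together with compactness. I would verify these directly, without invoking strict convexity.
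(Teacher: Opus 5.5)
Your proposal is correct and is essentially the paper's argument. The paper obtains Lemma~\ref{lem:reg} by rerunning the second half of the proof of Corollary~\ref{cor:reg}, now using the second half of Proposition~\ref{prop:kkkreg}: topologically inner $\Rightarrow$ geometrically inner $\Rightarrow$ uniform extendability at $p$ (via non-branching) $\Rightarrow$ $T_pX=T_p^gX$ $\Rightarrow$ $\exp_p$ is a homeomorphism from a ball in an $n$-dimensional normed space. This is exactly your Steps~1 and~3. Your Step~2 is not needed: as you yourself observe, $\exp_p$ in Section~\ref{sec:tang} requires only uniform extendability from $p$ and non-branching, not strict convexity. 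For the same reason, the remark that the argument ``must produce strict convexity'' can be dropped.
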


\subsection{Facts on Busemann with MCP}\label{sec:factbusemcp}

Next we recall two facts on Busemann spaces satisfying MCP condition (possibly collapsed).
Both are due to Kapovitch--Ketterer \cite{KK20} and Kapovitch--Kell--Ketterer \cite{KKK22}.
Although the original statements deal with CAT spaces with CD conditions, their proofs only use the Busemann convexity and the MCP condition.

The first is the non-branching property of Busemann spaces with MCP, proved in \cite[Proposition 6.9]{KK20} (see also \cite[Remark 6.10]{KK20}).

\begin{prop}\label{prop:kkbran}
Let $X$ be a locally Busemann space satisfying local {\rm MCP($K,N$)} for some measure, where $K\le 0$ and $N\ge 1$.
Then $X$ is non-branching.
\end{prop}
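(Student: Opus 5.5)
We argue by contradiction, in the spirit of \cite[Proposition 6.9]{KK20}, playing the MCP lower bound against the Lipschitz contraction coming from the Busemann convexity. Since non-branching is a local property of shortest paths, we may work inside a small convex neighborhood $U$ which is Busemann and satisfies MCP($K,N$). Suppose two shortest paths $\gamma,\eta$ in $U$ agree on an initial interval $[0,s]$ and then separate. Set $q:=\gamma(0)$, take $a\in(0,s)$ with $z:=\gamma(a)$, and choose points $x$ on $\gamma$ and $y$ on $\eta$ beyond the branch point with $d(q,x)=d(q,y)=:\ell$ and $x\ne y$. For $t$ close to $1$, the intermediate points $x_t,y_t$ with respect to $q$ lie on the common segment only when $t\ell\le s$; the branching is arranged so that the contraction centered at $q$ collapses distinct directions onto one segment.

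The cleanest route is to contract toward the far endpoints. Place small balls $B_x:=B(x,\delta)$ and $B_y:=B(y,\delta)$, and consider the contraction maps $\Phi^x_t$ and $\Phi^y_t$ centered at $x$ and $y$ respectively. By MCP($K,N$), the set $(B(q,\delta))_t^x$ has measure at least $c\,t^N m(B(q,\delta))$, with a constant $c>0$ depending only on $K$, $N$ and $\operatorname{diam}U$ (absorbing the $\sinh$ factor from Remark \ref{rem:mcp}); the same bound holds for the contraction toward $y$. Choose $t$ so that $(1-t)\ell=\ell-a$, i.e.\ so that the images of $q$ are near $z=\gamma(a)$. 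Busemann convexity \eqref{eq:buse}, applied with center $x$, gives that $\Phi^x_t$ is $t$-Lipschitz, and similarly for $\Phi^y_t$. Hence both images $(B(q,\delta))^x_t$ and $(B(q,\delta))^y_t$ lie in $B(z,t\delta)$. This already implies that each image occupies a definite proportion of $m(B(z,t\delta))$, via Bishop--Gromov/doubling.

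To obtain the contradiction, we use many endpoints rather than just two. Along the branching, we pick a family of points $y^{(1)},\dots,y^{(k)}$ (or a continuum, on which we integrate) whose contractions push disjoint portions of a neighbourhood into the same small ball near the branch point. We then show that the corresponding image sets are essentially disjoint, because different endpoints force different geodesics through the contracted points; uniqueness of geodesics in $U$ is what makes this work. Summing the measures gives
\[
k\,c\,t^N m(B(q,\delta)) \le m(B(z,\delta)),
\]
which is impossible for large $k$ by doubling. Equivalently, following \cite{KK20}, we may use the MCP along the branch point $w$ itself. For $y$ on the branch $\eta$, the contraction toward $y$ of a ball around a point of $\gamma$ beyond $w$ must enter a region where, by \eqref{eq:buse} with center $y$, its measure is squeezed by a factor $\le (\text{const})\,\epsilon^{N'}$ with exponent exceeding what MCP allows. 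In this version, the squeezing comes from the fact that points on $\gamma$ past $w$ are at distance $\sim d(w,\cdot)$ from $\eta$, so their geodesics to $y$ pass through a thin neighborhood of $w$.

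The main obstacle is the disjointness/concentration step: making quantitative that contracting a positive-measure set across a branch point forces it into a set whose measure is too small. I would first try the two-sided version of the argument. Take a ball $A$ around a point of $\gamma$ past $w$ and a ball $A'$ around a point of $\eta$ past $w$, and contract both toward $q$. Busemann convexity then bounds the distance between the images of $A$ and $A'$ by $t$ times the original distance, while non-uniqueness of direction beyond $w$ would make these images overlap in $B(w,t\,r)$. Iterating this via the Bishop--Gromov inequality is expected to contradict MCP($K,N$).
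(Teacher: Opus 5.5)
Your proposal has a genuine gap. None of the three routes actually produces a contradiction, and the step you call the ``main obstacle'' is exactly the missing argument.

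\textbf{First route.} That $\Phi^x_t(B(q,\delta))$ and $\Phi^y_t(B(q,\delta))$ both lie in $B(z,t\delta)$ with measure at least $c\,t^Nm(B(q,\delta))$ is perfectly compatible with MCP and doubling. With a fixed constant $c$ and a ratio $t$ bounded away from $1$, any such bookkeeping is absorbed by doubling constants.

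The many-endpoint repair also fails. Uniqueness of the shortest path between two given points does not prevent a point from lying on shortest paths to several different endpoints. Every point of the common segment of $\gamma$ and $\eta$ lies on the shortest paths from $q$ to both $x$ and $y$, so the images are not disjoint. Moreover, a single branching gives you two branches, not arbitrarily many.

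\textbf{Second route.} There is no source for an exponent larger than $N$. Busemann convexity only says that the $t$-contraction is $t$-Lipschitz, so $\rho$-balls go into $t\rho$-balls, which is exactly the scaling MCP permits. The claim that geodesics from $y$ to points of $\gamma$ beyond $w$ pass through a thin neighbourhood of $w$ is also unjustified in a general Busemann space.

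\textbf{Third route and the missing ingredient.} Your two-sided idea is the right squashing step. However, it must be run with contraction ratios tending to $1$, and it needs one more ingredient that Bishop--Gromov cannot supply.

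Inside $U$, let $\gamma=\eta$ on $[s-\delta,s]$ and $\gamma(\tau)\ne\eta(\tau)$ for $\tau\in(s,s+\delta]$; uniqueness of shortest paths makes the coincidence set an interval. Put $p=\gamma(s-\delta)$, $x=\gamma(s+\delta)$, $y=\eta(s+\delta)$. For small $\epsilon>0$, set $z=\gamma(s-\epsilon)$, $x'=\gamma(s+\epsilon)$, $y'=\eta(s+\epsilon)$, $\rho=d(x',y')/2$ and $\lambda=(\delta-\epsilon)/(\delta+\epsilon)$.

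Let $c_\lambda$ be the infimum over distances $d\le\diam U$ of the MCP($K,N$) distortion coefficient at ratio $\lambda$ (so $c_\lambda=\lambda^N$ if $K=0$). Then $c_\lambda\to1$ as $\lambda\to1$ by Remark~\ref{rem:mcp}.

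Contracting the disjoint balls $B(x',\rho)$ and $B(y',\rho)$ toward $p$ with ratio $\lambda$ sends both centres to $z$. Hence
\[
m(B(z,\lambda\rho))\ge c_\lambda\bigl(m(B(x',\rho))+m(B(y',\rho))\bigr).
\]
To conclude, you need $m(B(x',\rho))$ and $m(B(y',\rho))$ to be almost as large as a ball of comparable radius at $z$. Bishop--Gromov only compares concentric balls, and doubling only compares up to a constant, which cannot beat the factor $2$.

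The missing idea is the comparison along a geodesic from Lemma~\ref{lem:hom}. Contract $B(z,\rho/\lambda)$ toward the far endpoint $x$ with ratio $\lambda$. The shortest path $xz$ runs along $\gamma$ through $x'$, and the contraction is $\lambda$-Lipschitz, so the image lies in $B(x',\rho)$. This gives $m(B(x',\rho))\ge c_\lambda\, m(B(z,\rho/\lambda))$, and likewise $m(B(y',\rho))\ge c_\lambda\, m(B(z,\rho/\lambda))$ using center $y$. Combining,
\[
m(B(z,\lambda\rho))\ge 2c_\lambda^2\,m(B(z,\rho/\lambda))\ge 2c_\lambda^2\,m(B(z,\lambda\rho)).
\]
Since $0<m(B(z,\lambda\rho))<\infty$, choosing $\epsilon$ so small that $2c_\lambda^2>1$ gives the contradiction.

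\textbf{Comparison with the paper.} The paper does not reprove this proposition; it quotes \cite[Proposition 6.9]{KK20}, remarking that only Busemann convexity and MCP are used. The ratio-near-$1$ mechanism above is the same one that drives Lemmas~\ref{lem:hom} and \ref{lem:trans} and Proposition~\ref{prop:coneloc}, which the authors describe as reminiscent of the Kapovitch--Ketterer arguments.
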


The second is the continuity of the tangent cone, proved in \cite[Theorem 7.1]{KKK22}.
This plays an important role in the proof of Theorem \ref{thm:mfd}.
For a shortest path $\gamma$ in a geodesic space and $\epsilon>0$, the \textit{$\epsilon$-interior} of $\gamma$ is the image of $\gamma$ with the $\epsilon$-neighborhoods of its endpoints removed.

\begin{prop}\label{prop:kkkconti}
Let $X$ be a locally Busemann space satisfying local {\rm MCP($K,N$)} for some measure, where $K\le 0$ and $N\ge 1$.
Let $\gamma$ be a shortest path in $X$.
Then the same scale Gromov--Hausdorff tangent cone (if exists) is continuous in the interior of $\gamma$ with respect to the pointed Gromov--Hausdorff convergence.

More precisely, for any $\epsilon>0$, there exists $\delta>0$ such that the following holds:
for any $p,q\in X$ with $d(p,q)<\delta$ lying in the $\epsilon$-interior of $\gamma$, we have
\[d_\GH((r^{-1}B(p,r),p),(r^{-1}B(q,r),q))<\epsilon\]
for any $0<r<\delta$, where $d_\GH$ denotes the Gromov--Hausdorff distance.
\end{prop}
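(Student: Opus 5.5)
The plan is to follow the strategy of \cite[Theorem 7.1]{KKK22} and transport small balls around $p$ to small balls around $q$. The transport map is a composition of geodesic contractions centered at points of $\gamma$ beyond the endpoints of the segment $pq$. Busemann convexity \eqref{eq:buse} gives the upper bound on distortion. The MCP condition, through a measure-theoretic argument, gives an almost-inverse bound, and this is what yields Gromov--Hausdorff closeness.

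\emph{Setup.} Localize first. Since the $\epsilon$-interior of $\gamma$ is compact, we may cover it by finitely many convex neighborhoods on which $X$ is Busemann and satisfies MCP($K,N$). Taking $\delta$ smaller than a Lebesgue number, we may assume everything happens in one such neighborhood $U$. By Proposition \ref{prop:kkbran}, $U$ is non-branching.

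Let $p,q$ lie in the $\epsilon$-interior with $d(p,q)<\delta$, and let $a,b$ be the points of $\gamma$ at distance $\epsilon/2$ beyond $p$ and beyond $q$ respectively. Say the order along $\gamma$ is $a,p,q,b$. Set $s=d(a,p)/d(a,q)\in(1-C\delta/\epsilon,1)$, and consider the contraction $\Phi^a_s$ centered at $a$. It sends $q\mapsto p$ and is $s$-Lipschitz, hence $1$-Lipschitz. Symmetrically, $\Phi^b_{s'}$ sends $p\mapsto q$ and is $1$-Lipschitz. Thus we obtain $1$-Lipschitz maps $F:B(q,r)\to B(p,r)$ and $G:B(p,r)\to B(q,r)$ with $F(q)=p$ and $G(p)=q$.

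\emph{Key step (almost-isometry).} It remains to show that $F$ and $G$ are $\epsilon r$-surjective, or equivalently that $G\circ F$ moves points by at most $\epsilon r$ on $B(q,r)$, uniformly in $r<\delta$. A pair of $1$-Lipschitz maps each of which is almost onto gives the Gromov--Hausdorff estimate by a standard argument. My plan for surjectivity is to use the MCP measure estimate. $F(B(q,r))$ is the $s$-intermediate set of $B(q,r)$ with respect to $a$, so \eqref{eq:mcp} (its $K<0$ version, with the distortion factor close to $1$ because $s\approx1$ and distances are bounded) gives
\[m(F(B(q,r)))\ge (1-\eta(\delta/\epsilon))\, m(B(q,r)).\]
Similarly for $G$. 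Combining these with the Bishop--Gromov doubling estimates comparing $m(B(p,r))$ and $m(B(q,r))$ at the scale $d(p,q)\ll r$ is not automatic, so instead I will compare $G\circ F(B(q,r))$ with $B(q,r)$. Both $G\circ F$ and the inclusion are $1$-Lipschitz self-maps fixing $q$, and by MCP the image has measure at least $(1-\eta)m(B(q,r))$. If some point $x\in B(q,r/2)$ had $B(x,\epsilon r)$ disjoint from the image, doubling would give $m(B(x,\epsilon r))\ge c(\epsilon)m(B(q,r))$, a contradiction once $\eta<c(\epsilon)$. This yields $\epsilon r$-density of the image at every scale $r<\delta$, which is the required estimate, with constants uniform in $r$ because the MCP distortion depends only on $s$ and not on $r$.

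\emph{Main obstacle.} The hard step is the density-to-surjectivity argument. The image of the ball has nearly full measure, but the doubling constant must control the measure of the missing ball $B(x,\epsilon r)$ relative to $B(q,r)$ uniformly. This is where the uniform local doubling coming from MCP (Bishop--Gromov) is essential. A subtler point is that the contraction is centered at $a$, which is not the center of the ball, so MCP applies to arbitrary sets $A=B(q,r)$ and the factor $s^N$ is the only loss. I would also check that rescaling by $r^{-1}$ preserves all estimates, so that $\delta$ depends only on $\epsilon$, $K$, $N$, and the local doubling constant near $\gamma$.
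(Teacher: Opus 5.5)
Your proof is correct and follows essentially the contraction-plus-MCP argument of \cite[Theorem 7.1]{KKK22}, which the paper imports without reproving. In that argument, the contractions toward points of $\gamma$ beyond $p$ and $q$ are $1$-Lipschitz by Busemann convexity, and MCP together with Bishop--Gromov (applied to $G\circ F$ and $F\circ G$) makes them almost onto uniformly in $r$. Your ``standard argument'' is then a compactness argument in the uniformly doubling class of rescaled balls, combined with the fact that a $1$-Lipschitz surjection of a compact metric space onto itself is an isometry.

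Only minor repairs are needed:
\begin{itemize}
\item Place $a,b$ at a fixed distance $\rho\le\epsilon/2$ that is smaller than the local Busemann/MCP radius along $\gamma$, rather than at distance $\epsilon/2$, which may leave the Busemann/MCP neighborhood.
\item Obtain density near $\partial B(q,r)$ by first moving points a distance $\epsilon r$ toward the center; as written, you only get density in $B(q,r/2)$.
\item Drop the remark that almost-surjectivity is ``equivalent'' to $G\circ F$ being close to the identity. These are not equivalent statements, and you never use the latter.
\end{itemize}
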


\begin{rem}
Deng \cite{Den25} recently proved Propositions \ref{prop:kkbran} and \ref{prop:kkkconti} for general RCD spaces (without upper curvature bounds).
On the other hand, for general MCP/CD spaces, such nice results cannot be expected; see, e.g., \cite{KR15, Ma23}.
\end{rem}

\subsection{Consequence}

Here is an immediate consequence of the above results.
For the proof, see Theorem \ref{thm:and}, Remark \ref{rem:and}, Corollary \ref{cor:reg}, and Proposition \ref{prop:kkbran}.
See also Proposition \ref{prop:dim} for the dimension estimate.

\begin{cor}\label{cor:kkbran}
Let $X$ be a locally geodesically complete, locally Busemann space satisfying local {\rm MCP($K,N$)} for some measure, where $K\le 0$ and $N\ge 1$.
Then there exists $n\le N$ such that $X$ is a topological $n$-manifold (without boundary) and every point is an $n$-regular point.
\end{cor}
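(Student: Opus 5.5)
The plan is to combine the non-branching result with Andreev's tangent cone theorem, and then to use the exponential map to pass from regularity to the manifold structure. First, Proposition \ref{prop:kkbran} shows that $X$ is non-branching. Since $X$ is locally Busemann and satisfies local MCP, it is locally doubling by the Bishop--Gromov inequality, and hence proper.

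Next we show that every shortest path emanating from a point $p$ extends to some uniform length $\epsilon(p)>0$. Local geodesic completeness, together with the local convexity of small balls, gives such an extension in a direction beyond any endpoint. To extend $px$ beyond $x$ for $x$ near $p$, we extend $xp$ beyond $p$ to a point $q$, then extend $qx$ beyond $x$. By non-branching, this extension is an extension of $px$. This is the same trick as in the proof of Corollary \ref{cor:reg}, and the uniformity comes from the remark in Section \ref{sec:buse}. Lemma \ref{lem:tang} then shows that $T_pX=T_p^gX$ is unique.

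By Remark \ref{rem:and}, $T_pX$ is locally compact, geodesically complete, Busemann, non-branching, and of cone-type. Theorem \ref{thm:and} therefore identifies $T_pX$ with a strictly convex Banach space of some dimension $n(p)$. Thus every point is $n(p)$-regular, and by Corollary \ref{cor:reg} (via the exponential map homeomorphism $\exp_p$ of Section \ref{sec:tang}) it is an $n(p)$-manifold point. In particular $X$ is a topological manifold, possibly with components of different dimensions.

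It remains to show that $n(p)$ is constant and that $n\le N$. Since $\exp_p$ is a homeomorphism from a ball in $\R^{n(p)}$ onto a neighborhood of $p$, invariance of domain makes $n(\cdot)$ locally constant. Connectedness of the geodesic space $X$ then makes it constant. For the bound, the Hausdorff dimension of $X$ is at most $N$ by \cite[Corollary 2.7]{Oh07}, while topological dimension is at most Hausdorff dimension, so $n\le N$. Alternatively, $T_pX$ with the rescaled limit measure satisfies MCP$(0,N)$ by Remark \ref{rem:tang}, and an $n$-dimensional Banach space with such a measure forces $n\le N$; this is the content deferred to Proposition \ref{prop:dim}.

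The main obstacle I expect is the uniform extendability step. Local geodesic completeness must be made uniform near $p$ so that Lemma \ref{lem:tang} and the construction of $\exp_p$ apply. I would handle this using the local convexity of a small ball in which the space is Busemann, combined with non-branching.
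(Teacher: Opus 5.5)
Your proposal is correct and is essentially the paper's argument: the paper just cites Proposition \ref{prop:kkbran}, Theorem \ref{thm:and} with Remark \ref{rem:and}, Corollary \ref{cor:reg}, and Proposition \ref{prop:dim}, and your invariance-of-domain step for the constancy of $n$ and your Hausdorff-dimension bound for $n\le N$ are valid ways to fill in details the paper leaves implicit. One small point: the detour through $q$ and non-branching in your extendability step does no work, since extending $qx$ beyond $x$ already requires extending shortest paths beyond $x$. Uniform extendability near $p$ follows directly from local geodesic completeness and local Busemann convexity, as noted in Section \ref{sec:buse}, so nothing is lost.
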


\noindent This is the geodesically complete (and possibly collapsed) version of Theorem \ref{thm:mfd}.
However, if geodesic completeness is removed, some improvements are required, which we will deal with in Theorem \ref{thm:mfd} (and Corollary \ref{cor:mfdcol}).

\section{Flatness under geodesic completeness}\label{sec:main}

In this section, we prove Theorem \ref{thm:main}.
We show that our Busemann space is of cone-type to apply Theorem \ref{thm:and}.
The proof combines a few elementary arguments based on the Busemann convexity \eqref{eq:buse}, the measure contraction property \eqref{eq:mcp}, and geodesic completeness.
Some of them are reminiscent of the proofs of Propositions \ref{prop:kkbran} and \ref{prop:kkkconti} by Kapovitch--Ketterer \cite{KK20} and Kapovitch--Kell--Ketterer \cite{KKK22}.

We first show the homogeneity of the reference measure.
Compare also with \cite[Corollary 6.5]{KKK22}, which can be viewed as the infinitesimal version of the following argument.

\begin{lem}\label{lem:hom}
Let $X$ and $m$ be as in Theorem \ref{thm:main}.
Then for any $x,y\in X$ and $r>0$, we have
\[m(B(x,r))=m(B(y,r)).\]
\end{lem}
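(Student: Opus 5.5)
The plan is to compare the two balls through a geodesic contraction centered at a point far away on the line through $x$ and $y$. The Busemann convexity \eqref{eq:buse} controls where the contracted ball lands, and the MCP \eqref{eq:mcp} controls how much measure it keeps. Letting the center go to infinity makes the contraction factor tend to $1$, which gives one inequality; swapping the roles of $x$ and $y$ gives the other.

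Fix $x\neq y$ (the case $x=y$ is trivial), $r>0$, and set $d:=d(x,y)$.

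First, let $\gamma$ be the unique shortest path from $y$ to $x$, parametrized so that $\gamma(0)=y$ and $\gamma(d)=x$. By geodesic completeness, extend $\gamma$ beyond $x$ to a geodesic on $[0,\infty)$. Since every geodesic in a Busemann space is a shortest path (Section \ref{sec:buse}), for each $L>0$ the point $p_L:=\gamma(d+L)$ satisfies $d(p_L,y)=L+d$ and $d(p_L,x)=L$. Moreover, by uniqueness of geodesics, $x$ lies on the shortest path $p_Ly$. Hence, with $t_L:=L/(L+d)$, the $t_L$-contraction map $\Phi_{t_L}$ centered at $p_L$ satisfies $\Phi_{t_L}(y)=x$.

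Second, put $A:=B(y,r)$. For every $z\in A$, the Busemann convexity \eqref{eq:buse} gives
\[d(\Phi_{t_L}(z),x)=d(\Phi_{t_L}(z),\Phi_{t_L}(y))\le t_L\,d(z,y)<r,\]
so $A_{t_L}\subset B(x,r)$. Measurability of $A_{t_L}$ is not an issue. Since $X$ is proper, $A$ is $\sigma$-compact, and $\Phi_{t_L}$ is $t_L$-Lipschitz, hence continuous, so $A_{t_L}$ is $\sigma$-compact and therefore Borel. Also $0<m(A)<\infty$, because $m$ is locally finite and locally positive. Applying MCP($0,N$) with center $p_L$ then yields
\[m(B(x,r))\ge m(A_{t_L})\ge t_L^N\, m(B(y,r)).\]

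Third, let $L\to\infty$. Then $t_L\to1$, so $m(B(x,r))\ge m(B(y,r))$. Exchanging $x$ and $y$ gives the reverse inequality, and hence $m(B(x,r))=m(B(y,r))$.

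The only point that needs care is the first step: the extension beyond $x$ must remain a shortest path, so that $x$ is exactly the $t_L$-intermediate point of $y$ with respect to $p_L$. This is exactly where geodesic completeness, combined with the fact that geodesics in Busemann spaces are shortest paths, is used.
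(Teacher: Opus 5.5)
Your proposal is correct and follows essentially the same argument as the paper. Both contract one ball toward a far-away point on the extension of the shortest path $xy$, use the Busemann convexity \eqref{eq:buse} to place the image inside the other ball and the MCP \eqref{eq:mcp} to bound its measure from below, then send the center to infinity and swap the roles of $x$ and $y$. The differences are cosmetic: you contract $B(y,r)$ toward a point beyond $x$ rather than $B(x,r)$ toward a point beyond $y$, and you also check the measurability of the contracted set.
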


\begin{proof}
By geodesic completeness, we can extend the shortest path $xy$ beyond both $x$ and $y$.
Let $x'$ and $y'$ be points on that extension beyond $x$ and $y$, respectively (and far away from them).
See Figure \ref{fig:hom}.

\begin{figure}[ht]
\centering
\begin{tikzpicture}
\coordinate[label=below:$x$](x)at(-1.5,0);
\coordinate[label=below:$y$](y)at(1.5,0);
\coordinate[label=below:$x'$](x')at(-4.5,0);
\coordinate[label=below:$y'$](y')at(4.5,0);

\draw(x)circle[radius=1];
\draw(y)circle[radius=1];
\draw[dashed](y)circle[radius=0.5];

\draw(-5,0)to(5,0);
\draw[dashed](-1.5,1)to(y');
\draw[dashed](-1.5,-1)to(y');

\fill(x)circle(1.5pt);
\fill(y)circle(1.5pt);
\fill(x')circle(1.5pt);
\fill(y')circle(1.5pt);
\end{tikzpicture}
\caption{}\label{fig:hom}
\end{figure}

We show $m(B(x,r))\le m(B(y,r))$.
Let $\Phi$ be the $(1-d(x,y)/d(x,y'))$-contraction map centered at $y'$ (in particular, $\Phi(x)=y$).
The measure contraction property \eqref{eq:mcp} implies
\begin{equation}\label{eq:hom1}
\left(1-\frac{d(x,y)}{d(x,y')}\right)^Nm(B(x,r))\le m(\Phi(B(x,r))).
\end{equation}
By the Busemann convexity \eqref{eq:buse}, $\Phi$ is always $1$-Lipschitz, and hence
\begin{equation}\label{eq:hom2}
\Phi(B(x,r))\subset B(y,r).
\end{equation}
Combining \eqref{eq:hom1} and \eqref{eq:hom2} and taking $d(x,y')\to\infty$, we obtain $m(B(x,r))\le m(B(y,r))$, as desired.
The symmetric procedure using $x'$ gives the opposite inequality, which together implies the desired equality.
\end{proof}

Next, using Lemma \ref{lem:hom}, we construct a ``parallel translation'' for our Busemann space.
A \textit{ray} in a metric space is an isometric embedding of $[0,\infty)$.

\begin{lem}\label{lem:trans}
Let $X$ be as in Theorem \ref{thm:main}.
Let $x,y\in X$ and $\gamma$ a ray starting at $x$.
Then there exists a ray $\eta$ starting at $y$ such that
\[d(\gamma(t),\eta(t))=d(x,y)\]
for all $t\in[0,\infty)$.
\end{lem}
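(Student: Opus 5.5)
The plan is to build $\eta$ as a limit of shortest paths aimed at far points of $\gamma$, get the inequality $d(\gamma(t),\eta(t))\le d(x,y)$ from the Busemann convexity \eqref{eq:buse}, and then upgrade it to equality using convexity together with geodesic completeness. Throughout, set $D:=d(x,y)$.

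\emph{Construction and the upper bound.} For $T>0$, let $\sigma_T$ be the unique shortest path from $y$ to $\gamma(T)$, and let $\ell_T:=d(y,\gamma(T))$, so $|\ell_T-T|\le D$. Consider the contraction $\Phi_s$ centered at $\gamma(T)$ with ratio $s=1-t/T$. It sends $x$ to $\gamma(t)$, and it sends $y$ to the point of $\sigma_T$ at distance $(t/T)\ell_T$ from $y$. By \eqref{eq:buse},
\[
d\bigl(\gamma(t),\sigma_T(t\ell_T/T)\bigr)\le (1-t/T)\,D .
\]
Since $X$ is proper (MCP implies uniform local doubling), Arzel\`a--Ascoli lets us pass to a subsequence $T_i\to\infty$ along which $\sigma_{T_i}$ converges locally uniformly to a curve $\eta$ from $y$. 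Because $\ell_T/T\to1$, $\eta$ is a ray, as a limit of shortest paths whose lengths tend to infinity. In the limit the displayed estimate becomes $f(t):=d(\gamma(t),\eta(t))\le D$ for all $t\ge0$.

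\emph{From inequality to equality.} By Remark \ref{rem:buse}, $f$ is convex on $[0,\infty)$, and it is bounded, so it is non-increasing. It therefore suffices to show that $f$ cannot drop below $D$. I would argue this by making the construction two-sided. By geodesic completeness (and the fact that geodesics are shortest paths in a Busemann space), $\gamma$ extends to a line $\gamma\colon\R\to X$. The idea is to run the construction above from the shifted base points $\gamma(-S)$, producing rays parallel in the $\le$ sense to $\gamma|_{[-S,\infty)}$ that pass close to $y$. A diagonal limit as $S\to\infty$ should then produce a line $\tilde\eta$ through $y$ with $d(\gamma(t),\tilde\eta(t))$ bounded on all of $\R$. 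A bounded convex function on $\R$ is constant, and its value at $0$ is $D$. Non-branching (Proposition \ref{prop:kkbran}) then forces $\tilde\eta|_{[0,\infty)}$ to be the desired ray $\eta$.

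\emph{Main obstacle.} I expect the hard step to be forcing the two-sided limit to pass exactly through $y$ at time $0$, since a line bounded-distance from $\gamma$ need not hit $y$. If this fails, my fallback is a measure argument modeled on Lemma \ref{lem:hom}, using the homogeneity proven there. Suppose $f(t_0)<D$. Contract a small ball around $x$ toward a far point $\gamma(T)$, and symmetrically contract a ball around $y$ toward $\eta(T)$. A strict drop in $f$ should squeeze the image of some ball into a strictly smaller ball than \eqref{eq:mcp} permits once $T\to\infty$ (the factor $(1-t/T)^N\to1$). Given the equality $m(B(x,r))=m(B(y,r))$, this would contradict the MCP lower bound, and hence $f\equiv D$.
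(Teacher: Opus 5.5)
Your construction of $\eta$ and the bound $d(\gamma(t),\eta(t))\le d(x,y)$ match the paper's. However, the upgrade to equality has a genuine gap.

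\emph{The two-sided route cannot work as sketched.} Your diagonal-limit step uses only Busemann convexity, geodesic completeness, properness and non-branching. All of these hold in the hyperbolic plane $\mathbb{H}^2$. There, a line through a point $y$ off the line containing $\gamma$, staying at bounded distance from $\gamma$ in both directions, would share both endpoints at infinity with $\gamma$ and hence coincide with it. In fact the lemma itself fails in $\mathbb{H}^2$: for $y$ off that line, the distance to $\gamma(t)$ from the asymptotic ray is strictly decreasing, and from any other ray it is unbounded. So the measure contraction property must enter exactly in the equality step, and your main route never uses it there.

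\emph{Your fallback invokes MCP but lacks the mechanism that produces a contradiction.} You contract $B(x,r)$ toward $\gamma(T)$ and $B(y,r)$ toward $\eta(T)$ separately. This only places the images in $B(\gamma(t),r)$ and $B(\eta(t),r)$, each with measure at least $(1-t/T)^N m(B(x,r))$, which is perfectly consistent with Lemma \ref{lem:hom}. The contraction maps are $1$-Lipschitz, so a drop of $f$ squeezes no single ball into a smaller one. With two different centers, \eqref{eq:mcp} gives no lower bound on the measure of the union of the two images.

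\emph{The missing idea} is to apply \eqref{eq:mcp} once, with the single center $\gamma(T)$ and ratio $1-t/T$, to the disjoint union $B(x,\rho)\cup B(y,\rho)$ with $\rho:=D/2$. A small radius would not detect a small drop of $f$.
\begin{itemize}
\item By \eqref{eq:buse}, the image lies in $B(\gamma(t),\rho)\cup B(\sigma_T(t\ell_T/T),\rho)$.
\item Letting $T\to\infty$ gives $m(B(\gamma(t),\rho)\cup B(\eta(t),\rho))\ge m(B(x,\rho)\cup B(y,\rho))=2m(B(x,\rho))$.
\item If $f(t)<D=2\rho$, the two $\rho$-balls on the left share a nonempty open set, which has positive measure because $m$ is locally positive.
\item By Lemma \ref{lem:hom}, the left-hand side is then strictly less than $2m(B(x,\rho))$, a contradiction.
\end{itemize}
Hence $f\equiv D$.
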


\begin{proof}
Let $q_i$ be a sequence on $\gamma$ such that $d(x,q_i)\to\infty$.
By the Arzel\`a--Ascoli theorem, after passing to a subsequence, we may assume that the shortest paths $yq_i$ converge to a ray $\eta$ starting at $y$ (see Figure \ref{fig:trans}).
In what follows, we use the contraction map centered at the ``limit point'' of $\gamma$ to obtain the desired inequality.

\begin{figure}[ht]
\centering
\begin{tikzpicture}
\coordinate[label=below:$x$](x)at(0,0);
\coordinate[label=above:$y$](y)at(1,2);
\coordinate[label=right:$\gamma$](gamma)at(10,0);
\coordinate[label=right:$\eta$](eta)at(10,2);
\coordinate[label=below:$\gamma(t)$](gammat)at(2,0);
\coordinate[label=above:$\eta(t)$](etat)at(3,2);
\coordinate[label=below:$q_i$](qi)at(8,0);
\coordinate[label=below left:$h_i(t)$](hit)at($(y)!0.25!(qi)$);

\draw(x)to(gamma);
\draw(y)to(eta);
\draw(y)to(qi);
\draw[dashed](x)to(y);
\draw[dashed](gammat)to(hit);

\fill(x)circle(1.5pt);
\fill(y)circle(1.5pt);
\fill(gammat)circle(1.5pt);
\fill(etat)circle(1.5pt);
\fill(qi)circle(1.5pt);
\fill(hit)circle(1.5pt);
\end{tikzpicture}
\caption{}\label{fig:trans}
\end{figure}

We first show
\begin{equation}\label{eq:trans1}
d(\gamma(t),\eta(t))\le d(x,y).
\end{equation}
Let $h_i(t)$ be the point on the shortest path $q_iy$ such that \[\frac{d(q_i,h_i(t))}{d(q_i,y)}=\frac{d(q_i,\gamma(t))}{d(q_i,x)}.\]
The Busemann convexity \eqref{eq:buse} centered at $q_i$ implies
\begin{equation}\label{eq:trans1-1}
d(\gamma(t),h_i(t))\le d(x,y).
\end{equation}
Furthermore, since $d(q_i,y)/d(q_i,x)\to1$ as $i\to\infty$, we see that $h_i(t)$ converges to $\eta(t)$.
Taking $i\to\infty$ in \eqref{eq:trans1-1}, we obtain \eqref{eq:trans1}.

Let $D:=d(x,y)/2$.
Next we show
\begin{equation}\label{eq:trans2}
m(B(\gamma(t),D)\cup B(\eta(t),D))\ge m(B(x,D)\cup B(y,D)).
\end{equation}
Let $\lambda_i:=1-t/d(q_i,x)$ and $\Phi_i$ the $\lambda_i$-contraction map centered at $q_i$ (in particular, $\Phi_i(x)=\gamma(t)$ and $\Phi_i(y)=h_i(t)$).
Then the measure contraction property \eqref{eq:mcp} implies
\begin{equation}\label{eq:trans2-1}
m(\Phi_i(B(x,D)\cup B(y,D)))\ge\lambda_i^Nm(B(x,D)\cup B(y,D)).
\end{equation}
By the Busemann convexity \eqref{eq:buse} as before, we have
\begin{equation}\label{eq:trans2-2}
\Phi_i(B(x,D)\cup B(y,D))\subset B(\gamma(t),D)\cup B(h_i(t),D).
\end{equation}
Combining \eqref{eq:trans2-1} and \eqref{eq:trans2-2} and taking $i\to\infty$, we obtain \eqref{eq:trans2}.

Finally, suppose the strict inequality holds in \eqref{eq:trans1}, i.e., $d(\gamma(t),\eta(t))<d(x,y)$.
Then $B(\gamma(t),D)$ and $B(\eta(t),D)$ intersect, whereas $B(x,D)$ and $B(y,D)$ do not intersect.
Since the measures of $D$-balls are equal by Lemma \ref{lem:hom}, we have
\[m(B(\gamma(t),D)\cup B(\eta(t),D))<m(B(x,D)\cup B(y,D)).\]
This is a contradiction to \eqref{eq:trans2}.
\end{proof}

Finally, using Lemma \ref{lem:trans}, we arrive at the desired cone-type property.

\begin{prop}\label{prop:cone}
Let $X$ be as in Theorem \ref{thm:main} and $p\in X$.
Then for any $x,y\in X$ and $0\le t\le 1$, we have
\[d(x_t,y_t)=td(x,y),\]
where $x_t,y_t$ denote the $t$-intermediate points for $x,y$ with respect to $p$, respectively.
\end{prop}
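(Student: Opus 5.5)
The plan is to prove the reverse inequality $d(x_t,y_t)\ge td(x,y)$, since the Busemann convexity \eqref{eq:buse} already gives $d(x_t,y_t)\le td(x,y)$. The idea is to use Lemma \ref{lem:trans} to move the segment $x_ty_t$ parallel to itself along the direction of $px$. This should turn $t$-contraction into an additive comparison via the triangle inequality. Fix $p,x,y$ and $0<t<1$ (the cases $t=0,1$ are trivial), and set $a:=d(x_t,y_t)$.

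First, by geodesic completeness and the fact that geodesics in a Busemann space are shortest paths, I extend $px$ beyond $x$ to a ray from $p$. Let $\gamma$ be its restriction starting at $x_t$, so that $\gamma(s_0)=x$ with $s_0:=(1-t)d(p,x)$. Lemma \ref{lem:trans} gives a ray $\eta$ from $y_t$ with $d(\gamma(s),\eta(s))=a$ for all $s$. Put $q:=\eta(s_0)$, so $d(x,q)=a$. The triangle inequality then gives
\[d(x,y)\le d(x,q)+d(q,y)=a+d(q,y).\]
It therefore suffices to show
\[d(q,y)\le(1-t)\,d(x,y).\]
Plugging this in yields $d(x,y)\le a+(1-t)d(x,y)$, that is, $td(x,y)\le a$, as desired. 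In Euclidean space this is exactly the identity $q-y=(1-t)(x-y)$.

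The key step, and the expected main obstacle, is the estimate $d(q,y)\le(1-t)d(x,y)$. Here $q$ and $y$ are both reached from $y_t$: $y$ lies on the extension of $py_t$, and $q$ lies on $\eta$. I would first try Busemann convexity with a center far away, as in the proof of Lemma \ref{lem:trans}. Take points $z_i$ far out on the extension of the ray $\eta$ backwards, or equivalently realize $\eta$ as a limit of shortest paths $y_tq_i$ with $q_i\to\infty$ along $\gamma$. Then contract the configuration $\{x,y\}$ towards these centers to compare the pair $(q,y)$ with $(x,y)$ scaled by $(1-t)$. Letting the centers go to infinity should kill the error terms, since the contraction ratios tend to $1$ in the limit.

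If this direct approach fails, the fallback is as follows. By \eqref{eq:buse}, the function $t\mapsto d(x_t,y_t)/t$ is non-decreasing, so it is enough to prove concavity at a single scale and iterate. I would try to establish $d(x_{1/2},y_{1/2})\ge\frac12 d(x,y)$ using a measure argument modelled on the last step of Lemma \ref{lem:trans}. That argument combines the homogeneity of Lemma \ref{lem:hom}, the MCP inequality \eqref{eq:mcp} for balls of radius $d(x,y)/2$ contracted along the parallel rays, and the disjointness versus intersection of balls to rule out strict inequality. The homogeneity and MCP steps look routine; I expect the geometric comparison of $d(q,y)$ (or its measure-theoretic substitute) to be where the real work lies.
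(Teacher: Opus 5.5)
Your reduction is valid, but it does not make the problem easier. The inequality $d(q,y)\le(1-t)d(x,y)$ carries all the content, and neither of your routes proves it. Combined with $d(x,q)=a\le t\,d(x,y)$ (Busemann convexity), it forces $d(x,q)+d(q,y)=d(x,y)$. So it says $q$ is the point of $yx$ at distance $(1-t)d(x,y)$ from $y$, which is at least as strong as the proposition itself.

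To get a factor $1-t$ from \eqref{eq:buse}, you need $q$ and $y$ to be $(1-t)$-intermediate points of a controlled pair with respect to a common center, and your configuration has no such center. The natural center is $y_t$. But then $q$ and $y$ are the $(1-t)$-intermediate points of $\eta(d(p,x))$ and of the point at distance $d(p,y)$ from $y_t$ beyond $y$, and the distance between those two points is just as uncontrolled.

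Contracting towards centers $q_i$ or $z_i$ escaping to infinity, as in Lemma \ref{lem:trans}, uses ratios tending to $1$. In the limit it only yields comparisons with constant $1$, never the factor $1-t$.

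The fallback fails for a different reason. A fixed-ratio measure argument needs $m(B(z,r/2))=2^{-N}m(B(z,r))$, whereas MCP only gives $\ge$. Since $m$ may be collapsed relative to $N$ (e.g.\ $\Hcal^n$ on $\R^n$ with $N>n$, Remark \ref{rem:main}), an overlap of the two half-size balls produces no contradiction. The measure step in Lemma \ref{lem:trans} works only because $\lambda_i^N\to 1$. The paper uses a fixed-ratio version only in the non-collapsed setting (Proposition \ref{prop:coneloc}), where $\Hcal^n(B(z,r))=Cr^n$ holds exactly.

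The missing idea is to translate a different ray, so that the parallel ray starts at $y$ rather than $y_t$.
\begin{itemize}
\item Let $\gamma$ be the ray from $x$ extending $xp$ beyond $p$, so $\gamma((1-t)d(p,x))=x_t$ and $\gamma(d(p,x))=p$.
\item Let $\eta$ be the ray from $y$ given by Lemma \ref{lem:trans}, and put $z=\eta(d(p,x))$ and $w=\eta((1-t)d(p,x))$.
\item Parallelism gives $d(x_t,w)=d(x,y)=d(p,z)$.
\item Now $y_t$ and $w$ are the $(1-t)$-intermediate points of $p$ and $z$ with respect to the common center $y$. So \eqref{eq:buse} at $y$ gives $d(y_t,w)\le(1-t)d(p,z)=(1-t)d(x,y)$.
\item The triangle inequality $d(x,y)=d(x_t,w)\le d(x_t,y_t)+d(y_t,w)$ then yields $d(x_t,y_t)\ge t\,d(x,y)$.
\end{itemize}
This is exactly the paper's proof.
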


\begin{proof}
Let $\gamma$ be a ray that extends the shortest path $xp$ beyond $p$.
Let $\eta$ be a ray starting at $y$ that is ``parallel'' to $\gamma$ in the sense of Lemma \ref{lem:trans}.
Set
\[z:=\eta(d(p,x)),\quad w:=\eta((1-t)d(p,x)).\]
Note that $w$ is the $(1-t)$-intermediate point for $z$ with respect to $y$ (and that $y_t$ is also the $(1-t)$-intermediate point for $p$ with respect to $y$).
See Figure \ref{fig:cone}.

\begin{figure}[ht]
\centering
\begin{tikzpicture}
\coordinate[label=below:$p$](p)at(0,0);
\coordinate[label=below:$x$](x)at(-3,0);
\coordinate[label=above:$y$](y)at(-1.5,2);
\coordinate[label=above:$z$](z)at(1.5,2);
\coordinate[label=below:$x_t$](xt)at($(x)!0.5!(p)$);
\coordinate[label=left:$y_t$](yt)at($(y)!0.5!(p)$);
\coordinate[label=above:$w$](w)at($(y)!0.5!(z)$);
\coordinate[label=right:$\gamma$](gamma)at(4,0);
\coordinate[label=right:$\eta$](eta)at(4,2);

\draw(x)to(gamma);
\draw(y)to(eta);
\draw(y)to(p);
\draw[dashed](x)to(y);
\draw[dashed](p)to(z);
\draw[dashed](xt)to(w);

\fill(p)circle(1.5pt);
\fill(x)circle(1.5pt);
\fill(y)circle(1.5pt);
\fill(z)circle(1.5pt);
\fill(xt)circle(1.5pt);
\fill(yt)circle(1.5pt);
\fill(w)circle(1.5pt);
\end{tikzpicture}
\caption{}\label{fig:cone}
\end{figure}

By Lemma \ref{lem:trans}, we have
\begin{equation}\label{eq:cone1}
d(x,y)=d(p,z)=d(x_t,w).
\end{equation}
On the other hand, by the triangle inequality and the Busemann convexity \eqref{eq:buse},
\begin{equation}\label{eq:cone2}
d(x_t,w)\le d(x_t,y_t)+d(y_t,w)\le td(x,y)+(1-t)d(p,z).
\end{equation}
Combining \eqref{eq:cone1} and \eqref{eq:cone2}, we obtain the desired equality.
\end{proof}

We are now in a position to prove Theorem \ref{thm:main}.

\begin{proof}[Proof of Theorem \ref{thm:main}]
Let $X$ be as in Theorem \ref{thm:main}.
By Proposition \ref{prop:cone}, $X$ is of cone-type (at every point) and in particular non-branching (cf.\ Proposition \ref{prop:kkbran}).
Therefore, Theorem \ref{thm:and} implies that $X$ is isometric to a strictly convex Banach space of dimension $n$ for some $n$.
Since the Hausdorff dimension is less than or equal to the dimension parameter $N$, \cite[Corollary 2.7]{Oh07}, we have $n\le N$.
Furthermore, Lemma \ref{lem:hom} shows that the measure $m$ is translation invariant, and hence it is a constant multiple of the $n$-dimensional Hausdorff (or Lebesgue) measure of $X$.
This completes the proof.
\end{proof}

We can also provide an alternative proof using the idea from sub-Finsler geometry (\cite{Le11, Be18} cf.\ \cite{Le15, HaL23}).
The necessary statements are included in Appendix \ref{sec:app}; see Theorem \ref{thm:le} and Lemma \ref{lem:sub}.

\begin{proof}[Alternative proof of Theorem \ref{thm:main}]
Let $X$ be as in Theorem \ref{thm:main}.
By Lemma \ref{lem:tang}, the Gromov--Hausdorff tangent cone of $X$ is uniquely defined at every point.
Therefore, by \cite[Theorem 1.2]{Le11} (Theorem \ref{thm:le}), there exists $p\in X$ such that $T_pX$ is isometric to a sub-Finsler Carnot group.
By Proposition \ref{prop:cone}, $T_pX$ is isometric to $X$ via the exponential map (see Section \ref{sec:tang}).
In particular, $T_pX$ is Busemann convex.
Since any Busemann convex sub-Finsler Carnot group is a strictly convex Banach space (Lemma \ref{lem:sub}), this completes the proof.
(Alternatively, one can also use the fact that any Busemann concave sub-Finsler Carnot group is a strictly convex Banach space, \cite[Proposition 2.5]{Ke19}).
\end{proof}

\begin{rem}
Here is yet another proof of Theorem \ref{thm:main} using the theory of G-space developed in \cite{Bu55}.
Note that our Busemann space $X$ of Theorem \ref{thm:main} is a straight G-space in the sense of \cite[p.\ 38]{Bu55} (cf.\ Proposition \ref{prop:kkbran}).
Since we have shown that $X$ is of cone-type at every point in Proposition \ref{prop:cone}, $X$ has curvature $0$ in the sense of \cite[(36.2)]{Bu55}.
Therefore, $X$ is isometric to a finite-dimensional Banach space, by the proof of \cite[(39.12)]{Bu55}.
\end{rem}

\begin{rem}
As implicitly used in the above proofs, the MCP condition includes that $X$ is a metric measure space with respect to the reference measure, i.e., $m$ is locally positive and locally finite on $X$.
It would be natural to ask if any locally compact, geodesically complete Busemann space admits a ``canonical measure'' defined by its metric structure so that it becomes a metric measure space.
In fact, such a canonical measure, consisting of the Hausdorff measures of different dimensions, exists in the CAT setting; see \cite[Theorem 1.4]{LN19}.
However, the current structure theory of geodesically complete Busemann spaces developed in \cite{FG25} is still insufficient to construct such a measure (as it focused on the topological aspects).
\end{rem}

Finally, we give an example showing that Theorem \ref{thm:main} does not hold for CD($0,\infty$) (note that there is no such notion as MCP($0,\infty$)).

\begin{ex}\label{ex:gauss}
    Let $\mathbb{H}^2$ be the hyperbolic plane of constant sectional curvature $\equiv -1$ equipped with the radial coordinates: $g=dr^2 + \sinh^2(r) d\theta^2$.
On this space, we consider a Gaussian-type measure $e^{-V}\mathrm{vol}$, where $V(r,\theta):=r^2$ denotes the squared distance from the base point.
By a direct computation, the Hessian of the square distance function $V$ is
\begin{align*}
    \mathrm{Hess}(V)&{}=2dr^2 + 2r\cosh(r)\sinh(r)d\theta^2\\
    &{}=2g+2\sinh(r)\left(r\cosh(r)-\sinh(r)\right)d\theta^2\geq 2g.
\end{align*}
Therefore we have
$\Ric_{\infty,V}:=\Ric + \mathrm{Hess}(V)\geq g$,
which implies the CD($1,\infty$) condition by \cite[Theorem 17.37]{Vil}.
\end{ex}

\section{Topological regularity without geodesic completeness}\label{sec:mfd}

In this section, we prove Theorem \ref{thm:mfd}.
First, in Section \ref{sec:col}, we prove general results for locally Busemann spaces satisfying the local MCP condition (possibly collapsed).
After that, in Section \ref{sec:noncol}, we discuss the non-collapsing case and prove Theorem \ref{thm:mfd}.
Note that all results in Section \ref{sec:col} are assuming the existence of a manifold point and the non-collapsing assumption in Section \ref{sec:noncol} is used only to show the existence of a manifold point
(Theorem \ref{thm:noncol}).

\subsection{General case (possibly collapsing)}\label{sec:col}
In this subsection, we prove general results for locally Busemann spaces satisfying the local MCP condition, \textit{assuming the existence of a manifold point (but without assuming the non-collapsing condition)}.
We first establish the convexity of the set of manifolds points (Theorem \ref{thm:conv}).
We then show the equivalence of several different notions of inner points (Theorem \ref{thm:inner}).
Finally, we observe that the set of non-manifolds points is indeed the manifold boundary (Corollary \ref{cor:bdry}).
These are Busemann+MCP versions of the results of Kapovitch--Kell--Ketterer \cite[Sections 3--4]{KKK22} for CAT+CD spaces.
However, the proofs are somewhat different, and in particular, the existence of a manifold point in \cite[Theorem 3.15]{KKK22} is not shown in this subsection.

Before we get into the above discussion, we show the following basic proposition generalizing part of Theorem \ref{thm:main}.

\begin{prop}\label{prop:dim}
Let $X$ be a locally Busemann space satisfying local {\rm MCP($K,N$)} for some measure, where $K\le0$ and $N\ge 1$.
If $X$ contains an $n$-manifold point, then $n\le N$.
\end{prop}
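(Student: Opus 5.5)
Let $p\in X$ be an $n$-manifold point. I plan to reduce to the Hausdorff dimension bound for MCP spaces: an MCP($K,N$) space has Hausdorff dimension at most $N$ (\cite[Corollary 2.7]{Oh07}). Choose a small convex neighborhood $U$ of $p$ on which MCP($K,N$) holds with the restricted metric and measure, and which is a Busemann space. Then $\dim_H U\le N$. It therefore suffices to show that $U$ contains a subset of Hausdorff dimension at least $n$. Since $p$ is an $n$-manifold point, $U$ contains an open set homeomorphic to $\R^n$, so its topological dimension is $n$. By the classical inequality between topological and Hausdorff dimension (Szpilrajn), $\dim_H U\ge n$, and hence $n\le N$.

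If one wants a more metric argument, matching the spirit of the proof of Theorem \ref{thm:main}, the following route also works. By Corollary \ref{cor:reg} together with Proposition \ref{prop:kkbran}, which supplies the non-branching property, $p$ is $n$-regular. Thus $T_pX$ is unique and is an $n$-dimensional strictly convex Banach space. By Remark \ref{rem:tang}, $T_pX$ carries a limit measure $m_\infty$ satisfying MCP($0,N$). Local finiteness and positivity of $m_\infty$ are inherited from the uniform Bishop--Gromov doubling bound, once the rescaled measures are normalized so that the unit ball has measure one. Applying \cite[Corollary 2.7]{Oh07} to $(T_pX,m_\infty)$ then gives $n=\dim_H T_pX\le N$.

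The main obstacle in the first route is only bookkeeping. Local MCP is stated on convex neighborhoods, so I must check that the convex neighborhood of $p$ can be taken inside the chart homeomorphic to $\R^n$, or at least that it meets the chart in a nonempty open set. Either condition is enough, since an open subset of $\R^n$ still has topological dimension $n$.

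In the second route, the delicate point is the nontriviality of the limit measure. This follows from Bishop--Gromov: the doubling bound gives uniform lower and upper bounds on the normalized ball masses at fixed scales, which rules out collapse of the limit measure to zero and blow-up to infinity.

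I would present the first route as the proof and keep the second as a remark.
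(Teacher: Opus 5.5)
Your fallback route is essentially the paper's proof. Corollary~\ref{cor:reg}, with non-branching supplied by Proposition~\ref{prop:kkbran}, makes $p$ an $n$-regular point. By Remark~\ref{rem:tang}, $T_pX$ then carries a rescaled limit measure satisfying MCP($0,N$), and \cite[Corollary 2.7]{Oh07} applied to the $n$-dimensional Banach space $T_pX$ gives $n\le N$. The paper does not discuss nondegeneracy of that limit measure; your Bishop--Gromov normalization is the right way to fill this in.

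Your primary route is genuinely different, and it is also correct. Applying \cite[Corollary 2.7]{Oh07} directly to a convex MCP neighborhood $U$ of $p$ gives $\dim_H U\le N$. Since $U$ is a neighborhood of $p$, it contains an open set around $p$ whose intersection with the chart is a nonempty open subset of $\R^n$. Szpilrajn's inequality $\dim_{\mathrm{top}}\le\dim_H$ for separable metric spaces then gives $\dim_H U\ge n$. Strictly, you only get $\dim_{\mathrm{top}}U\ge n$ rather than $=n$, but that is all you need, and the bookkeeping you worry about is automatic.

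What your route buys is economy and generality. It never touches the tangent cone, so it needs no Lytchak--Schroeder or Andreev through Corollary~\ref{cor:reg}, no non-branching, and no measured Gromov--Hausdorff stability of MCP. Busemann convexity plays no role beyond the framework in which local MCP is formulated. The same argument therefore shows that in any MCP($K,N$) space no open subset is homeomorphic to $\R^n$ with $n>N$.

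The paper's route instead stays within its own tangent-cone toolkit and avoids classical dimension theory. The identification of $T_pX$ as an $n$-dimensional Banach space that it relies on is needed anyway in the surrounding results, such as Theorem~\ref{thm:conv} and Corollary~\ref{cor:mfdcol}.
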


\begin{proof}
Let $p$ be an $n$-manifold point of $X$.
By Corollary \ref{cor:reg}, $p$ is an $n$-regular point, i.e., $T_pX$ is unique and isometric to a strictly convex Banach space of dimension $n$.
Since $X$ satisfies the local MCP($K,N$), $T_pX$ satisfies the MCP($0,N$) for some rescaled limit measure.
Hence the claim follows from the fact that the Hausdorff dimension is less than or equal to the dimension parameter, \cite[Corollary 2.7]{Oh07}.
\end{proof}

Now let us get into the main topic.
The key observation in this subsection is the following strong convexity of the set of manifold points.

Let $X$ be a geodesic space.
For a positive integer $n$, we denote by $\In_nX$ the set of $n$-manifold points in $X$.
For a shortest path $\gamma$ in $X$, we denote by $\In\gamma$ the relative interior of $\gamma$, that is, the image of $\gamma$ with its endpoints removed.

\begin{thm}\label{thm:conv}
Let $X$ be a locally Busemann space satisfying local {\rm MCP($K,N$)} for some measure, where $K\le0$ and $N\ge 1$.
Then for any positive integer $n$, the $n$-manifold part $\In_nX$ (if exists) is strongly convex in $X$ in the following sense: if a shortest path $\gamma$ intersects $\In_nX$, then its interior $\In\gamma$ is contained in $\In_nX$.
\end{thm}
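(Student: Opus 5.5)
The plan is a connectedness argument along $\gamma$, based on the continuity of tangent cones (Proposition \ref{prop:kkkconti}) and the characterization of manifold points by tangent cones (Corollary \ref{cor:reg}, Lemma \ref{lem:reg}). First we reduce to a local statement. Since $\In_nX$ is open, and since a shortest path meeting $\In_nX$ meets it in a relatively open subset of $\gamma$, it suffices to prove the following. Let $\gamma$ be a shortest path and $t_0$ an interior parameter. Suppose $\gamma(t)\in\In_nX$ for all $t$ on one side of $t_0$, say $t<t_0$, arbitrarily close to $t_0$. Then $\gamma(t_0)\in\In_nX$. Applying this in both directions along $\gamma$ shows that $\In\gamma\cap\In_nX$ is relatively closed in $\In\gamma$. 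Together with openness and nonemptiness, connectedness of $\In\gamma$ gives the claim. One remark on the nonemptiness step: if $\gamma$ meets $\In_nX$ only at an endpoint, then openness of $\In_nX$ shows that nearby interior points of $\gamma$ already lie in $\In_nX$.

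Next we fix $q=\gamma(t_0)$ and points $p_k=\gamma(t_k)\in\In_nX$ with $t_k\uparrow t_0$. By Corollary \ref{cor:reg} each $p_k$ is $n$-regular, so $T_{p_k}X$ is unique and is a strictly convex $n$-dimensional Banach space $V_k$. Because $q$ and the $p_k$ lie in a fixed $\epsilon$-interior of $\gamma$, Proposition \ref{prop:kkkconti} gives $d_\GH((r^{-1}B(p_k,r),p_k),(r^{-1}B(q,r),q))<\epsilon$ for all $0<r<\delta$ once $d(p_k,q)<\delta$. Letting $r\to0$ along any sequence defining a tangent cone $T_qX$, and using uniqueness at $p_k$, we get $d_\GH(B(o,1)\subset T_qX,\ B(o,1)\subset V_k)\le\epsilon$. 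By compactness of the Banach--Mazur space of $n$-dimensional norms, the $V_k$ subconverge to an $n$-dimensional normed space $V$, and we conclude $T_qX\cong V$ for every choice of scale. Hence $T_qX$ is unique and isometric to a finite-dimensional Banach space; it need not be strictly convex, since limits of strictly convex norms may fail to be. This is exactly the situation of Lemma \ref{lem:reg}, which applies because $X$ is non-branching by Proposition \ref{prop:kkbran} and locally doubling by MCP. So $q$ is an $n$-manifold point.

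The main obstacle is the uniqueness of the tangent cone at $q$. Proposition \ref{prop:kkkconti} only compares same-scale balls, so we must pass to the limit carefully: for every scale $r<\delta$, the rescaled ball at $q$ is $\epsilon$-close to the ball of radius $1$ in $V_k$. This requires the rescaled balls $r^{-1}B(p_k,r)$ themselves, not only their limits, to be close to $V_k$ uniformly in $r$. To handle this, for a given $\epsilon$ we first fix $\delta$ and a single $k$, and then use that $r^{-1}B(p_k,r)\to V_k$ as $r\to0$. It follows that every tangent cone at $q$ lies within $2\epsilon$ of $V_k$. Letting $\epsilon\to0$ and using the fact that $n$-dimensional normed balls form a compact metric space with isometry classes as points, any two tangent cones at $q$ are isometric to the same limit $V$. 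This is where Lemma \ref{lem:ban} is likely needed: it should guarantee that a Banach space which is a GH limit of $n$-dimensional ones is again an $n$-dimensional Banach space, so that dimension does not drop.
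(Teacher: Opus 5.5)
Your proposal is correct and follows essentially the same route as the paper. Openness of $\In_nX$ and connectedness of $\In\gamma$ reduce the claim to closedness, which you obtain from the continuity of tangent cones (Proposition \ref{prop:kkkconti}), Corollary \ref{cor:reg} at the approximating points, Lemma \ref{lem:ban} to keep the limit an $n$-dimensional (possibly non-strictly convex) Banach space, and Lemma \ref{lem:reg} to conclude. Your fixed-$k$, $2\epsilon$ argument spells out the uniqueness of $T_qX$, which the paper only asserts parenthetically.
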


\noindent This theorem is a generalization of \cite[Theorem 3.19(iii)]{KKK22} in the CAT+CD case, but it seems difficult to adapt their original proof to our setting.
This is because, as mentioned in Section \ref{sec:out}, Busemann spaces do not have rich structures as CAT spaces.
For example, in \cite[Propositions 3.12, 3.13]{KKK22}, the original proof is using the splitting theorem for CAT($0$) spaces, as well as the standard fact that the tangent cone of a CAT($\kappa$) space is a CAT($0$) space.

To overcome this issue, we shall take a different approach suggested in \cite[Remark 7.3]{KKK22}.
The key is the continuity of the tangent cone, Proposition \ref{prop:kkkconti} (\cite[Theorem 7.1]{KKK22}), and the following basic lemma on Banach spaces.

\begin{lem}\label{lem:ban}
Let $(X_i,o_i)$ be a sequence of pointed metric spaces, each of which is isometric to a Banach space of dimension $n$.
Suppose $(X_i,o_i)$ converges to $(X,o)$ in the pointed Gromov--Hausdorff topology.
Then $X$ is also isometric to a Banach space of dimension $n$.
\end{lem}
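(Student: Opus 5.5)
Each $(X_i,o_i)$ is isometric to $(\R^n,\|\cdot\|_i,0)$ for some norm $\|\cdot\|_i$; by Mazur--Ulam the isometry may be taken linear once the base point goes to $0$. Normalize using John's theorem: after a linear change of coordinates, which is an isometry onto $(\R^n,\|\cdot\|_i)$ with the transported norm, we may assume
\[
|v|/\sqrt n\le\|v\|_i\le|v|
\]
for the Euclidean norm $|\cdot|$, uniformly in $i$. The unit balls $B_i$ are then compact convex symmetric bodies between the Euclidean balls of radii $1$ and $\sqrt n$. By Blaschke selection we pass to a subsequence with $B_i\to B$ in the Hausdorff sense, and $B$ is again convex, symmetric and contains the unit Euclidean ball. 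Let $\|\cdot\|$ be its gauge.

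Next I show $\|\cdot\|_i\to\|\cdot\|$ uniformly on compact sets of $\R^n$. This follows from Hausdorff convergence of the unit balls together with the uniform equivalence to $|\cdot|$, since gauges of bodies sandwiched between fixed balls are uniformly Lipschitz and depend continuously on the body. Consequently the identity map
\[
(\bar B_{|\cdot|}(0,R\sqrt n),\|\cdot\|_i)\to(\bar B_{|\cdot|}(0,R\sqrt n),\|\cdot\|)
\]
has distortion tending to $0$. Restricting to the $\|\cdot\|_i$-ball of radius $R$, which lies inside this Euclidean ball, gives that $(\R^n,\|\cdot\|_i,0)\to(\R^n,\|\cdot\|,0)$ in the pointed Gromov--Hausdorff sense. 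A small adjustment is needed because the $R$-balls for $\|\cdot\|_i$ and $\|\cdot\|$ differ slightly, but they are Hausdorff close, so this is routine.

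Finally, pointed Gromov--Hausdorff limits of proper (here boundedly compact) spaces are unique up to pointed isometry. Since $(X_i,o_i)\to(X,o)$ and a subsequence converges to $(\R^n,\|\cdot\|,0)$, we conclude that $X$ is isometric to the $n$-dimensional Banach space $(\R^n,\|\cdot\|)$.

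The only delicate step is the uniform normalization. Without it a degenerate sequence of norms, say $\|(a,b)\|_i=|a|+i|b|$ written in fixed coordinates, would have no convergent subsequence. John's theorem, applied intrinsically to each $X_i$, removes this issue, because the Gromov--Hausdorff class does not depend on the choice of coordinates.
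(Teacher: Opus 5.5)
Your proof is correct and follows essentially the same route as the paper: normalize each norm via John's theorem so that the unit balls are sandwiched between fixed Euclidean balls, extract a convergent subsequence of norms, and identify $X$ with the limit normed space by uniqueness of pointed Gromov--Hausdorff limits. The differences are cosmetic: you use Blaschke selection on the unit balls and the sharper symmetric constant $\sqrt n$, where the paper applies Arzel\`a--Ascoli to the norms restricted to the Euclidean unit sphere; you also spell out the final identification step, which the paper leaves implicit.
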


\begin{proof}
Let $f_i:X_i\to (\R^n,|\cdot|_i)$ be an isometry, where $|\cdot|_i$ is a norm on $\R^n$.
Let $B_i(r)$ and $B(r)$ denote the $r$-balls centered at the origin of $\R^n$, with respect to $|\cdot|_i$ and the standard Euclidean norm, respectively.
By choosing $f_i$ and $|\cdot|_i$ appropriately, we may assume that
\begin{equation}\label{eq:ban}
B(1)\subset B_i(1)\subset B(n).
\end{equation}
Indeed, by the theorem of John \cite{Jo}, there exists an ellipsoid  $E_i\subset\R^n$ such that
\[E_i\subset B_i(1)\subset nE_i.\]
Choosing a linear isomorphism $A:\R^n\to\R^n$ that takes $E_i$ to $B(1)$ and replacing $f_i$ and $|\cdot|_i$ with $f_i':=Af_i$ and $|\cdot|_i':=|A^{-1}\cdot|_i$, respectively, we obtain the desired property \eqref{eq:ban}.

Consider the norm $|\cdot|_i$ as a function on the the standard unit sphere $S(1)\subset\R^n$.
By the property \eqref{eq:ban} and the Arzel\`a--Ascoli theorem, we may assume that $|\cdot|_i$ converges to a norm $|\cdot|$ on $\R^n$.
Since $(X_i,o_i)$ converges to $(X,o)$, we see that $X$ is isometric to $(\R^n,|\cdot|)$.
\end{proof}

\begin{proof}[Proof of Theorem \ref{thm:conv}]
Let $\gamma$ be as in Theorem \ref{thm:conv}.
Since $\In_nX$ is open in $X$, the intersection $\In\gamma\cap\In_nX$ is open in $\In\gamma$.
Since $\In\gamma$ is connected, it suffices to prove that $\In\gamma\cap\In_nX$ is closed in $\In\gamma$.

Suppose $p_i\in\In\gamma\cap\In_nX$ converges to $p\in\In\gamma$.
We prove that $p\in\In_nX$.
By Proposition \ref{prop:kkkconti}, $T_pX$ is a pointed Gromov--Hausdorff limit of $T_{p_i}X$ (in particular, since $T_{p_i}X$ is unique, so is $T_pX$).
By Corollary \ref{cor:reg}, $T_{p_i}X$ is isometric to a Banach space of dimension $n$.
Therefore, by Lemma \ref{lem:ban}, $T_pX$ is also isometric to a Banach space of dimension $n$.
By Lemma \ref{lem:reg}, we see that $p$ is an $n$-manifold point.
This completes the proof.
\end{proof}

\begin{rem}

In the CAT case \cite[Remark 7.3]{KKK22}, we did not need Lemma \ref{lem:ban}, since the tangent cone at a regular point of a CAT space is isometric to Euclidean space, which has a unique norm.
However, for Busemann spaces, the tangent norm at a regular point may vary from point to point, and thus we need Lemma \ref{lem:ban}.
See also Question \ref{ques:ber} towards the homogeneity of tangent norms.
\end{rem}

The following corollary immediately follows from Theorem \ref{thm:conv}.

\begin{cor}\label{cor:conv}
Let $X$ be a locally Busemann space with local {\rm MCP($K,N$)} for some measure, where $K\le0$ and $N\ge 1$.
If $\In_nX$ is nonempty, then such $n$ is unique and $\In_nX$ is dense in $X$.
\end{cor}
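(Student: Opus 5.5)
Both claims will follow from Theorem \ref{thm:conv} by connecting points with shortest paths. Suppose $\In_nX\neq\emptyset$ and fix $p\in\In_nX$.

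For \textbf{density}, take an arbitrary $x\in X$ and $\epsilon>0$. First I reduce to the case that $x$ lies in a convex neighborhood of $p$; the global case is handled below. In that case there is a shortest path $\gamma$ from $p$ to $x$. Since $p\in\In_nX$, the path $\gamma$ meets $\In_nX$, so Theorem \ref{thm:conv} gives $\In\gamma\subset\In_nX$. Points of $\In\gamma$ near $x$ lie within $\epsilon$ of $x$, so $x\in\overline{\In_nX}$. For general $x$, I use that $X$ is a complete geodesic space, hence joined to $p$ by a shortest path, possibly long. The statement of Theorem \ref{thm:conv} is global: it concerns any shortest path in $X$, and its proof only uses connectedness of $\In\gamma$ together with local facts along $\gamma$. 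So the same argument applies directly to a shortest path from $p$ to $x$, and $\In_nX$ is dense.

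For \textbf{uniqueness} of $n$, suppose also $q\in\In_mX$. Join $p$ and $q$ by a shortest path $\gamma$. If $p\ne q$, Theorem \ref{thm:conv} applied with $n$ gives $\In\gamma\subset\In_nX$, and applied with $m$ gives $\In\gamma\subset\In_mX$. Since $\In\gamma$ is nonempty, some point is both an $n$- and an $m$-manifold point. Invariance of domain, or alternatively the uniqueness of the dimension of the tangent cone in Corollary \ref{cor:reg}, then gives $n=m$. If $p=q$, the same invariance argument applies directly.

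The only mildly delicate point is ensuring that the shortest paths used are genuine shortest paths in $X$, so that Theorem \ref{thm:conv} applies. This holds since $X$ is assumed complete and geodesic.
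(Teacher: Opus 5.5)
Your proposal is correct and is essentially the paper's proof. The paper also joins a manifold point to an arbitrary point, or to another manifold point, by a shortest path and applies Theorem \ref{thm:conv}; it uses invariance of domain implicitly at a common interior point, and your preliminary reduction to a convex neighborhood is unnecessary, as you yourself observe.
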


\begin{proof}
First, suppose $\In_nX$ and $\In_{n'}X$ are nonempty.
Take $p\in\In_nX$ and $p'\in\In_{n'}X$ and connect them by a shortest path $\gamma$.
Since $\In\gamma$ intersects with both $\In_nX$ and $\In_{n'}X$, applying Theorem \ref{thm:conv}, we see that $n=n'$.

Next, suppose $\In_nX$ is nonempty and let $x\in X$ be an arbitrary point.
Take $p\in\In_nX$ and connect $p$ to $x$ by a shortest path $\gamma$.
Applying Theorem \ref{thm:conv}, we see that every interior point of $\gamma$ is an $n$-manifold point, which converges to $x$.
\end{proof}

\begin{rem}
The constancy of dimension as above does not hold for general MCP (even CD) spaces, see \cite{KR15, Ma23}.
On the other hand, it does hold for RCD spaces, see \cite{BS20}.
\end{rem}

Using Theorem \ref{thm:conv}, we show the equivalence of several notions of inner points introduced in Section \ref{sec:factbuse}, assuming the existence of a manifold point.
Compare the following with \cite[Proposition 3.13]{KKK22}.

\begin{thm}\label{thm:inner}
Let $X$ be a locally Busemann space with local {\rm MCP($K,N$)} for some measure, where $K\le0$ and $N\ge 1$.
Suppose $X$ contains an $n$-manifold point $x_0$.
Then for any $p\in X$, the following conditions are equivalent.
\begin{enumerate}
\item $p$ is an $n$-manifold point;
\item $p$ is an $n$-regular point;
\item $p$ is a topologically inner point;
\item $p$ is a geometrically inner point;
\item there exists a geodesic from $x_0$ to $p$ that is extendable beyond $p$.
\end{enumerate}
\end{thm}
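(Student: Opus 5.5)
I would prove the cycle of implications (1)$\Rightarrow$(2)$\Rightarrow$(3)$\Rightarrow$(4)$\Rightarrow$(5)$\Rightarrow$(1). By Proposition \ref{prop:kkbran}, $X$ is non-branching, and it is locally doubling by the MCP. Hence Corollary \ref{cor:reg} applies and gives (1)$\Leftrightarrow$(2). The implication (2)$\Rightarrow$(3) is Proposition \ref{prop:kkkreg}, and (3)$\Rightarrow$(4) is Proposition \ref{prop:ls}. For (4)$\Rightarrow$(5), join $x_0$ to $p$ by the (locally unique) shortest path; since $p$ is geometrically inner, this path extends beyond $p$ to length $\epsilon$, which is exactly (5). (If $X$ is only locally Busemann, I would take $\gamma$ to be any geodesic from $x_0$ to $p$, for instance a shortest path, which is a geodesic.)

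The substantive implication is (5)$\Rightarrow$(1), and here I would use Theorem \ref{thm:conv}. Let $\gamma$ be a geodesic from $x_0$ to $p$ extended beyond $p$ to a point $q$. Then $p$ lies in the relative interior of $\gamma$ from $x_0$ to $q$. If this extended geodesic is a shortest path, Theorem \ref{thm:conv} immediately gives $p\in\In_nX$, because the path meets $\In_nX$ at $x_0$.

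In the merely local setting the geodesic need not be globally shortest, so I would subdivide it. Cover it by finitely many consecutive subsegments $\sigma_1,\dots,\sigma_k$, each contained in a convex Busemann neighborhood and hence a shortest path. Arrange that consecutive segments overlap in a nontrivial sub-arc, that $\sigma_1$ starts at $x_0$, and that $p$ is an interior point of $\sigma_k$. Since $x_0\in\In_nX$ lies on $\sigma_1$, Theorem \ref{thm:conv} puts $\In\sigma_1$ inside $\In_nX$. The overlap then contains interior points of $\sigma_2$ lying in $\In_nX$, so $\In\sigma_2\subset\In_nX$. By induction we conclude $p\in\In\sigma_k\subset\In_nX$.

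The main obstacle I expect is exactly this localization: making sure the overlapping pieces are genuinely shortest paths and that $p$ sits strictly inside the last one. Since $x_0$ is an endpoint, Theorem \ref{thm:conv} requires only that the path intersect $\In_nX$, which $x_0$ already does. The rest follows by the routine chaining argument using Lebesgue numbers of the cover of the compact image of the geodesic by convex neighborhoods.
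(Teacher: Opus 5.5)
Your proposal is correct and follows the paper's proof: it uses the same cycle, with Corollary \ref{cor:reg} (non-branching from Proposition \ref{prop:kkbran}), Propositions \ref{prop:kkkreg} and \ref{prop:ls}, the trivial step (4)$\Rightarrow$(5), and Theorem \ref{thm:conv} for (5)$\Rightarrow$(1). Your chaining of Theorem \ref{thm:conv} along overlapping short shortest subsegments spells out carefully the localization the paper leaves implicit when it applies that theorem to a geodesic in a merely locally Busemann space.
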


\begin{proof}

The following diagram summarizes all the implications discussed below, indicating the conditions used.
In particular, the existence of a manifold point is only used to show the implication from (4) to (1) passing through (5).
\[
\begin{diagram}
 \node{(2)}\arrow{e,t,2}{B+MCP}\arrow{s,l,2}{B}
 \node{(1)}\arrow{w,d,2}{}\arrow{sw,t,2}{T}\\
 \node{(3)}\arrow{e,t,2}{B}
 \node{(4)}\arrow{e,t,2}{T}\arrow{n,..}
 \node{(5)}\arrow{nw,t,2}{B+MCP}
\end{diagram}
\qquad\qquad
\begin{aligned}
   & B=\text{local Busemann}\\
   & MCP=\text{local MCP}\\
   & T=\text{trivial}
\end{aligned}
\]
The equivalence between (1) and (2) was already proved in Corollary \ref{cor:reg} (note that the non-branching assumption, as well as the local doubling condition, follows from the local MCP; see Proposition \ref{prop:kkbran}).
We also showed that (2) implies (3) in Proposition \ref{prop:kkkreg} (and clearly (1) implies (3)).
By Proposition \ref{prop:ls}, (3) implies (4).
Clearly (4) implies (5).
Finally, by Theorem \ref{thm:conv}, (5) implies (1).
\end{proof}

\begin{rem}
In the CAT case, the circle of equivalence is closed by showing (4) to (2) directly without passing through (5), using the splitting theorem and the fact that $X$ is non-branching.
See \cite[Corollary 3.10]{KKK22}.
In particular, Theorem \ref{thm:inner} in the CAT case does not require the existence of a manifold point.
\end{rem}

Now we discuss the boundary of a Busemann space with MCP.
Our treatment is different from Kapovitch--Kell--Ketterer \cite[Section 4]{KKK22}, where they introduced the \textit{geometric boundary} and proved that it coincides with the manifold boundary.
Here we define it more directly as the complement of the set of manifold points.

Let $X$ be a locally Busemann space with local MCP($K,N$).
With Corollary \ref{cor:conv} in mind, we simply denote by $\In X$ the set of manifold points in $X$.
We say that $p\in X$ is a \textit{boundary point} if it is not a manifold point, and denote by $\partial X$ the set of boundary points, i.e., $\partial X=X\setminus\In X$.
Note that Theorem \ref{thm:inner} gives several equivalent definitions of a boundary point, providing $\In X\neq\emptyset$.

We first show the vanishing of the reference measure on the boundary.

\begin{cor}\label{cor:meas}
Let $X$ be a locally Busemann space with local {\rm MCP($K,N$)} for some measure, where $K\le0$ and $N\ge 1$.
Suppose $\In X$ is nonempty.
Then $\In X$ has full measure in $X$, i.e., $\partial X$ has measure zero.
\end{cor}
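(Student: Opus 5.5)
Fix a manifold point $x_0\in\In X$ and look at contractions toward $x_0$. By Theorem \ref{thm:conv}, for any $x\in X$ the interior of the shortest path $x_0x$ lies in $\In X$. Hence for $t\in[0,1)$ the $t$-contraction $\Phi_t$ centered at $x_0$ sends all of $X$ into $\In X$: for $x\neq x_0$ the point $x_t$ is an interior point of $x_0x$, or equals $x_0$ when $t=0$. The plan is to use the MCP lower bound on $m(A_t)$ with $A\subset\partial X$ to squeeze measure out of the boundary. The conclusion is local, and $\partial X$ is closed because $\In X$ is open. So it suffices to show $m(\partial X\cap U)=0$ for a small convex neighborhood $U$ of each boundary point $p$ on which local MCP($K,N$) holds.

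First I would localize. Choose $x_0\in\In X\cap U$, which is possible by the density statement of Corollary \ref{cor:conv}. Then apply the MCP inside $U$, centered at $x_0$, to $A:=\partial X\cap U$. Since $U$ is convex, the intermediate sets $A_t$ stay in $U$. For $K\le 0$ the distortion coefficient in Remark \ref{rem:mcp} is bounded below by $c\,t^N$ for $t$ close to $1$ (uniformly on the bounded set $U$), so
\[m(A_t)\ge c\,t^N m(A)\]
with $c>0$. Since $A_t\subset\In X$ for $t<1$, this alone is not a contradiction. The point is how $A_t$ sits relative to $A$.

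The key step, which I expect to be the main obstacle, is to show that the sets $A_t$ for different $t$ carry disjoint mass, or at least that their total mass grows without bound. I would use non-branching (Proposition \ref{prop:kkbran}) together with the observation that a geodesic from $x_0$ cannot pass through a boundary point in its interior and then continue. By Theorem \ref{thm:inner}(5), if a geodesic from $x_0$ extends beyond $p$, then $p$ is a manifold point. Consequently the map $\Phi_t|_A$ is injective: if $\Phi_t(a)=\Phi_t(b)$ with $a\neq b$, non-branching forces $a$ and $b$ onto a common geodesic from $x_0$, so one of them lies in the interior of the path to the other, which is impossible for boundary points. The same argument shows that for $s\neq t$ close to $1$, the sets $A_s$ and $A_t$ are disjoint. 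Indeed, a common point would again place two points of $A$ on one ray from $x_0$, one strictly before the other, or force $s=t$.

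Now pick infinitely many distinct values $t_k\in[1/2,1)$. The sets $A_{t_k}$ are pairwise disjoint subsets of the bounded set $U$, which has finite measure by local finiteness. Each has measure at least $c\,2^{-N}m(A)$. Summing over $k$ gives $m(A)=0$. Covering $\partial X$ by countably many such neighborhoods $U$ (using separability) finishes the proof. The points to check carefully are measurability of $A_t$, which could be handled through an inner-regular compact subset of $A$ and continuity of $\Phi_t$, and the uniform lower bound on the $K<0$ distortion coefficient on a bounded region.
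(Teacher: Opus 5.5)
Your proof is correct and has the same structure as the paper's: localize to a small convex Busemann+MCP neighborhood, pick a manifold point $x_0$ there via Corollary~\ref{cor:conv}, and use Theorem~\ref{thm:conv} (equivalently (5)$\Rightarrow$(1) of Theorem~\ref{thm:inner}) to see that no boundary point lies in the interior of a shortest path from $x_0$. The only difference is the last step. The paper cites the general fact that the non-extendable set of an MCP space is null \cite[Section 3.3]{vo08}, whereas you prove it directly through the pairwise disjointness of the sets $A_{t_k}$, using the non-branching property (Proposition~\ref{prop:kkbran}) that holds here; this is a valid self-contained substitute, and measurability is automatic because $\Phi_t$ is $t$-Lipschitz, so $A_t$ is compact when $U$ is a small closed ball.
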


\begin{proof}
Recall that the non-extendable set in an MCP space has measure zero (\cite[Section 3.3]{vo08}).
Here the \textit{non-extendable set} for a point $x_0$ in a geodesic space is the set of points $p$ such that any shortest path $x_0p$ cannot be extended beyond $p$ as a shortest path.

For any $x\in X$, we take $r>0$ such that the closed ball $\bar B(x,r)$ is a Busemann and MCP($K,N$) space with respect to the restricted metric and measure.
By Corollary \ref{cor:conv}, there exists a manifold point $x_0$ in the interior $B(x,r)$.
From the equivalence between (1) and (5) of Theorem \ref{thm:inner}, we see that the set of manifold points in $\bar B(x,r)$ coincides with the extendable set for $x_0$ in $\bar B(x,r)$ (note that by the Busemann condition, every geodesic contained in $\bar B(x,r)$ is a shortest path).
Therefore, by the fact in the previous paragraph, $\In X$ has full measure in $B(x,r)$.
Since $X$ is proper, a covering argument shows the claim.
\end{proof}

Next we see that the boundary defined above is indeed the manifold boundary, as proved in \cite[Theorem 4.3]{KKK22}.

\begin{cor}\label{cor:bdry}
Let $X$ be a locally Busemann space with local {\rm MCP($K,N$)} for some measure, where $K\le0$ and $N\ge 1$.
Suppose $X$ contains an $n$-manifold point and let $p\in\partial X$.
Then there exists a neighborhood of $p$ that is homeomorphic to the half space $\R^n_+$.
\end{cor}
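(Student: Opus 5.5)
Near a boundary point $p$, I will build a homeomorphism to the half space by radially projecting from a nearby interior point.

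\emph{Setup.} Fix $r>0$ such that $\bar B(p,r)$ is a convex Busemann neighbourhood satisfying MCP($K,N$). By Corollary \ref{cor:conv}, there is an $n$-manifold point $x_0\in B(p,r/2)$. Since $x_0\in\In X$ is geometrically inner and $X$ is non-branching (Proposition \ref{prop:kkbran}), the argument of Corollary \ref{cor:reg} gives the exponential map $\exp_{x_0}$. It is a homeomorphism from a small ball $B(o,\epsilon)\subset T_{x_0}X$ onto $B(x_0,\epsilon)$, and $T_{x_0}X\cong\R^n$. I shrink $r$ only through the choice of $x_0$, so that $x_0$ is very close to $p$.

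\emph{Radial structure.} Every point $y$ in a small convex neighbourhood $U$ of $p$ is joined to $x_0$ by a unique shortest path. By non-branching and the uniform extendability near $x_0$, its initial direction is a well-defined unit vector $v(y)\in S\subset T_{x_0}X$. Define the exit function $\rho(v)\in(0,\infty]$ as the supremum of lengths to which the geodesic from $x_0$ in direction $v$ extends within $\bar B(p,r)$, i.e.\ before losing extendability. Theorem \ref{thm:inner}, (1)$\Leftrightarrow$(5), says that along each such geodesic every point before the terminal point is a manifold point, and the terminal point, if it lies inside $B(p,r)$, is a boundary point. Hence $\partial X\cap B(p,r)$ is exactly the set of endpoints $\gamma_v(\rho(v))$. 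Non-branching makes distinct directions give distinct geodesics, so each boundary point near $p$ lies on exactly one ray from $x_0$.

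\emph{Continuity of $\rho$.} Near the direction $v_p=v(p)$, I will show $\rho$ is finite and continuous. This is the main obstacle. Upper semicontinuity follows from the Arzelà--Ascoli theorem: limits of extendable geodesics remain geodesics, and properness gives limits of extensions. For lower semicontinuity, suppose $\rho(v_i)\to a<\rho(v)$. Then the interior points $\gamma_{v_i}(\rho(v_i))\in\partial X$ converge to $\gamma_v(a)$, a manifold point. But $\In X$ is open, so the boundary points would eventually lie in $\In X$, a contradiction.

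To handle a possibly non-open set of directions, I consider the set $D$ of directions $v$ for which $\gamma_v$ meets $\partial X\cap B(p,r')$. Strong convexity (Theorem \ref{thm:conv}) shows that the segment from $x_0$ to any boundary point has manifold interior. Using that $\partial X$ is closed and $\In X$ is dense, I expect $D$ to contain an open neighbourhood $W$ of $v_p$ in $S\cong S^{n-1}$.

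\emph{Homeomorphism.} Define
\[F:W\times(-\delta,0]\to X,\quad F(v,s)=\gamma_v(\rho(v)+s).\]
This is continuous because $\rho$ and the geodesics depend continuously on their data, by non-branching and uniqueness. It is injective because the rays from $x_0$ are disjoint apart from $x_0$ and the parameter is arclength. Its image contains a neighbourhood of $p$: every nearby point $y$ lies on the ray with direction $v(y)$, at distance within $\delta$ of $\rho(v(y))$, by continuity at $p$, and $v(y)$ depends continuously on $y$. Invariance of domain applied on the interior, together with compactness on closed subsets, makes $F$ a homeomorphism onto its image. Since $W\times(-\delta,0]\cong\R^{n-1}\times(-1,0]\cong\R^n_+$, the point $p$ has a half-space neighbourhood.
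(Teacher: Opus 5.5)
Your proposal is correct and follows essentially the same route as the paper. The paper likewise takes a regular point $q$ near $p$ and pushes a half-ball neighbourhood of $p'\in qp\cap\partial B(q,r)$ in the disk $\bar B(q,r)$ outward along the unique maximal shortest paths from $q$ to $\partial X$ (non-branching plus Theorem \ref{thm:inner}), which is exactly your chart $F(v,s)=\gamma_v(\rho(v)+s)$. You also supply the continuity of the exit function $\rho$ that the paper leaves to the reader; the step you only ``expect'' (that $D$ contains an open $W\ni v_p$) is immediate from continuity of $\rho$ at $v_p$, since $v\mapsto\gamma_v(\rho(v))$ is then continuous at $v_p$ with value $p$, and the homeomorphism property is most cleanly obtained from the explicit continuous inverse $y\mapsto(v(y),d(x_0,y)-\rho(v(y)))$ rather than from invariance of domain.
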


\begin{proof}
Since the proof is exactly the same as that of \cite[Theorem 4.3]{KKK22}, we only give an outline.
In fact, the original proof is based only on a few properties of a metric space belonging to the class $\mathcal C$, which are listed in \cite[Proposition 3.13, Theorem 3.19]{KKK22}.
We have already established those properties in our Busemann+MCP setting, under the additional assumption of the existence of a manifold point.

In what follows, we restrict our attention to a neighborhood of $p$ that satisfies the Busemann condition and MCP.
By Corollary \ref{cor:conv} and Theorem \ref{thm:inner}, we can find a regular point $q$ near $p$.
Then there exists a small closed ball $\bar B(q,r)$ that is homeomorphic to a closed $n$-disk (via the exponential map, see Section \ref{sec:tang}).
Suppose the boundary $\partial B(q,r)$ intersects with the shortest path $qp$ at a point $p'$.
Let $U$ be a small neighborhood of $p'$ in $\bar B(q,r)$ that is homeomorphic to $\R^n_+$.
By Proposition \ref{prop:kkbran} and Theorem \ref{thm:inner}, for any $x\in U$, there exists a unique maximal shortest path emanating from $q$, passing through $x$, and reaching $\partial X$.
Moving the points of $U$ along these unique shortest paths, we can construct a homeomorphism from $U$ to a neighborhood of $p$.
The details are left to the reader.
\end{proof}

We conclude this subsection with the following corollary summarizing Proposition \ref{prop:dim}, Theorems \ref{thm:conv} and \ref{thm:inner}, and Corollaries \ref{cor:conv}, \ref{cor:meas}, and \ref{cor:bdry}.
This is a general (i.e., possibly collapsing) version of Theorem \ref{thm:mfd},
assuming the existence of a manifold point.
Compare with \cite[Theorem 1.1]{KKK22} and \cite[Corollary 1.2]{KK20}.

\begin{cor}\label{cor:mfdcol}
Let $X$ be a locally Busemann space with local {\rm MCP($K,N$)} for some measure, where $K\le0$ and $N\ge 1$.
Suppose $X$ contains an $n$-manifold point.
Then $n\le N$ and $X$ is a topological $n$-manifold with boundary.
The manifold interior of $X$ is geodesically convex, has full measure, and coincides with the set of $n$-regular points.
\end{cor}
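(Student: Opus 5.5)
The statement is a summary, so the plan is to assemble the earlier results of this subsection in order. We are given a locally Busemann space $X$ with local MCP($K,N$), $K\le0$, $N\ge1$, and an $n$-manifold point $x_0$. By Proposition \ref{prop:kkbran}, $X$ is non-branching. The MCP also gives local doubling (Bishop--Gromov) and properness. These facts put us in the setting of Corollary \ref{cor:reg}, Theorem \ref{thm:conv} and Theorem \ref{thm:inner}.

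The proof proceeds in five steps.

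\textbf{Dimension bound.} Proposition \ref{prop:dim} applied to $x_0$ gives $n\le N$.

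\textbf{Uniqueness of dimension and density.} Corollary \ref{cor:conv} shows that $n$ is the only dimension of manifold points occurring in $X$, so $\In X=\In_nX$, and that this set is dense.

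\textbf{Manifold-with-boundary structure.} Every point of $\In X$ has a neighborhood homeomorphic to $\R^n$ by definition. Every point of $\partial X=X\setminus\In X$ has a neighborhood homeomorphic to $\R^n_+$ by Corollary \ref{cor:bdry}. Together these make $X$ a topological $n$-manifold with boundary.

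\textbf{Manifold interior and boundary.} We must check that $\In X$ is exactly the manifold interior. A point $p\in\partial X$ cannot be a manifold-interior point: such a point would have a neighborhood homeomorphic to $\R^n$, making it an $n$-manifold point. Hence the manifold boundary is precisely $\partial X$, so the topological notion and our definition agree.

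\textbf{Remaining properties of the interior.}
\begin{itemize}
\item \emph{Convexity.} Take $x,y\in\In X$ and a shortest path $\gamma$ joining them. Theorem \ref{thm:conv} puts $\In\gamma$ in $\In_nX$, and the endpoints lie there by assumption. Hence $\In X$ is geodesically convex.
\item \emph{Full measure.} This is Corollary \ref{cor:meas}.
\item \emph{Regular points.} Theorem \ref{thm:inner} (1)$\Leftrightarrow$(2), or directly Corollary \ref{cor:reg}, identifies manifold points with $n$-regular points.
\end{itemize}

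The only step needing any care is the one matching the topological manifold interior with $\In X$. Once the argument above is made, every remaining part is a direct citation, so no real obstacle is expected.
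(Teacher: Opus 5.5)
Your proposal is correct and matches the paper, which presents this corollary simply as a summary of Proposition~\ref{prop:dim}, Theorems~\ref{thm:conv} and~\ref{thm:inner}, and Corollaries~\ref{cor:conv}, \ref{cor:meas}, and~\ref{cor:bdry}; these are exactly the results you assemble. Your extra check that the topological manifold interior coincides with $\In X$ is sound: a point of $\partial X$ with a neighborhood homeomorphic to $\R^n$ would by definition be an $n$-manifold point, so in a half-space chart it cannot lie in the open part. The paper leaves this bookkeeping implicit.
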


\begin{rem}
In Corollary \ref{cor:mfdcol}, the Busemann convexity cannot be replaced with a weaker assumption of convex geodesic bicombing.
The counterexample can be found in \cite{KR15} (cf.\ \cite{Ma23}).
Indeed, the example given in the proof of \cite[Theorem 3]{KR15} satisfies MCP($0,3$) and admits a convex geodesic bicombing (by considering the concatenations of affine geodesics glued at the origin).
However, $X$ is not even a topological manifold.
This happens because the family of geodesics that satisfies the convexity is different from the one used to show the MCP.
\end{rem}

\subsection{Non-collapsing case}\label{sec:noncol}

From now on we consider the non-collapsing case and prove Theorem \ref{thm:mfd}.
In view of Corollary \ref{cor:mfdcol}, it remains to show the existence of a manifold point
under the non-collapsing assumption.

The following theorem is the main result of this subsection.

\begin{thm}\label{thm:noncol}
Let $X$ be a locally Busemann space satisfying local non-collapsed {\rm MCP($K,n$)}, where $K\le 0$ and $n\ge 1$.
Then $n$ is an integer and $X$ contains an open dense subset of $n$-manifold points with full measure.
Furthermore, $X$ is $n$-rectifiable.
\end{thm}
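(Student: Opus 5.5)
The plan follows the outline in Section~\ref{sec:out}. We find one point where the Gromov--Hausdorff tangent cone is a Banach space, turn it into a manifold point via Lemma~\ref{lem:reg}, and then let Corollary~\ref{cor:mfdcol} do the rest. Since everything is local, we work in a small closed ball $\bar B(x,r)$ that is Busemann, satisfies $\mathrm{MCP}(K,n)$ for $\Hcal^n$, and is non-branching by Proposition~\ref{prop:kkbran}.

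\textbf{Step 1 (almost extendability).} Fix a base point $x_0$. By \cite[Section 3.3]{vo08}, $\Hcal^n$-almost every $p$ lies on the extendable set for $x_0$. We want more: at $\Hcal^n$-a.e.\ $p$, the shortest paths emanating from $p$ are ``almost extendable''. Concretely, for every $\delta>0$ and small $s$, the set of $y\in B(p,s)$ such that $py$ does not extend beyond $y$ by length $\ge \delta s$ should have $\Hcal^n$-density tending to $0$.

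To get this, contract $B(p,s)$ toward $p$ by the factor $1-\delta$. The image consists only of points whose shortest path from $p$ extends by $\ge\delta s$ within the ball. Non-collapsed MCP gives
\[
\Hcal^n(\text{image})\ge(1-\delta)^n\,\Hcal^n(B(p,s)).
\]
Combined with the Bishop--Gromov monotonicity, and with the existence of the density $\lim_s \Hcal^n(B(p,s))/s^n$ at a.e.\ point (a consequence of non-collapsing: $\Hcal^n$-densities are bounded above by Bishop--Gromov and positive), this shows that the non-extendable part has density $O(1-(1-\delta)^n)$.

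\textbf{Step 2 (uniqueness of tangent cones).} Almost extendability together with Busemann convexity should replace the hypothesis of Lemma~\ref{lem:tang}. Take any GH tangent cone $T_pX$ along $\lambda_i\to\infty$. Any point of $\lambda_iX$ near distance $1$ from $p$ is $\epsilon$-close to a point $y$ whose path $py$ extends by $\delta$. By Busemann convexity, that point corresponds to an element $(\gamma,a)$ of $T_p^gX$ up to error $O(\epsilon+\delta)$. Hence the isometric embedding $T_p^gX\hookrightarrow T_pX$ of \cite[Lemma 3.4(i)]{KKK22} has dense image, so it is an isometry. Thus $T_pX=T_p^gX$ is unique at $\Hcal^n$-a.e.\ $p$.

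\textbf{Step 3 (Banach tangent, integrality, rectifiability).} Now invoke \cite[Theorem 1.2]{MMR25} (Theorem~\ref{thm:mmr}). Its technical hypotheses include essential non-branching, which follows from Proposition~\ref{prop:kkbran}, and uniqueness of tangents a.e., which is Step 2. It then yields that $n$ is an integer, that $X$ is $n$-rectifiable, and that at $\Hcal^n$-a.e.\ point the (unique) tangent cone is an $n$-dimensional Banach space. Pick one such $p$. Lemma~\ref{lem:reg} makes $p$ an $n$-manifold point.

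Finally, Corollaries~\ref{cor:conv} and \ref{cor:meas} upgrade this to: $\In_nX$ is open, dense and of full measure. This is global, since density and full measure propagate along the ball cover by connectedness and Corollary~\ref{cor:conv}.

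\textbf{Main obstacle.} The delicate point is Step 2: making the density estimate of Step 1 strong enough to show that every point of the rescaled ball, not just most points in measure, is approximated by extendable directions. Measure-density smallness implies metric density here because $\Hcal^n$ is doubling and every small ball has measure comparable to $r^n$, by non-collapsing and Bishop--Gromov. So a ball of radius $\epsilon s$ that avoided extendable points would carry a definite proportion of measure, contradicting Step 1. I would prove this carefully first, and also check that the precise hypotheses of Theorem~\ref{thm:mmr} match what Steps 1 and 2 produce.
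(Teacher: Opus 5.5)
Your overall plan matches the paper's. Non-collapsed MCP gives almost extendability at points of finite density; together with Busemann convexity this gives uniqueness of tangent cones; then you apply Theorem~\ref{thm:mmr}, Lemma~\ref{lem:reg} and Corollaries~\ref{cor:conv}, \ref{cor:meas}. However, Steps~1 and~2 as written do not connect. Step~1 only gives almost extendability at a single scale. You contract by the fixed factor $1-\delta$, so a point at distance $\sim s$ from $p$ extends by $\sim\delta s$. (Image points near $p$ extend only by $\delta d(p,y)$, not $\delta s$.)

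Step~2 then asserts that, by Busemann convexity, such a point corresponds to some $(\gamma,a)\in T_p^gX$ up to error $O(\epsilon+\delta)$. This is unjustified, for two reasons:
\begin{itemize}
\item The extended path has length about $(1+\delta)/\lambda_i$ and changes with $i$. So it does not give a fixed element of $\Gamma_p$ to feed into the embedding $T_p^gX\hookrightarrow T_pX$.
\item Convexity of $s\mapsto d(x_s,y_s)$ only yields $d^*(\log_px,\log_py)\le d(x,y)$. A short extension gives no control on the reverse inequality.
\end{itemize}
Iterating the one-scale statement does not help either. Passing through the $\approx\log(\lambda_iR)/\delta$ scales between $1/\lambda_i$ and a fixed $R$, the relative errors add up (about $k\epsilon$ after $k$ steps), which is unbounded as $\lambda_i\to\infty$. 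The measure-to-metric conversion you single out as the main obstacle is actually the easy part.

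What you need is almost extendability uniform in the contraction factor, i.e.\ condition ($*$) of Claim~\ref{clm:noncol1}: for every $\delta>0$ there is $R>0$ such that for all $0<r\le R$ and all $0<t<1$, the set $\Phi_t(B(p,r))$ is $(\delta tr)$-dense in $B(p,tr)$. Your own measure argument gives this once you allow arbitrary $t$, with harmless $(1-\epsilon)$ factors when $K<0$.
\begin{itemize}
\item Set $\theta(r)=\Hcal^n(B(p,r))/r^n$ and $\theta_0=\lim_{r\to0}\theta(r)$. MCP gives $\Hcal^n\bigl(B(p,tr)\setminus\Phi_t(B(p,r))\bigr)\le(tr)^n\bigl(\theta(tr)-\theta(r)\bigr)\le(tr)^n\bigl(\theta_0-\theta(r)\bigr)$. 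This is small uniformly in $t$ once $r$ is small.
\item A ball of radius $\delta tr/2$ missed by the image has measure at least $c(\delta tr)^n$, which gives the contradiction.
\item This needs $\theta_0<\infty$ at $p$. That does not come from Bishop--Gromov, which gives only monotonicity and positivity. It comes from the upper density theorem for $\Hcal^n$, valid a.e., and it is exactly what drives this step.
\end{itemize}
With ($*$) in hand, fix a finite $\delta R$-net $A\subset B(p,R)$. Since $d(z_t,u_t)\le t\,d(z,u)$, every point of $B(p,tR)$ is $2\delta tR$-close to some $z_t$ with $z\in A$. So it is close to a point on one of finitely many fixed shortest paths $pz$. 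Passing to a subsequence on which $z$ is constant shows that the image of $T_p^gX$ in $T_pX$ is dense. Since $T_p^gX$ is complete, the image is closed, so the embedding is onto. This dense-image route is a legitimate, slightly shorter alternative to the paper's argument in Claim~\ref{clm:noncol2} that $\log_p$ is an almost Gromov--Hausdorff approximation. The rest of your plan then goes through.
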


\noindent Note that the rectifiability also follows from the stronger claims, Theorem \ref{thm:int} and Corollary \ref{cor:meas}.
We prove Theorem \ref{thm:noncol} with the help of the recent structure result for non-collapsed MCP spaces by Magnabosco--Mondino--Rossi \cite{MMR25}.

\begin{thm}[{\cite[Theorem 1.2]{MMR25}}]\label{thm:mmr}
Let $(X,d,m)$ be an {\rm MCP($K,n$)} space, where $K\in \R$ and $n\in(1,\infty)$ (or $K\le 0$ and $n=1$).
Assume that
\begin{enumerate}
\item the lower and upper densities are positive and finite almost everywhere, i.e.,
\[0<\liminf_{r\to0}\frac{m(B(p,r))}{r^n}\le\limsup_{r\to0}\frac{m(B(p,r))}{r^n}<\infty\]
for $m$-a.e.\ $p\in X$;
\item the Gromov--Hausdorff tangent cone $T_pX$ is unique up to isometry for $m$-a.e.\ $p\in X$. 
\end{enumerate}
Then $n$ is an integer and $T_pX$ is isometric to an $n$-dimensional Banach space for almost every $p\in X$.
Furthermore, $X$ is $(m,n)$-rectifiable.
\end{thm}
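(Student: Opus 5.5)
The plan is to reduce the statement to a structure result for tangents of doubling metric measure spaces, due to Le Donne \cite{Le11} (the result recalled as Theorem \ref{thm:le} in Appendix \ref{sec:app}), and then use the MCP to rule out non-abelian tangents.

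\textbf{Step 1 (tangents are Carnot groups).} Since MCP($K,n$) gives the uniform local doubling property, tangent cones exist at every point. Assumption (2) says the tangent is unique for $m$-a.e.\ $p$. Under these two conditions, the iterated-tangent principle of Preiss (tangents of tangents are tangents, at a.e.\ point) shows that for $m$-a.e.\ $p$ the tangent $T_pX$ is isometrically homogeneous and admits dilations. This is exactly the setting of \cite[Theorem 1.2]{Le11}, which gives that for $m$-a.e.\ $p$, $T_pX$ is isometric to a sub-Finsler Carnot group $G$ with homogeneous dimension $Q$.

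\textbf{Step 2 (the tangent measure and the equality $Q=n$).} Along the blow-up $(r^{-1}X,p,m/m(B(p,r)))$, the rescaled measures subconverge to a limit measure $\mu$ on $G$. By \cite[Theorem 6.11]{Oh07}, $(G,\mu)$ satisfies MCP($0,n$). Assumption (1), combined with the a.e.\ tangent-measure argument, lets me arrange $\mu(B(x,r))=c\,r^n$ for all $x\in G$ and $r>0$, so that $\mu$ is a multiple of the Haar measure. Comparing with the Haar scaling $r^Q$ then forces $Q=n$. In particular $n$ is an integer, since $Q=\sum_k k\dim V_k$ for the stratification $V_1\oplus V_2\oplus\cdots$ of the Lie algebra of $G$.

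\textbf{Step 3 (abelianity; the main obstacle).} It remains to show that a sub-Finsler Carnot group with Haar measure satisfying MCP($0,Q$), with $Q$ equal to its homogeneous dimension, is abelian. This is the step I expect to be hardest. Known computations (Juillet, Rizzi for Heisenberg) give the strict inequality ``optimal MCP exponent $>Q$'' only for specific groups. My first attempt would be a saturation argument.

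Take $A=B(o,1)$ and contract toward a center $x$. The contracted set $A_t$ lies in a set of Haar measure $t^Q\mu(A)$ coming from the dilation $\delta_t$ centered at $x$. At the same time, MCP forces $\mu(A_t)\ge t^Q\mu(A)$. Equality therefore holds, so the geodesic $t$-contraction and the dilation centered at $x$ must agree $\mu$-a.e. Consequently a.e.\ point $y$ is joined to $x$ by a geodesic invariant under the dilations centered at $x$, i.e.\ by a horizontal straight line $x\exp(sv)$ with $v\in V_1$. Closing up, $\exp(V_1)=G$, which forces the step to be $1$, so $G$ is a normed vector space.

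The delicate point is the first containment: one needs the geodesic $t$-intermediate set of the ball to sit inside a dilation image of the same Haar measure. If this fails, I would instead contract a small ball far from the center and run a first-variation comparison of volumes.

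\textbf{Step 4 (rectifiability).} For $m$-a.e.\ $p$, the tangent $T_pX$ is now an $n$-dimensional Banach space, and the lower $n$-density is positive and finite by (1). A rectifiability criterion in the style of Bate (unique Banach tangents a.e.\ together with positive lower density imply rectifiability), or alternatively a Reifenberg-type bi-Lipschitz chart construction on pieces where the Gromov--Hausdorff approximation to the tangent is uniform, then yields $(m,n)$-rectifiability of $X$.
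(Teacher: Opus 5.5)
The paper does not prove this statement. It is quoted from \cite{MMR25}, and Remark~\ref{rem:mmr} indicates its structure: Le Donne's theorem gives Carnot tangents a.e., and the substantive input is \cite[Theorem 1.4]{MMR25}, that a sub-Finsler Carnot group with non-collapsed MCP is a Banach space. Your Steps 1, 2 and 4 are reasonable sketches of that outline. Step 3, which is exactly that input, does not work as written.

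\emph{The containment.} The containment you flag is indeed unjustified for $A=B(o,1)$ with $o\neq x$, because the geodesic contraction toward $x$ does not follow $\delta^x_t$ off the horizontal lines through $x$. It is automatic only for balls centred at $x$, where $A_t\subset B(x,tR)=\delta^x_t(B(x,R))$.

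\emph{The pointwise agreement.} This is the more serious problem. Even for balls centred at $x$, the equality $\mu(A_t)=t^Q\mu(A)=\mu(\delta^x_t(A))$ is an equality between the measures of two sets. It says the contracted ball fills $B(x,tR)$ up to a null set; it does not say that the contraction and the dilation agree pointwise. So ``a.e.\ $y$ is joined to $x$ by a dilation-invariant geodesic'' is a non sequitur. (When geodesics are not unique, MCP does not even provide a contraction map, only a selection of geodesics.)

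\emph{What the equality actually gives.} It yields almost extendability in the spirit of Claim~\ref{clm:noncol1}: a.e.\ point of $B(x,tR)$ is the $t$-intermediate point of a geodesic from $x$. Letting $t=1/k\to0$ and using compactness, a.e.\ point lies on some geodesic ray from $x$. This would close the argument in groups where every ray from a point is a straight line, for example the sub-Riemannian Heisenberg group, where the rays from $e$ fill only the null plane $\exp(V_1)$. It fails in general. In the Heisenberg group with the $\ell^\infty$-norm on $V_1$, take the horizontal lift $\gamma$ of any curve $s\mapsto(s,y(s))$ with $|\dot y|\le 1$. Its length on $[a,b]$ is $b-a=\|\pi(\gamma(b))-\pi(\gamma(a))\|_\infty\le d(\gamma(a),\gamma(b))$, where $\pi$ is the $1$-Lipschitz projection to $V_1$. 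So every such $\gamma$ is a ray, and these rays from $e$ sweep out a set with nonempty interior, not contained in $\exp(V_1)$.

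To finish, you would still have to show that in every sub-Finsler Carnot group of step $\ge 2$ the geodesic rays from a point do not fill a set of full measure, or find another mechanism. Your fallback ``first-variation comparison'' is not yet an argument. As it stands, the key claim that a non-abelian sub-Finsler Carnot group with Haar measure fails MCP($0,Q$) is unproved. Step 2 alone gives the integrality of $n$, but the Banach conclusion and the rectifiability in Step 4 both rest on the missing Step 3.
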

\noindent The last condition, \textit{$(m,n)$-rectifiability}, means that $X$ is covered by a countable collection of Lipschitz images of Borel subsets of $\R^n$ with positive $n$-dimensional Hausdorff measure, up to an $m$-null set.
See \cite[Definition 3.2]{MMR25} for the precise definition.
Below we will consider the $m=\Hcal^n$ case,  where the $(m,n)$-rectifiability reduces to the standard $n$-rectifiability.

\begin{rem}\label{rem:density}
    Although \cite[Theorem 1.2]{MMR25} is claimed for $K\in \R$ and $n>1$, the conclusion still holds for $K\leq 0$ and $n=1$, essentially because \cite[Theorem 4.3]{MMR25} holds for $N=1$ trivially.
    Note also that the $\liminf$ and $\limsup$ in the assumption (1) are actually equal and positive by the Bishop--Gromov inequality (see Section \ref{sec:mcp}).
\end{rem}

The core of the proof of Theorem \ref{thm:noncol} is to show the uniqueness of the tangent cone of $X$ at almost every point, as assumed in Theorem \ref{thm:mmr}(2).
The conclusion then follows from Theorem \ref{thm:mmr} and our previous arguments.
In what follows, we first observe that the non-collapsed MCP assumption implies the almost extendability of geodesics at almost every point (Claim \ref{clm:noncol1}).
We then prove that this almost extendability, together with the Busemann convexity, implies the uniqueness of the tangent cone at that point (Claim \ref{clm:noncol2}).
The latter is a generalization of Lemma \ref{lem:tang}.

\begin{proof}[Proof of Theorem \ref{thm:noncol}]
Let $X$ be as in Theorem \ref{thm:noncol}.
We will apply Theorem \ref{thm:mmr} to each Busemann+MCP neighborhood in $X$.
Then, by a covering argument, we see that $X$ is $n$-rectifiable and almost every point of $X$ has a unique Banach tangent cone of dimension $n$ (note that $X$ is proper).
In particular, the existence (and denseness) of manifold points follows from Lemma \ref{lem:reg}  (cf.\ Corollary \ref{cor:conv}).

We will check that the assumptions of Theorem \ref{thm:mmr} are satisfied.
As explained in Remark \ref{rem:density}, the lower density is positive everywhere.
Since $\Hcal^n$ is locally finite, the upper density is finite almost everywhere, that is, for almost every $p\in X$ we have
\begin{equation}\label{eq:upper}
\limsup_{r\to0}\frac{\Hcal^n(B(p,r))}{r^n}<\infty
\end{equation}
(see, e.g., \cite[Theorem 3.6]{Simbook}).
Hence it remains to show the uniqueness of the Gromov--Hausdorff tangent cone $T_pX$ for almost every $p\in X$.
We will show that $T_pX$ is isometric to the geodesic tangent cone $T_p^gX$ for every point $p$ with finite upper density \eqref{eq:upper}.

Recall that Lemma \ref{lem:tang} showed the above claim under the assumption that any shortest path starting at $p$ is extendable to uniform length.
Here, we first show the ``almost extendability'' of geodesics for any point $p$ satisfying \eqref{eq:upper}, and then derive the uniqueness of the tangent cone at $p$ from this almost extendability.
More precisely, we prove the following two claims independently.

\begin{clm}\label{clm:noncol1}
Suppose the inequality \eqref{eq:upper} holds for $p\in X$. 
Then the following almost extendability of geodesics holds.
\begin{itemize}
\item[($*$)]
For any $\delta>0$, there exists $R>0$ such that for any $0<r\le R$ and $0<t<1$, the image of the $t$-contraction map
\[\Phi_t:B(p,r)\to B(p,tr)\]
centered at $p$ is $(\delta tr)$-dense in $B(p,tr)$.
\end{itemize}
\end{clm}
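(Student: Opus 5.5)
We argue by contradiction, playing the non-collapsed MCP lower bound against the finite upper density \eqref{eq:upper}. Work inside a small ball around $p$ on which $X$ is Busemann and satisfies MCP($K,n$) for $\Hcal^n$. By Bishop--Gromov the lower density $\theta_-:=\liminf_{r\to0}\Hcal^n(B(p,r))/r^n$ is positive, and by \eqref{eq:upper} the upper density $\theta_+$ is finite. Suppose ($*$) fails for some $\delta>0$. Then there are $r_j\to0$ and $t_j\in(0,1)$, and points $z_j\in B(p,t_jr_j)$, such that the ball $B(z_j,\delta t_jr_j)$ misses $\Phi_{t_j}(B(p,r_j))$.

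The key observation is that a point $z$ is in $\Phi_t(B(p,r))$ exactly when the shortest path $pz$ extends beyond $z$ to length $d(p,z)/t$ inside $B(p,r)$; geodesics are unique and non-branching by Proposition \ref{prop:kkbran}. Since images of contractions are nested, $\Phi_t(B(p,r))\supset\Phi_{t'}(B(p,r'))$ whenever $tr=t'r'$ and $t\le t'$. Now set $\rho_j:=t_jr_j$. The ball $B(z_j,\delta\rho_j)$ avoids $\Phi_{t_j}(B(p,r_j))$, so no point of it extends radially by the factor $1/t_j$ within $B(p,r_j)$.

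Take $A:=B(p,r_j)$ and apply MCP to the contraction $\Phi_{t_j}$. This gives
\[
\Hcal^n\bigl(\Phi_{t_j}(B(p,r_j))\bigr)\ge c_K\,t_j^n\,\Hcal^n(B(p,r_j))\gtrsim \theta_-\,t_j^nr_j^n=\theta_-\,\rho_j^n ,
\]
where the factor $c_K\to1$ as $r_j\to0$. Since the image lies in $B(p,\rho_j)$ and misses $B(z_j,\delta\rho_j)\cap B(p,\rho_j)$, we also have
\[
\Hcal^n(B(p,\rho_j))\ge \theta_-\,\rho_j^n+\Hcal^n\bigl(B(z_j,\delta\rho_j)\cap B(p,\rho_j)\bigr).
\]
The last term is at least $c(\delta,\theta_-)\rho_j^n$, because a ball of radius about $\delta\rho_j/2$ sits inside the intersection (move $z_j$ slightly toward $p$) and Bishop--Gromov bounds such a ball from below.

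This estimate alone does not contradict $\theta_+<\infty$; it only forces densities to jump. So I would iterate using the nesting. Apply MCP centered at $p$ with contraction $s$ to $B(p,\rho)$, for a suitable scale $\rho$, and compare $\Hcal^n(B(p,s\rho))/(s\rho)^n$ with $\Hcal^n(B(p,\rho))/\rho^n$. A deficit of proportion $c(\delta)$ (points of the small ball that are non-extendable by the factor $1/s$) forces the density ratio at the smaller scale to fall strictly below the ratio at the larger scale, beyond the MCP prediction. The cleanest quantity to use is the monotone Bishop--Gromov ratio $\Hcal^n(B(p,r))/r^n$, which is non-increasing and converges to $\theta$ as $r\to0$. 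Contracting $B(p,r_j)$ by $t_j$ lands in $B(p,\rho_j)$ and misses a $c\rho_j^n$ chunk, so
\[
\frac{\Hcal^n(B(p,\rho_j))}{\rho_j^n}\ge \frac{\Hcal^n(B(p,r_j))}{r_j^n}+c .
\]
This contradicts the fact that the ratio is non-increasing as $r$ decreases, with a finite limit (here we use the existence of the limit and $\theta_+<\infty$). Indeed, for large $j$ both ratios lie within $c/2$ of $\theta$.

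The main obstacle is justifying the uniform lower bound $c\rho_j^n$ on the missed region, uniformly in $t_j$. If $t_j\to0$, the missed ball has radius $\delta\rho_j$, which is comparable to $\rho_j$, so Bishop--Gromov at scale $\rho_j$ around a point near $z_j$ gives this bound, with constant depending on $\theta_-$ and $\theta_+$ through doubling. I would also need to handle the $K<0$ distortion factors, which tend to $1$ as $r\to0$.
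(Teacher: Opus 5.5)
Your argument is correct and is essentially the paper's proof. MCP applied to $B(p,r_j)$ under $\Phi_{t_j}$, together with a Bishop--Gromov lower bound on a missed $(\delta\rho_j/2)$-ball, gives a uniform jump $\Hcal^n(B(p,\rho_j))/\rho_j^n\ge \Hcal^n(B(p,r_j))/r_j^n+c\delta^n$. This jump is incompatible with the finite limit of the monotone ratio. The paper instead sums such jumps along interlaced scales $r_i>s_i>r_{i+1}$, which is an equivalent ending.

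There are three minor slips, none of which affects the proof:
\begin{itemize}
\item Your nesting inclusion is reversed. For $tr=t'r'$ and $t\le t'$ one has $\Phi_t(B(p,r))\subset\Phi_{t'}(B(p,r'))$, though you never use it.
\item The contradiction is with the finiteness of the limit, not with monotonicity. The ratio is non-decreasing as $r\downarrow 0$, so a jump is consistent with monotonicity.
\item The lower bound on the missed ball is uniform near $p$, by Bishop--Gromov at a fixed scale. It is independent of $t_j$, $\theta_-$ and $\theta_+$, so your ``main obstacle'' is not actually one.
\end{itemize}
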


\begin{clm}\label{clm:noncol2}
Suppose the almost extendability of geodesics ($*$) holds for $p\in X$.
Then $T_pX$ is unique and isometric to $T_p^gX$.
\end{clm}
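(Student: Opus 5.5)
The plan is to show directly that for every sufficiently small scale $\rho$, the rescaled ball $(\rho^{-1}\bar B(p,\rho),p)$ is Gromov--Hausdorff close to the unit ball $\bar B(o,1)$ of the geodesic tangent cone $T_p^gX$, with an error tending to $0$ as $\rho\to0$. This gives both the uniqueness of $T_pX$ and the isometry $T_pX\cong T_p^gX$ at once. (Alternatively one could start from the isometric embedding $T_p^gX\hookrightarrow T_pX$ of \cite[Lemma 3.4(i)]{KKK22} and prove that its image is dense, hence everything. The estimates are the same, so I would phrase it as a direct Gromov--Hausdorff approximation, which avoids choosing a scale sequence.)

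The basic tool is the monotonicity coming from the Busemann convexity. For two shortest paths $\gamma,\eta$ from $p$ and $a,b\ge0$, the function $s\mapsto d(\gamma(as),\eta(bs))/s$ is non-decreasing in $s$ (convexity of $s\mapsto d(\gamma(as),\eta(bs))$, vanishing at $s=0$). Its limit as $s\to0$ is $d^*((\gamma,a),(\eta,b))$. Consequently, in any rescaling the cone distance is a lower bound: $d^*\le d(\gamma(as),\eta(bs))/s$.

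Fix $\delta>0$ and let $R$ be given by ($*$). Since $X$ is proper, choose a finite $\delta R$-net $\{x_j\}$ of $\bar B(p,R)$, and let $\gamma_j$ be the shortest path $px_j$ with $a_j:=d(p,x_j)$. For this finite family the limits defining $d^*((\gamma_j,a_j),(\gamma_k,a_k))$ are attained up to error $\delta R$ for all $t\le t_0$. Concretely,
\[
\left|\frac{d(\Phi_t(x_j),\Phi_t(x_k))}{t}-d^*((\gamma_j,a_j),(\gamma_k,a_k))\right|<\delta R .
\]
Now let $\rho=tR$ with $t\le t_0$. Since $\Phi_t$ is $t$-Lipschitz, ($*$) together with the net property shows that $\{\Phi_t(x_j)\}$ is $2\delta\rho$-dense in $B(p,\rho)$. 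After dividing by $\rho$, the mutual distances of these points agree with the distances of the cone points $(\gamma_j,a_j/R)\in\bar B(o,1)$ up to $\delta$.

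It remains to check that $\{(\gamma_j,a_j/R)\}$ is $C\delta$-dense in $\bar B(o,1)\subset T_p^gX$. This is where the reverse direction of monotonicity is used. Take a cone point $(\gamma,a)$ with $a\le 1$. Its realization $\gamma(a\rho)$ lies in $\bar B(p,\rho)$, so it is $2\delta\rho$-close to some $\Phi_t(x_j)=\gamma_j(a_j\rho/R)$. By the inequality $d^*\le d(\cdot,\cdot)/s$ at $s=\rho$, this gives $d^*((\gamma,a),(\gamma_j,a_j/R))\le 2\delta$. Points of the completion are handled by density. Hence $\rho^{-1}\bar B(p,\rho)$ and $\bar B(o,1)$ are $C\delta$-close in the Gromov--Hausdorff sense for all $\rho\le t_0R$, and since $\delta$ is arbitrary, every Gromov--Hausdorff tangent cone is isometric to $T_p^gX$.

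The main obstacle is the uniformity of the convergence $d(\gamma(as),\eta(bs))/s\to d^*$ over all shortest paths from $p$. Without geodesic extendability, a point of $B(p,\rho)$ need not lie on a shortest path of length comparable to $R$, which is exactly what fails in the example after Lemma \ref{lem:tang}. The approach above handles this in two ways. Hypothesis ($*$) lets us approximate every point at scale $\rho$ by points on paths of length $\sim R$. The finite net reduces uniformity to finitely many paths, and the $t$-Lipschitz property of $\Phi_t$ controls the net errors. The delicate point to verify carefully is that the errors scale correctly, i.e.\ are $O(\delta\rho)$ rather than $O(\delta R)$ before rescaling. This is precisely why the net is taken in $\bar B(p,R)$ and then pushed down by $\Phi_t$, instead of being chosen directly in $\bar B(p,\rho)$.
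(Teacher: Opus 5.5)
Your proposal is correct and is essentially the paper's argument: a finite $\delta R$-net of $B(p,R)$ pushed down by $\Phi_t$, condition ($*$) for density, and the Busemann monotonicity $d^*\le d(\cdot,\cdot)/s$ on the cone side. The only difference is that you compare the two finite nets directly, whereas the paper shows $\log_p$ is a $\delta r$-Gromov--Hausdorff approximation on all of $B(p,r)$; this spares you the paper's comparison of $d^*(\log_px,\log_py)$ with $d^*(\log_pz_t,\log_pw_t)$.

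One step needs a small repair. A cone point $(\gamma,a)$ need not be realizable at the fixed scale $\rho$, because $\gamma$ may be shorter than $a\rho$; this is exactly the non-extendable situation. Realize it instead at a smaller scale $s$ where $\gamma(as)$ is defined, as the paper does in its Condition (2). This works because ($*$) gives $2\delta s$-density of $\{\Phi_{s/R}(x_j)\}$ in $B(p,s)$ for every $s<R$, not only for $s\le t_0R$, and the resulting bound $d^*\le 2\delta$ does not depend on $s$.
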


\noindent It should be emphasized that the proof of Claim \ref{clm:noncol1} only uses the MCP, whereas the proof of Claim \ref{clm:noncol2} only uses the Busemann convexity.
In particular, Claim \ref{clm:noncol2} generalizes Lemma \ref{lem:tang}.
Indeed, the assumption of Lemma \ref{lem:tang} that any shortest path starting at $p$ is extendable to uniform length is equivalent to that Condition ($*$) holds for $\delta=0$ and some $R>0$.

\begin{proof}[Proof of Claim \ref{clm:noncol1}]
For simplicity, we first consider the MCP($0,n$) case.
The general MCP($K,n$) case with $K<0$ only requires a minor modification, which will be explained later.

The idea of the proof is simple.
By the MCP($0,n$), the value $\Hcal^n(B(p,r))/r^n$ is non-decreasing as $r\to 0$.
If Condition ($*$) does not hold for a fixed $\delta$, then for any $R>0$, there exist $0<r\le R$ and $0<t<1$ such that this value will increase by a fixed amount (depending on $\delta$) when moving from $r$ to $tr$.
Repeating this procedure infinitely many times, we get a contradiction to \eqref{eq:upper}.
The precise argument goes as follows.

Suppose that Condition ($*$) does not hold.
Then, for some $\delta>0$, there exists monotonically decreasing sequences $r_i>s_i>r_{i+1}>0$ such that the image of the $(s_i/r_i)$-contraction map
\[\Phi_i:B(p,r_i)\to B(p,s_i)\]
is not $(\delta s_i)$-dense in $B(p,s_i)$.
Moreover, we may assume $r_i\to0$.
See Figure \ref{fig:noncol1}.

\begin{figure}[ht]
\centering
\begin{tikzpicture}
\coordinate[label=below:$p$](p)at(0,0);
\coordinate(qi)at(1.1,1.1);

\draw(p)circle[radius=3];
\draw(p)circle[radius=2];
\draw(p)circle[radius=1];

\draw[dashed](p)to(25:3);
\draw[dashed](p)to(65:3);

\fill[lightgray](qi)circle[radius={0.4}];
\node at(qi){$\frac{\delta s_i}2$};

\node[below right]at(-45:3){$r_i$};
\node[below right]at(-45:2){$s_i$};
\node[below right]at(-45:1){$r_{i+1}$};

\fill(p)circle(1.5pt);
\end{tikzpicture}
\caption{}\label{fig:noncol1}
\end{figure}

By the MCP($0,n$) condition, we have
\begin{equation}\label{eq:upper1}
\frac{\Hcal^n(\Phi_i(B(p,r_i)))}{s_i^n}\ge\frac{\Hcal^n(B(p,r_i))}{r_i^n}.
\end{equation}
By the definition of $s_i$, the image of $\Phi_i$ misses some $(\delta s_i/2)$-ball that is entirely contained in $B(p,s_i)$ (indeed, if $q_i\in B(p,s_i)$ is not contained in the $(\delta s_i)$-neighborhood of the image of $\Phi_i$, then let $q_i'$ be a point on the shortest path $pq_i$ at distance $\delta s_i/2$ from $q_i$ and consider the $(\delta s_i/2)$-ball around $q_i'$).
By the MCP($0,n$), we see that the volume of this $(\delta s_i/2)$-ball is bounded from below by $c(\delta s_i)^n$, where $c$ is a positive number independent of $i$.
Therefore, we have
\begin{equation}\label{eq:upper2}
\frac{\Hcal^n(B(p,s_i))}{s_i^n}\ge\frac{\Hcal^n(\Phi_i(B(p,s_i)))}{s_i^n}+c\delta^n.
\end{equation}
Finally, by the MCP($0,n$) again, we have
\begin{equation}\label{eq:upper3}
\frac{\Hcal^n(B(p,r_{i+1}))}{r_{i+1}^n}\ge\frac{\Hcal^n(B(p,s_i))}{s_i^n}.
\end{equation}
Combining \eqref{eq:upper1}, \eqref{eq:upper2}, and \eqref{eq:upper3}, we get
\begin{equation}\label{eq:upper4}
\frac{\Hcal^n(B(p,r_{i+1}))}{r_{i+1}^n}\ge\frac{\Hcal^n(B(p,r_i))}{r_i^n}+c\delta^n.
\end{equation}
Adding the inequalities \eqref{eq:upper4} for all $i$, we arrive at a contradiction to $\eqref{eq:upper}$.

For the general MCP($K,n$) case, we use the following property instead of the MCP($0,n$): for any $\epsilon>0$, there exists $r_0>0$ such that for any $0<r\le r_0$ and $0\le t\le 1$, we have
\[\Hcal^n(\Phi_t(B(p,r))\ge(1-\epsilon)t^n\Hcal^n(B(p,r)).\]
This is immediate from the definition of the general MCP($K,n$) (see Remark \ref{rem:mcp}) and the elementary fact that $\lim_{x\to0}\sinh(x)/x=1$.
All the previous estimates remain valid with such small errors.
In particular, instead of \eqref{eq:upper4}, we have
\[\frac{\Hcal^n(B(p,r_{i+1}))}{r_{i+1}^n}\ge(1-\epsilon_i)\left(\frac{\Hcal^n(B(p,r_i))}{r_i^n}+c\delta^n\right),\]
where $\epsilon_i\to0$ as $r_i\to0$.
Therefore, to get a contradiction to \eqref{eq:upper} by combining these inequalities for all $i$, it suffices to choose $r_i$ so that $\prod_{i=1}^\infty(1-\epsilon_i)>0$.
This is possible since one may choose arbitrarily small $r_i<s_{i-1}$ at the beginning of the argument by contradiction.
\end{proof}

\begin{proof}[Proof of Claim \ref{clm:noncol2}]
The following argument is a modification of \cite[Lemma 5.6]{LN19} (cf.\ \cite[Lemma 4.5]{FG25}).
For any small $r>0$, we consider the logarithmic map $\log_p:B(p,r)\to B(o,r)\subset T_p^gX$ defined by
\[\log_p(x):=(px,d(p,x))\in\Gamma_p\times[0,\infty),\]
where $px$ denotes the unique
shortest path from $p$ to $x$ and $\Gamma_p$ is the set of shortest paths emanating from $p$ (see Section \ref{sec:tang} for the definition of $T_p^gX$).
In what follows we denote by $d^*$ the metric of $T_p^gX$.

We prove that $\log_p$ is almost distance-preserving and almost surjective in the corresponding scale.
More precisely, we show that for any $\delta>0$, there exists $r_0>0$ such that for any $0<r\le r_0$, the following two conditions hold:
\begin{enumerate}
\item for any $x,y\in B(p,r)$, we have
\[\left|d(x,y)-d^*(\log_px,\log_py)\right|\le\delta r;\]
\item for any $v\in B(o,r)$, there exists $x\in B(p,r)$ such that
\[d^*(\log_px,v)\le\delta r.\]
\end{enumerate}
Then, for any $R>0$ and sufficiently large $\lambda>0$ with $\lambda^{-1}R<r_0$, the rescaled map $\log_p:\lambda B(p,\lambda^{-1}R)\to \lambda B(o,\lambda^{-1}R)$ is a $(\delta R)$-Gromov--Hausdorff approximation.
By the cone structure of $T_p^gX$ (see Section \ref{sec:tang}), we have $\lambda B(o,\lambda^{-1}R)=B(o,R)$.
Therefore we obtain the desired convergence $(\lambda X,p)\to (T_p^gX,o)$.
Note that, if the shortest paths from $p$ are extendable as in Lemma \ref{lem:tang}, then $\log_p$ is surjective and Condition (2) is trivial.

By the definition of the metric $d^*$ of $T_p^gX$, we have
\begin{equation}\label{eq:log1}
d^*(\log_px,\log_py)=\lim_{t\to0}\frac{d(x_t,y_t)}t\le d(x,y),
\end{equation}
where $x_t,y_t$ denote the $t$-intermediate points for $x,y$ with respect to $p$.
Here, by the Busemann convexity, $d(x_t,y_t)/t$ is non-increasing as $t\to 0$ (in particular, the last inequality holds).
This implies that, if Condition (1) holds for $x,y\in B(p,r)$, then it also holds for any $x_t,y_t\in B(p,rt)$, where $0<t<1$.

First we show Condition (1).
Fix $\delta>0$ and take $R>0$ from Condition ($*$).
Let $A\subset B(p,R)$ be a finite $(\delta R)$-dense subset.
 Since $A$ is finite, there exists $t_0>0$ such that for any $z,w\in A$ and $t\le t_0$, Condition (1) holds for any $t$-intermediate points $z_t,w_t\in B(p,tR)$, i.e.,
\begin{equation}\label{eq:log2}
\left|d(z_t,w_t)-d^*(\log_pz_t,\log_pw_t)\right|\le\delta tR.
\end{equation}
Here we used the monotonicity property of Condition (1) explained in the previous paragraph.

\begin{figure}[ht]
\centering
\begin{tikzpicture}
\coordinate[label=below:$p$](p)at(0,0);
\coordinate[label=above:$x$](x)at(120:1);
\coordinate[label=above:$y$](y)at(60:1);
\coordinate[label=left:$u$](u)at(135:2.5);
\coordinate[label=right:$v$](v)at(45:2.5);
\coordinate[label=left:$u_t$](ut)at(135:1);
\coordinate[label=right:$v_t$](vt)at(45:1);
\coordinate[label=left:$z$](z)at(150:2.5);
\coordinate[label=right:$w$](w)at(30:2.5);
\coordinate[label=below:$z_t$](zt)at(150:1);
\coordinate[label=below:$w_t$](wt)at(30:1);

\draw(p)circle[radius=1.5];
\draw(p)circle[radius=3];

\draw(p)to(x);
\draw(p)to(y);
\draw(p)to(u);
\draw(p)to(v);
\draw(p)to(z);
\draw(p)to(w);
\draw[dashed](x)to(ut)to(zt);
\draw[dashed](y)to(vt)to(wt);
\draw[dashed](u)to(z);
\draw[dashed](v)to(w);

\node[below right]at(-45:1.5){$tR$};
\node[below right]at(-45:3){$R$};

\fill(p)circle(1.5pt);
\fill(x)circle(1.5pt);
\fill(y)circle(1.5pt);
\fill(u)circle(1.5pt);
\fill(v)circle(1.5pt);
\fill(ut)circle(1.5pt);
\fill(vt)circle(1.5pt);
\fill(z)circle(1.5pt);
\fill(w)circle(1.5pt);
\fill(zt)circle(1.5pt);
\fill(wt)circle(1.5pt);
\end{tikzpicture}
\caption{}\label{fig:noncol2}
\end{figure}

Suppose $t<t_0$ and let $x,y\in B(p,tR)$ (see Figure \ref{fig:noncol2}).
By Condition ($*$), there exist $u,v\in B(p,R)$ such that $u_t,v_t$ are $(\delta tR)$-close to $x,y$, respectively.
Since $A$ is $(\delta R)$-dense in $B(p,R)$, there exist $z,w\in A$ that are $(\delta R)$-close to $u,v$, respectively.
By the Busemann convexity, $z_t,w_t$ are $(\delta tR)$-close to $u_t,v_t$, respectively.
Therefore, by the triangle inequality, $x,y$ are $(2\delta tR)$-close to $z_t,w_t$, respectively.
In particular,
\begin{equation}\label{eq:log3}
\left|d(x,y)-d(z_t,w_t)\right|\le 4\delta tR.
\end{equation}
Moreover, by the Busemann convexity again, $x_s,y_s$ are $(2\delta tsR)$-close to $(z_t)_s,(w_t)_s$, respectively, where $0<s<1$.
Passing to the limit as in \eqref{eq:log1}, we have
\begin{equation}\label{eq:log4}
\left|d^\ast(\log_px,\log_py)-d^\ast(\log_pz_t,\log_pw_t)\right|\le 4\delta tR.
\end{equation}
Combining \eqref{eq:log2}, \eqref{eq:log3}, and \eqref{eq:log4} gives
\[\left|d(x,y)-d^*(\log_px,\log_py)\right|\le9\delta tR.\]
Thus Condition (1) holds for $r_0:=t_0R$ by replacing $\delta$ with $9\delta$.

Next we show Condition (2).
Fix $\delta>0$ and take $R>0$ from Condition ($*$).
Suppose $r\le R=:r_0$ and let $v\in B(o,r)$.
Since $T_p^gX$ is the completion of the metrization of $\Gamma_p\times[0,\infty)$, in order to prove Condition (2), we may assume that $v=(\gamma,a)$, where $\gamma$ is a shortest path from $p$ and $0<a<r$.
Then there is some $0<t\le1$ for which $\gamma(at)$ is defined.
By Condition ($*$), there exists $x\in B(p,r)$ such that
\[d(x_t,\gamma(at))\le\delta tr.\]
Together with the cone structure of $T_p^gX$ (see Section \ref{sec:tang}) and the inequality \eqref{eq:log1}, this implies that
\[d^*(\log_px,v)=t^{-1}d^*(\log_px_t,\log_p\gamma(at))\le t^{-1}d(x_t,\gamma(at))\le\delta r,\]
as desired.
\end{proof}

By Claims \ref{clm:noncol1} and \ref{clm:noncol2}, the tangent cone of $X$ is unique at almost every point.
This completes the proof of Theorem \ref{thm:noncol}.
\end{proof}

\begin{rem}
The almost extendability of geodesics ($*$) is analogous to the notion of a \textit{$(1,\delta)$-strainer} in Alexandrov geometry, see \cite{BGP92, BBI01}.
See also the recent study of Busemann concave spaces in \cite{Ke19, HY25}.
Any finite-dimensional Alexandrov space satisfies the non-collapsed MCP (\cite[Proposition 2.8]{Oh07}), and the same holds for any Busemann concave space with non-trivial Hausdorff measure (\cite[Proposition 2.23]{Ke19}).
Moreover, in these spaces, the Gromov--Hausdorff tangent cone is unique at every point and has some cone structure (\cite[Theorem 7.8.1]{BGP92}, \cite[Corollary 2.21]{Ke19}).
This enables us to prove a property similar to ($*$).
What we did in Claim \ref{clm:noncol1} can be viewed as a metric measure analog of this argument.
It is also worth mentioning that Condition ($*$) is reminiscent of the ``almost geodesic completeness'' in the context of large-scale geometry, see \cite{On05, GO07, KR21}.
\end{rem}

\begin{rem}\label{rem:mmr}
In the above proof, the result of Magnabosco--Mondino--Rossi \cite{MMR25} (Theorem \ref{thm:mmr}) is essential to find a Banach tangent cone.
Indeed, since the tangent cones are unique almost everywhere as proved above, the result of Le Donne \cite{Le11} (Theorem \ref{thm:le}) shows that almost every tangent cone is a sub-Finsler Carnot group.
Hence, if such a tangent cone inherits the Busemann convexity, one can immediately conclude that it is a Banach space, since any sub-Finsler Carnot group satisfying the Busemann convexity is a Banach space (Lemma \ref{lem:sub}).
However, in the proof of Theorem \ref{thm:noncol}, we do not know if the tangent cone is Busemann.
Therefore, to conclude that the tangent cone is a Banach space, we essentially relied on another fact that any sub-Finsler Carnot group with non-collapsed MCP is a Banach space (\cite[Theorem 1.4]{MMR25}).
Note also that the Busemann convexity inherits to the tangent cone if we assume geodesic completeness (Proposition \ref{prop:app}).
However, this does not apply to the current situation, since the existence of inner points is not yet known until Theorem \ref{thm:noncol} is proven.
\end{rem}

Theorem \ref{thm:mfd} now follows from Theorem \ref{thm:noncol} and Corollary \ref{cor:mfdcol}.

\begin{rem}\label{rem:cat}
In the CAT case, the existence of a manifold point follows from the dimension theory of Kleiner \cite{Kl99}.
See \cite[Theorem 3.15]{KKK22} for more details.
For Busemann spaces, there are currently no such results.
\end{rem}

\section{Flatness without geodesic completeness}\label{sec:bdry}

In this section, we prove Theorem \ref{thm:bdry}, using Theorem \ref{thm:mfd} and modifying the proof of Theorem \ref{thm:main}.

Let $X$ be as in Theorem \ref{thm:bdry}.
By Theorem \ref{thm:mfd}, $X$ is a topological manifold with boundary, whose interior $\In X$ is convex and consists of regular points.
In particular, for any $p\in\In X$, the tangent cone $T_pX$ is a strictly convex Banach space.
We show that the exponential map gives an isometry between a closed convex subset of $T_pX$ and $X$ (see Section \ref{sec:tang} for the exponential map).

The proof is divided into a local part and a global part.
In Section \ref{sec:loc}, we first show that the Busemann concavity (i.e., the opposite inequality to the Busemann convexity) holds locally in the interior of $X$.
In Section \ref{sec:glo}, using the geodesic convexity of the interior (Theorem \ref{thm:inner}), we prove that the Busemann concavity holds globally.
Together with the Busemann convexity, this implies that the exponential map is an isometry.
As in Section \ref{sec:main}, some arguments below are reminiscent of the proofs of Propositions \ref{prop:kkbran} and \ref{prop:kkkconti}.

\subsection{Local argument}\label{sec:loc}

We first establish the local homogeneity of the Hausdorff measure.
Compare with Lemma \ref{lem:hom}.
As in Section \ref{sec:mfd}, we denote by $\In X$ and $\partial X$ the interior and boundary of a manifold $X$, respectively.

\begin{prop}\label{prop:homloc}
Let $X$ be as in Theorem \ref{thm:bdry} and $q\in\In X$.
Suppose $R\le d(q,\partial X)/10$.
Then there exists $C>0$ such that for any $x\in B(q,R)$ and $0<r<R$, we have
\[\Hcal^n(B(x,r))=Cr^n.\]
\end{prop}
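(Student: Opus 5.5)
The key observation is that for the Hausdorff measure, the Busemann convexity gives the \emph{upper} volume bound that complements the MCP lower bound. A $t$-Lipschitz map $\Phi$ satisfies $\Hcal^n(\Phi(A))\le t^n\Hcal^n(A)$, while the MCP($0,n$) gives $\Hcal^n(\Phi_t(A))\ge t^n\Hcal^n(A)$. So I will show that $r\mapsto \Hcal^n(B(x,r))/r^n$ is constant in $r$, and then that the constant does not depend on $x$.

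\textbf{Step 1 (uniform extendability).} Every $z\in B(q,3R)$ satisfies $d(z,\partial X)\ge 7R$. By Theorem \ref{thm:mfd} and Theorem \ref{thm:inner}, such $z$ is a manifold point, hence geometrically inner. I claim that any shortest path from $x\in B(q,R)$ extends as a shortest path to length at least $2R$. To see this, take a maximal extension, which is unique by Proposition \ref{prop:kkbran}. If it stopped at a point $z$ with $d(x,z)<2R$, then $z\in B(q,3R)\subset\In X$, and by the equivalence (1)$\Leftrightarrow$(5) of Theorem \ref{thm:inner} (with $x$ as the base manifold point) the path would extend beyond $z$, a contradiction. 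Since $X$ is globally Busemann, the extended geodesic remains a shortest path.

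\textbf{Step 2 (exact scaling at a fixed center).} Fix $x\in B(q,R)$, $0<r<R$ and $0<t<1$. Let $\Phi_t$ be the $t$-contraction centered at $x$. By Step 1, every point of $B(x,tr)$ lies on a shortest path from $x$ that extends to length beyond $r$, so $\Phi_t(B(x,r))=B(x,tr)$. By the Busemann convexity \eqref{eq:buse}, $\Phi_t$ is $t$-Lipschitz, so $\Hcal^n(B(x,tr))\le t^n\Hcal^n(B(x,r))$. The MCP($0,n$) for $\Hcal^n$ gives the reverse inequality. Hence $\Hcal^n(B(x,r))/r^n=:\theta(x)$ is independent of $r\in(0,R)$.

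\textbf{Step 3 ($\theta$ is constant on $B(q,R)$).} Take $x,y\in B(q,R)$. By Step 1, extend the shortest path $xy$ beyond $y$ to a point $y'$ with $d(y,y')=L$ for some $L\le 2R$. Let $\Phi$ be the $\lambda$-contraction centered at $y'$ with $\lambda=1-d(x,y)/d(x,y')$, so that $\Phi(x)=y$. The Busemann convexity gives $d(\Phi(z),y)\le\lambda d(z,x)$, hence $\Phi(B(x,r))\subset B(y,\lambda r)$. The MCP gives $\lambda^n\Hcal^n(B(x,r))\le\Hcal^n(B(y,\lambda r))$. Note that there is no loss here: the radius shrinks by the same factor $\lambda$ as the measure, which is what fails in a naive localization of Lemma \ref{lem:hom}. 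Dividing by $(\lambda r)^n$ and using Step 2 (valid since $\lambda r<R$) yields $\theta(x)\le\theta(y)$. Exchanging the roles of $x$ and $y$, extending beyond $x$ instead, gives equality. So $C:=\theta(q)$ works for all $x\in B(q,R)$ and $0<r<R$.

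The main obstacle I expect is Step 1: one must make sure that the extension produced by Theorem \ref{thm:inner} is a genuine shortest path of uniform length, and that it stays inside the region where innerness is guaranteed. This is the reason for the generous margin $R\le d(q,\partial X)/10$. The measure-theoretic part reduces to the two-sided squeeze in Step 2.
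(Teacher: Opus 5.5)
Your proof is correct and follows essentially the same route as the paper. The paper proves the scaling identity $\Hcal^n(B(x,tR))=t^n\Hcal^n(B(x,R))$ by the same two-sided squeeze: the $t$-Lipschitz contraction gives the upper bound, MCP gives the lower bound, and extendability gives surjectivity onto the smaller ball. It then compares centers by contracting toward a point $y'$ on the extension of $xy$ beyond $y$. The differences are cosmetic: the paper takes $d(y,y')=d(x,y)$, so the factor is $1/2$, and works at the single radius $R$. It also simply asserts the uniform extendability, whereas you derive it from Theorem \ref{thm:inner} and non-branching.
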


Unlike the proof of Lemma \ref{lem:hom}, we cannot extend shortest paths infinitely due to the lack of geodesic completeness.
Instead, we exploit the property of the Hausdorff measure.

\begin{proof}
Since $R\le d(q,\partial X)/10$, local geodesic completeness and the Busemann condition imply that any shortest path starting from a point of $B(q,R)$ is extendable to length $9R$.
In what follows we will frequently use this property.

For any $x,y\in B(q,R)$ and $0\le t\le 1$, we prove the following two equalities:
\begin{gather}
\Hcal^n(B(x,tR))=t^n\Hcal^n(B(x,R)),\label{eq:homloc1}\\
\Hcal^n(B(x,R))=\Hcal^n(B(y,R)).\label{eq:homloc2}
\end{gather}
The desired equality immediately follows from these two equalities by setting $C:=\Hcal^n(B(p,R))/R^n$.

We first show \eqref{eq:homloc1}.
Let $\Phi_t=\Phi_t^x$ be the $t$-contraction map centered at $x$.
By the local geodesic completeness, we have
\begin{equation}\label{eq:homloc1-1}
\Phi_t(B(x,R))=B(x,tR).
\end{equation}
By the Busemann convexity \eqref{eq:buse}, $\Phi_t$ is $t$-Lipschitz, and hence
\begin{equation}\label{eq:homloc1-2}
\Hcal^n(\Phi_t(B(x,R)))\le t^n\Hcal^n(B(x,R)).
\end{equation}
Note that here we used the property of the Hausdorff measure.
On the other hand, the measure contraction property \eqref{eq:mcp} implies
\begin{equation}\label{eq:homloc1-3}
\Hcal^n (\Phi_t(B(x,R)))\ge t^n\Hcal^n(B(x,R)).
\end{equation}
Combining \eqref{eq:homloc1-1}, \eqref{eq:homloc1-2}, and \eqref{eq:homloc1-3} gives the desired equality \eqref{eq:homloc1}.

Next we show \eqref{eq:homloc2}.
By local geodesic completeness, we can find points $x'$ and $y'$ on the extension of the shortest path $xy$ beyond $x$ and $y$, respectively, such that
\[d(x,x')=d(x,y)=d(y,y').\]
See Figure \ref{fig:homloc1}.

\begin{figure}[ht]
\centering
\begin{tikzpicture}
\coordinate[label=below:$x$](x)at(-1.5,0);
\coordinate[label=below:$y$](y)at(1.5,0);
\coordinate[label=below:$x'$](x')at(-4.5,0);
\coordinate[label=below:$y'$](y')at(4.5,0);

\draw(x)circle[radius=1];
\draw(y)circle[radius=1];
\draw(y)circle[radius=0.5];

\draw(x')to(x)to(y)to(y');
\draw[dashed](-1.5,1)to(y');
\draw[dashed](-1.5,-1)to(y');

\fill(x)circle(1.5pt);
\fill(y)circle(1.5pt);
\fill(x')circle(1.5pt);
\fill(y')circle(1.5pt);
\end{tikzpicture}
\caption{}\label{fig:homloc1}
\end{figure}

We show $\Hcal^n(B(x,R))\le\Hcal^n(B(y,R))$.
Let $\Phi_{1/2}=\Phi_{1/2}^{y'}$ be the $(1/2)$-contraction map centered at $y'$.
The measure contraction property \eqref{eq:mcp} implies
\begin{equation}\label{eq:homloc2-1}
\Hcal^n(B(x,R))\le2^n\Hcal^n(\Phi_{1/2}(B(x,R))).
\end{equation}
By the Busemann convexity \eqref{eq:buse}, we have
\begin{equation}\label{eq:homloc2-2}
\Phi_{1/2}(B(x,R))\subset B(y,R/2).
\end{equation}
By the previous equality \eqref{eq:homloc1}, we have
\begin{equation}\label{eq:homloc2-3}
2^n\Hcal ^n(B(y,R/2))=\Hcal^n(B(y,R)).
\end{equation}
Combining \eqref{eq:homloc2-1}, \eqref{eq:homloc2-2}, and \eqref{eq:homloc2-3} shows $\Hcal^n(B(x,R))\le\Hcal^n(B(y,R))$, as desired.
The symmetric procedure using $x'$ gives the opposite inequality, which together implies the desired equality \eqref{eq:homloc2}.
This completes the proof of Proposition \ref{prop:homloc}.
\end{proof}

Using Proposition \ref{prop:homloc}, we show the local Busemann concavity of our Busemann space, i.e., the opposite inequality to the Busemann convexity holds.

\begin{prop}\label{prop:coneloc}
Let $X$ be as in Theorem \ref{thm:bdry} and $p\in\In X$.
Fix $q\in\In X$ and suppose $R\le d(q,\partial X)/10$.
Then for any $x,y\in B(q,R)$ and $0\le t\le 1$ with $x_t,y_t\in B(q,R)$, we have
\[d(x_t,y_t)=td(x,y),\]
where $x_t,y_t$ denote the $t$-intermediate points for $x,y$ with respect to $p$, respectively.
\end{prop}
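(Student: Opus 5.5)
The plan is to localize the two-step argument of Section \ref{sec:main}: first a parallel translation (Lemma \ref{lem:trans}), then the cone comparison (Proposition \ref{prop:cone}). Set $g(s):=d(x_s,y_s)$ for $s\in[0,1]$, with intermediate points taken with respect to $p$. By Remark \ref{rem:buse}, $g$ is convex with $g(0)=0$, so $g(s)/s$ is non-decreasing. The desired equality $g(t)=tg(1)$ is therefore equivalent to $g(s)/s$ being constant on $[t,1]$.

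The intended way to get constancy is the one-sided derivative inequality $s\,g'(s^-)\le g(s)$ at every $s\in[t,1]$. Together with convexity this forces $g(s)/s$ to be non-increasing, hence constant. Since $p\in\In X$ and $\In X$ is convex (Theorem \ref{thm:inner}), the segments $px$ and $py$ lie in $\In X$. I would also like to reduce to the case where all points $x_s,y_s$ with $s\in[t,1]$ stay in $B(q,R)$. Strictly, the hypothesis only places the endpoints $x,y,x_t,y_t$ in $B(q,R)$. I would try to deduce the intermediate points from this, e.g. by convexity of $s\mapsto d(q,x_s)$ for a suitable parametrization. Otherwise I would subdivide and use that the inequality is local in $s$.

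The first step is a local version of Lemma \ref{lem:trans}. Let $x',y'\in B(q,R)$, and let $\gamma$ be a shortest path from $x'$ of length $\ell\le R$, for instance an initial piece of $x'p$. I want a shortest path $\eta$ from $y'$ with $d(\gamma(u),\eta(u))=d(x',y')$ for $u\le\ell$. The construction follows Lemma \ref{lem:trans}. Since $\gamma$ is extendable to length $9R$, choose the point $o'$ on its extension at distance $L$ (between $5R$ and $9R$) from $x'$. Let $\eta$ be the corresponding initial piece of $y'o'$; then Busemann convexity centered at $o'$ gives $d(\gamma(u),h(u))\le d(x',y')$. In Lemma \ref{lem:trans} the reverse inequality came from a limit as the center tends to infinity. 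That limit is not available here, so I would replace it with the exact scaling $\Hcal^n(B(z,r))=Cr^n$ from Proposition \ref{prop:homloc}. The contraction $\Phi$ by factor $\lambda=1-u/L$ centered at $o'$ maps $B(x',D)\cup B(y',D)$ into $B(\gamma(u),\lambda D)\cup B(h(u),\lambda D)$ with $D=d(x',y')/2$. The MCP then gives $\Hcal^n$ of the image $\ge\lambda^n\cdot 2CD^n$. On the other hand, the union of the two $\lambda D$-balls has measure exactly $2C\lambda^nD^n$ if they are disjoint, and strictly less if they overlap. This should force $d(\gamma(u),h(u))\ge 2\lambda D$. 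That is not yet $2D$, so I would iterate over a chain of centers, or compare with the ratio directly, to obtain equality up to an error $O(u^2)$. That error suffices for the derivative statement below.

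The second step is the infinitesimal form of Proposition \ref{prop:cone}. At $x_s$, take $\gamma$ to be the segment $x_sp$ and $\eta$ the local parallel from $y_s$, and compare at parameter $h\,d(p,x_s)$ for small $h>0$. Then $d(x_{s-h},\eta(\cdot))\approx g(s)$. By the triangle inequality, $g(s)\lesssim g(s-h)+d(y_{s-h},\eta(h\,d(p,x_s)))$. The last term should be compared with $h\,d(p,z)$, where $z$ plays the role in Proposition \ref{prop:cone}. This is the main obstacle: $z$ is at distance $d(p,x_s)$ and may lie far outside the region where we control things. I would first bound $d(y_{s-h},\eta(\cdot))/h$ using Busemann convexity centered at $y_s$ on the two short segments. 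I would then show that the resulting first-order quantity equals $g(s)/s$, by applying the parallel transport once more near $y_s$ to identify the direction of $\eta$ with the translate of $x_sp$. This yields $g(s)-g(s-h)\le h\,g(s)/s+o(h)$, which is the required derivative inequality.
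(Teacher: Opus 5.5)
Your proposal does not close, and the obstacle you identify comes from the detour, not from the problem. Look at what Step 1 actually proves. With center $o'$ and factor $\lambda=1-u/L$, Busemann convexity gives $d(\gamma(u),h(u))\le\lambda\,d(x',y')$, which is stronger than $\le d(x',y')$. Your measure comparison gives $\ge\lambda\,d(x',y')$. Hence $d(\gamma(u),h(u))=(1-u/L)\,d(x',y')$ exactly.

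Since $L\le 9R$ is bounded, the defect $u\,d(x',y')/L$ is of first order in $u$. This is what happens in the flat model, where segments aimed at a common point at distance $L$ converge at exactly that rate. So this $\eta$ is not parallel up to $O(u^2)$, and \emph{iterating over a chain of centers} gives no mechanism for removing a first-order error. That error then enters your derivative inequality directly.

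Step 2 is also not an argument. Identifying $\lim_{h\to0}d(y_{s-h},\eta(\cdot))/h$ with $g(s)/s$ amounts to the infinitesimal cone property at $y_s$ relative to $p$, which is essentially the statement itself. The far point $z$ that Proposition~\ref{prop:cone} uses to get this is exactly what the local setting lacks.

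The missing observation is that the measure argument you already set up in Step 1 can be run directly with center $p$ and factor $t$, with no parallel translation; this is the paper's proof. Suppose $d(x_t,y_t)<t\,d(x,y)$ and put $D=d(x,y)/2<R$.
\begin{enumerate}
\item The balls $B(x,D)$ and $B(y,D)$ are disjoint, so by Proposition~\ref{prop:homloc} their union has measure $2CD^n$.
\item The MCP($0,n$) for $\Hcal^n$, centered at $p$, gives $\Hcal^n(\Phi_t(B(x,D)\cup B(y,D)))\ge 2C(tD)^n$.
\item Busemann convexity puts this image inside $B(x_t,tD)\cup B(y_t,tD)$.
\item Since $x_t,y_t\in B(q,R)$ and $tD<R$, and these two balls now overlap in a nonempty open set, Proposition~\ref{prop:homloc} shows the union has measure $<2C(tD)^n$.
\end{enumerate}
This is a contradiction. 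The only local input is the ball-measure formula at the four points $x,y,x_t,y_t$. The MCP and the Busemann convexity with center $p$ are global hypotheses of Theorem~\ref{thm:bdry}, so it does not matter how far $p$ lies from $q$. Equivalently, taking $o'=p$ in your Step 1 already gives $g(s-h)\ge(1-h/s)\,g(s)$, which is your derivative inequality, and makes Step 2 unnecessary.
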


\begin{proof}
By the Busemann convexity \eqref{eq:buse}, we have $d(x_t,y_t)\le td(x,y)$.
Suppose the opposite inequality does not hold for some $x,y,x_t,y_t\in B(q,R)$, that is,
\[d(x_t,y_t)<td(x,y).\]
Set $D:=d(x,y)/2 <R$.
The above strict inequality shows that $B(x_t,tD)$ and $B(y_t,tD)$ have nonempty intersection (see Figure \ref{fig:coneloc}).
This implies
\begin{equation}\label{eq:coneloc1}
\Hcal^n\left(B(x_t,tD)\cup B(y_t,tD)\right)<2C(tD)^n,
\end{equation}
where $C$ is the constant of Proposition \ref{prop:homloc}.

\begin{figure}[ht]
\centering
\begin{tikzpicture}
\coordinate[label=left:$p$](p)at(0,0);
\coordinate[label=above:$x_t$](xt)at(4,0.5);
\coordinate[label=below:$y_t$](yt)at(4,-0.5);
\coordinate[label=above:$x$](x)at(6,1);
\coordinate[label=below:$y$](y)at(6,-1);

\draw(xt)circle[radius=0.6];
\draw(yt)circle[radius=0.6];
\draw(x)circle[radius=1];
\draw(y)circle[radius=1];
\draw(-20:8)arc(-20:20:8);

\node[above]at(4,1.1){$tD$};
\node[above]at(6,2){$D$};
\node[right]at(8,0){$\partial X$};

\fill(p)circle(1.5pt);
\fill(xt)circle(1.5pt);
\fill(yt)circle(1.5pt);
\fill(x)circle(1.5pt);
\fill(y)circle(1.5pt);

\draw(p)to(xt)to(x);
\draw(p)to(yt)to(y);
\draw[dashed](xt)to(yt);
\draw[dashed](x)to(y);
\end{tikzpicture}
\caption{}\label{fig:coneloc}
\end{figure}

On the other hand, by the triangle inequality, the intersection of $B(x,D)$ and $B(y,D)$ is empty.
By Proposition \ref{prop:homloc}, we have
\begin{equation}\label{eq:coneloc2}
2C(tD)^n=t^n\Hcal^n(B(x,D)\cup B(y,D)).
\end{equation}
By the measure contraction property \eqref{eq:mcp},
\begin{equation}\label{eq:coneloc3}
t^n\Hcal^n(B(x,D)\cup B(y,D))\leq \Hcal^n(\Phi_t(B(x,D)\cup B(y,D))),
\end{equation}
where $\Phi_t$ is the $t$-contraction map centered at $p$.
By the Busemann convexity \eqref{eq:buse},
\begin{equation}\label{eq:coneloc4}
\Phi_t((B(x,D)\cup B(y,D))\subset B(x_t,tD)\cup B(y_t,tD).
\end{equation}
Combining \eqref{eq:coneloc2}, \eqref{eq:coneloc3}, and \eqref{eq:coneloc4} gives a contradiction to \eqref{eq:coneloc1}.
This completes the proof.
\end{proof}

\begin{rem}
If $X$ is geodesically complete, i.e., $\partial X=\emptyset$, Propositions \ref{prop:homloc} and \ref{prop:coneloc} hold globally for $R=\infty$.
In particular, this gives a slightly different proof of Theorem \ref{thm:main} in the non-collapsing case.
\end{rem}

\subsection{Global argument}\label{sec:glo}

Next we establish a Toponogov-type globalization theorem for the Busemann concavity in a slightly general setting.

Recall that for a uniquely geodesic space $Y$, the \textit{Busemann concavity} with respect to $p\in Y$ means the following inequality for $x,y\in Y$ and $0\le t\le 1$:
\begin{equation}\label{eq:topo}
d(x_t,y_t)\ge td(x,y)
\end{equation}
where $x_t,y_t$ are the $t$-intermediate points for $x,y$ with respect to $p$.
\begin{prop}\label{prop:topo}
Let $Y$ be a (not necessarily complete) uniquely geodesic space such that the shortest paths depend continuously on their endpoints (i.e., the mapping $(x,y,t)\mapsto \gamma_{xy}(t)$ is continuous in $x,y\in Y$ and $t\in[0,1]$, where $\gamma_{xy}$ is the linear reparameterization of the shortest path $xy$ defined on $[0,1]$).
Fix $p\in Y$ and suppose that any $q\in Y$ has a neighborhood $U$ satisfying the following:
\begin{enumerate}
\item for any $x\in U$, the shortest path $px$ extends to length $d(p,x)+\epsilon$ beyond $x$, where $\epsilon>0$ is independent of $x$;
\item for any $x,y\in U$ and any $0\le t\le 1$ with $x_t,y_t\in U$, the Busemann concavity \eqref{eq:topo} holds.
\end{enumerate}
Then the Busemann concavity \eqref{eq:topo} holds for any $x,y\in Y$ and $0\le t\le 1$.
\end{prop}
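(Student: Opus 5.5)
The plan is to reduce the global inequality to a monotonicity statement along the contraction parameter, and to prove that monotonicity by a local-to-global continuity argument. Fix $x,y\in Y$. If $x=p$ or $y=p$ the inequality is trivial, so assume otherwise. For $0<s\le 1$ set
\[f(s):=\frac{d(x_s,y_s)}{s}.\]
Since $(x_{s'})_{s/s'}=x_s$ for $0<s\le s'\le 1$ (both lie on $px$ at distance $s\,d(p,x)$ from $p$), the desired inequality \eqref{eq:topo} for all $t$ is equivalent to
\[f(t)\ge f(1)\quad\text{for all } 0<t\le1 .\]
It therefore suffices to show that $f$ is non-increasing on $(0,1]$. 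By the continuous dependence of shortest paths on their endpoints, $f$ is continuous on $(0,1]$. Hence it is enough to prove local monotonicity: for every $s\in(0,1)$ there is $\sigma>0$ such that $f(s')\le f(s)$ for all $s'\in[s,s+\sigma]$. A standard connectedness (sup) argument then gives global monotonicity.

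To prove local monotonicity at $s$, consider the compact shortest path $\beta:=x_sy_s$. Cover $\beta$ by finitely many neighborhoods $U$ as in the hypothesis; this gives a uniform extension length $\epsilon>0$ and a Lebesgue number $\lambda>0$. Subdivide $\beta$ by points $w_0=x_s,w_1,\dots,w_k=y_s$ with consecutive distances much smaller than $\lambda$. For $s'$ slightly larger than $s$, use condition (1) to extend each shortest path $pw_i$ beyond $w_i$ to a point $w_i'$ with $d(p,w_i')=(s'/s)\,d(p,w_i)$, so that $(w_i')_{s/s'}=w_i$ with respect to $p$. This is possible as soon as $(s'/s-1)\,d(p,w_i)<\epsilon$, which is uniform because $d(p,\cdot)$ is bounded on $\beta$. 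For the endpoints, take the extensions along $px$ and $py$ themselves, so that $w_0'=x_{s'}$ and $w_k'=y_{s'}$. Since
\[d(w_i',w_i)=\Bigl(\frac{s'}{s}-1\Bigr)d(p,w_i),\]
all four points $w_i,w_{i+1},w_i',w_{i+1}'$ lie in a single neighborhood $U$ once $s'-s$ is small. Applying condition (2) to the pair $w_i',w_{i+1}'$ with ratio $s/s'$ gives $d(w_i',w_{i+1}')\le (s'/s)\,d(w_i,w_{i+1})$. The triangle inequality then yields
\[d(x_{s'},y_{s'})\le\sum_i d(w_i',w_{i+1}')\le\frac{s'}{s}\sum_i d(w_i,w_{i+1})=\frac{s'}{s}\,d(x_s,y_s),\]
where the last equality holds because the $w_i$ lie in order on a shortest path. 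This is exactly $f(s')\le f(s)$.

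The main obstacle is the uniformity bookkeeping in the second step. One must choose a single $\sigma$ that works for every subdivision point simultaneously: the extension length must be uniform along $\beta$, and all four points $w_i,w_{i+1},w_i',w_{i+1}'$ must fit in one neighborhood where condition (2) is available. I would handle this by compactness of $\beta$, using the fact that $w_i'$ lies within $(s'/s-1)\sup_\beta d(p,\cdot)$ of $w_i$. Non-uniqueness of extensions at the interior points $w_i$ is harmless, since any choice works; only at the endpoints must the extension be the one along $px$ and $py$. Finally, the passage from local to global monotonicity uses the continuity of $f$, which comes from the continuity assumption on shortest paths. This is needed precisely because $Y$ need not be complete.
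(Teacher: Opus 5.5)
Your proof is correct and follows essentially the paper's strategy. Both arguments subdivide the inner shortest path and push the subdivision points radially outward using (1). Both then apply (2) to each consecutive pair, sum with the triangle inequality along the outer chain, and globalize by an extremal-parameter continuity argument. The paper takes an infimum $\tau$ and works downward from a fixed pair $x,y$; you take a supremum and work upward from the fixed inner path $x_sy_s$.

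Your arrangement has one small advantage. Because the subdivided path is fixed, you never actually need the continuous dependence of shortest paths on their endpoints, which the paper uses to keep $x_ty_t$ inside the cover of $xy$. The continuity of $f$ is also automatic from $d(x_s,x_{s'})=|s-s'|\,d(p,x)$, so it does not come from that hypothesis either.
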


\noindent Later we will regard $Y$ as the interior of $X$ of Theorem \ref{thm:bdry}.
This is why we did not assume the completeness of $Y$.

The following proof is reminiscent of the final step of the proof of the Cartan--Hadamard-type globalization theorem for the Busemann convexity \cite{AB90} (see, e.g., \cite[Lemma 7.14]{FG25}).

\begin{proof}
Let $x,y\in Y$ be arbitrary.
We first prove \eqref{eq:topo} in the special case where $t$ is sufficiently close to $1$.
Let $\gamma$ (resp.\ $\gamma_t$) be the shortest path between $x$ and $y$ (resp.\ $x_t$ and $y_t$).

By the compactness of $\gamma$, we take a finite cover $\{U_\alpha\}$ of $\gamma$ such that each $U_\alpha$ satisfies the assumptions (1) and (2).
Let $\epsilon_\alpha>0$ be the extendable length for each $U_\alpha$ as in (1).
By the finiteness of $\{U_\alpha\}$, $\epsilon:=\min\{\epsilon_\alpha\}>0$.
We choose $t$ sufficiently close to $1$ so that $\gamma_t$ is contained in the union of $U_\alpha$ and $(t^{-1}-1)d(p,z)<\epsilon$ for all $z\in \gamma_t$.
This choice is possible by the continuity of shortest paths as in the assumption.

\begin{figure}[ht]
\centering
\begin{tikzpicture}
\coordinate[label=left:$p$](p)at(0,0);
\coordinate[label=above:$x_t$](z0)at(6,2);
\coordinate(z1)at(6,1);
\coordinate[label=below left:$z_i$](z2)at(6,0);
\coordinate(z3)at(6,-1);
\coordinate[label=below:$y_t$](z4)at(6,-2);
\coordinate[label=above:$x$](w0)at($(p)!1.1!(z0)$);
\coordinate(w1)at($(p)!1.2!(z1)$);
\coordinate[label=below right:$w_i$](w2)at($(p)!1.2!(z2)$);
\coordinate(w3)at($(p)!1.2!(z3)$);
\coordinate[label=below:$y$](w4)at($(p)!1.1!(z4)$);

\draw(z0)to(z4);
\draw(w0)to(w4);
\draw[dashed](p)to(w0);
\draw[dashed](p)to(w1);
\draw[dashed](p)to(w2);
\draw[dashed](p)to(w3);
\draw[dashed](p)to(w4);
\draw(6.5,1.6)circle[radius=1.1];

\node[right]at(7.7,1.6){$U_\alpha$};
\node[left]at(6,-1.5){$\gamma_t$};
\node[right]at(6.6,-1.7){$\gamma$};

\fill(p)circle(1.5pt);
\fill(z0)circle(1.5pt);
\fill(z1)circle(1.5pt);
\fill(z2)circle(1.5pt);
\fill(z3)circle(1.5pt);
\fill(z4)circle(1.5pt);
\fill(w0)circle(1.5pt);
\fill(w1)circle(1.5pt);
\fill(w2)circle(1.5pt);
\fill(w3)circle(1.5pt);
\fill(w4)circle(1.5pt);
\end{tikzpicture}
\caption{}\label{fig:topo}
\end{figure}

Subdivide $\gamma_t$ by finitely many points $x_t=z_0, z_1,\dots, z_{N-1}, z_N=y_t$ so that any adjacent $z_i,z_{i+1}$ lie in a single $U_\alpha$ (see Figure \ref{fig:topo}).
By the choice of $t$, we can extend the shortest path $pz_i$ beyond $z_i$ to a shortest path $pw_i$ of length $t^{-1}d(p,z_i)$ (in particular, we choose $w_0=x$ and $w_N=y$).
Furthermore, by choosing $t$ close to $1$ again, we may assume that $w_i,w_{i+1}$ also lie in the same $U_\alpha$.
Then the assumption (2) implies the Busemann convexity \eqref{eq:topo} for these $w_i,w_{i+1}$ and $t$, that is,
\[d(z_i,z_{i+1})\ge td(w_i,w_{i+1}).\]
Adding these inequalities for all $i$ gives us
\begin{align*}
d(x_t,y_t)&=d(z_0,z_1)+\dots+d(z_{N-1},z_N)\\
&\ge t(d(w_0,w_1)+\dots+d(w_{N-1},w_N))\\
&\ge td(x,y),
\end{align*}
where we used the triangle inequality in the last step.

Now we prove the general case.
For any fixed $x,y\in Y$, let $\tau$ be the infimum of $0\le t\le 1$ for which the Busemann concavity \eqref{eq:topo} holds.
By continuity,
$\tau$ is actually the minimum, and thus
\begin{equation}\label{eq:topo1}
d(x_\tau,y_\tau)\ge\tau d(x,y).
\end{equation}
Suppose $\tau>0$.
By the special case in the previous paragraph, there exists $0<s<1$ sufficiently close to $1$ such that
\begin{equation}\label{eq:topo2}
d((x_\tau)_s,(y_\tau)_s)\ge sd(x_\tau,y_\tau).
\end{equation}
Since $(x_\tau)_s=x_{\tau s}$ and $(y_\tau)_s=y_{\tau s}$, combining \eqref{eq:topo1} and \eqref{eq:topo2} gives
\[d(x_{\tau s},y_{\tau s})\ge(\tau s)d(x,y),\]
which contradicts the minimality of $\tau$.
This completes the proof.
\end{proof}

Now we are ready to prove Theorem \ref{thm:bdry}.

\begin{proof}[Proof of Theorem \ref{thm:bdry}]
Let $X$ be as in Theorem \ref{thm:bdry} and $p\in\In X$.
By Theorem \ref{thm:mfd}, $p$ is an $n$-regular point, i.e., the tangent cone $T_pX$ is a strictly convex Banach space of dimension $n$.
Let $\exp_p:A\to X$ be the exponential map (see Section \ref{sec:tang}), where $A$ is the maximal subset of $T_pX$ for which $\exp_p$ is defined.

We prove that the Busemann concavity \eqref{eq:topo} with respect to $p$ holds for any $x,y\in X$ and $0\le t\le 1$.
Then $\exp_p$ is an isometry from $A$ to $X$.
Since $T_pX$ is a strictly convex Banach space (hence uniquely geodesic) and $X$ is a complete geodesic space, we see that $A=\exp_p^{-1}(X)$ is a closed convex subset of $T_pX$.

We will apply Proposition \ref{prop:topo} to $\In X$ for proving the Busemann concavity in $\In X$.
Since $\In X$ is convex in $X$ (Theorem \ref{thm:mfd}), it is an (incomplete) uniquely geodesic space.
Furthermore, shortest paths vary continuously in $\In X$ (by the Arzel\`a--Ascoli theorem or the convexity of distance function, Remark \ref{rem:buse}).
Hence it remains to show that for any $q\in\In X$, there exists a neighborhood $U$ satisfying the assumptions of Proposition \ref{prop:topo}.
The assumption (1) is clear since $q$ is an interior point (Proposition \ref{prop:ls}).
The assumption (2) was already proved in Proposition \ref{prop:coneloc}.
Therefore, by Proposition \ref{prop:topo}, the Busemann convexity with respect to $p$ holds in $\In X$.

Finally, since $\In X$ is dense in $X$ and shortest paths vary continuously in $X$, we get the Busemann concavity over $X$.
This completes the proof.
\end{proof}

\begin{rem}
A characterization of a convex subset in a strictly convex Banach space in terms of affine functions can be found in \cite{HiL07}.
The authors are not aware of an alternative proof of Theorem \ref{thm:bdry} using this result.
\end{rem}

Finally, we give a simple example showing that the non-collapsing assumption of Theorem \ref{thm:bdry} is necessary, as noted in Remark \ref{rem:bdry} (cf.\ \cite[Remark 5.6]{St06}).

\begin{ex}\label{ex:hyp}

Let $\mathbb H^2$ be the hyperbolic plane and $\bar B(R)$ its closed $R$-ball.
Since $\bar B(R)$ is convex in $\mathbb H^2$, it is CAT($0$).
Furthermore, for any $N>2$, there exists $R>0$ such that $\bar B(R)$ with the Hausdorff measure $\Hcal^2$ satisfies MCP($0,N$).
Indeed, since $\bar B(R)$ is convex in $\mathbb H^2$, it satisfies MCP($-1,2$).
That is, for any $p\in\bar B(R)$ and any measurable $A\subset\bar B(R)$, we have
\[\Hcal^2(A_t)\ge\int_{A}t\frac{\sinh(td(p,x))}{\sinh(d(p,x))}d\Hcal^2(x)\]
for all $0\le t\le 1$ (see Remark \ref{rem:mcp}).
Here, if $R$ is sufficiently small compared to $N>2$, the integrand is not less than $t^N$.
We will check it for $N\in(2,3)$ (the $N\ge 3$ case follows by monotonicity).
Setting $r:=d(p,x)$ and $f_r(t):=\sinh(tr)-t^{N-1}\sinh(r)$, it suffices to prove that $f_r(t)\ge 0$ for all $t\in[0,1]$, provided $r$ is sufficiently small.
A direct computation shows
\[f_r''(t)=r^2\sinh(tr)-(N-1)(N-2)t^{N-3}\sinh(r),\]
which is non-positive for $t\in[0,1]$, provided $N\in(2,3)$ and $r$ is sufficiently small.
Since $f_r(0)=f_r(1)=0$, we get $f_r(t)\ge0$ for all $t\in[0,1]$, as desired.
Therefore, $\Hcal^2(A_t)\ge t^N\Hcal^2(A)$, i.e., $\bar B(R)$ satisfies MCP($0,N$).
\end{ex}

\begin{rem}\label{rem:grushin}

Even if the MCP in Theorem \ref{thm:bdry} is replaced by the CD condition, the non-collapsing assumption is necessary.
For example, the so-called Grushin halfplane with a suitable measure satisfies both CAT($0$) and CD($0,N$) (and hence the same is true for any proper convex subset of the Grushin halfplane).
See \cite[Remark 3.10]{DHPW23} and references therein.
Note that, as stated there, the Grushin halfplane is also an example for which the topological dimension is not equal to the Hausdorff dimension in the CAT+CD setting.
\end{rem}

\section{Almost flat structure}\label{sec:int}

Finally, we prove Theorem \ref{thm:int}, modifying the proof of the local part of Theorem \ref{thm:bdry} given in Section \ref{sec:loc}.

Let $0<\epsilon<1$.
We say that a map $f:X\to Y$ between metric spaces $(X,d_X)$ and $(Y,d_Y)$ is an \textit{$\epsilon$-almost isometry} if it is surjective and
\[\left|\frac{d_Y(f(x),f(x'))}{d_X(x,x')}-1\right|\le\epsilon\]
for any $x\neq x'\in X$.

The following theorem is the main result of this section.

\begin{thm}\label{thm:exp}
Let $X$ be a locally Busemann space satisfying local non-collapsed {\rm MCP($K,n$)}, where $K\le 0$ and $n\ge 1$.
Let $p\in X$ be a manifold point.
Then, for any $\epsilon>0$, there exists $R>0$ such that the exponential map 
\[\exp_p:B(o,R)\to  B(p,R)\]
is an $\epsilon$-almost isometry, where $o$ denotes the apex of the tangent cone $T_pX$.
\end{thm}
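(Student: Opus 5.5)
The plan is to run an ``almost'' version of the local argument of Section~\ref{sec:loc}, with the tangent cone $T_pX$ as the comparison model. By Theorem~\ref{thm:mfd} and Corollary~\ref{cor:reg}, $p$ is $n$-regular and geometrically inner, so $T_pX=T_p^gX$ is a strictly convex Banach space of dimension $n$ (Lemma~\ref{lem:tang}). Moreover, $\exp_p:B(o,R)\to B(p,R)$ is a homeomorphism for small $R$, with inverse $\log_p$. By \eqref{eq:log1}, $\log_p$ is $1$-Lipschitz: $d^*(\log_p x,\log_p y)\le d(x,y)$. Hence $\exp_p$ is expanding, and it suffices to prove the reverse estimate
\[
d(x,y)\le(1+\epsilon)\,d^*(\log_px,\log_py),\qquad x,y\in B(p,R).
\]
Since $d^*(\log_p x,\log_p y)=\lim_{t\to0}d(x_t,y_t)/t$ and $d(x_t,y_t)/t$ is monotone by Busemann convexity, this amounts to an almost Busemann concavity $d(x_t,y_t)\ge(1-\epsilon)t\,d(x,y)$ for all $x,y\in B(p,R)$, $0<t\le1$, with contractions centered at $p$.

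First I will prove almost homogeneity of $\Hcal^n$ near $p$, the analogue of Proposition~\ref{prop:homloc}. Choose $R$ with $10R<d(p,\partial X)$ and small enough that MCP($K,n$) gives $\Hcal^n(\Phi_t(A))\ge(1-\epsilon')t^n\Hcal^n(A)$ for sets in $B(p,10R)$, as in the proof of Claim~\ref{clm:noncol1}. Local geodesic completeness gives $\Phi_t(B(x,r))=B(x,tr)$. Combined with the $t$-Lipschitz property this yields
\[
(1-\epsilon')t^n\Hcal^n(B(x,r))\le\Hcal^n(B(x,tr))\le t^n\Hcal^n(B(x,r)).
\]
Contracting along the extension of $xy$, exactly as in \eqref{eq:homloc2}, gives $\Hcal^n(B(x,r))\le(1+\epsilon'')\Hcal^n(B(y,r))$. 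Consequently $\Hcal^n(B(x,r))=C r^n(1\pm\epsilon'')$ uniformly for $x\in B(p,R)$ and $r<R$. Since Busemann convexity alone bounds $r\mapsto\Hcal^n(B(x,r))/r^n$ from both sides, this step is elementary.

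Next comes almost concavity at all scales. Repeating the two-ball argument of Proposition~\ref{prop:coneloc} with these error terms shows the following. If $d(x_t,y_t)<(1-\eta)t\,d(x,y)$, then the balls $B(x_t,tD)$ and $B(y_t,tD)$, with $D=d(x,y)/2$, overlap in a region containing a ball of radius $\sim\eta tD$. That region has measure $\ge c\eta^n(tD)^n$, which contradicts the measure lower bound once $\epsilon''\ll c\eta^n$. This gives $d(x_t,y_t)\ge(1-\eta)t\,d(x,y)$ with $\eta\to0$ as $R\to0$, uniformly in $t$. Note that scale invariance is automatic: all estimates are stated as ratios at every scale below $R$.

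\textbf{The main obstacle} is the passage from almost concavity to an almost isometry. Letting $t\to0$ directly gives $d^*(\log_px,\log_py)\ge(1-\eta)d(x,y)$, which is exactly the required bi-Lipschitz bound, since the error is multiplicative and therefore does not accumulate under $t\to0$. One still has to check two points. The constant $\eta$ must not depend on $t$ or on the pair $x,y$; this holds because the measure errors are uniform over $B(p,10R)$. Also, the overlapping balls must stay inside the region where the homogeneity estimate holds; this is ensured by $x,y\in B(p,R)$ and $10R<d(p,\partial X)$. Surjectivity of $\exp_p$ onto $B(p,R)$ follows from $p$ being geometrically inner. Taking $\eta\le\epsilon$ completes the argument.
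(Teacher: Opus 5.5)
Your proposal is correct and follows the paper's proof essentially step for step: you reduce to the almost Busemann concavity $d(x_t,y_t)\ge(1-\epsilon)t\,d(x,y)$ and let $t\to0$; you derive the almost homogeneity of $\Hcal^n$ on $B(p,R)$ from the $\delta$-almost MCP($0,n$) together with the $t$-Lipschitz contraction (as in Claim~\ref{clm:homloc'}); and you run the two-ball argument of Proposition~\ref{prop:coneloc} with an extra ball of radius comparable to $\epsilon tD$ in the overlap. One small inaccuracy is harmless: Busemann convexity alone only gives the upper bound $\Hcal^n(B(x,tr))\le t^n\Hcal^n(B(x,r))$, while the lower bound comes from the (almost) MCP, and your displayed inequality already uses both.
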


\noindent Note that $T_pX=T_p^gX$ is isometric to a strictly convex Banach space of dimension $n$, by Corollary \ref{cor:reg}.
In particular, Theorem \ref{thm:exp} implies Theorem \ref{thm:int}.

The proof of Theorem \ref{thm:exp} is a minor modification of that of Proposition \ref{prop:coneloc}, where we proved that $\exp_p:B(o,R)\to B(p,R)$ is an isometry under the assumption $K=0$.
Indeed, Theorem \ref{thm:exp} can be viewed as the local almost rigid version of Theorem \ref{thm:bdry}.

For the proof, we first introduce the ``almost MCP($0,N$) condition'', which was already used in the proof of Claim \ref{clm:noncol1}.
Let $(X,d,m)$ be an MCP($K,N$) space, where $K<0$.
For given small $\delta>0$, assume that the diameter of $X$ is sufficiently small depending on $\delta$ (and $K,N$).
Then, for any $p\in X$ and any measurable $A\subset X$, we have
\begin{equation}\label{eq:mcp'}
m(A_t)\ge(1-\delta) t^Nm(A).
\end{equation}
for all $0\le t\le 1$, where $A_t$ denotes the $t$-intermediate set for $A$ with respect to $p$.
This immediately follows from the definition of the MCP($K,N$) in Remark \ref{rem:mcp} and the assumption that $\diam X\ll\delta$.
We refer to \eqref{eq:mcp'} as the \textit{$\delta$-almost {\rm MCP($0,N$)} condition}.

\begin{proof}[Proof of Theorem \ref{thm:exp}]
Let $X$ and $p$ be as in Theorem \ref{thm:exp}.
Fix $\epsilon>0$ and suppose $R>0$ is small enough, depending on $\epsilon$ and $p$ (in particular, we assume that $B(p,10R)$ satisfies the Busemann convexity and the measure contraction property).
We prove that for any $x,y\in B(p,R)$ and $0\le t\le 1$, the following ``almost Busemann concavity'' holds:
\begin{equation}\label{eq:exp}
d(x_t,y_t)\ge(1-\epsilon)td(x,y),
\end{equation}
where $x_t,y_t$ denote the $t$-intermediate points for $x,y$ with respect to $p$, respectively.

Indeed, once the inequality \eqref{eq:exp} is proved, dividing both sides by $t$ and taking $t\to0$, we obtain
\[d^*(\log_px,\log_py)\ge(1-\epsilon)d(x,y),\]
where $d^*$ denotes the metric of the tangent cone and $\log_p$ denotes the logarithmic map (see the proof of Claim \ref{clm:noncol2}).
Furthermore, by the Busemann convexity, we have $d^*(\log_px,\log_py)\le d(x,y)$.
Therefore $\log_p=\exp_p^{-1}$ is an $\epsilon$-almost isometry (the surjectivity of $\log_p$ clearly follows from local geodesic completeness).

Let us show \eqref{eq:exp}.
Suppose $\delta>0$ is sufficiently small compared to $\epsilon$ (which will be determined at the end of the proof).
Choosing $R$ small enough, we may assume that $B(p,10R)$ satisfies the $\delta$-almost non-collapsed MCP($0,n$), as in \eqref{eq:mcp'}.

We first show the local ``almost homogeneity'' of the Hausdorff measure.
Compare with Proposition \ref{prop:homloc}.

\begin{clm}\label{clm:homloc'}
Suppose $R\le d(p,\partial X)/10$ and $B(p,10R)$ satisfies the Busemann convexity and the $\delta$-almost non-collapsed {\rm MCP($0,n$)}.
Then there exists $C>0$ such that for any $x\in B(p,R)$ and $0<r<R$, we have
\[(1-\kappa(\delta))Cr^n\le\Hcal^n(B(x,r))\le(1+\kappa(\delta))Cr^n,\]
where $\kappa(\delta)$ is a positive function depending only on $\delta$ such that $\kappa(\delta)\to0$ as $\delta\to0$.
\end{clm}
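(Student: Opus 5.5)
We follow the proof of Proposition \ref{prop:homloc}, tracking multiplicative errors of size $1-\delta$. Since $R\le d(p,\partial X)/10$, every shortest path starting from a point of $B(p,R)$ extends to length $9R$. This follows from local geodesic completeness at interior points and the Busemann condition, exactly as before. We use this extendability throughout, together with the fact that $\Phi_t$ is $t$-Lipschitz.

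\emph{Step 1 (almost scaling at a fixed center).} Fix $x\in B(p,R)$ and $0<s<r\le R$, and let $\Phi=\Phi^x_{s/r}$ be the contraction centered at $x$. Extendability gives $\Phi(B(x,r))=B(x,s)$. The Lipschitz property of $\Phi$ gives $\Hcal^n(B(x,s))\le (s/r)^n\Hcal^n(B(x,r))$, while \eqref{eq:mcp'} gives $\Hcal^n(B(x,s))\ge(1-\delta)(s/r)^n\Hcal^n(B(x,r))$. Hence $f_x(r):=\Hcal^n(B(x,r))/r^n$ satisfies
\[
f_x(r)\le f_x(s)\le (1-\delta)^{-1}f_x(r)\quad\text{for } s<r\le R,
\]
so $f_x$ is almost constant on $(0,R]$. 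By Bishop--Gromov, $f_x(s)$ is also nondecreasing as $s\downarrow0$.

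\emph{Step 2 (comparing centers).} For $x,y\in B(p,R)$, extend the shortest path $xy$ beyond $y$ to a point $y'$ with $d(y,y')=d(x,y)<2R$. Then $y'$ lies in $B(p,5R)$, where the hypotheses hold. Let $\Phi_{1/2}$ be the $1/2$-contraction centered at $y'$. Exactly as in \eqref{eq:homloc2-1}--\eqref{eq:homloc2-3}, but now using \eqref{eq:mcp'} and Step 1 with their error factors, we get
\[
(1-\delta)\Hcal^n(B(x,R))\le 2^n\Hcal^n(B(y,R/2))\le (1-\delta)^{-1}\Hcal^n(B(y,R)).
\]
This yields $f_x(R)\le(1-\delta)^{-2}f_y(R)$, and by symmetry the reverse bound holds as well.

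Set $C:=f_p(R)$. Then for $x\in B(p,R)$ and $r<R$, combining the two steps gives
\[
(1-\delta)^2C\le f_x(R)\le f_x(r)\le (1-\delta)^{-1}f_x(R)\le(1-\delta)^{-3}C.
\]
This proves the claim with $\kappa(\delta):=(1-\delta)^{-3}-1$, which tends to $0$ as $\delta\to0$.

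The main obstacle is ensuring that every auxiliary point and every contracted set stays inside the region where Busemann convexity and the $\delta$-almost MCP hold. This concerns the extension points $y'$ and the contraction centers. The factor $10$ in $B(p,10R)$ together with the hypothesis $R\le d(p,\partial X)/10$ is what guarantees this. Note that only a single application of the contraction is needed in each step, so the errors do not accumulate.
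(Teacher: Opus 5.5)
Your argument is the paper's argument: Step 1 is \eqref{eq:homloc1'}, Step 2 is \eqref{eq:homloc2'}, and $C=\Hcal^n(B(p,R))/R^n$ as there. However, the conclusion you draw in Step 1 has its inequalities reversed. The two estimates you actually derive are the $(s/r)$-Lipschitz bound and \eqref{eq:mcp'}, and together they say
\[
(1-\delta)f_x(r)\le f_x(s)\le f_x(r)\qquad (s<r\le R).
\]
So $f_x$ is non-decreasing in the radius and almost constant, not almost non-increasing. Your appeal to Bishop--Gromov is also wrong here. For $K<0$ the Bishop--Gromov inequality compares with the hyperbolic model, so it gives no monotonicity of $\Hcal^n(B(x,r))/r^n$. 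Concretely, in $\mathbb{H}^2$ with $\Hcal^2$, which satisfies the hypotheses for small $R$, $f_x(r)$ is proportional to $(\cosh r-1)/r^2$. This function is strictly increasing. Hence your claimed $f_x(r)\le f_x(s)$ for $s<r$ is false, and so is the step $f_x(R)\le f_x(r)$ in your final chain.

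The fix is immediate and the claim survives. With the correct Step 1 you get $(1-\delta)f_x(R)\le f_x(r)\le f_x(R)$. Combined with your Step 2 this gives $(1-\delta)^3C\le f_x(r)\le(1-\delta)^{-2}C$. Your $\kappa(\delta)=(1-\delta)^{-3}-1$ still works, since $a+a^{-1}\ge 2$ for $a=(1-\delta)^3$. The second inequality in Step 2 is true, but you should justify it by the Lipschitz side of Step 1, not by the reversed bound. The Lipschitz side gives the sharper $2^n\Hcal^n(B(y,R/2))\le\Hcal^n(B(y,R))$. This is exactly the ``necessary side'' of \eqref{eq:homloc2-3} that the paper uses, and it yields \eqref{eq:homloc2'} with a single factor $1-\delta$.
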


\begin{proof}
The proof is almost identical to Proposition \ref{prop:homloc}, except that the MCP($0,n$) condition is replaced by the almost MCP($0,n$) condition.

Indeed, \eqref{eq:homloc1-1} and \eqref{eq:homloc1-2} are true without any changes.
The inequality \eqref{eq:homloc1-3} holds with a small error:
\[\Hcal^n(\Phi_t(B(x,R)))\ge(1-\delta)t^n\Hcal^n(B(x,R)).\]
This changes the equality \eqref{eq:homloc1} to the following almost equality:
\begin{equation}\label{eq:homloc1'}
(1-\delta)t^n\Hcal^n(B(x,R))\le\Hcal^n(B(x,tR))\le t^n\Hcal^n(B(x,R)).
\end{equation}
Similarly, the inequality \eqref{eq:homloc2-1} is modified as follows:
\[(1-\delta)\Hcal^n(B(x,R))\le2^n\Hcal^n(\Phi_{1/2}(B(x,R))).\]
The inequality \eqref{eq:homloc2-2} remains true.
Furthermore, the necessary side of the equality \eqref{eq:homloc2-3} also remains true by the right inequality of \eqref{eq:homloc1'}:
\[2^n\Hcal ^n(B(y,R/2))\le\Hcal^n(B(y,R)).\]
These modifications change the equality \eqref{eq:homloc2} to:
\begin{equation}\label{eq:homloc2'}
(1-\delta)\Hcal^n(B(x,R))\le\Hcal^n(B(y,R))\le(1-\delta)^{-1}\Hcal^n(B(x,R)).
\end{equation}
Combining \eqref{eq:homloc1'} and \eqref{eq:homloc2'} gives the desired inequality with $C:=\Hcal^n(B(p,R))/R^n$ and a suitable $\kappa(\delta)$.
\end{proof}

Now suppose \eqref{eq:exp} does not hold for some $x,y\in B(p,R)$ and $0<t<1$, i.e.,
\begin{equation}\label{eq:exp'}
d(x_t,y_t)<(1-\epsilon)td(x,y).
\end{equation}
In what follows, we repeat the same argument as in the proof of Proposition \ref{prop:coneloc} by using Claim \ref{clm:homloc'} instead of Proposition \ref{prop:homloc}.

Set $D:=d(x,y)/2$.
The inequality \eqref{eq:exp'} shows that the intersection of $B(x_t,tD)$ and $B(y_t,tD)$ contains the $(\epsilon tD)$-ball centered at the midpoint of the shortest path $x_ty_t$.
Therefore, instead of \eqref{eq:coneloc1}, we have
\begin{equation}\label{eq:coneloc1'}
\Hcal^n\left(B(x_t,tD)\cup B(y_t,tD)\right)\le(1+\kappa(\delta))2C(tD)^n-(1-\kappa(\delta))C(\epsilon tD)^n,
\end{equation}
where $C$ is the constant of Claim \ref{clm:homloc'}.

The other estimates also hold with small errors.
By the triangle inequality, the intersection of $B(x,D)$ and $B(y,D)$ is empty.
By Claim \ref{clm:homloc'} again, we have
\begin{equation}\label{eq:coneloc2'}
(1-\kappa(\delta))2C(tD)^n\le t^n\Hcal^n(B(x,D)\cup B(y,D)).
\end{equation}
By the almost MCP($0,n$) condition \eqref{eq:mcp'},
\begin{equation}\label{eq:coneloc3'}
(1-\delta)t^n\Hcal^n(B(x,D)\cup B(y,D))\leq\Hcal^n(\Phi_t(B(x,D)\cup B(y,D))),
\end{equation}
where $\Phi_t$ is the $t$-contraction map centered at $p$.
By the Busemann convexity \eqref{eq:buse},
\begin{equation}\label{eq:coneloc4'}
\Phi_t((B(x,D)\cup B(y,D))\subset B(x_t,tD)\cup B(y_t,tD).
\end{equation}
Combining \eqref{eq:coneloc2'}, \eqref{eq:coneloc3'}, and \eqref{eq:coneloc4'} gives
\[(1-\delta)(1-\kappa(\delta))2C(tD)^n\le\Hcal^n(B(x_t,tD)\cup B(y_t,tD)).\]
Taking $R$ sufficiently small so that $\delta,\kappa(\delta)\ll\epsilon$, we get a contradiction to \eqref{eq:coneloc1'} (recall that $\kappa(\delta)\to0$ as $\delta\to0$ as $R\to0$).
This completes the proof.
\end{proof}

\begin{rem}
As can be seen from the above proof, the choice of $R$ depends only on the distance to the boundary of $X$ and the radius for which the Busemann and non-collapsed MCP($K,n$) conditions hold.
In particular, Theorem \ref{thm:exp} can be strengthened to the following local uniform version:

Let $X$ be as in Theorem \ref{thm:exp}.
Then for any $p\in\In X$ and any $\epsilon>0$, there exists $R>0$ such that $\exp_q:B(o,R)\to B(q,R)$ is an $\epsilon$-almost isometry for any $q\in B(p,R)$.
\end{rem}

\section{Problems}\label{sec:prob}

In this section we summarize the remaining open problems, some of which have already been discussed.

The first problem is to drop the non-collapsing assumption of Theorem \ref{thm:mfd}, as mentioned in Remark \ref{rem:mfd}.
Note that this is impossible for Theorem \ref{thm:bdry}, see Remark \ref{rem:bdry}.
In view of Corollary \ref{cor:mfdcol}, it suffices to consider the following.

\begin{prob}\label{prob:mfd}
Let $X$ be a locally Busemann space satisfying local (collapsed) MCP($K,N$), where $K\le 0$ and $N\ge 1$.
Show that $X$ contains a manifold point of some dimension.
\end{prob}

\noindent One direction towards this problem is to develop a Busemann analog of Kleiner's dimension theory for CAT spaces \cite{Kl99}, which had already been partly discussed in \cite[Theorem D]{Kl99}.

The second problem is a collapsing version of Theorem \ref{thm:int}.
Since the interior of the manifold $X$ of Corollary \ref{cor:mfdcol} is locally geodesically complete, we will assume local geodesic completeness from the beginning (and instead, we may drop completeness).

\begin{ques}
Let $X$ be a locally geodesically complete, locally Busemann space with local (collapsed) MCP($K,N$).
Does every point of $X$ have a neighborhood almost isometric to an open subset of a strictly convex Banach space (via the exponential map)?
\end{ques}

\noindent In particular, the authors do not know if it is possible to develop the almost rigid version of Theorem \ref{thm:main}.
Partial results would follow from the more general theory of geodesically complete Busemann spaces (cf.\ Appendix \ref{sec:app}).

The third problem is a generalization of the main result of \cite{KK20}, which is also an improvement of Theorem \ref{thm:exp}.
In \cite{KK20}, Kapovitch--Ketterer proved that if a non-collapsed CD($K,n$) space satisfies the CAT($\kappa$) condition locally, then it is an Alexandrov space with curvature bounded below by $K-\kappa(n-2)$.
In other words, the synthetic sectional and Ricci curvatures satisfy the same relation as in the Riemannian case.

Recall that, in the proof of Theorem \ref{thm:bdry}, we proved that the Busemann convexity combined with the non-collapsed MCP($0,n$) implies the Busemann concavity.
This can be viewed as the generalization of the Kapovitch--Ketterer theorem in the extremal case $K=\kappa=0$.

In order to discuss further generalization, one needs to define general lower curvature bounds in the sense of Busemann, which generalizes Alexandrov lower curvature bounds.
This is done by regarding the Busemann concavity \eqref{eq:conc} as a comparison inequality with Euclidean model space and replacing the model space with the hyperbolic one of constant curvature $K<0$.
We call it the \textit{$K$-Busemann convexity}.
For more details, see \cite[Section 2]{Ke19}.

Now we can ask the following question.

\begin{ques}
Let $X$ be a locally Busemann space satisfying local non-collapsed MCP($K,n$).
Does $X$ satisfy the $K$-Busemann concavity locally?
\end{ques}

\noindent Similarly, one can define the \textit{$\kappa$-Busemann convexity} (see \cite[Section 2.4]{FG25}), which generalizes the CAT($\kappa$) condition.
It is also natural to ask whether our main results extend to $\kappa$-Busemann convex spaces, but we will not pursue this further here.

Another problem is the construction of a Finsler metric in the manifold part of a Busemann space with MCP, analogous to the Riemannian structure in the CAT with CD setting (\cite[Corollary 1.2]{KK20}, \cite[Theorem 1.1]{KKK22}).
As before, since we are only interested in the interior, we assume local geodesic completeness from the beginning.

\begin{prob}\label{prob:fin}
Let $X$ be a locally geodesically complete, locally Busemann space with local MCP($K,N$) (non-collapsed if necessary).
Construct a Finsler metric on $X$ that is compatible with the original distance.
\end{prob}

\noindent
However, at present, we suspect that this requires some more regularity for the distance function, which is independent of MCP (cf.\ \cite{Po90, Po98}).
The details will be discussed in future work.

Finally, we ask about the Berwaldness of a Busemann space with MCP.
Recall that Ivanov--Lytchak \cite{IL19} proved that any locally Busemann Finsler manifold is \textit{Berwald}, that is, its geodesics are affine reparameterizations of geodesics of some Riemannian metric.
In particular, this implies that every tangent norm is isometric to each other.
Compare with Proposition \ref{prop:kkkconti}.

\begin{ques}\label{ques:ber}
Let $X$ be a locally geodesically complete, locally Busemann space with local MCP($K,N$) (non-collapsed if necessary).
Does $X$ satisfy any kind of Berwaldness?
For example, is the tangent cone isometric to each other?
\end{ques}

Note that all the above problems and questions remain open even if the MCP in the assumption is replaced by the CD condition.

\appendix
\section{}\label{sec:app}

Here we include some important observations that will be useful for future research beyond \cite{FG25} on geodesically complete Busemann spaces.
Note that in this section we will not assume MCP.
In particular, Busemann spaces in this section can be branching.

Before we get into new results, let us first recall what was proved in \cite{FG25}.

\begin{thm}[{\cite[Theorems 1.4, 5.30]{FG25}}]\label{thm:lip}
Let $X$ be a locally compact, locally geodesically complete, locally Busemann space (not necessarily complete).
Then $X$ contains an open dense subset of Lipschitz manifold points (i.e., a point with a neighborhood bi-Lipschitz homeomorphic to an open subset of Euclidean space).
\end{thm}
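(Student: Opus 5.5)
The plan is to adapt the strainer approach of Lytchak--Nagano \cite{LN19} to the Busemann setting, using only the convexity of distance functions (Remark \ref{rem:buse}) and geodesic extendability in place of angle comparison. Since the statement is local, we work in a small convex ball $U$ that is Busemann, relatively compact, and in which every shortest path extends by a fixed length $\epsilon_U>0$. By Remark \ref{rem:doub}, $U$ is doubling, so any quantity bounded in terms of packing numbers is uniformly bounded in $U$.

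\emph{Step 1 (strainers).} Call $(a_1,b_1,\dots,a_k,b_k)$ a $(k,\delta)$-strainer at $x$ if the following holds: each $x$ lies $\delta$-almost on the shortest path $a_ib_b$, i.e.\ $d(a_i,x)+d(x,b_i)<d(a_i,b_i)+\delta\, d(a_i,x)$; and the pairs are almost independent, in the sense that moving along $a_jb_j$ changes $d(a_i,\cdot)$ by at most $\delta$ times the displacement for $i\neq j$. Using convexity of $t\mapsto d(a_i,\gamma(t))$ along shortest paths $\gamma$, the map $F=(d(a_1,\cdot),\dots,d(a_k,\cdot))$ should be Lipschitz and \emph{co-Lipschitz} near $x$. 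Concretely, I want to show that for any target value, moving along $a_ib_b$-directions (available by extendability) realizes it with controlled displacement. It follows that $F$ is open near $x$. The doubling property then bounds $k$ by a constant $n_U$, since a $(k,\delta)$-strainer yields $\sim r^{-k}$ disjoint $\delta r$-balls in $B(x,r)$.

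\emph{Step 2 (maximal rank).} Let $n$ be the maximal $k$ such that some point of a given open set $V\subset U$ admits a $(k,\delta)$-strainer. Strainer conditions are open, so the set $V_n$ of $n$-strained points is open and nonempty. The key claim is that $F$ is injective near a point $x\in V_n$, and hence a bi-Lipschitz homeomorphism onto an open subset of $\R^n$.

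The main obstacle is this injectivity, and I would argue it by contradiction. Suppose $y\neq z$ are close to $x$ with $F(y)=F(z)$. Extend the shortest path $yz$ beyond both ends by length $\epsilon_U$, obtaining points $a_{n+1},b_{n+1}$. By convexity of $d(a_i,\cdot)$ along this extended path, together with the equality $d(a_i,y)=d(a_i,z)$, each $d(a_i,\cdot)$ is almost constant along the segment near $y,z$. This almost-constancy is what should give the independence condition, so that $(a_1,\dots,b_{n+1})$ is an $(n+1,\delta')$-strainer at a point of $V$, contradicting maximality. The delicate point is quantitative: convexity only controls one side of the estimate. I expect to need $\delta'$ comparable to $\delta$ times a factor depending on $d(y,z)/d(x,a_i)$. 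I would handle this by a rescaling argument, blowing up at scale $d(y,z)$ and using local compactness, to get uniform constants.

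\emph{Step 3 (density).} Given any nonempty open $V\subset X$, apply Step 2 inside $V$ to find a nonempty open subset of $V$ consisting of Lipschitz manifold points. Hence the set of such points, which is open by definition, is dense.
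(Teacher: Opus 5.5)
The overall strategy is the expected one: the paper does not reprove this theorem but cites \cite{FG25}, whose proof uses strainer maps built from almost orthogonal distance functions. Your Step 2, however, which is the heart of the matter, has a genuine gap.

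From convexity of $f=d(a_i,\cdot)$ along the extended segment and $f(y)=f(z)$ you only get $f\le f(y)$ on $yz$ and $f\ge f(y)$ beyond it. This does not make $f$ almost constant, because $f$ can have a corner at its minimum with slopes bounded away from $0$ on both sides.

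A concrete counterexample is a trivalent metric tree, which is locally compact, geodesically complete and Busemann. Along any geodesic a distance function has slopes $\pm1$, so no point carries two almost independent pairs, and $n=1$ on every open $V$. Yet a vertex $x$ is $(1,\delta)$-strained in your sense by $a_1,b_1$ lying on two of its edges. Take $y,z$ at equal distance from $x$ on the two edges not containing $a_1$. Then $F(y)=F(z)$, and along the extension of $yz$ (which passes through $x$) the function $d(a_1,\cdot)$ has slopes $-1$ and $+1$. So no $(2,\delta')$-strainer arises, and in fact $F$ is not injective on any neighborhood of $x$.

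The same corner phenomenon occurs without branching. In $\R^2$ with a strictly convex norm whose unit ball has a corner, $d(a,\cdot)$ restricted to a suitable line through a corner point of a large sphere around $a$ has one-sided slopes $\pm c$ with $c>0$. Both configurations are cones, so the blow-up at scale $d(y,z)$ that you propose reproduces them rather than removing them.

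Two ingredients are therefore missing.
\begin{enumerate}
\item Maximality of the rank at $x$ does not imply injectivity near $x$. You need an argument that singles out points of $V_n$ where $F$ is injective; in the tree these are the interior points of edges. Your contradiction argument makes no such choice, and it would apply verbatim at a vertex, so it cannot be correct as stated.
\item You need two-sided control of $d(a_i,\cdot)$ in the new direction. In a Busemann space, convexity bounds one-sided derivatives only from above, by large-scale difference quotients. A lower bound in one direction must come from an upper bound in the opposite direction along a geodesic extension, and the fiber condition $F(y)=F(z)$ does not supply this.
\end{enumerate}

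The same issue affects Step 1. A fixed-scale condition such as $d(a_i,x)+d(x,b_i)<d(a_i,b_i)+\delta\,d(a_i,x)$ gives no control at small scales at nearby points. For example, a point on a short side branch of a tree satisfies it, yet moving from that point toward $b_i$ first decreases $d(a_i,\cdot)$. The strainer conditions should instead be phrased as upper bounds on the change of $d(p_i,\cdot)$ both toward $p_j$ and along extensions of $p_jx$ beyond $x$. Convexity propagates such bounds to all smaller scales, and they are open conditions.

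Finally, the contradiction with maximality needs $\delta'\le\delta$, or a pigeonhole choice of $\delta$. Your estimate allows $\delta'$ to degrade by a factor depending on $d(y,z)/d(x,a_i)$, so it does not deliver this.
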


\begin{rem}
In \cite{FG25}, a separable, locally compact, locally geodesically complete, locally Busemann space was called a \textit{GNPC space}, imitating the \textit{GCBA space} introduced by Lytchak--Nagano \cite{LN19, LN22} (note that completeness is not assumed).
However, for clarity, we will not use this terminology here.
The separability was never used in \cite{FG25}, and the same is true below.
\end{rem}

\begin{rem}
The proof of Theorem \ref{thm:lip} is quite different from that of Theorem \ref{thm:int}, where we used the exponential map in Theorem \ref{thm:exp}.
In the proof of Theorem \ref{thm:lip}, we use the so-called \textit{strainer map} introduced in \cite{FG25}, which consists of distance functions that are ``almost orthogonal'' to each other.
\end{rem}

\noindent In \cite{FG25}, the first named author and Shijie Gu mainly focused on the topological aspects of geodesically complete Busemann spaces and did not go further into the geometric aspects beyond necessity.
Here, as a first step in the study of these geometric aspects, we will discuss the structure of the tangent cones of geodesically complete Busemann spaces. 

Recall that one of the main difficulties in studying Busemann spaces is that they are not closed under limiting operations; see the references cited in Remark \ref{rem:tang} and compare with the proofs of Theorems \ref{thm:conv} and \ref{thm:noncol} (Remark \ref{rem:mmr}).
However, if we assume geodesic completeness, at least the tangent cone belongs to the same class.
This observation is essentially due to Andreev \cite{An14}.

\begin{prop}[cf.\ {\cite[Lemma 6]{An14}}]\label{prop:app}
Let $X$ be a locally compact, locally geodesically complete, locally Busemann space (not necessarily complete).
Then for any $p\in X$, the tangent cone $T_pX$ is a locally compact, geodesically complete Busemann space.
\end{prop}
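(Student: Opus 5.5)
The plan is to identify $T_pX$ with the geodesic tangent cone $T_p^gX$, and then read off each required property from the cone structure together with the Busemann inequality in $X$.

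\textbf{Step 1: $T_pX=T_p^gX$.} Since $X$ is locally compact and locally geodesically complete, there is $\epsilon>0$ such that every shortest path from $p$ extends to length $\epsilon$. In the locally Busemann setting, local compactness together with this uniform extendability gives local doubling (Remark \ref{rem:doub}). Lemma \ref{lem:tang} then shows that $T_pX$ is unique and isometric to $T_p^gX$, and that the logarithmic map $\log_p$ of Claim \ref{clm:noncol2} is surjective onto small balls.

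\textbf{Step 2: geodesics of $T_p^gX$ and the Busemann inequality.} Work in the rescalings $(\lambda X,p)$. For $v,w\in T_p^gX$, choose points $x_\lambda,y_\lambda\in X$ with $\log_p$-images $v/\lambda,w/\lambda$. The rescaled shortest paths $x_\lambda y_\lambda$ subconverge to a shortest path $vw$ in the cone. Busemann convexity in $X$ for the pair of linearly parameterized paths $x_\lambda y_\lambda$ and $x'_\lambda y'_\lambda$ passes to the limit (Remark \ref{rem:buse}). This gives a convex bicombing on $T_pX$, which is exactly what Remark \ref{rem:tang} guarantees in general.

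\textbf{Step 3: uniqueness of geodesics.} This is the main point. I would first try to show that every shortest path in $T_pX$ arises as such a limit. For $v,w$ let $m$ be any midpoint. By Step 1, $m/\lambda$ corresponds to an actual point $z_\lambda\in X$ whose rescaled distances to $x_\lambda,y_\lambda$ are approximately $d^*(v,w)/2$. By the Busemann convexity of $X$, an approximate midpoint must lie close to the true midpoint of $x_\lambda y_\lambda$. Turning ``close'' into a uniform estimate requires a quantitative uniqueness of midpoints at small scale, and I expect this to be the hardest step.

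My first attempt at that step uses a local version of Andreev's argument. Extend the path $x_\lambda z_\lambda$ beyond $z_\lambda$ by local geodesic completeness, and compare it with $x_\lambda y_\lambda$ via the convexity of $t\mapsto d(\gamma(t),\eta(t))$. This should show that the distance between $z_\lambda$ and the true midpoint is $o(1)$ after rescaling, so $m$ equals the bicombing midpoint. Once midpoints are unique, geodesics are unique and Busemann convexity holds for all pairs of geodesics, so $T_pX$ is Busemann.

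\textbf{Step 4: remaining properties.} Local compactness comes from doubling, which passes to the limit; completeness is automatic for Gromov--Hausdorff limits of this kind. For geodesic completeness, note that a shortest path in $T_pX$ is a limit of rescaled shortest paths in $X$. Each of these extends by a definite rescaled length, because the extension length $\epsilon$ available near $p$ becomes $\lambda\epsilon\to\infty$. Hence the limit path extends, and together with the Busemann property this yields geodesic completeness. The completeness criterion \cite[Proposition 9.1.28]{BBI01} reduces the task to local extendability.
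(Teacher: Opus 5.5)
Your reduction is sound and matches the paper's. Local compactness comes from doubling. Once $T_pX$ is uniquely geodesic, the limiting bicombing of Remark \ref{rem:tang} makes it Busemann. Geodesic completeness then follows from uniqueness together with the rescaled extension length $\lambda\epsilon\to\infty$.

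\textbf{The gap is in Step 3, which is the whole content of the proposition.} The sentence ``by the Busemann convexity of $X$, an approximate midpoint must lie close to the true midpoint of $x_\lambda y_\lambda$'' needs closeness that is $o(1)$ in rescaled units \emph{uniformly in $\lambda$}. In other words, you need a modulus of strict convexity for the family $(\lambda X,p)$ that does not degenerate as $\lambda\to\infty$. Busemann convexity of each member gives no such uniformity; this is exactly why the paper stresses that tangent cones of general Busemann spaces are not known to be Busemann.

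Your proposed mechanism does not supply the missing modulus. Let $y'_\lambda$ be the point on the extension of $x_\lambda z_\lambda$ with $z_\lambda$ as its midpoint. Convexity of $t\mapsto d(\gamma(t),\eta(t))$ then gives only $d(z_\lambda,m_\lambda)\le\frac12 d(y'_\lambda,y_\lambda)$, and nothing controls the right-hand side.

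A concrete example in the $\ell^p$-plane with large $p$ shows this. Take $x=(0,0)$, $z=(1,0)$, $y'=(2,0)$, $y=(2,1)$. Then $d(x,z)+d(z,y)-d(x,y)=1+2^{1/p}-(2^p+1)^{1/p}\to 0$, so $z$ is an approximate midpoint of $x,y$ with vanishing error. Yet $d(y,y')=1$, and the true midpoint $(1,\frac12)$ stays at distance $\frac12$ from $z$. Your argument uses only Busemann convexity and extendability at the scale of the configuration, and every $\ell^p$-plane has both properties. So the argument would prove a uniform statement that is false for this family.

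\textbf{The missing idea (the paper's route, imported from \cite[Lemma 6]{An14})} is to obtain a lower bound on the modulus of convexity that is uniform over all rescalings $(\lambda X,p)$. The uniformity comes from a compactness argument inside a fixed compact ball of $X$. It uses the fact that all the $\lambda X$ are rescalings of \emph{one} locally compact, geodesically complete space.

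Geodesic completeness enters through antipodal points. Since $X$ may branch here, the paper replaces Andreev's continuous function $d(\bar y,z)$ with $f'(x,y,z)=\sup_{\bar y}d(\bar y,z)$. The supremum runs over all points $\bar y$ on extensions of $yx$ beyond $x$ with $d(x,\bar y)=d(x,y)$. Limits of antipodal points are antipodal, so $f'$ is upper semicontinuous and attains its maximum on compact sets. That is what Andreev's argument needs.

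Without some such compactness-plus-rescaling input, Step 3 is circular: bounding $d(y'_\lambda,y_\lambda)$ is the same uniform strict convexity you are trying to prove.
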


\noindent Here, by Lemma \ref{lem:tang}, the two definitions of the tangent cone coincide.
Furthermore, as mentioned in Remark \ref{rem:doub}, in the setting of locally geodesically complete, locally Busemann spaces, local compactness is equivalent to the local doubling condition; see  \cite[Proposition 3.1]{FG25}.

\begin{proof}
For simplicity, we assume that $X$ is (globally) Busemann.
The general case is the same since the tangent cone $T_pX$ only concerns a neighborhood of $p$.

The local compactness of $T_pX$ follows from the above-mentioned fact that $X$ is locally doubling and hence $T_pX$ is doubling.
Since $X$ is locally geodesically complete, if we show that $T_pX$ is uniquely geodesic, then it turns out to be geodesically complete (cf.\ \cite[Example 4.3]{LN19}).

Therefore it suffices to prove that $T_pX$ is Busemann.
The following argument is due to Andreev \cite[Lemma 6]{An14}, where he proved that, if in addition $X$ is non-branching, then $T_pX$ is Busemann (and he further showed that $T_pX$ is also non-branching; see \cite[Lemma 8]{An14}).

In the proof of \cite[Lemma 6]{An14}, the non-branching assumption is only used to show that the following function $f:X\times X\times X\to\R$ is continuous, and thus it attains a maximum value on a compact set:
\[f(x,y,z):=d(\bar y,z),\]
where $\bar y$ is the antipodal point for $y$ with respect to $x$, that is, a point on an extension of the shortest path $yx$ beyond $x$ such that $d(x,y)=d(x,\bar y)$.
Note that if $X$ is non-branching as in \cite[Lemma 6]{An14}, then $\bar y$ is unique and hence $f$ is well-defined and continuous.

Now suppose that $X$ is not necessarily non-branching.
In this case, we can still consider the following function $f':X\times X\times X\to\R$:
\[f'(x,y,z):=\sup_{\bar y}d(\bar y,z),\]
where $\bar y$ runs over all antipodal points for $y$ with respect to $x$.

The set of antipodal points varies upper semi-continuously.
That is, if $x_i,y_i$ converge to $x,y$ and an antipodal point $\bar y_i$ for $y_i$ with respect to $x_i$ converges to $\bar y$, then $\bar y$ is an antipodal point for $y$ with respect to $x$.
Therefore the function $f'$ is upper semi-continuous and attains a maximum value on a compact set.
This is enough to repeat the rest of the proof of \cite[Lemma 6]{An14}, which proves that $T_pX$ is Busemann by showing that the modulus of convexity is uniformly bounded below for the rescaled family $(\lambda X,p)$ with $\lambda\to\infty$.
The details are left to the reader.
\end{proof}

Using the above proposition, we prove the following new theorem.

\begin{thm}\label{thm:app}
Let $X$ be a locally compact, locally geodesically complete, locally Busemann space (not necessarily complete).
Then the set of regular points (i.e., a point with a unique strictly convex Banach tangent cone) is dense in $X$.
\end{thm}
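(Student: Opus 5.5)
The plan is to combine Theorem \ref{thm:lip} with Le Donne's theorem (Theorem \ref{thm:le}) and Lemma \ref{lem:sub}, using Proposition \ref{prop:app} to transfer Busemann convexity to tangent cones. Density is local, so fix an arbitrary open set $V\subset X$; we must find a regular point in $V$.

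First, by Theorem \ref{thm:lip}, $V$ contains a Lipschitz manifold point, hence an open subset $W\subset V$ bi-Lipschitz homeomorphic to an open subset of $\R^k$. Since $X$ is locally geodesically complete, Lemma \ref{lem:tang} shows that the Gromov--Hausdorff tangent cone $T_qX$ is unique at every $q\in W$. Moreover, $W$ carries a doubling measure: pull back Lebesgue measure via the bi-Lipschitz chart. With these two facts, Le Donne's result \cite[Theorem 1.2]{Le11} (Theorem \ref{thm:le}) applies to $W$ with this measure. It yields that for almost every $q\in W$ (in particular for some $q$) the tangent cone $T_qX$ is isometric to a sub-Finsler Carnot group.

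Next, Proposition \ref{prop:app} gives that $T_qX$ is a locally compact, geodesically complete Busemann space. By Lemma \ref{lem:sub}, a sub-Finsler Carnot group satisfying Busemann convexity is a strictly convex Banach space. Hence $T_qX$ is a strictly convex Banach space and is unique, so $q$ is a regular point lying in $V$. If one wants the dimension to be $k$, observe that the bi-Lipschitz chart forces the tangent cone to be bi-Lipschitz to $\R^k$; this is not needed for the statement, however.

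The main obstacle is verifying the precise hypotheses of Theorem \ref{thm:le} as stated in the appendix. These are presumably a doubling metric measure space, uniqueness of tangents almost everywhere, and perhaps a geodesic or Lipschitz-manifold setting. The point to check is that restricting to the open set $W$, which is not complete and not geodesic with the induced structure, is harmless because tangent cones are local. If Theorem \ref{thm:le} instead requires a globally doubling measure on all of $X$, I would apply it on a small closed convex ball $\bar B(q_0,r)\subset W$. This ball is a complete geodesic doubling space, and its tangent cones at interior points coincide with those of $X$. I would then pick a point given by Theorem \ref{thm:le} in the open ball.
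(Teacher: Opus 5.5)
Your architecture (doubling measure, then Theorem \ref{thm:le}, then Proposition \ref{prop:app} and Lemma \ref{lem:sub}) matches the paper's. However, the step you yourself flag as the main obstacle is left open, and it is exactly where the paper does its work.

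\textbf{The gap.} In this paper Theorem \ref{thm:le} is stated for metric measure spaces in the paper's convention, which are complete, separable and geodesic. So it does not apply to the open set $W$ as it stands, and the paper works on the compact convex ball $Y=\bar B(p,r)$. Once you pass to $Y$ (your fallback), the hypothesis of Theorem \ref{thm:le} is uniqueness of the tangent cones \emph{of $Y$} at $\mu$-a.e.\ point of $Y$. This includes the points of the sphere $\partial B(q_0,r)$. There $T_xY$ a priori only embeds in $T_xX$ and need not be unique, and Lemma \ref{lem:tang} says nothing. Saying you will ``pick a point in the open ball'' uses the conclusion of Theorem \ref{thm:le} instead of verifying its hypothesis. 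The paper closes this by proving
\[
T_xY=\{v\in T_xX\mid d_xf(v)\le 0\}\quad\text{for } x\in\partial B(p,r),\ f=d(p,\cdot),
\]
using the differential from Lemma \ref{lem:diff} and the geodesic retraction onto $\bar B(p,r)$. Hence tangents of $Y$ are unique everywhere.

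\textbf{A second unverified point.} Pulled-back Lebesgue measure on an arbitrary bi-Lipschitz chart $W$ need not be doubling for the restricted metric; it can fail near an irregular boundary of the image. Also, metric doubling of $\bar B(q_0,r)$ does not by itself make $\mu$ restricted to it doubling.

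\textbf{How to repair both within your route.}
\begin{itemize}
\item Your $\mu$ is defined on $W$ independently of the radius. The spheres $\partial B(q_0,s)$, $0<s<r_0$, are pairwise disjoint subsets of a set of finite $\mu$-measure, so all but countably many are $\mu$-null. Choose $r$ with $\mu(\partial B(q_0,r))=0$; then uniqueness of tangents on the open ball (Lemma \ref{lem:tang}) suffices.
\item For doubling of $\mu|_Y$, use convexity. For $y\in Y$ and $s\le r$, $B(y,s)\cap Y$ contains a ball $B(y',s/2)\subset B(q_0,r)\subset W$; take $y'$ on $q_0y$, or $y'=q_0$ if $d(q_0,y)<s/2$. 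Its image under the $L$-bi-Lipschitz chart contains a Euclidean ball of radius $s/(2L)$. Meanwhile the image of $B(y,2s)\cap Y$ lies in a Euclidean ball of radius $2Ls$.
\end{itemize}
With these additions your argument is complete.

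\textbf{Comparison with the paper.} You invoke the deep Theorem \ref{thm:lip} to get a natural measure with $\mu(B(y,s)\cap Y)$ comparable to $s^k$. That makes the boundary issue disappear by a genericity choice of $r$. The paper needs only metric doubling of small balls (\cite[Proposition 3.1]{FG25}) and an abstract Vol'berg--Konyagin measure constructed after $r$ is fixed. It pays for this with the explicit computation of $T_xY$ at boundary points.
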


For the proof, we need the following two facts regarding sub-Finsler geometry, which were already used in the alternative proof of Theorem \ref{thm:main}.
We refer the reader to \cite{Le11} or \cite[Section 2.2]{MMR25} for the definition of a sub-Finsler Carnot group.
The authors thank Enrico Le Donne for the proof of Lemma \ref{lem:sub}.

\begin{thm}[{\cite[Theorem 1.2]{Le11}}]\label{thm:le}
Let $(X,d,m)$ be a metric measure space with a doubling measure $m$.
Suppose for $m$-a.e.\ $p\in X$, the Gromov--Hausdorff tangent cone $T_pX$ is unique.
Then $T_pX$ is isometric to a sub-Finsler Carnot group for $m$-a.e.\ $p\in X$.
\end{thm}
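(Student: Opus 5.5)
The statement above is Le Donne's theorem \cite[Theorem 1.2]{Le11}, quoted from the literature. If I had to reprove it, I would follow the standard "tangents of tangents" strategy. The plan is to show that, at a generic point, the unique tangent cone is isometrically homogeneous and admits dilations. Then I would invoke the structure theory of locally compact homogeneous geodesic spaces.

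\textbf{Step 1 (dilations).} Since $m$ is doubling, every rescaled sequence $(\lambda_iX,p)$ subconverges, and every limit is proper and doubling. Suppose $T_pX$ is unique. For any $\mu>0$, the space $\mu T_pX$ is the limit of $(\mu\lambda_iX,p)$, so it is again a tangent cone at $p$. Uniqueness therefore gives a pointed isometry $(\mu T_pX,o)\cong(T_pX,o)$. In other words, $T_pX$ carries a one-parameter family of dilations $\delta_\mu$ fixing $o$ and scaling the metric by $\mu$.

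\textbf{Step 2 (tangents at other base points are tangents).} This is the measure-theoretic heart of the argument and the step I expect to be the main obstacle. I want to show that for $m$-a.e.\ $p$ and every $q\in T_pX$, the pointed space $(T_pX,q)$ is itself a tangent of $X$ at $p$.

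The mechanism I would use is a Preiss-type principle. Fix a GH approximation realising $(\lambda_iX,p)\to(T_pX,o)$. A point $q\in T_pX$ corresponds to points $x_i\in X$ with $d(p,x_i)\approx|q|/\lambda_i$, so $(\lambda_iX,x_i)\to(T_pX,q)$. One then needs to compare $(\lambda_iX,x_i)$ with $(\lambda_iX,p')$ for $p'$ near $x_i$. This uses Lebesgue density points of the good set, together with doubling, which shows that the good set is $o(1/\lambda_i)$-dense near $p$ at density points. The result is that $(T_pX,q)$ is a limit of tangent cones at nearby generic points. The remaining task is to upgrade this to a tangent cone at $p$ itself. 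For that I would use a diagonal argument combined with uniform convergence of blow-ups on a large-measure compact set, obtained via an Egorov-type argument on the Borel map $x\mapsto$ (rescalings at $x$). Making this diagonal argument precise is where the real care is needed.

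Granted Step 2, uniqueness gives a pointed isometry $(T_pX,q)\cong(T_pX,o)$ for every $q$. Hence the isometry group of $T_pX$ acts transitively.

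\textbf{Step 3 (structure).} $T_pX$ is locally compact, connected, locally connected and finite-dimensional, the last by doubling. It is also isometrically homogeneous. By Montgomery--Zippin / Gleason, its isometry group is a Lie group, so $T_pX$ is a homogeneous manifold. I also need $T_pX$ to be geodesic (or at least to have a geodesic model); in our setting this is automatic, since $X$ is geodesic and limits of geodesic spaces are geodesic. Berestovskii's theorem then says the metric is sub-Finsler on a homogeneous space. Finally, the dilations from Step 1, combined with transitivity, force the space to be a simply connected nilpotent Lie group with a graded Lie algebra on which $\delta_\mu$ acts as the grading automorphism. Such a group with a left-invariant sub-Finsler metric is exactly a sub-Finsler Carnot group.
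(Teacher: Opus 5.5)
Your proposal is consistent with the paper, which gives no argument of its own and simply imports the statement from \cite[Theorem 1.2]{Le11}. Your outline is the standard strategy behind that reference: self-similarity of $T_pX$ from uniqueness; isometric homogeneity from a Preiss-type principle; then Gleason--Montgomery--Zippin and Berestovskii. Under uniqueness, the Preiss-type step can indeed be run via Egorov-uniform convergence of the blow-ups on a compact set $K$ of almost full measure. That uniformity also makes $x\mapsto T_xX$ continuous on $K$, which is exactly the ``upgrade'' you need at density points of $K$.

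The only step you merely assert is the last one, passing from a self-similar, isometrically homogeneous sub-Finsler space to a Carnot group. It can be closed by observing that self-similarity makes $T_pX$ its own tangent cone at $o$. The tangent cone of a homogeneous (hence equiregular) sub-Finsler manifold is a sub-Finsler Carnot group by a Mitchell-type theorem. Alternatively, conjugation by a dilation induces an automorphism of the isometry group, and one applies Siebert's theorem on contractive automorphisms to the subgroup it contracts.
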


\begin{lem}\label{lem:sub}
Any sub-Finsler Carnot group satisfying the Busemann convexity is a strictly convex Banach space. 
\end{lem}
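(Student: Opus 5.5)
The plan is to exploit the convexity formulation of Remark~\ref{rem:buse} along pairs of parallel horizontal lines. Let $G$ be a sub-Finsler Carnot group with stratified Lie algebra $\mathfrak g=V_1\oplus\dots\oplus V_s$, identity $e$, dilations $\delta_\lambda$, and a norm on $V_1$ defining the metric. The two basic properties I will use are that left translations are isometries and that $d(\delta_\lambda g,\delta_\lambda h)=\lambda d(g,h)$.

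First, for every $X\in V_1$ the horizontal one-parameter curve $t\mapsto\exp(tX)$ is a geodesic line. Indeed, its length on $[a,b]$ is $|b-a|\,\|X\|$. Moreover, the abelianization projection $G\to V_1$ is $1$-Lipschitz onto $(V_1,\|\cdot\|)$ and maps this curve isometrically, so the line is minimizing. Every left translate $t\mapsto g\exp(tX)$ is then also a geodesic line. By the convexity formulation of the Busemann condition, applied on arbitrarily long intervals, the function
\[
\phi(t):=d\bigl(\exp(tX),\,g\exp(tX)\bigr)=d\bigl(e,\exp(-tX)\,g\,\exp(tX)\bigr)
\]
is convex on all of $\R$.

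Next, take $g=\exp(Y)$ with $Y\in V_1$, and suppose for contradiction that $[Y,X]\neq0$ for some $X,Y\in V_1$. Then
\[
\exp(-tX)\exp(Y)\exp(tX)=\exp\Bigl(\textstyle\sum_{k\ge0}\tfrac{t^k}{k!}(-\mathrm{ad}_X)^k Y\Bigr),
\]
a finite sum in which the $k$-th term lies in $V_{k+1}$. By the ball-box estimate (equivalence of $d(e,\cdot)$ with a homogeneous quasi-norm $\sum_j|v_j|^{1/j}$), we get
\[
\phi(t)\asymp\sum_{k=0}^{m}|t|^{k/(k+1)},
\]
where $m\ge1$ is the largest $k$ with $\mathrm{ad}_X^kY\neq0$. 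So $\phi$ is sublinear, $\phi(t)=o(|t|)$, and yet $\phi(t)\to\infty$. A convex function on $\R$ with $\phi(t)=o(|t|)$ must be constant, since any nonzero one-sided slope would force linear growth. This is a contradiction, so $[V_1,V_1]=0$. Since $V_1$ generates $\mathfrak g$, the group is abelian, $G=V_1$, and the sub-Finsler metric is the normed-space metric of $(V_1,\|\cdot\|)$, a finite-dimensional Banach space.

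Finally, a Busemann space is uniquely geodesic, and a normed space is uniquely geodesic exactly when its norm is strictly convex. Hence the Banach space is strictly convex.

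The step I expect to need the most care is the two-sided growth estimate for $\phi$, specifically the lower bound showing $\phi\to\infty$. I would get it cleanly by rescaling: $\delta_{|t|^{-m/(m+1)}}$ applied to the conjugate converges to the nonzero element $\exp(\pm\mathrm{ad}_X^mY/m!)$ of $V_{m+1}$. This gives $\phi(t)\sim c|t|^{m/(m+1)}$ with $c>0$, which is both unbounded and sublinear.
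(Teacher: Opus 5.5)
Your proof is correct, but it takes a different route from the paper. The paper proves the lemma in one line by citing Berestovskii \cite[Proposition 3.2]{Be18} (cf.\ \cite{HaL23}): every non-Banach sub-Finsler Carnot group fails to be uniquely geodesic. A Busemann space is uniquely geodesic, so the group must be a normed space, and uniqueness of geodesics then forces strict convexity. You instead use the convexity of $t\mapsto d(\gamma(t),\eta(t))$ itself, for a horizontal line and its left translate. The conjugation formula and the dilation-homogeneity of $d$ give $\phi(t)\sim c|t|^{m/(m+1)}$ with $c>0$, which is unbounded yet sublinear, and no convex function on $\R$ behaves like that. The paper's route uses only uniqueness of geodesics, so it actually gives the stronger statement that every uniquely geodesic sub-Finsler Carnot group is a strictly convex Banach space. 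The cost is reliance on a nontrivial external fact about non-unique geodesics in Carnot groups. Your route is self-contained, needing only the $\mathrm{Ad}$-formula, homogeneity under dilations, and an elementary fact about convex functions. It uses convexity of the distance only along left translates of horizontal one-parameter subgroups, and uniqueness of geodesics enters only at the last step, for strict convexity. It also makes the mechanism visible: in a non-abelian Carnot group, parallel horizontal lines diverge sublinearly, which Busemann non-positive curvature forbids.
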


\noindent Lemma \ref{lem:sub} is an immediate consequence of the following fact shown in \cite[Proposition 3.2]{Be18} (cf.\ \cite[Proof of Proposition 6.1]{HaL23}): any non-Banach sub-Finsler Carnot group is not uniquely geodesic.

Let us prepare one more basic lemma on the differential of the distance function (but not all of them are necessary for the proof of Theorem \ref{thm:app}).

\begin{lem}\label{lem:diff}
Let $X$ be a locally compact, locally geodesically complete, locally Busemann space (not necessarily complete).
Set $f:=d(p,\cdot)$, where $p\in X$, and suppose $x\in X\setminus\{p\}$ is sufficiently close to $p$.
Then there exists a differential of $f$ at $x$,
\[d_xf:T_xX\to\R,\]
which is $1$-Lipschitz, convex, and positively homogeneous.
\end{lem}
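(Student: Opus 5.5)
We define $d_xf$ on the tangent cone $T_xX$ by taking directional derivatives along shortest paths from $x$, and then extend by density.
Fix $x\ne p$ close to $p$, inside a Busemann neighborhood of $p$ on which shortest paths are unique and extendable to uniform length.
By Lemma \ref{lem:tang}, $T_xX=T_x^gX$ is the completion of $\Gamma_x\times[0,\infty)$ modulo $d^*$.
For $(\gamma,a)$ with $\gamma\in\Gamma_x$ we set
\[
d_xf(\gamma,a):=\lim_{t\to0}\frac{f(\gamma(at))-f(x)}{t}.
\]

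The limit exists because $f=d(p,\cdot)$ is convex along shortest paths, by Remark \ref{rem:buse} with one path constant at $p$.
The difference quotient of a convex function is monotone in $t$.
It is also bounded below by $-a$, since $f$ is $1$-Lipschitz, so the limit is finite.
Positive homogeneity $d_xf(\gamma,\lambda a)=\lambda\, d_xf(\gamma,a)$ follows by substituting $t\mapsto\lambda t$.

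The first real step is the $1$-Lipschitz estimate with respect to $d^*$:
\[
|d_xf(\gamma,a)-d_xf(\eta,b)|\le\lim_{t\to0}\frac{|f(\gamma(at))-f(\eta(bt))|}{t}\le\lim_{t\to0}\frac{d(\gamma(at),\eta(bt))}{t}=d^*((\gamma,a),(\eta,b)).
\]
In particular $d_xf$ descends to the metrization of $\Gamma_x\times[0,\infty)$.
It then extends uniquely to a $1$-Lipschitz function on the completion $T_xX$.
Homogeneity passes to the extension by continuity.

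The main obstacle is convexity of $d_xf$ on $T_xX$.
Here $T_xX$ is a Busemann space by Proposition \ref{prop:app}, so we must control $d_xf$ along the unique shortest paths of $T_xX$.
These paths need not come from shortest paths in $X$ in any obvious way.
\begin{itemize}
\item First, we would realize $d_xf$ as a blow-up limit of the rescaled functions $f_\lambda:=\lambda(f-f(x))$ on $(\lambda X,x)$.
Via the logarithm-type Gromov--Hausdorff approximation of Lemma \ref{lem:tang}, or the proof of Claim \ref{clm:noncol2}, the $f_\lambda$ converge uniformly on compacta to $d_xf$.
This uses the monotone convergence of difference quotients together with the equi-Lipschitz bound.
\item Each $f_\lambda$ is convex along shortest paths of $\lambda X$.
\item Since the limit space $T_xX$ is uniquely geodesic, every shortest path of $T_xX$ is the limit of shortest paths of $\lambda X$, by Arzel\`a--Ascoli and uniqueness.
\item Convexity of $f_\lambda$ along the approximating paths then passes to the limit and gives convexity of $d_xf$ along every shortest path of $T_xX$.
\end{itemize}
The delicate point is verifying the uniform convergence $f_\lambda\to d_xf$ without assuming non-branching.
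For this we would use the uniform extendability at $x$ (geodesic completeness) exactly as in Lemma \ref{lem:tang}.
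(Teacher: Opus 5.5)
Your proposal is correct and follows essentially the same route as the paper: you define $d_xf$ by the difference quotient along $(\gamma,a)$, identify it with the blow-up limit of $\lambda(f-f(x))$, and get convexity from the unique geodesicity of $T_xX$ (Proposition \ref{prop:app}) plus Arzel\`a--Ascoli. The step you call delicate is handled in the paper in one line, and it needs no non-branching: $\gamma(a\lambda^{-1})$ itself converges to $(\gamma,a)$, so any other sequence converging to $(\gamma,a)$ gives the same limit because $f$ is $1$-Lipschitz; the only other difference is that you obtain well-definedness and the $1$-Lipschitz bound directly from the $d^*$-estimate, whereas the paper reads both off from the blow-up description.
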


\noindent Note that $T_xX=T_x^gX$ by Lemma \ref{lem:tang} (and Remark \ref{rem:doub}).
We say that $d_xf$ is \textit{convex} if its restriction to any shortest path of $T_xX$ is a convex function in the usual sense.
We also say that $d_xf$ is \textit{positively homogeneous} if $d_xf(cv)=c d_xf(v)$ for any $c>0$.
Here if $v=(\gamma,a)\in\Gamma_x\times[0,\infty)$, we put $cv:=(\gamma,ca)$.

\begin{proof}
Let $r>0$ be such that $\bar B(p,10r)$ is a compact Busemann space.
We show the above claim for any $x\in B(p,r)$ (such an open ball is called a \textit{tiny ball}, see \cite[Definition 2.8]{FG25}).

The differential $d_xf:T_xX\to\R$ is defined as follows.
Let $v\in T_xX=T_x^gX$.
Suppose $v=(\gamma,a)\in\Gamma_x\times[0,\infty)$, where $\gamma$ is a shortest path from $x$ and $a>0$ (however, $\gamma$ is not necessarily unique due to branching).
Then we define
\begin{equation}\label{eq:diff1}
d_xf(v):=\lim_{t\to0}\frac{f(\gamma(at))-f(x)}{t}.
\end{equation}
Note that the limit exists since $f(\gamma(at))$ is a convex function (see Remark \ref{rem:buse}).
We show that $d_xf$ is well-defined independent of the choice of $\gamma$.

In fact, $d_xf$ is the limit of the rescaling of the normalized distance function
\begin{equation}\label{eq:diff2}
\lambda(f(\cdot)-f(x)):\lambda X\to \R
\end{equation}
under the pointed Gromov--Hausdorff convergence $(\lambda X,x)\to(T_xX,o)$.
It is easy to check that this definition is compatible with \eqref{eq:diff1}.
Indeed, if $x_i\in\lambda_iX$ is a sequence converging to $v$, then by the $1$-Lipschitz continuity of $f$,
\[|\lambda_i(f(x_i)-f(x))-\lambda_i(f(\gamma(a\lambda_i^{-1}))-f(x))|\le\lambda_id(x_i,\gamma(a\lambda_i^{-1})),\]
where the right-hand side goes to $0$ as $i\to\infty$.
Thus the limit of \eqref{eq:diff2} is unique and coincides with \eqref{eq:diff1}.

Since $d_xf$ is well-defined, it is positively homogeneous by \eqref{eq:diff1}.
Furthermore, $d_xf$ is $1$-Lipschitz as a limit of $1$-Lipschitz functions \eqref{eq:diff2}.
Similarly, $d_xf$ is convex along any shortest path of $T_xX$ that arises as a limit of shortest paths of $\lambda X$.
By Proposition \ref{prop:app}, $T_xX$ is uniquely geodesic, and hence every shortest path of $T_xX$ is such a limit shortest path (by the Arzel\`a--Ascoli theorem).
This completes the proof.
\end{proof}

\begin{proof}[Proof of Theorem \ref{thm:app}]
Let $X$ be as in Theorem \ref{thm:app}.
Fix $p\in X$.
We prove that any small neighborhood of $p$ contains a regular point.

Let $r>0$ be such that $\bar B(p,100r)$ is a compact Busemann space.
By \cite[Proposition 3.1]{FG25}, the closed ball $\bar B(p,r)$ is a doubling metric space.
By \cite[Theorem 1]{VK87} (cf.\ \cite{LS98}), there exists a doubling measure $\mu$ on $\bar B(p,r)$ with full support.

We will apply Theorem \ref{thm:le} to a metric measure space $(Y,d,\mu)$, where $Y:=\bar B(p,r)$ (note that $(Y,d)$ is a geodesic space since it is convex in $X$).
For this purpose, we show that every point of $Y$ has a unique Gromov--Hausdorff tangent cone.
By Lemma \ref{lem:tang}, this is clear for every inner point $x\in B(p,r)$.

Suppose $x\in \partial B(p,r)$.
By Lemma \ref{lem:tang}, $T_xX=T_x^gX$.
Let $f:=d(p,\cdot)$.
We show that the Gromov--Hausdorff tangent cone $T_xY$, which is naturally embedded in $T_xX$, is determined by
\[T_xY=\{v\in T_xX\mid d_xf(v)\le 0\},\]
where $d_xf$ is the differential of $f$ at $x$ defined in Lemma \ref{lem:diff}.

Since $f\le f(x)=r$ on $Y$, it is clear that $T_xY$ is contained in $\{d_xf\le0\}$.
To show the opposite inclusion, let $v\in T_xX$ be such that $d_xf(v)\le0$.
Suppose $v=(\gamma,a)\in\Gamma_x\times[0,\infty)$.
By the definition \eqref{eq:diff1}, we have
\begin{equation}\label{eq:diff3}
f(\gamma(at))-r\le t\epsilon(t),
\end{equation}
where $\epsilon(t)\to0$ as $t\to0$.
Let $\Psi:\bar B(p,10r)\to\bar B(p,r)$ be the geodesic retraction, i.e., for any $y\in\bar B(p,10r)$, $\Psi(y)$ is a point on the shortest path $py$ at distance $\min\{d(p,y),r\}$ from $p$.
The inequality \eqref{eq:diff3} implies that $d(\Psi(\gamma(at)),\gamma(at))\le t\epsilon(t)$.
This shows that $\Psi(\gamma(at))\in Y$ also converges to $v$ under the convergence $(t^{-1}X,x)\to(T_xX,o)$, as desired.

Now, by applying Theorem \ref{thm:le} to $Y$, we find an inner point $x\in B(p,r)$ such that the tangent cone $T_xX=T_xY$ is isometric to a sub-Finsler Carnot group.
By Proposition \ref{prop:app} and Lemma \ref{lem:sub}, this tangent cone is actually a strictly convex Banach space.
Since $r>0$ is arbitrary, we conclude that the set of regular points is dense in $X$.
\end{proof}

\begin{rem}
Corollary \ref{cor:reg} does not apply to the current setting, since geodesics may be branching.
In particular, the relationship between Theorem \ref{thm:lip} and Theorem \ref{thm:app} is still unclear.
\end{rem}

\begin{rem}
In the above proof, we have constructed a doubling measure on $\bar B(p,r)$ for which regular points exist almost everywhere.
However, this measure is not at all natural.
In the GCBA case \cite{LN19}, there exists a canonical measure consisting of the Hausdorff measures of different dimensions, for which almost all points are regular (in particular, one gets rectifiability).
Such further refinements will be discussed in future work.
\end{rem}

\printbibliography

@article {AB90,
    AUTHOR = {Alexander, Stephanie B. and Bishop, Richard L.},
     TITLE = {The {H}adamard-{C}artan theorem in locally convex metric
              spaces},
   JOURNAL = {Enseign. Math. (2)},
  FJOURNAL = {L'Enseignement Math\'ematique. Revue Internationale. 2e
              S\'erie},
    VOLUME = {36},
      YEAR = {1990},
    NUMBER = {3-4},
     PAGES = {309--320},
      ISSN = {0013-8584},
   MRCLASS = {53C70 (53C22)},
  MRNUMBER = {1096422},
MRREVIEWER = {Werner\ Ballmann},
}

@book {AKP24,
    AUTHOR = {Alexander, Stephanie and Kapovitch, Vitali and Petrunin,
              Anton},
     TITLE = {Alexandrov geometry---foundations},
    SERIES = {Graduate Studies in Mathematics},
    VOLUME = {236},
 PUBLISHER = {American Mathematical Society, Providence, RI},
      YEAR = {2024},
     PAGES = {xvii+282},
   MRCLASS = {53C23 (30Lxx 53-02 53C45)},
  MRNUMBER = {4734965},
}

@article {AmbGigMonRaj15,
    AUTHOR = {Ambrosio, Luigi and Gigli, Nicola and Mondino, Andrea and
              Rajala, Tapio},
     TITLE = {Riemannian {R}icci curvature lower bounds in metric measure
              spaces with {$\sigma$}-finite measure},
   JOURNAL = {Trans. Amer. Math. Soc.},
  FJOURNAL = {Transactions of the American Mathematical Society},
    VOLUME = {367},
      YEAR = {2015},
    NUMBER = {7},
     PAGES = {4661--4701},
      ISSN = {0002-9947,1088-6850},
   MRCLASS = {49J52 (31C25 35K90 49Q20 58J35)},
  MRNUMBER = {3335397},
MRREVIEWER = {Paul\ Bryan},
       DOI = {10.1090/S0002-9947-2015-06111-X},
       URL = {https://doi.org/10.1090/S0002-9947-2015-06111-X},
}

@article {AmbGigSav14,
    AUTHOR = {Ambrosio, Luigi and Gigli, Nicola and Savar\'e, Giuseppe},
     TITLE = {Metric measure spaces with {R}iemannian {R}icci curvature
              bounded from below},
   JOURNAL = {Duke Math. J.},
  FJOURNAL = {Duke Mathematical Journal},
    VOLUME = {163},
      YEAR = {2014},
    NUMBER = {7},
     PAGES = {1405--1490},
      ISSN = {0012-7094,1547-7398},
   MRCLASS = {35R01 (60J45 60J65)},
  MRNUMBER = {3205729},
       DOI = {10.1215/00127094-2681605},
       URL = {https://doi.org/10.1215/00127094-2681605},
}

@article {An09,
    AUTHOR = {Andreev, P. D.},
     TITLE = {Geometric constructions in the class of {B}usemann
              nonpositively curved spaces},
   JOURNAL = {J. Math. Phys. Anal. Geom.},
  FJOURNAL = {Journal of Mathematical Physics, Analysis, Geometry},
    VOLUME = {5},
      YEAR = {2009},
    NUMBER = {1},
     PAGES = {25--37, 107},
      ISSN = {1812-9471,1817-5805},
   MRCLASS = {53C70 (53C23)},
  MRNUMBER = {2528398},
MRREVIEWER = {Joseph\ E.\ Borzellino},
}

@article {An17,
    AUTHOR = {Andreev, P. D.},
     TITLE = {Normed space structure on a {B}usemann {$G$}-space of cone
              type},
   JOURNAL = {Mat. Zametki},
  FJOURNAL = {Matematicheskie Zametki},
    VOLUME = {101},
      YEAR = {2017},
    NUMBER = {2},
     PAGES = {169--180},
      ISSN = {0025-567X,2305-2880},
   MRCLASS = {53C70},
  MRNUMBER = {3608015},
       DOI = {10.4213/mzm10609},
       URL = {https://doi.org/10.4213/mzm10609},
addendum = {Translation in \textit{Math. Notes} \textbf{101}.1-2 (2017), 193--202}
}

@article {AS19,
    AUTHOR = {Andreev, P. D. and Starostina, V. V.},
     TITLE = {Normed planes in the tangent cone to a chord space of
              nonpositive curvature},
   JOURNAL = {Izv. Vyssh. Uchebn. Zaved. Mat.},
  FJOURNAL = {Izvestiya Vysshikh Uchebnykh Zavedeni\u i. Matematika.
              Kazanski\u i\ Gosudarstvenny\u i\ Universitet},
      YEAR = {2019},
    NUMBER = {1},
     PAGES = {3--17},
      ISSN = {0021-3446,2076-4626},
   MRCLASS = {53C23},
  MRNUMBER = {3971763},
addendum = {Translation in \textit{Russian Math. (Iz. VUZ)} \textbf{63}.1 (2019), 1--13}
}

@article {BarRiz18,
    AUTHOR = {Barilari, Davide and Rizzi, Luca},
     TITLE = {Sharp measure contraction property for generalized {H}-type
              {C}arnot groups},
   JOURNAL = {Commun. Contemp. Math.},
  FJOURNAL = {Communications in Contemporary Mathematics},
    VOLUME = {20},
      YEAR = {2018},
    NUMBER = {6},
     PAGES = {1750081, 24},
      ISSN = {0219-1997,1793-6683},
   MRCLASS = {53C17 (35R03 53C21 53C22 54E35)},
  MRNUMBER = {3848070},
MRREVIEWER = {Nicolas\ Juillet},
       DOI = {10.1142/S021919971750081X},
       URL = {https://doi.org/10.1142/S021919971750081X},
}

@misc{BMRT24a,
      title={Measure contraction property and curvature-dimension condition on sub-{F}insler {H}eisenberg groups}, 
      author={Borza, S. and Magnabosco, M. and Rossi, T. and Tashiro, T.},
      year={2024},
      eprint={2402.14779v1},
      archivePrefix={arXiv},
      primaryClass={math.MG},
      note={Preprint}
}

@article {Bo95,
    AUTHOR = {Bowditch, B. H.},
     TITLE = {Minkowskian subspaces of non-positively curved metric spaces},
   JOURNAL = {Bull. London Math. Soc.},
  FJOURNAL = {The Bulletin of the London Mathematical Society},
    VOLUME = {27},
      YEAR = {1995},
    NUMBER = {6},
     PAGES = {575--584},
      ISSN = {0024-6093,1469-2120},
   MRCLASS = {20F32 (53C70)},
  MRNUMBER = {1348712},
MRREVIEWER = {Michael\ L.\ Mihalik},
       DOI = {10.1112/blms/27.6.575},
       URL = {https://doi.org/10.1112/blms/27.6.575},
}

@book {BH99,
    AUTHOR = {Bridson, Martin R. and Haefliger, Andr\'e},
     TITLE = {Metric spaces of non-positive curvature},
    SERIES = {Grundlehren der mathematischen Wissenschaften [Fundamental
              Principles of Mathematical Sciences]},
    VOLUME = {319},
 PUBLISHER = {Springer-Verlag, Berlin},
      YEAR = {1999},
     PAGES = {xxii+643},
      ISBN = {3-540-64324-9},
   MRCLASS = {53C23 (20F65 53C70 57M07)},
  MRNUMBER = {1744486},
MRREVIEWER = {Athanase\ Papadopoulos},
       DOI = {10.1007/978-3-662-12494-9},
       URL = {https://doi.org/10.1007/978-3-662-12494-9},
}

@book {BBI01,
    AUTHOR = {Burago, Dmitri and Burago, Yuri and Ivanov, Sergei},
     TITLE = {A course in metric geometry},
    SERIES = {Graduate Studies in Mathematics},
    VOLUME = {33},
 PUBLISHER = {American Mathematical Society, Providence, RI},
      YEAR = {2001},
     PAGES = {xiv+415},
      ISBN = {0-8218-2129-6},
   MRCLASS = {53C23},
  MRNUMBER = {1835418},
MRREVIEWER = {Mario\ Bonk},
       DOI = {10.1090/gsm/033},
       URL = {https://doi.org/10.1090/gsm/033},
}

@article {BS20,
    AUTHOR = {Bru\'e, Elia and Semola, Daniele},
     TITLE = {Constancy of the dimension for {${\rm RCD}(K,N)$} spaces via
              regularity of {L}agrangian flows},
   JOURNAL = {Comm. Pure Appl. Math.},
  FJOURNAL = {Communications on Pure and Applied Mathematics},
    VOLUME = {73},
      YEAR = {2020},
    NUMBER = {6},
     PAGES = {1141--1204},
      ISSN = {0010-3640,1097-0312},
   MRCLASS = {53C23 (53E99)},
  MRNUMBER = {4156601},
MRREVIEWER = {Luis\ Guijarro},
       DOI = {10.1002/cpa.21849},
       URL = {https://doi.org/10.1002/cpa.21849},
}

@article {Bu48,
    AUTHOR = {Busemann, Herbert},
     TITLE = {Spaces with non-positive curvature},
   JOURNAL = {Acta Math.},
  FJOURNAL = {Acta Mathematica},
    VOLUME = {80},
      YEAR = {1948},
     PAGES = {259--310},
      ISSN = {0001-5962,1871-2509},
   MRCLASS = {53.0X},
  MRNUMBER = {29531},
MRREVIEWER = {J.\ J.\ Stoker},
       DOI = {10.1007/BF02393651},
       URL = {https://doi.org/10.1007/BF02393651},
}

@book {Bu55,
    AUTHOR = {Busemann, Herbert},
     TITLE = {The geometry of geodesics},
 PUBLISHER = {Academic Press, Inc., New York},
      YEAR = {1955},
     PAGES = {x+422},
   MRCLASS = {53.0X},
  MRNUMBER = {75623},
MRREVIEWER = {L.\ W.\ Green},
}

@article {CM21,
    AUTHOR = {Cavalletti, Fabio and Milman, Emanuel},
     TITLE = {The globalization theorem for the curvature-dimension
              condition},
   JOURNAL = {Invent. Math.},
  FJOURNAL = {Inventiones Mathematicae},
    VOLUME = {226},
      YEAR = {2021},
    NUMBER = {1},
     PAGES = {1--137},
      ISSN = {0020-9910,1432-1297},
   MRCLASS = {49Q22 (49Q20 53C23)},
  MRNUMBER = {4309491},
MRREVIEWER = {Luca\ Granieri},
       DOI = {10.1007/s00222-021-01040-6},
       URL = {https://doi.org/10.1007/s00222-021-01040-6},
}

@article {DHPW23,
    AUTHOR = {Dai, Xianzhe and Honda, Shouhei and Pan, Jiayin and Wei,
              Guofang},
     TITLE = {Singular {W}eyl's law with {R}icci curvature bounded below},
   JOURNAL = {Trans. Amer. Math. Soc. Ser. B},
  FJOURNAL = {Transactions of the American Mathematical Society. Series B},
    VOLUME = {10},
      YEAR = {2023},
     PAGES = {1212--1253},
      ISSN = {2330-0000},
   MRCLASS = {53C23 (53C17 53C21)},
  MRNUMBER = {4634191},
MRREVIEWER = {Yaoting\ Gui},
       DOI = {10.1090/btran/160},
       URL = {https://doi.org/10.1090/btran/160},
}

@article {Den25,
    AUTHOR = {Deng, Qin},
     TITLE = {H\"older continuity of tangent cones in {${\rm RCD}(K,N)$}
              spaces and applications to nonbranching},
   JOURNAL = {Geom. Topol.},
  FJOURNAL = {Geometry \& Topology},
    VOLUME = {29},
      YEAR = {2025},
    NUMBER = {2},
     PAGES = {1037--1114},
      ISSN = {1465-3060,1364-0380},
   MRCLASS = {53C23 (53C22)},
  MRNUMBER = {4900035},
MRREVIEWER = {Luis\ Guijarro},
       DOI = {10.2140/gt.2025.29.1037},
       URL = {https://doi.org/10.2140/gt.2025.29.1037},
}

@article {De16,
    AUTHOR = {Descombes, Dominic},
     TITLE = {Asymptotic rank of spaces with bicombings},
   JOURNAL = {Math. Z.},
  FJOURNAL = {Mathematische Zeitschrift},
    VOLUME = {284},
      YEAR = {2016},
    NUMBER = {3-4},
     PAGES = {947--960},
      ISSN = {0025-5874,1432-1823},
   MRCLASS = {53C23},
  MRNUMBER = {3563261},
MRREVIEWER = {Kyle\ Edward\ Kinneberg},
       DOI = {10.1007/s00209-016-1680-3},
       URL = {https://doi.org/10.1007/s00209-016-1680-3},
}

@article {DL15,
    AUTHOR = {Descombes, Dominic and Lang, Urs},
     TITLE = {Convex geodesic bicombings and hyperbolicity},
   JOURNAL = {Geom. Dedicata},
  FJOURNAL = {Geometriae Dedicata},
    VOLUME = {177},
      YEAR = {2015},
     PAGES = {367--384},
      ISSN = {0046-5755,1572-9168},
   MRCLASS = {53C23 (20F65 20F67)},
  MRNUMBER = {3370039},
MRREVIEWER = {Igor\ Belegradek},
       DOI = {10.1007/s10711-014-9994-y},
       URL = {https://doi.org/10.1007/s10711-014-9994-y},
}

@article {DL16,
    AUTHOR = {Descombes, Dominic and Lang, Urs},
     TITLE = {Flats in spaces with convex geodesic bicombings},
   JOURNAL = {Anal. Geom. Metr. Spaces},
  FJOURNAL = {Analysis and Geometry in Metric Spaces},
    VOLUME = {4},
      YEAR = {2016},
    NUMBER = {1},
     PAGES = {68--84},
      ISSN = {2299-3274},
   MRCLASS = {53C23 (20F65 20F67)},
  MRNUMBER = {3483604},
MRREVIEWER = {R\'emi\ Bernard\ Coulon},
       DOI = {10.1515/agms-2016-0003},
       URL = {https://doi.org/10.1515/agms-2016-0003},
}

@article {DiGigPasSou21,
    AUTHOR = {Di Marino, Simone and Gigli, Nicola and Pasqualetto, Enrico
              and Soultanis, Elefterios},
     TITLE = {Infinitesimal {H}ilbertianity of locally {${\rm
              CAT}(\kappa)$}-spaces},
   JOURNAL = {J. Geom. Anal.},
  FJOURNAL = {Journal of Geometric Analysis},
    VOLUME = {31},
      YEAR = {2021},
    NUMBER = {8},
     PAGES = {7621--7685},
      ISSN = {1050-6926,1559-002X},
   MRCLASS = {53C23 (46E35)},
  MRNUMBER = {4293907},
MRREVIEWER = {Padmavati},
       DOI = {10.1007/s12220-020-00543-7},
       URL = {https://doi.org/10.1007/s12220-020-00543-7},
}

@misc{FG25,
      title={{T}opological regularity of {B}usemann spaces of nonpositive curvature}, 
      author={Fujioka, T. and Gu, S.},
      year={2025},
      eprint={2504.14455v2},
      archivePrefix={arXiv},
      primaryClass={math.MG},
      note={Preprint}
}

@article {Fuk24,
    AUTHOR = {Fukaya, Tomohiro},
     TITLE = {A topological product decomposition of {B}usemann space},
   JOURNAL = {J. Math. Soc. Japan},
  FJOURNAL = {Journal of the Mathematical Society of Japan},
    VOLUME = {76},
      YEAR = {2024},
    NUMBER = {1},
     PAGES = {269--281},
      ISSN = {0025-5645,1881-1167},
   MRCLASS = {51F30 (53C23)},
  MRNUMBER = {4693873},
MRREVIEWER = {Logan\ S.\ Fox},
       DOI = {10.2969/jmsj/89738973},
       URL = {https://doi.org/10.2969/jmsj/89738973},
}

@article {Gig12,
    AUTHOR = {Gigli, Nicola},
     TITLE = {On the differential structure of metric measure spaces and
              applications},
   JOURNAL = {Mem. Amer. Math. Soc.},
  FJOURNAL = {Memoirs of the American Mathematical Society},
    VOLUME = {236},
      YEAR = {2015},
    NUMBER = {1113},
     PAGES = {vi+91},
      ISSN = {0065-9266,1947-6221},
      ISBN = {978-1-4704-1420-7},
   MRCLASS = {53C23 (30L05 30L10 49Q15 58C20)},
  MRNUMBER = {3381131},
MRREVIEWER = {Davide\ Vittone},
       DOI = {10.1090/memo/1113},
       URL = {https://doi.org/10.1090/memo/1113},
}

@misc{Gig13,
      title={The splitting theorem in non-smooth context}, 
      author={Gigli, N.},
      year={2013},
      eprint={1302.5555v1},
      archivePrefix={arXiv},
      primaryClass={math.MG},
      note={Preprint}
}

@inproceedings {Gr81,
    AUTHOR = {Gromov, M.},
     TITLE = {Hyperbolic manifolds, groups and actions},
 BOOKTITLE = {Riemann surfaces and related topics: {P}roceedings of the 1978
              {S}tony {B}rook {C}onference ({S}tate {U}niv. {N}ew {Y}ork,
              {S}tony {B}rook, {N}.{Y}., 1978)},
    SERIES = {Ann. of Math. Stud.},
    VOLUME = {No. 97},
     PAGES = {183--213},
 PUBLISHER = {Princeton Univ. Press, Princeton, NJ},
      YEAR = {1981},
      ISBN = {0-691-08264-2},
   MRCLASS = {53C15 (53C45 58F17)},
  MRNUMBER = {624814},
MRREVIEWER = {Elmer\ G.\ Rees},
}

@misc{HY25,
      title={On the structure of {B}usemann spaces with non-negative curvature}, 
      author={Han, B.-X. and Yin, L.},
      year={2025},
      eprint={2508.12348v2},
      archivePrefix={arXiv},
      primaryClass={math.MG},
      note={Preprint}
}

@article {HaL23,
    AUTHOR = {Hakavuori, Eero and Le Donne, Enrico},
     TITLE = {Blowups and blowdowns of geodesics in {C}arnot groups},
   JOURNAL = {J. Differential Geom.},
  FJOURNAL = {Journal of Differential Geometry},
    VOLUME = {123},
      YEAR = {2023},
    NUMBER = {2},
     PAGES = {267--310},
      ISSN = {0022-040X,1945-743X},
   MRCLASS = {53C17 (28A75 49K21 53C22)},
  MRNUMBER = {4571803},
MRREVIEWER = {Scott\ Robert\ Zimmerman},
       DOI = {10.4310/jdg/1680883578},
       URL = {https://doi.org/10.4310/jdg/1680883578},
}

@article {HiL07,
    AUTHOR = {Hitzelberger, Petra and Lytchak, Alexander},
     TITLE = {Spaces with many affine functions},
   JOURNAL = {Proc. Amer. Math. Soc.},
  FJOURNAL = {Proceedings of the American Mathematical Society},
    VOLUME = {135},
      YEAR = {2007},
    NUMBER = {7},
     PAGES = {2263--2271},
      ISSN = {0002-9939,1088-6826},
   MRCLASS = {53C23},
  MRNUMBER = {2299504},
MRREVIEWER = {Mario\ Bonk},
       DOI = {10.1090/S0002-9939-07-08728-X},
       URL = {https://doi.org/10.1090/S0002-9939-07-08728-X},
}

@article {Ho24,
    AUTHOR = {Honda, Shouhei},
     TITLE = {Metric measure spaces with {R}icci bounds from below},
   JOURNAL = {Sugaku Expositions},
  FJOURNAL = {Sugaku Expositions},
    VOLUME = {37},
      YEAR = {2024},
    NUMBER = {2},
     PAGES = {179--201},
      ISSN = {0898-9583,2473-585X},
   MRCLASS = {53C20 (53C21)},
  MRNUMBER = {4818014},
addendum= {Translation of \textit{S\=ugaku} \textbf{72}.2 (2020), 158--181}
}

@article {An14,
    AUTHOR = {Andreev, P. D.},
     TITLE = {Proof of the {B}usemann conjecture for {$G$}-spaces of
              nonpositive curvature},
   JOURNAL = {Algebra i Analiz},
  FJOURNAL = {Rossi\u iskaya Akademiya Nauk. Algebra i Analiz},
    VOLUME = {26},
      YEAR = {2014},
    NUMBER = {2},
     PAGES = {1--20},
      ISSN = {0234-0852},
   MRCLASS = {53C70},
  MRNUMBER = {3242034},
       DOI = {10.1090/S1061-0022-2015-01336-8},
       URL = {https://doi.org/10.1090/S1061-0022-2015-01336-8},
addendum = {Translation in \textit{St. Petersburg Math. J.} \textbf{26}.2 (2015), 193--206}
}

@article {IL19,
    AUTHOR = {Ivanov, Sergei and Lytchak, Alexander},
     TITLE = {Rigidity of {B}usemann convex {F}insler metrics},
   JOURNAL = {Comment. Math. Helv.},
  FJOURNAL = {Commentarii Mathematici Helvetici. A Journal of the Swiss
              Mathematical Society},
    VOLUME = {94},
      YEAR = {2019},
    NUMBER = {4},
     PAGES = {855--868},
      ISSN = {0010-2571,1420-8946},
   MRCLASS = {53B40 (53C23 53C60)},
  MRNUMBER = {4046007},
MRREVIEWER = {Wei\ Zhao},
       DOI = {10.4171/cmh/476},
       URL = {https://doi.org/10.4171/cmh/476},
}

@article {Jui09,
    AUTHOR = {Juillet, Nicolas},
     TITLE = {Geometric inequalities and generalized {R}icci bounds in the
              {H}eisenberg group},
   JOURNAL = {Int. Math. Res. Not. IMRN},
  FJOURNAL = {International Mathematics Research Notices. IMRN},
      YEAR = {2009},
    NUMBER = {13},
     PAGES = {2347--2373},
      ISSN = {1073-7928,1687-0247},
   MRCLASS = {53C23 (49Q05)},
  MRNUMBER = {2520783},
MRREVIEWER = {Alessio\ Figalli},
       DOI = {10.1093/imrn/rnp019},
       URL = {https://doi.org/10.1093/imrn/rnp019},
}

@article {KK19,
    AUTHOR = {Kapovitch, Vitali and Ketterer, Christian},
     TITLE = {Weakly noncollapsed {RCD} spaces with upper curvature bounds},
   JOURNAL = {Anal. Geom. Metr. Spaces},
  FJOURNAL = {Analysis and Geometry in Metric Spaces},
    VOLUME = {7},
      YEAR = {2019},
    NUMBER = {1},
     PAGES = {197--211},
      ISSN = {2299-3274},
   MRCLASS = {53C20 (53C21 53C23)},
  MRNUMBER = {4034631},
MRREVIEWER = {Shouhei\ Honda},
       DOI = {10.1515/agms-2019-0010},
       URL = {https://doi.org/10.1515/agms-2019-0010},
}

@article {KK20,
    AUTHOR = {Kapovitch, Vitali and Ketterer, Christian},
     TITLE = {C{D} meets {CAT}},
   JOURNAL = {J. Reine Angew. Math.},
  FJOURNAL = {Journal f\"ur die Reine und Angewandte Mathematik. [Crelle's
              Journal]},
    VOLUME = {766},
      YEAR = {2020},
     PAGES = {1--44},
      ISSN = {0075-4102,1435-5345},
   MRCLASS = {53C23},
  MRNUMBER = {4145200},
MRREVIEWER = {Loreno\ Heer},
       DOI = {10.1515/crelle-2019-0021},
       URL = {https://doi.org/10.1515/crelle-2019-0021},
}

@article {KKK22,
    AUTHOR = {Kapovitch, Vitali and Kell, Martin and Ketterer, Christian},
     TITLE = {On the structure of {RCD} spaces with upper curvature bounds},
   JOURNAL = {Math. Z.},
  FJOURNAL = {Mathematische Zeitschrift},
    VOLUME = {301},
      YEAR = {2022},
    NUMBER = {4},
     PAGES = {3469--3502},
      ISSN = {0025-5874,1432-1823},
   MRCLASS = {53C21 (53C20)},
  MRNUMBER = {4449717},
MRREVIEWER = {Wei\ Zhao},
       DOI = {10.1007/s00209-022-03015-6},
       URL = {https://doi.org/10.1007/s00209-022-03015-6},
}

@article {Ke19,
    AUTHOR = {Kell, Martin},
     TITLE = {Sectional curvature-type conditions on metric spaces},
   JOURNAL = {J. Geom. Anal.},
  FJOURNAL = {Journal of Geometric Analysis},
    VOLUME = {29},
      YEAR = {2019},
    NUMBER = {1},
     PAGES = {616--655},
      ISSN = {1050-6926,1559-002X},
   MRCLASS = {53C23 (51F99)},
  MRNUMBER = {3897028},
MRREVIEWER = {Barry\ Minemyer},
       DOI = {10.1007/s12220-018-0013-7},
       URL = {https://doi.org/10.1007/s12220-018-0013-7},
}

@article {KR15,
    AUTHOR = {Ketterer, Christian and Rajala, Tapio},
     TITLE = {Failure of topological rigidity results for the measure
              contraction property},
   JOURNAL = {Potential Anal.},
  FJOURNAL = {Potential Analysis. An International Journal Devoted to the
              Interactions between Potential Theory, Probability Theory,
              Geometry and Functional Analysis},
    VOLUME = {42},
      YEAR = {2015},
    NUMBER = {3},
     PAGES = {645--655},
      ISSN = {0926-2601,1572-929X},
   MRCLASS = {53C23 (28A33 49Q20)},
  MRNUMBER = {3336992},
MRREVIEWER = {Andrew\ Bucki},
       DOI = {10.1007/s11118-014-9450-5},
       URL = {https://doi.org/10.1007/s11118-014-9450-5},
}

@article {Kl99,
    AUTHOR = {Kleiner, Bruce},
     TITLE = {The local structure of length spaces with curvature bounded
              above},
   JOURNAL = {Math. Z.},
  FJOURNAL = {Mathematische Zeitschrift},
    VOLUME = {231},
      YEAR = {1999},
    NUMBER = {3},
     PAGES = {409--456},
      ISSN = {0025-5874,1432-1823},
   MRCLASS = {53C23 (57M50)},
  MRNUMBER = {1704987},
MRREVIEWER = {Raul\ Quiroga-Barranco},
       DOI = {10.1007/PL00004738},
       URL = {https://doi.org/10.1007/PL00004738},
}

@article {Kr11,
    AUTHOR = {Kramer, Linus},
     TITLE = {On the local structure and the homology of {${\rm
              CAT}(\kappa)$} spaces and {E}uclidean buildings},
   JOURNAL = {Adv. Geom.},
  FJOURNAL = {Advances in Geometry},
    VOLUME = {11},
      YEAR = {2011},
    NUMBER = {2},
     PAGES = {347--369},
      ISSN = {1615-715X,1615-7168},
   MRCLASS = {54C55 (51E24 57M50)},
  MRNUMBER = {2795430},
MRREVIEWER = {Pierre-Emmanuel\ Caprace},
       DOI = {10.1515/ADVGEOM.2010.049},
       URL = {https://doi.org/10.1515/ADVGEOM.2010.049},
}

@article {KVK04,
    AUTHOR = {Krist\'aly, Alexandru and Varga, Csaba and Kozma, L\'aszl\'o},
     TITLE = {The dispersing of geodesics in {B}erwald spaces of
              non-positive flag curvature},
   JOURNAL = {Houston J. Math.},
  FJOURNAL = {Houston Journal of Mathematics},
    VOLUME = {30},
      YEAR = {2004},
    NUMBER = {2},
     PAGES = {413--420},
      ISSN = {0362-1588},
   MRCLASS = {53C60 (53C22 53C70)},
  MRNUMBER = {2084910},
MRREVIEWER = {Ioan\ Radu\ Peter},
}

@article {KK06,
    AUTHOR = {Krist\'aly, Alexandru and Kozma, L\'aszl\'o},
     TITLE = {Metric characterization of {B}erwald spaces of non-positive
              flag curvature},
   JOURNAL = {J. Geom. Phys.},
  FJOURNAL = {Journal of Geometry and Physics},
    VOLUME = {56},
      YEAR = {2006},
    NUMBER = {8},
     PAGES = {1257--1270},
      ISSN = {0393-0440,1879-1662},
   MRCLASS = {53C70 (53C23 53C60)},
  MRNUMBER = {2234441},
MRREVIEWER = {Ioan\ Bucataru},
       DOI = {10.1016/j.geomphys.2005.06.014},
       URL = {https://doi.org/10.1016/j.geomphys.2005.06.014},
}

@article {Le11,
    AUTHOR = {Le Donne, Enrico},
     TITLE = {Metric spaces with unique tangents},
   JOURNAL = {Ann. Acad. Sci. Fenn. Math.},
  FJOURNAL = {Annales Academi\ae\ Scientiarum Fennic\ae. Mathematica},
    VOLUME = {36},
      YEAR = {2011},
    NUMBER = {2},
     PAGES = {683--694},
      ISSN = {1239-629X,1798-2383},
   MRCLASS = {54E35 (26A16 53C17)},
  MRNUMBER = {2865538},
MRREVIEWER = {Oleksiy\ A.\ Dovgoshey},
       DOI = {10.5186/aasfm.2011.3636},
       URL = {https://doi.org/10.5186/aasfm.2011.3636},
}

@article {Le15,
    AUTHOR = {Le Donne, Enrico},
     TITLE = {A metric characterization of {C}arnot groups},
   JOURNAL = {Proc. Amer. Math. Soc.},
  FJOURNAL = {Proceedings of the American Mathematical Society},
    VOLUME = {143},
      YEAR = {2015},
    NUMBER = {2},
     PAGES = {845--849},
      ISSN = {0002-9939,1088-6826},
   MRCLASS = {53C17 (22E25 53C60 58D19)},
  MRNUMBER = {3283670},
MRREVIEWER = {Gareth\ Speight},
       DOI = {10.1090/S0002-9939-2014-12244-1},
       URL = {https://doi.org/10.1090/S0002-9939-2014-12244-1},
}

@article {LotVil09,
    AUTHOR = {Lott, John and Villani, C\'edric},
     TITLE = {Ricci curvature for metric-measure spaces via optimal
              transport},
   JOURNAL = {Ann. of Math. (2)},
  FJOURNAL = {Annals of Mathematics. Second Series},
    VOLUME = {169},
      YEAR = {2009},
    NUMBER = {3},
     PAGES = {903--991},
      ISSN = {0003-486X,1939-8980},
   MRCLASS = {53C23 (49Q15)},
  MRNUMBER = {2480619},
MRREVIEWER = {Alessio\ Figalli},
       DOI = {10.4007/annals.2009.169.903},
       URL = {https://doi.org/10.4007/annals.2009.169.903},
}

@article {LN19,
    AUTHOR = {Lytchak, Alexander and Nagano, Koichi},
     TITLE = {Geodesically complete spaces with an upper curvature bound},
   JOURNAL = {Geom. Funct. Anal.},
  FJOURNAL = {Geometric and Functional Analysis},
    VOLUME = {29},
      YEAR = {2019},
    NUMBER = {1},
     PAGES = {295--342},
      ISSN = {1016-443X,1420-8970},
   MRCLASS = {53C23 (53C20 53C21)},
  MRNUMBER = {3925112},
       DOI = {10.1007/s00039-019-00483-7},
       URL = {https://doi.org/10.1007/s00039-019-00483-7},
}

@article {LN22,
    AUTHOR = {Lytchak, Alexander and Nagano, Koichi},
     TITLE = {Topological regularity of spaces with an upper curvature
              bound},
   JOURNAL = {J. Eur. Math. Soc. (JEMS)},
  FJOURNAL = {Journal of the European Mathematical Society (JEMS)},
    VOLUME = {24},
      YEAR = {2022},
    NUMBER = {1},
     PAGES = {137--165},
      ISSN = {1435-9855,1435-9863},
   MRCLASS = {53C23 (54E45)},
  MRNUMBER = {4375449},
MRREVIEWER = {Bo\.zena\ Pi\polhk atek},
       DOI = {10.4171/jems/1091},
       URL = {https://doi.org/10.4171/jems/1091},
}

@article {LS07,
    AUTHOR = {Lytchak, Alexander and Schroeder, Viktor},
     TITLE = {Affine functions on {${\rm CAT}(\kappa)$}-spaces},
   JOURNAL = {Math. Z.},
  FJOURNAL = {Mathematische Zeitschrift},
    VOLUME = {255},
      YEAR = {2007},
    NUMBER = {2},
     PAGES = {231--244},
      ISSN = {0025-5874,1432-1823},
   MRCLASS = {53C21 (53C20)},
  MRNUMBER = {2262730},
MRREVIEWER = {Mario\ Bonk},
       DOI = {10.1007/s00209-006-0020-4},
       URL = {https://doi.org/10.1007/s00209-006-0020-4},
}

@misc{Ma23,
      title={Examples of ${CD}(0,N)$ spaces with non-constant dimension}, 
      author={Magnabosco, Mattia},
      year={2023},
      eprint={2310.05738v1},
      archivePrefix={arXiv},
      primaryClass={math.MG},
      note={To appear in \textit{Ann. Sc. Norm. Super. Pisa Cl. Sci.}}
}

@misc{MMR25,
      title={On the rectifiability of $\mathsf{CD}(K,N)$ and $\mathsf{MCP}(K,N)$ with unique tangents}, 
      author={Magnabosco, Mattia and Mondino, Andrea and Rossi, Tommaso},
      year={2025},
      eprint={2505.01151v1},
      archivePrefix={arXiv},
      primaryClass={math.MG},
      note={Preprint}
}

@article {MN19,
    AUTHOR = {Mondino, Andrea and Naber, Aaron},
     TITLE = {Structure theory of metric measure spaces with lower {R}icci
              curvature bounds},
   JOURNAL = {J. Eur. Math. Soc. (JEMS)},
  FJOURNAL = {Journal of the European Mathematical Society (JEMS)},
    VOLUME = {21},
      YEAR = {2019},
    NUMBER = {6},
     PAGES = {1809--1854},
      ISSN = {1435-9855,1435-9863},
   MRCLASS = {53C23 (53C21)},
  MRNUMBER = {3945743},
MRREVIEWER = {Fernando\ Galaz-Garc\'ia},
       DOI = {10.4171/JEMS/874},
       URL = {https://doi.org/10.4171/JEMS/874},
}

@misc{NavPan25,
      title={Universal non-{CD} of sub-{R}iemannian manifolds}, 
      author={Navarro, Dimitri and Pan, Jiayin},
      year={2026},
      eprint={2507.00471v2},
      archivePrefix={arXiv},
      primaryClass={math.MG},
      note={To appear in \textit{J. Reine Angew. Math.}}
}

@article {Oh07,
    AUTHOR = {Ohta, Shin-ichi},
     TITLE = {On the measure contraction property of metric measure spaces},
   JOURNAL = {Comment. Math. Helv.},
  FJOURNAL = {Commentarii Mathematici Helvetici. A Journal of the Swiss
              Mathematical Society},
    VOLUME = {82},
      YEAR = {2007},
    NUMBER = {4},
     PAGES = {805--828},
      ISSN = {0010-2571,1420-8946},
   MRCLASS = {53C23 (28C15)},
  MRNUMBER = {2341840},
MRREVIEWER = {Jana\ Bj\"orn},
       DOI = {10.4171/CMH/110},
       URL = {https://doi.org/10.4171/CMH/110},
}

@article {Oh09,
    AUTHOR = {Ohta, Shin-ichi},
     TITLE = {Finsler interpolation inequalities},
   JOURNAL = {Calc. Var. Partial Differential Equations},
  FJOURNAL = {Calculus of Variations and Partial Differential Equations},
    VOLUME = {36},
      YEAR = {2009},
    NUMBER = {2},
     PAGES = {211--249},
      ISSN = {0944-2669,1432-0835},
   MRCLASS = {58E35 (53C60)},
  MRNUMBER = {2546027},
MRREVIEWER = {Giovanni\ Pisante},
       DOI = {10.1007/s00526-009-0227-4},
       URL = {https://doi.org/10.1007/s00526-009-0227-4},
}

@book {Pa14,
    AUTHOR = {Papadopoulos, Athanase},
     TITLE = {Metric spaces, convexity and non-positive curvature},
    SERIES = {IRMA Lectures in Mathematics and Theoretical Physics},
    VOLUME = {6},
   EDITION = {Second},
 PUBLISHER = {European Mathematical Society (EMS), Z\"urich},
      YEAR = {2014},
     PAGES = {xii+309},
      ISBN = {978-3-03719-132-3},
   MRCLASS = {53-01 (53C21 53C70 54E35 57M50)},
  MRNUMBER = {3156529},
       DOI = {10.4171/132},
       URL = {https://doi.org/10.4171/132},
}

@article{Po98,
 author = {Pogorelov, A. V.},
 title = {Busemann regular {{\(G\)}}-spaces},
 fjournal = {Reviews in Mathematics and Mathematical Physics},
 journal = {Rev. Math. Math. Phys.},
 issn = {1024-5278},
 volume = {10},
 number = {4},
 pages = {1--99},
 year = {1998},
 language = {English},
 zbMATH = {1386158},
 Zbl = {0996.53001}
}

@article {Riz18,
    AUTHOR = {Rizzi, Luca},
     TITLE = {A counterexample to gluing theorems for {MCP} metric measure
              spaces},
   JOURNAL = {Bull. Lond. Math. Soc.},
  FJOURNAL = {Bulletin of the London Mathematical Society},
    VOLUME = {50},
      YEAR = {2018},
    NUMBER = {5},
     PAGES = {781--790},
      ISSN = {0024-6093,1469-2120},
   MRCLASS = {53C17 (53C23 54E50)},
  MRNUMBER = {3873493},
MRREVIEWER = {Alessandro\ Ottazzi},
       DOI = {10.1112/blms.12186},
       URL = {https://doi.org/10.1112/blms.12186},
}

@article {Ri16,
    AUTHOR = {Rizzi, Luca},
     TITLE = {Measure contraction properties of {C}arnot groups},
   JOURNAL = {Calc. Var. Partial Differential Equations},
  FJOURNAL = {Calculus of Variations and Partial Differential Equations},
    VOLUME = {55},
      YEAR = {2016},
    NUMBER = {3},
     PAGES = {Art. 60, 20},
      ISSN = {0944-2669,1432-0835},
   MRCLASS = {53C17 (35R03 53C21 53C22 53C23 54E35)},
  MRNUMBER = {3502622},
MRREVIEWER = {Enrico\ Le Donne},
       DOI = {10.1007/s00526-016-1002-y},
       URL = {https://doi.org/10.1007/s00526-016-1002-y},
}

@article {St06,
    AUTHOR = {Sturm, Karl-Theodor},
     TITLE = {On the geometry of metric measure spaces. {II}},
   JOURNAL = {Acta Math.},
  FJOURNAL = {Acta Mathematica},
    VOLUME = {196},
      YEAR = {2006},
    NUMBER = {1},
     PAGES = {133--177},
      ISSN = {0001-5962,1871-2509},
   MRCLASS = {53C23},
  MRNUMBER = {2237207},
MRREVIEWER = {Juha\ Heinonen},
       DOI = {10.1007/s11511-006-0003-7},
       URL = {https://doi.org/10.1007/s11511-006-0003-7},
}

@book {Vil,
    AUTHOR = {Villani, C\'edric},
     TITLE = {Optimal transport Old and new},
    SERIES = {Grundlehren der mathematischen Wissenschaften [Fundamental
              Principles of Mathematical Sciences]},
    VOLUME = {338},
 PUBLISHER = {Springer-Verlag, Berlin},
      YEAR = {2009},
     PAGES = {xxii+973},
      ISBN = {978-3-540-71049-3},
   MRCLASS = {49-02 (28A75 37J50 49Q20 53C23 58E30)},
  MRNUMBER = {2459454},
MRREVIEWER = {Dario\ Cordero-Erausquin},
       DOI = {10.1007/978-3-540-71050-9},
       URL = {https://doi.org/10.1007/978-3-540-71050-9},
}

@article {vo08,
    AUTHOR = {{von Renesse}, Max-K.},
     TITLE = {On local {P}oincar\'e{} via transportation},
   JOURNAL = {Math. Z.},
  FJOURNAL = {Mathematische Zeitschrift},
    VOLUME = {259},
      YEAR = {2008},
    NUMBER = {1},
     PAGES = {21--31},
      ISSN = {0025-5874,1432-1823},
   MRCLASS = {53C21 (46E35 53C23 54E35)},
  MRNUMBER = {2375612},
MRREVIEWER = {Filippo\ Santambrogio},
       DOI = {10.1007/s00209-007-0206-4},
       URL = {https://doi.org/10.1007/s00209-007-0206-4},
}

@article {Po90,
    AUTHOR = {Pogorelov, A. V.},
     TITLE = {Regular {$G$}-spaces of {H}. {B}usemann},
   JOURNAL = {Dokl. Akad. Nauk SSSR},
  FJOURNAL = {Doklady Akademii Nauk SSSR},
    VOLUME = {314},
      YEAR = {1990},
    NUMBER = {1},
     PAGES = {114--118},
      ISSN = {0002-3264},
   MRCLASS = {53C70 (53C60)},
  MRNUMBER = {1118490},
addendum = {Translation in \textit{Soviet Math. Dokl.} \textbf{42}.2 (1991), 356--359}
}

@incollection {Jo,
    AUTHOR = {John, Fritz},
     TITLE = {Extremum problems with inequalities as subsidiary conditions},
 BOOKTITLE = {Studies and {E}ssays {P}resented to {R}. {C}ourant on his 60th
              {B}irthday, {J}anuary 8, 1948},
     PAGES = {187--204},
 PUBLISHER = {Interscience Publishers, New York},
      YEAR = {1948},
   MRCLASS = {49.0X},
  MRNUMBER = {30135},
MRREVIEWER = {J.\ E.\ Wilkins, Jr.},
}

@article {BGP92,
    AUTHOR = {Burago, Yu. and Gromov, M. and Perel'man, G.},
     TITLE = {A. {D}. {A}leksandrov spaces with curvatures bounded below},
   JOURNAL = {Uspekhi Mat. Nauk},
  FJOURNAL = {Uspekhi Matematicheskikh Nauk},
    VOLUME = {47},
      YEAR = {1992},
    NUMBER = {2(284)},
     PAGES = {3--51, 222},
      ISSN = {0042-1316,2305-2872},
   MRCLASS = {53C21 (53C23)},
  MRNUMBER = {1185284},
MRREVIEWER = {Tadeusz\ Januszkiewicz},
       DOI = {10.1070/RM1992v047n02ABEH000877},
       URL = {https://doi.org/10.1070/RM1992v047n02ABEH000877},
addendum = {Translation in \textit{Russian Math. Surveys} \textbf{47}.2 (1992), 1--58}
}

@article {VK87,
    AUTHOR = {Vol'berg, A. L. and Konyagin, S. V.},
     TITLE = {On measures with the doubling condition},
   JOURNAL = {Izv. Akad. Nauk SSSR Ser. Mat.},
  FJOURNAL = {Izvestiya Akademii Nauk SSSR. Seriya Matematicheskaya},
    VOLUME = {51},
      YEAR = {1987},
    NUMBER = {3},
     PAGES = {666--675},
      ISSN = {0373-2436},
   MRCLASS = {28A12 (28A75 54E45 54F45)},
  MRNUMBER = {903629},
MRREVIEWER = {M.\ P.\ Er\v sov},
addendum = {Translation in \textit{Math. USSR-Izv.} \textbf{30}.3 (1988), 629--638}
}

@article {LS98,
    AUTHOR = {Luukkainen, Jouni and Saksman, Eero},
     TITLE = {Every complete doubling metric space carries a doubling
              measure},
   JOURNAL = {Proc. Amer. Math. Soc.},
  FJOURNAL = {Proceedings of the American Mathematical Society},
    VOLUME = {126},
      YEAR = {1998},
    NUMBER = {2},
     PAGES = {531--534},
      ISSN = {0002-9939,1088-6826},
   MRCLASS = {28A12},
  MRNUMBER = {1443161},
MRREVIEWER = {Guozhen\ Lu},
       DOI = {10.1090/S0002-9939-98-04201-4},
       URL = {https://doi.org/10.1090/S0002-9939-98-04201-4},
}

@article{Rif13,
title = {Ricci curvatures in Carnot groups},
journal = {Mathematical Control and Related Fields},
volume = {3},
number = {4},
pages = {467-487},
year = {2013},
issn = {2156-8472},
doi = {10.3934/mcrf.2013.3.467},
url = {https://www.aimsciences.org/article/id/06ad68b2-25fd-47a2-b684-d67eb5bfbd5f},
author = {Ludovic Rifford}
}

@misc{BorTas23,
      title={{Measure contraction property, curvature exponent and geodesic dimension of sub-Finsler $\ell^p$-Heisenberg groups}}, 
      author={Samuël Borza and Kenshiro Tashiro},
      year={2025},
      eprint={2305.16722v2},
      archivePrefix={arXiv},
      primaryClass={math.MG},
      note = {To appear in \textit{Ann. Inst. Fourier (Grenoble)}}
}

@book{Simbook,
    AUTHOR = {Simon, Leon},
     TITLE = {{Lectures on Geometric Measure Theory}},
    SERIES = {Proceedings of the Centre for Mathematical Analysis,
              Australian National University},
    VOLUME = {3},
 PUBLISHER = {Australian National University, Centre for Mathematical
              Analysis, Canberra},
      YEAR = {1983},
     PAGES = {vii+272},
      ISBN = {0-86784-429-9},
   MRCLASS = {49-01 (28A75 49F20)},
  MRNUMBER = {756417},
MRREVIEWER = {J.\ S.\ Joel},
}

@incollection {Be18,
    AUTHOR = {Berestovski\u i, Valeri\u i\ Nikolaevich},
     TITLE = {Busemann's results, ideas, questions and locally compact homogeneous geodesic spaces},
 BOOKTITLE = {Herbert Busemann Selected works. {I}},
     PAGES = {41--84},
 PUBLISHER = {Springer, Cham},
      YEAR = {2018},
      ISBN = {978-3-319-64294-9},
}

@article {On05,
    AUTHOR = {Ontaneda, Pedro},
     TITLE = {Cocompact {CAT}(0) spaces are almost geodesically complete},
   JOURNAL = {Topology},
  FJOURNAL = {Topology. An International Journal of Mathematics},
    VOLUME = {44},
      YEAR = {2005},
    NUMBER = {1},
     PAGES = {47--62},
      ISSN = {0040-9383},
   MRCLASS = {57M07 (53C22)},
  MRNUMBER = {2104000},
MRREVIEWER = {Craig\ R.\ Guilbault},
       DOI = {10.1016/j.top.2004.01.010},
       URL = {https://doi.org/10.1016/j.top.2004.01.010},
}

@article {GO07,
    AUTHOR = {Geoghegan, Ross and Ontaneda, Pedro},
     TITLE = {Boundaries of cocompact proper {${\rm CAT}(0)$} spaces},
   JOURNAL = {Topology},
  FJOURNAL = {Topology. An International Journal of Mathematics},
    VOLUME = {46},
      YEAR = {2007},
    NUMBER = {2},
     PAGES = {129--137},
      ISSN = {0040-9383},
   MRCLASS = {57M07 (20F65)},
  MRNUMBER = {2313068},
MRREVIEWER = {Robert\ W.\ Bell},
       DOI = {10.1016/j.top.2006.12.002},
       URL = {https://doi.org/10.1016/j.top.2006.12.002},
}

@article {KR21,
    AUTHOR = {Kent, Curtis and Ricks, Russell},
     TITLE = {Asymptotic cones and boundaries of {$\rm CAT(0)$} spaces},
   JOURNAL = {Indiana Univ. Math. J.},
  FJOURNAL = {Indiana University Mathematics Journal},
    VOLUME = {70},
      YEAR = {2021},
    NUMBER = {4},
     PAGES = {1441--1469},
      ISSN = {0022-2518,1943-5258},
   MRCLASS = {20F65 (53C23)},
  MRNUMBER = {4318480},
MRREVIEWER = {Igor\ Belegradek},
       DOI = {10.1512/iumj.2021.70.8552},
       URL = {https://doi.org/10.1512/iumj.2021.70.8552},
}

@incollection {BN93,
    AUTHOR = {Berestovskij, V. N. and Nikolaev, I. G.},
     TITLE = {Multidimensional generalized {R}iemannian spaces},
 BOOKTITLE = {Geometry, {IV}},
    SERIES = {Encyclopaedia Math. Sci.},
    VOLUME = {70},
     PAGES = {165--243, 245--250},
 PUBLISHER = {Springer, Berlin},
      YEAR = {1993},
      ISBN = {3-540-54701-0},
   MRCLASS = {53C20},
  MRNUMBER = {1263965},
       DOI = {10.1007/978-3-662-02897-1\_2},
       URL = {https://doi.org/10.1007/978-3-662-02897-1_2},
}

@article {Be02,
    AUTHOR = {Berestovski\u i, V. N.},
     TITLE = {Busemann spaces with upper-bounded {A}leksandrov curvature},
   JOURNAL = {Algebra i Analiz},
  FJOURNAL = {Rossi\u iskaya Akademiya Nauk. Algebra i Analiz},
    VOLUME = {14},
      YEAR = {2002},
    NUMBER = {5},
     PAGES = {3--18},
      ISSN = {0234-0852},
   MRCLASS = {53C70 (53C23)},
  MRNUMBER = {1970330},
  addendum = {Translation in \textit{St. Petersburg Math. J.} \textbf{14}.5 (2003), 713--723}
}

\end{document}